\newtheorem*{thm*}{Theorem}
\newtheorem{thm}{Theorem}[section]
\newtheorem{quest}{Question}
\newtheorem{cor}[thm]{Corollary}
\newtheorem{af}[thm]{Claim}
\newtheorem{lemma}[thm]{Lemma}
\newtheorem{prop}[thm]{Proposition}
\newtheorem{teoA}{Theorem}
\newtheorem{propB}[teoA]{Proposition}
\newtheorem{teoAprima}{Theorem}
\newtheorem{teoBprima}{Theorem}
\newtheorem{propCprima}{Proposition}
\theoremstyle{definition}
\newtheorem{definition}[thm]{Definition}
\newtheorem{remark}[thm]{Remark}
\newtheorem{example}[thm]{Example}
\newcommand{\R}{\mathbb{R}}
\newcommand{\W}{\mathcal{W}}
\newcommand{\WW}{\tilde{\mathcal{W}}}
\newcommand{\F}{\mathcal{F}}
\newcommand{\U}{\mathcal{U}}
\newcommand{\C}{\mathcal{C}}
\DeclareMathOperator{\Id}{Id}
\DeclareMathOperator{\D}{D}
\DeclareMathOperator{\id}{id}
\DeclareMathOperator{\intt}{int}
\DeclareMathOperator{\length}{length}
\DeclareMathOperator{\graph}{graph}
\DeclareMathOperator{\PH}{PH}
\DeclareMathOperator{\Diff}{Diff}
\DeclareMathOperator{\Fix}{Fix}
\newenvironment{itquote}
  {\begin{quote}\itshape}
  {\end{quote}\ignorespacesafterend}
\numberwithin{equation}{section}
\begin{document}

\author[Santiago Martinchich]{Santiago Martinchich} 
\address{CMAT, Universidad de la Rep\'ublica, 11400, Montevideo, Uruguay \vspace{-0.20cm}}

\address{LMO, Université Paris-Saclay, 91405, Orsay, France \vspace{0.10cm}}

\email{smartinchich@cmat.edu.uy}

\title[Global stability of discretized Anosov flows]{Global stability of discretized Anosov flows}

\thanks{S. M. was partially supported by `ADI 2019' project funded by IDEX Paris-Saclay ANR-11-IDEX-0003-02, ERC project 692925
NUHGD and CSIC No 618 `Sistemas Dinámicos'.}

%\thanks{This research was partially funded by ...}

\begin{abstract}
The goal of this article is to establish several general properties of a somewhat large class of partially hyperbolic diffeomorphisms called \emph{discretized Anosov flows}. A general definition for these systems is presented and is proven to be equivalent with the definition introduced in \cite{BFFP}, as well as with the notion of flow type partially hyperbolic diffeomorphisms introduced in \cite{BFT}.

The set of discretized Anosov flows is shown to be $C^1$-open and closed inside the set of partially hyperbolic diffeomorphisms. Every discretized Anosov flow is proven to be dynamically coherent and plaque expansive.
Unique integrability of the center bundle is shown to happen for whole connected components, notably the ones containing the time 1 map of an Anosov flow. For general connected components, a result on uniqueness of invariant foliation is obtained.

Similar results are seen to happen for partially hyperbolic systems admitting a uniformly compact center foliation extending the studies initiated in \cite{BoBo}.
\end{abstract}

\maketitle

\section{Introduction}

%This work is about partially hyperbolic dynamics. 

A diffeomorphism $f:M\to M$ on a closed manifold $M$ is called \emph{partially hyperbolic} if there exists a $Df$-invariant continuous decomposition $$TM=E^s \oplus E^c \oplus E^u$$ such that vectors in $E^s$ and $E^u$ are uniformly contracted by forward and backward iterates of $f$, respectively, and vectors in $E^c$ experience an intermediate contraction. See Section \ref{sectionprelims} for a precise definition.

The classical examples of partially hyperbolic diffeomorphisms (at least in dimension 3) are:
\begin{itemize}
\item \emph{Deformations of Anosov diffeomorphisms.} %The map $f:M\to M$ is homotopic to an Anosov diffeomorphism $A:M\to M$.
\item \emph{Partially hyperbolic skew-products.} 
%The center bundle $E^c$ integrates to a $f$-invariant compact foliation $\W^c$ such that $M/\W^c$ is a topological manifold and $f:M/\W^c\to M/\W^c$ is a topological Anosov homeomorphism.
\item \emph{Perturbations of time one maps of Anosov flows.} 
%The map $f:M\to M$ is sufficiently $C^1$ close to $\varphi_1:M\to M$ for $\varphi_t:M\to M$ an Anosov flow.
\end{itemize}

The class of \emph{discretized Anosov flows} is conceived as a natural generalization of the lattermost type of examples. The notion of a discretized Anosov flow and related ones have appeared recently and independently in different works. The aim of this text is to establish some general properties for this class of systems in any dimension. In particular, we show the equivalence among many of these notions that are a priori defined in different ways.

\subsection{Discretized Anosov flows}

\begin{definition}\label{defDAFintro} 
We say that $f\in \PH_{c=1}(M)$ is a \emph{discretized Anosov flow} if there exist a continuous flow $\varphi_t:M\to M$, with $\frac{\partial \varphi_t}{\partial t}|_{t=0}$ a continuous vector field without singularities, and a continuous function $\tau:M\to \mathbb{R}$ such that
$$f(x)=\varphi_{\tau(x)}(x)$$ for every $x\in M$.
\end{definition}

The prototypical example of a discretized Anosov flow is the time 1 map of an Anosov flow and all its sufficiently small $C^1$-perturbations. The latter is a consequence of \cite{HPS} and will be revisited in this text (see Theorem \ref{thmA} and Section \ref{sectionproofthmA}). 

The term \emph{discretized Anosov flow} derives from the fact that the flow $\varphi_t$ needs to be a \emph{topological Anosov flow} (see Definition \ref{deftopAnosov} and Proposition \ref{propsTopAF}). Hence $f$ can be regarded as a discretization of the topological Anosov flow $\varphi_t$.

The orbits of $\varphi_t$  form a \emph{center foliation} that is tangent to $E^c$ (see Proposition \ref{prop1DAFs}). An alternative definition for a discretized Anosov flow is the following: $f$ is a partially hyperbolic diffeomorphism admitting a one-dimensional center foliation $\W^c$ such that $f(x)\in \W^c_L(x)$ for every $x\in M$, where $L>0$ is some global constant and $\W^c_L(x)$ denotes the ball of center $x$ and radius $L$ inside the leaf $\W^c(x)$. The equivalence with Definition \ref{defDAFintro} is shown in Propostion \ref{propcenterfixingL}.

The notion of discretized Anosov flow studied in \cite{BFFP}, \cite{BFP}, \cite{BaGo} and \cite{GM}, the class of systems considered in  \cite[Theorem 2]{BW} and \cite{BG1}, and the notion of \emph{flow-type partially hyperbolic diffeomorphism} studied in \cite{BFT} are shown to be all equivalent to each other and equivalent with the definition of discretized Anosov given in Definition \ref{defDAFintro}. This is done in Section \ref{sectionequivalences}.

\subsection{\texorpdfstring{$C^1$}{C1}-openness and closedness} 

We denote by $\PH(M)$  the set of partially hyperbolic diffeomorphisms in $M$ and by $\PH_{c=1}(M)$ the ones such that $\dim(E^c)=1$. In the $C^1$ topology, the sets $\PH(M)$ and $\PH_{c=1}(M)$ are open subsets of the space of $C^1$ diffeomorphisms as a consequence of the cone criterion (see for example \cite{CP}).

We show that discretized Anosov flows constitute a somewhat large class of partially hyperbolic diffeomorphisms with one-dimensional center:

\begin{teoA}\label{thmA}
The set of discretized Anosov flows is a $C^1$-open and closed subset of $\PH_{c=1}(M)$.
\end{teoA}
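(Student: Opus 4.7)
My plan is to work throughout with the equivalent characterization from Proposition~\ref{propcenterfixingL}: $f \in \PH_{c=1}(M)$ is a discretized Anosov flow if and only if $f$ admits a one-dimensional center foliation $\W^c$ with $f(x)\in\W^c_L(x)$ for every $x$ and some global $L>0$. This reformulation strips away the explicit flow data and leaves purely foliated conditions that behave much better under both $C^1$-perturbation and $C^1$-limit.

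For openness, let $f_0$ be a discretized Anosov flow with center foliation $\W^c_0$ and displacement bound $L$. Because the leaves of $\W^c_0$ are the orbits of a topological Anosov flow, $\W^c_0$ is plaque-expansive (a fact established separately in the paper). The Hirsch--Pugh--Shub theorem \cite{HPS} then yields, for every $f$ sufficiently $C^1$-close to $f_0$, a center foliation $\W^c_f$ leaf-conjugate to $\W^c_0$ via a homeomorphism $h$ close to the identity, with $f\circ h$ plaque-equivalent to $h\circ f_0$ inside $\W^c_f$. Since $f_0(x)\in\W^c_0(x)$, transferring through $h$ gives $f(h(x))\in\W^c_f(h(x))$ for every $x$, and the intra-leaf displacement is bounded by $L$ (transported by $h$) plus the uniform size of the HPS plaques, producing a global bound $L'$. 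By Proposition~\ref{propcenterfixingL}, $f$ is a discretized Anosov flow.

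For closedness, let $f_n\to f$ in $C^1$ with each $f_n$ a discretized Anosov flow carrying center foliation $\W^c_n$, vector field $X_n$ generating $\varphi^n_t$, and displacement bound $L_n$. Assuming first that $L_n$ is uniformly bounded by some $L$, I would normalize the $X_n$ to unit length in $E^c_{f_n}$. Since the bundle $E^c$ depends continuously on $f\in\PH_{c=1}(M)$, after fixing orientation the $X_n$ converge uniformly to a non-vanishing continuous $X$ tangent to $E^c_f$; the local plaques of $\W^c_n$, being $C^1$-curves tangent to the converging bundles, converge locally uniformly to one-dimensional curves tangent to $E^c_f$ that assemble into a continuous foliation $\W^c$ for $f$. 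On each such leaf the non-vanishing $X$ integrates uniquely, yielding a continuous flow $\varphi_t$. The $\tau_n$ are uniformly bounded and the $f_n$ converge, so a subsequence of $\tau_n$ converges uniformly to a continuous $\tau$, and passing to the limit in $f_n(x)=\varphi^n_{\tau_n(x)}(x)$ produces $f(x)=\varphi_{\tau(x)}(x)$.

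The main obstacle is the uniform bound $L_n\le L$: without it, the arcs of $\W^c_n$ that $f_n$ traverses could lengthen indefinitely and no limit foliation would exist. I expect this bound to follow by exploiting the expansivity and uniform local product structure shared by topological Anosov flows that are close to one another, together with the automatic uniform bound on $d(x,f_n(x))$ coming from $f_n\to f$: if $d_{\W^c_n}(x,f_n(x))$ were unbounded along a subsequence, the long center arc joining $x$ to $f_n(x)$ would have to return very close to $x$ in the ambient metric, violating the uniform transversality of the bundles $E^{s,u}_{f_n}$ inherited from partial hyperbolicity that is uniform in $n$. Establishing this uniform control is the technical heart of the closedness half of the theorem.
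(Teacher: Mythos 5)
Your openness argument is essentially the right shape and close to the paper's: once discretized Anosov flows are known to be plaque expansive, Hirsch--Pugh--Shub gives a leaf-conjugate center foliation for nearby maps, and Proposition~\ref{propcenterfixingL} converts the bounded displacement back into the discretized-Anosov-flow property. That half is fine.

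The closedness half has a genuine gap, and the sketch you give for closing it does not work. You reduce the problem to a uniform bound $L_n\le L$ on the center displacement, and then propose to extract that bound by arguing that an unbounded center arc from $x$ to $f_n(x)$ ``would have to return very close to $x$ in the ambient metric, violating the uniform transversality of the bundles $E^{s,u}_{f_n}$.'' But long center arcs returning arbitrarily close to their starting point are completely consistent with uniform transversality --- compact center leaves and recurrent orbits of the center flow do exactly this, and they occur densely in every topological Anosov flow. Transversality of $E^s,E^c,E^u$ at a single scale says nothing against a long center segment re-entering a small ball. So no contradiction arises, and the uniform bound on $L_n$ is left unestablished. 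On top of that, even if $L_n$ were bounded, the passage from ``plaques of $\W^c_n$ converge as curves tangent to converging bundles'' to ``they assemble into a foliation $\W^c$ for $f$'' is exactly the kind of statement that fails without more structure: limits of tangent curves can merge or branch (this is why $E^c$ need not be uniquely integrable in general), so one does not automatically get a foliation, and one certainly does not get uniqueness of the limit without invoking expansivity quantitatively.

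The paper avoids the direct-limit strategy altogether, and this is the conceptual point worth absorbing. It proves Proposition~\ref{propopenclosed}: around \emph{any} $f_0\in\PH_{c=1}(M)$, there is a $C^1$-neighborhood $\U$ such that if some $f\in\U$ is a discretized Anosov flow then every $g\in\U$ is too. Open-and-closedness is then formal. The two ingredients are both uniform in a way the student's approach is not. First, Theorem~\ref{thmgraphtransf} is a strengthening of HPS in which the stability neighborhood is built around the reference map $f_0$, with a scale $\delta(f_0)$ that does not depend on which plaque-expansive system one later plugs in; the classical HPS theorem only gives a neighborhood around each plaque-expansive system separately, with no control of its size, so one cannot conclude that $f_0=\lim f_n$ falls in the stability neighborhood of any $f_n$. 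Second, Proposition~\ref{propplaqueexp} shows that a discretized Anosov flow lying in $\U_\delta(f_0)$ is automatically $\delta$-plaque expansive, with $\delta$ determined by $f_0$ alone; the proof uses the topology of the $cs$/$cu$ leaves of the induced topological Anosov flow (planes and cylinders, Proposition~\ref{propsTopAF}), not a transversality estimate, and it establishes a \emph{lower} bound $\tau>10\delta$ rather than any upper bound on the displacement. Combining the two, for $n$ large $f_n\in\U_\delta(f_0)$ is $\delta$-plaque expansive, so the HPS construction carried out \emph{from $f_n$ toward $f_0$} yields a center foliation for $f_0$ and a leaf-conjugacy, and Proposition~\ref{propcenterfixingL} finishes. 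The foliations $\W^c_n$ are never compared to one another, and no a priori bound on $L_n$ is needed.
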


In other words, Theorem \ref{thmA} shows that the class of discretized Anosov flows comprises whole connected components of $\PH_{c=1}(M)$. Many natural questions arise. One may ask, for example, which connected components contain the time 1 map of an Anosov flow, which are the properties that are preserved in whole connected components and which ones are not.

In Example \ref{exnonuniqint} we give an example of a discretized Anosov flow that does not belong to the same connected component as the time 1 map of an Anosov flow (see Corollary \ref{corotherccs} below). This is done via a simple modification of the dynamically coherent examples given in \cite{HHU}. However, Example \ref{exnonuniqint} is still very particular as it is not transitive and the center flow $\varphi_t$ is orbit equivalent to a suspension flow.

Recall that a pair of partially hyperbolic systems and invariant center foliations $(f,\W^c_f)$ and $(g,\W^c_g)$ are said to be \emph{leaf-conjugate} if there exists a homeomorphism $h$ taking leaves of $\W^c_f$ to leaves of $\W^c_g$ such that $g\circ h(L)=h\circ f(L)$ for every leaf $L\in \W^c_f$. 

From the proof of Theorem \ref{thmA} we also obtain:

\begin{cor}\label{corleafconjintro}
Two discretized Anosov flows in the same $C^1$ connected component of $\PH_{c=1}(M)$ are leaf-conjugate.
\end{cor}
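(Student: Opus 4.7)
The plan is to combine Theorem \ref{thmA} with a \emph{local} leaf-conjugacy statement that should come out as a byproduct of its proof, and then concatenate finitely many local leaf-conjugacies along a path joining $f_0$ and $f_1$.

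First I would fix a continuous path $t\mapsto f_t$ in $\PH_{c=1}(M)$ joining the two discretized Anosov flows. By the closedness in Theorem \ref{thmA}, the whole path lies inside the class of discretized Anosov flows, so each $f_t$ carries an invariant one-dimensional center foliation $\W^c_{f_t}$. The next step, which is the key input, is the following local statement: for every $t_0\in[0,1]$ there exists a $C^1$-neighborhood $\U_{t_0}$ of $f_{t_0}$ such that every $g\in\U_{t_0}$ is leaf-conjugate to $f_{t_0}$. This is exactly what Hirsch--Pugh--Shub normally hyperbolic stability gives once one knows $f_{t_0}$ is dynamically coherent and plaque expansive, and both properties are asserted in the abstract/introduction for all discretized Anosov flows. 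I expect this local leaf-conjugacy to be established within the proof of Theorem \ref{thmA} itself, since $C^1$-openness of the class of discretized Anosov flows is naturally proved by exhibiting a center foliation for the perturbation together with a leaf-conjugacy to the center foliation of the unperturbed system.

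Given this local statement, the conclusion follows by a standard compactness argument. Covering the compact set $\{f_t:t\in[0,1]\}$ by finitely many neighborhoods $\U_{t_0},\ldots,\U_{t_n}$ as above, and choosing a partition $0=s_0<s_1<\cdots<s_n=1$ with $f_{s_i},f_{s_{i+1}}\in \U_{t_i}$, one obtains homeomorphisms $h_i:M\to M$ sending leaves of $\W^c_{f_{s_{i+1}}}$ to leaves of $\W^c_{f_{s_i}}$ and intertwining the actions of the two dynamics at the level of center leaves. Since leaf-conjugacy is preserved under composition of such homeomorphisms, the concatenation $h=h_0\circ h_1\circ\cdots\circ h_{n-1}$ is a leaf-conjugacy between $(f_0,\W^c_{f_0})$ and $(f_1,\W^c_{f_1})$.

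The only delicate point is the local leaf-conjugacy input, and the difficulty there is not to invoke HPS---which applies directly---but to ensure that the center foliation produced by HPS for a perturbation $g$ of $f_{t_0}$ is the \emph{same} one-dimensional foliation that identifies $g$ as a discretized Anosov flow. Since Theorem \ref{thmA} already asserts that $g$ is a discretized Anosov flow and the proof presumably exhibits its center foliation precisely as the HPS perturbation of $\W^c_{f_{t_0}}$, this matching should be automatic; if not, uniqueness of the invariant center foliation among plaque-expansive discretely invariant ones (another consequence of HPS) closes the gap.
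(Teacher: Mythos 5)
Your argument is correct and reaches the same conclusion, but it travels by a noticeably different route than the paper. You connect $f_0$ and $f_1$ by a $C^1$-path, use closedness (Theorem \ref{thmA}) to see the whole path consists of discretized Anosov flows, cover the path by finitely many HPS-stability neighborhoods, and concatenate local leaf-conjugacies using the transitivity of the leaf-conjugacy relation. The paper's proof (Corollary \ref{corleafconjDAFs}, derived from Proposition \ref{propopenclosed}) is an abstract open--closed argument: it shows that the set $\mathcal{V}$ of systems leaf-conjugate to $(f,\W^c_f)$ is both open and closed in $\PH_{c=1}(M)$ (this is exactly the ``Moreover'' clause of Proposition \ref{propopenclosed}), and then observes that a connected set containing $f$ must lie entirely inside $\mathcal{V}$. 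The payoff of the paper's version is that it never needs path-connectedness; your version implicitly relies on the fact that connected components of the open set $\PH_{c=1}(M)\subset\Diff^1(M)$ are path-connected (true, because $\Diff^1(M)$ is a Banach manifold and hence locally path-connected, but it is an extra observation worth making explicit), and then on a Lebesgue-number-type argument to extract the finite partition $s_0<\cdots<s_n$. Both are standard; the paper's is just a bit leaner.

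On the point you flag as ``the only delicate point'': you are right that one must check the HPS-produced center foliation for $g$ coincides with the flow center foliation of $g$ as a discretized Anosov flow. However, your proposed fallback --- ``uniqueness of the invariant center foliation among plaque-expansive discretely invariant ones (another consequence of HPS)'' --- is not something HPS gives, and the paper in fact explicitly leaves open whether a discretized Anosov flow can have more than one $f$-invariant center foliation. What the paper actually uses to make the identification is either the direct construction (the proof of Proposition \ref{propopenclosed} builds $\W^c_g=h(\W^c_f)$ and runs the argument of Proposition \ref{propcenterfixingL} on it, so $\W^c_g$ \emph{is} the flow center foliation by construction), or equivalently the characterization of the flow center foliation as the unique $f$-invariant center foliation on which $f$ acts quasi-isometrically, which transfers across the leaf-conjugacy because $\frac12<\|Dh|_{E^c}\|<2$. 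So the gap closes, but by a slightly different mechanism than you suggest.
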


A key tool for the proof of Theorem \ref{thmA} is a strengthening of the stability of center foliations from \cite{HPS}:
\begin{thm}\label{thmunifHPSintro}%[Stability of normally hyperbolic foliations revisited] 

Suppose $f_0\in \PH_{c=1}(M)$. There exists $\delta>0$ and a $C^1$ neighborhood $\U$ of $f_0$ such that, 
if some $f\in \U$ admits a center foliation $\W^c$ so that $(f,\W^c)$ is $\delta$-plaque expansive, then every $g\in\U$ admits a center foliation $\W^c_g$ such that $(f,\W^c)$ and $(g,\W^c_g)$ are leaf-conjugate. 

%Moreover, the leaf conjugacy $h:M\to M$ can be taken so that it is $K$-biLipschitz from leaf to leaf, for a certain $K>0$ global.
\end{thm}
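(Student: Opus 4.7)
\medskip

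\noindent\textbf{Plan.} The strategy is to revisit the Hirsch--Pugh--Shub leaf conjugacy theorem from \cite{HPS} and verify that all of its quantitative ingredients can be fixed uniformly in a small enough $C^1$ neighborhood $\U$ of $f_0$, rather than depending on the particular $f$ appearing in the hypothesis. Once uniformity is secured, a single application of the HPS graph transform to any $g\in\U$, initialized at the plaque family of $\W^c$, produces both the foliation $\W^c_g$ and the leaf conjugacy.

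\medskip

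\noindent\textbf{Step 1: uniform partial hyperbolicity in $\U$.} By the cone criterion, shrinking $\U$ one can choose uniform partial hyperbolicity constants, uniform cone fields, and a uniform size of local exponential charts that are valid for every $g\in\U$. The bundles $E^s_g, E^c_g, E^u_g$ vary uniformly continuously with $g\in\U$ in the $C^0$ topology, and the contraction/expansion rates along $E^s_g$ and $E^u_g$ are bounded away from the rates along $E^c_g$ independently of $g$.

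\medskip

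\noindent\textbf{Step 2: uniform graph transform.} Given $f\in\U$ carrying a center foliation $\W^c$, the local leaves of $\W^c$ of some fixed radius form a $C^1$ plaque family for $f$. For any $g\in\U$, the HPS graph transform acts on the space of continuous plaque families near this one and, by the normal hyperbolicity estimates from Step~1, is a contraction. Its unique fixed point is a $g$-invariant plaque family $\W^c_g$, together with a continuous assignment $x\mapsto h_{f,g}(x)$ (close to $\id$) that sends plaques of $\W^c$ to plaques of $\W^c_g$ and satisfies $g\circ h_{f,g}(P_x)=h_{f,g}(P_{f(x)})$ for every $x\in M$. The key point is that the contraction rate of the graph transform and the uniform bound on $d(h_{f,g},\id)$ depend only on the data of Step~1 and on the $C^1$ distance between $f$ and $g$; shrinking $\U$ further one can make $d(h_{f,g},\id)$ smaller than any prescribed quantity, uniformly in $(f,g)\in\U\times\U$.

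\medskip

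\noindent\textbf{Step 3: from plaque family to foliation, using $\delta$-plaque expansiveness.} Choose $\delta>0$ so that $d(h_{f,g},\id)<\delta/3$ for all $(f,g)\in\U\times\U$ from Step~2. Suppose two plaques $\W^c_g(y)$ and $\W^c_g(z)$ intersect at a point but disagree on their overlap. Transporting the corresponding $g$-pseudo-orbits back via $h_{f,g}^{-1}$ yields two distinct $\W^c$-respecting $\delta$-pseudo-orbits of $f$ starting from the same plaque, contradicting the $\delta$-plaque expansiveness of $(f,\W^c)$. Hence $\W^c_g$ is an honest foliation, and $h_{f,g}$ is then a leaf conjugacy between $(f,\W^c)$ and $(g,\W^c_g)$ by construction.

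\medskip

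\noindent\textbf{Main obstacle.} The principal technical point is verifying the uniformity asserted in Step~2. In the original statement of \cite{HPS} the neighborhood of admissible $g$ and the contraction constant depend on $f$, while here they must depend only on $f_0$. A careful inspection of the HPS proof shows that the relevant constants (cone openings, uniform transversality of $E^s_g\oplus E^u_g$ with $E^c_g$, Lipschitz constant of the graph transform, domain of definition of the local plaque coordinates) are all continuous functions of the underlying diffeomorphism in the $C^1$ topology, and so can be bounded uniformly once $\U$ is small enough. With this uniformity in hand, $\delta$ depends only on $f_0$ through the graph transform displacement, as required.
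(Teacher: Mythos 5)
Your plan — uniform HPS data around $f_0$, a graph transform with $f_0$-controlled constants, and then $\delta$-plaque expansivity to upgrade the resulting semi-conjugacy to a leaf conjugacy — is exactly the route the paper takes, via Lemma \ref{rmkUdelta}, Theorem \ref{thmgraphtransf} and Lemma \ref{lemmadeltaplaqueexp=>leafconj}. There is, however, a concrete circularity in your Step~3: you transport two competing $g$-pseudo-orbits ``back via $h_{f,g}^{-1}$'' to produce $f$-pseudo-orbits violating $\delta$-plaque expansivity. At that stage $h_{f,g}$ is only known to be continuous, surjective and $C^0$-close to the identity; its invertibility is precisely what plaque expansivity is supposed to deliver, so $h_{f,g}^{-1}$ is not available. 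The paper's Lemma \ref{lemmadeltaplaqueexp=>leafconj} avoids this by constructing, alongside $h$, a homeomorphism $\rho:M\to M$ that fixes each $\W^c$-leaf with the semi-conjugacy identity $h\circ\rho\circ f = g\circ h$. Starting from $x_0,y_0$ with $h(x_0)=h(y_0)$ one builds the $f$-pseudo-orbits forward, $x_{n+1}=\rho(f(x_n))$ and $y_{n+1}=\rho(f(y_n))$, with no inverse of $h$ needed; $\delta$-plaque expansivity then forces $y_0\in\W^c_{3\delta}(x_0)$ and hence $x_0=y_0$ because $h$ restricts to a $C^1$ diffeomorphism along $\W^c$-plaques. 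You should replace the transport-by-$h^{-1}$ step by such a forward construction, which requires packaging the graph transform so that it also outputs $\rho$.

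A second, softer concern is that your Step~2 and ``Main obstacle'' reduce the crux of the theorem to an assertion that the HPS constants vary continuously in the $C^1$ topology and can therefore be made uniform. That assertion is correct, but it is precisely the content that must be proved rather than assumed: in \cite{HPS} the stability neighborhood is built around the $f$ carrying the foliation, not around a reference $f_0$. The paper spends all of Section~\ref{appendixA} establishing this uniformity, via an adapted metric and cone fields that are nearly euclidean at a fixed scale (Lemma \ref{rmkUdelta}), unfolded tubular covers $U(L)$ of individual center leaves, separate $cu$- and $cs$-strip graph transforms, a regulating (averaging) step to make $h|_L$ a genuine $C^1$ diffeomorphism with $\tfrac{1}{2}<\|Dh|_{E^c}\|<2$, and a nontrivial proof that $h$ is globally continuous. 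Your outline is the right skeleton, but as written Step~2 asserts the uniformity the theorem is about rather than deriving it.
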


The definition of \emph{$\delta$-plaque expansitivy} is given in Definition \ref{defdeltape}. It is merely a quantitative version of the usual \emph{plaque expansivity} property, though sensible to the metric one fixes in $M$.

The classical result from Hirsch-Pugh-Shub  \cite{HPS} constructs a stability neighborhood around every plaque expansive system but gives no a priori control over the size of these neighborhoods. For $f_n$ a sequence of plaque expansive systems converging to some $f_0$ there is no a priori reason for $f_0$ to lie in the stability neighborhood of some $f_n$. From Theorem \ref{thmunifHPSintro} one can ensure this and induce a center foliation on $f_0$ provided the sequence $(f_n,\W^c_{f_n})$ is $\delta$-plaque expansive.

A more complete version of Theorem \ref{thmunifHPSintro} is given in Theorem \ref{thmgraphtransf}. Its proof is the main goal of Section \ref{appendixA} and comprises a considerable portion of this text. We point out that Section \ref{appendixA} may be of independent interest.

It is worth noting that, in a different but
 related context, a statement similar to Theorem \ref{thmunifHPSintro}
has been observed in \cite{BFP} (see \cite[Theorem 4.1 and Theorem 4.2]{BFP}). Moreover, in dimension 3 the statement of Theorem \ref{thmA} essentially follows from \cite{BFP} if one combines \cite[Proposition 5.25, Proposition 5.26 and Theorem C]{BFP} once the definition of discretized Anosov flow given in \cite{BFP} is shown to be equivalent with Definition \ref{defDAFintro} (see Corollary \ref{corequivDAFS}).

\subsection{Dynamical coherence and uniqueness of invariant foliations}

Recall that a partially hyperbolic diffeomorphism $f:M\to M$ is said to be \emph{dynamically coherent} if the bundles $E^s\oplus E^c$ and $E^c\oplus E^u$ integrate to $f$-invariant foliations $\W^{cs}$ and $\W^{cu}$, respectively. If this is the case, then $\W^c=\W^{cs}\cap \W^{cu}$ is an $f$-invariant foliation tangent to $E^c$.

Let us call a leaf of a foliation of dimension $d>0$  a \emph{plane} if it is homeomorphic to $\R^d$, and a \emph{cylinder} if it is homeomorphic to a fiber bundle over the circle whose fibers are homeomorphic to $\R^{d-1}$.

\begin{teoA}\label{thmB} 
Let $f$ be a discretized Anosov flow. Let $\varphi_t$ be the flow appearing in the definition of $f$ and $\W^c$ be the center foliation whose leaves are the orbits of $\varphi_t$. Then:

\begin{enumerate}
\item\label{thm2i}(Topological Anosov flow). The flow $\varphi_t$ is a topological Anosov flow (see Definition \ref{deftopAnosov} below).
\item\label{thm2ii}(Dynamical coherence). The map $f$ is dynamically coherent. Moreover, it admits a center-stable foliation $\W^{cs}$ and a center-unstable foliation $\W^{cu}$ such that $\W^c=\W^{cs}\cap \W^{cu}$.
\item\label{thm2iii}(Uniqueness of foliations).  The foliations $\W^{cs}$ and $\W^{cu}$ are the only $f$-invariant foliations tangent to $E^s\oplus E^c$ and $E^c\oplus E^u$, respectively.
\item\label{thm2iv}(Completeness of leaves). The leaves of $\W^{cs}$ and $\W^{cu}$ satisfy that $\W^{cs}(x)=\bigcup_{y\in \W^c(x)}\W^s(y)$ and $\W^{cu}(x)=\bigcup_{y\in \W^c(x)}\W^u(y)$ for every $x\in M$.
\item\label{thm2v}(Topology of leaves)  The leaves of $\W^{cs}$ and $\W^{cu}$ are homeomorphic to either planes or cylinders. The former contain no compact center leaves while the latter contain exactly one.
\end{enumerate}
\end{teoA}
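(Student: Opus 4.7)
The plan is to prove the five items in order, each one feeding the next. By Proposition \ref{prop1DAFs} the orbits of $\varphi_t$ already form the center foliation $\W^c$, so the real task is to lift the hyperbolic structure of $f$ to a flow-level statement, and then to saturate the strong stable and strong unstable foliations $\W^s,\W^u$ of $f$ along $\W^c$.

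For part (\ref{thm2i}), I would show that $\W^s$ plays the role of the strong stable of the flow $\varphi_t$. Given $y\in\W^s(x)$, the distance $d(f^n(x),f^n(y))$ decays exponentially; writing $f^n(z)=\varphi_{s_n(z)}(z)$ with $s_n(z)=\sum_{k=0}^{n-1}\tau(f^k(z))$, uniform continuity of $\tau$ forces $|s_n(x)-s_n(y)|$ to stay bounded, so the $\varphi_t$-orbit of $y$ shadows that of $x$ for all large positive times up to a bounded reparametrization. Since $\varphi_t$ has no singularities, the flow speed is bounded below on compact pieces and iterating $f$ really drives $s_n(z)\to+\infty$, which is exactly the definition of a topological stable set. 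The symmetric argument for $\W^u$, together with the local product structure inherited from partial hyperbolicity, yields Definition \ref{deftopAnosov}.

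For (\ref{thm2ii}) and (\ref{thm2iv}) I define
\[
\W^{cs}(x)\;:=\;\bigcup_{y\in\W^c(x)}\W^s(y), \qquad \W^{cu}(x)\;:=\;\bigcup_{y\in\W^c(x)}\W^u(y),
\]
and check that these are $f$-invariant topological foliations tangent to $E^s\oplus E^c$ and $E^c\oplus E^u$ respectively. Local charts come from the $\varphi_t$-holonomy, which by (\ref{thm2i}) continuously maps $\W^s(x)$ into $\W^s(\varphi_t(x))$; this exhibits the saturation as a continuous image of $(-\varepsilon,\varepsilon)\times\W^s_{\mathrm{loc}}(x)$, hence a topological submanifold. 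Completeness is built into the definition, and the identity $\W^c=\W^{cs}\cap\W^{cu}$ follows from transversality of $\W^s$ and $\W^u$ along the center direction.

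Part (\ref{thm2iii}) is then a uniqueness-by-dimension argument: any $f$-invariant foliation $\mathcal{F}$ tangent to $E^s\oplus E^c$ must contain $\W^s(x)$ (by uniqueness of integral manifolds of the strong bundle $E^s$) and must contain the center leaf $\W^c(x)$ (as the unique integral curve of the continuous line field $E^c$ inside the $\mathcal{F}$-leaf, pushed around by $f$); saturating yields $\mathcal{F}(x)\supseteq\W^{cs}(x)$, and equality is forced by dimension. For (\ref{thm2v}) I invoke the classical topology of weak stable leaves of a topological Anosov flow, which by (\ref{thm2i}) applies to $\W^{cs}$: such a leaf is a plane if it contains no periodic orbit of $\varphi_t$, and a cylinder otherwise with the unique periodic orbit as core circle; since a leaf of $\W^c$ is a circle exactly when it is a periodic orbit of $\varphi_t$, the stated dichotomy drops out. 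The principal obstacle is part (\ref{thm2i}): the drift $s_n(y)-s_n(x)$ must be controlled uniformly, and the degenerate behavior at zeros of $\tau$ (fixed points of $f$) must be ruled out without losing the flow interpretation of the iterates; once (\ref{thm2i}) is established, the remaining parts are essentially a saturation-and-dimension exercise.
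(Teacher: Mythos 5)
Your proposal inverts the paper's logical order: the paper first proves dynamical coherence (items (\ref{thm2ii}),(\ref{thm2iv})) via the quasi-isometric center action, then uniqueness (\ref{thm2iii}), and only then the topological Anosov property (\ref{thm2i}) and the topology of leaves (\ref{thm2v}), each step leaning on the previous one. You instead try to front-load (\ref{thm2i}), but the argument you sketch for it has a gap that the paper's ordering is precisely designed to avoid.

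The specific problem with your step (\ref{thm2i}) is the drift bound. You write $s_n(z)=\sum_{k=0}^{n-1}\tau(f^k z)$ and claim that ``uniform continuity of $\tau$ forces $|s_n(x)-s_n(y)|$ to stay bounded.'' This is false in general: uniform continuity only gives a modulus $\omega$ with $|\tau(f^k x)-\tau(f^k y)|\le\omega(C\lambda^k)$, and the sum $\sum_k\omega(\lambda^k)$ need not converge (try $\omega(t)=1/\log(1/t)$). Since $\tau$ is only assumed continuous, one cannot control the accumulated reparametrization error this way. The paper circumvents the drift estimate entirely: once $\W^{cs}$ exists, it defines $\gamma_y(t)$ to be the point of $\W^c(y)$ lying in $\W^s(\varphi_t(x))$ (center holonomy along the orbit of $x$), observes the identity $f(\gamma_y(t))=\gamma_y(\tau(\varphi_t(x)))$, and lets $f$-contraction of stable leaves do the decay, with no need to bound $s_n(x)-s_n(y)$. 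Even granting your bounded drift, you would still owe an interpolation between the discrete times $s_n(x)$ to produce the continuous reparametrization $h$ required by Definition \ref{deftopAnosov}.

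Two further issues. For (\ref{thm2ii})/(\ref{thm2iv}), asserting that $\W^{cs}(x)=\bigcup_{y\in\W^c(x)}\W^s(y)$ is a foliation is not automatic: one must show these sets partition $M$, in particular that if $z\in\W^s(\W^c(x))$ then the entire leaf $\W^c(z)$ sits inside $\W^s(\W^c(x))$. The paper proves this via Lemma \ref{lemmaetainWsWc}, which crucially uses that $f$-iterates of center segments stay uniformly bounded (the quasi-isometric center action of Remark \ref{rmkDAFQI}); your sketch omits this. For (\ref{thm2iii}), your uniqueness argument rests on ``$\W^c(x)$ is the unique integral curve of $E^c$ inside the $\mathcal{F}$-leaf,'' which presupposes unique integrability of $E^c$; Example \ref{exnonuniqint} in the paper shows this can fail for a discretized Anosov flow, so the claim as stated is not available. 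The paper instead uses Lemma \ref{lemmaetac} (curves not contained in a $cs$-leaf have iterate lengths growing to infinity) together with quasi-isometry to conclude $\W^c(y)\subset\W^{cs}_1(x)$, which is a quantitatively different and correct route.
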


We point out that in dimension 3 the statement of Theorem \ref{thmB} was mostly known. Indeed, once (\ref{thm2ii}) is proven then (\ref{thm2i}), (\ref{thm2iv}) and (\ref{thm2v}) follow from \cite[Theorem 2]{BW}. In addition, once (\ref{thm2i}) is proven then (\ref{thm2ii}) has already appeared in \cite[Proposition G.2]{BFP} and (\ref{thm2iii}) follows from \cite{BFFP} (see \cite[Lemma 1.1]{BaGo}). Note also that (\ref{thm2iv}) has already appeared in \cite{GM}. It is included here for the sake of completeness.

\subsection{Unique integrability of the center bundle}

Given $f$ in $\PH_{c=1}(M)$ by Peano's existence theorem there exists at least one local $C^1$ curve tangent to $E^c$ through any point $x\in M$. We say that $E^c$ is \emph{uniquely integrable} if such a curve is unique (modulo reparametrizations) for every $x\in M$.

We show that unique integrablity is a property which only depends on the $C^1$ connected component of the space of discretized Anosov flows: 

\begin{propB}\label{propC}
Suppose $f$ is a discretized Anosov flow with $E^c$  uniquely integrable. Then every system in the same $C^1$ connected component of $f$ in $\PH_{c=1}(M)$ has a uniquely integrable center bundle.
\end{propB}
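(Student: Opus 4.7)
The plan is to show that, inside the $C^1$-connected component $\mathcal{C}$ of $f$ in $\PH_{c=1}(M)$ --- which by Theorem \ref{thmA} is entirely contained in the set of discretized Anosov flows --- the subset $\mathcal{C}_{UI} := \{g \in \mathcal{C} : E^c_g \text{ is uniquely integrable}\}$ is both $C^1$-open and $C^1$-closed. Since $f \in \mathcal{C}_{UI}$ by hypothesis and $\mathcal{C}$ is connected, this yields $\mathcal{C}_{UI} = \mathcal{C}$, which is the desired conclusion. Throughout, every system under consideration already carries a canonical center foliation $\W^c$ tangent to $E^c$ by Theorem \ref{thmB}.

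For $C^1$-closedness, I would take $g_n \to g_\infty$ in $\mathcal{C}$ with each $g_n \in \mathcal{C}_{UI}$, pick an arbitrary $C^1$ arc $\gamma$ tangent to $E^c_{g_\infty}$ through some $x \in M$, and use the $C^0$-continuity of $E^c$ in the diffeomorphism together with Peano existence to build $C^1$ arcs $\gamma_n$ through $x$, tangent to $E^c_{g_n}$, on a uniform interval and $C^0$-converging to $\gamma$. By hypothesis each $\gamma_n$ lies in $\W^c_{g_n}(x)$. Theorem \ref{thmunifHPSintro} supplies leaf-conjugacies between $(g_\infty, \W^c_{g_\infty})$ and $(g_n, \W^c_{g_n})$ converging to the identity, so the leaves $\W^c_{g_n}(x)$ converge to $\W^c_{g_\infty}(x)$ on compact pieces; hence $\gamma \subset \W^c_{g_\infty}(x)$, and $g_\infty \in \mathcal{C}_{UI}$.

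For $C^1$-openness, fix $f_0 \in \mathcal{C}_{UI}$ and suppose for contradiction that there is a sequence $g_n \to f_0$ with each $E^c_{g_n}$ not uniquely integrable. For each $n$ I would pick $x_n \in M$ together with two $C^1$ arcs $\alpha_n, \beta_n$ through $x_n$ tangent to $E^c_{g_n}$, arclength-parametrized on a uniform interval, which do not coincide on any neighborhood of $x_n$. After extracting a subsequence via Arzel\`a-Ascoli, $(x_n, \alpha_n, \beta_n) \to (x_\infty, \alpha_\infty, \beta_\infty)$ with $\alpha_\infty, \beta_\infty$ two $C^1$ arcs tangent to $E^c_{f_0}$ through $x_\infty$; by unique integrability at $f_0$ these coincide near $x_\infty$. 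The contradiction should then come from the uniform $\delta$-plaque-expansivity control of Section \ref{appendixA} and Theorem \ref{thmunifHPSintro}: for $n$ large, both $\alpha_n$ and $\beta_n$ must sit inside a single $\delta$-plaque of $\W^c_{g_n}(x_n)$, which forces $\alpha_n = \beta_n$ locally and contradicts their separation.

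The main obstacle is the openness step: failure of unique integrability reflects a non-Lipschitz phenomenon of the distribution $E^c$ and is not a priori a $C^1$-open property of the diffeomorphism, so a direct ODE-level stability argument is unavailable. Overcoming this requires substituting ODE regularity by the dynamical stability of the center foliation furnished by the machinery of Section \ref{appendixA}, exploiting the fact that branching arcs for $E^c_{g_n}$ are forced to be close to a single uniquely-integrable arc for $E^c_{f_0}$ and must therefore be captured by the $\delta$-plaque expansivity of the nearby center foliation.
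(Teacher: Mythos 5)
Your overall strategy — show that the subset of uniquely integrable systems in the $C^1$ connected component of $f$ is open and closed, then use connectedness — matches the paper's (which packages both halves into the single local constancy statement of Proposition \ref{propstableuniqint}). However, both your openness and closedness steps have the same genuine gap: you need to relate a $C^1$ arc $\gamma$ tangent to the center bundle of one diffeomorphism to arcs tangent to the center bundle of a nearby one, and Peano existence plus Arzel\`a--Ascoli does not do this. In closedness, there is no reason the unique integral curves $\gamma_n$ of $E^c_{g_n}$ through $x$ converge to the prescribed branch $\gamma$ of $E^c_{g_\infty}$; in the model $X_\infty(y)=\sqrt{|y|}$, $X_n(y)=\sqrt{|y|+1/n}$ the unique solutions through $0$ converge to the escaping branch, not to the stationary one. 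In openness, the claim that $\alpha_n,\beta_n$ sit inside a single plaque of $\W^c_{g_n}$ presupposes $\alpha_n,\beta_n\subset\W^c_{g_n}$, which is exactly what fails when $E^c_{g_n}$ is not uniquely integrable: the branching consists of a curve tangent to $E^c_{g_n}$ that peels off the foliation, so the $\delta$-plaque-expansivity contradiction never gets off the ground.

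The ingredient your outline is missing is Theorem \ref{thmunifcontC1completecenterimm}, the uniform continuation of complete $C^1$ center immersions, obtained by running the graph transform of Section \ref{appendixA} over the unfolded neighborhood of an arbitrary complete $C^1$ curve tangent to $E^c$, not only over $\W^c$-leaves. It provides, for any $C^1$ curve $\gamma$ tangent to $E^c_g$, a shadowing sequence of complete $C^1$ immersions tangent to $E^c_f$ together with a uniqueness statement. When $E^c_f$ is uniquely integrable the shadowing immersions are forced to parametrize $\W^c_f$-leaves, and the uniqueness part identifies $\gamma$ with a reparametrization of $h\circ\eta_0$, hence $\gamma\subset h(L)$ for some $L\in\W^c_f$ (Lemma \ref{lemmaintuniq}, Corollary \ref{corintuniq}). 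It is this dynamical continuation, not an ODE-level compactness argument, that replaces the unavailable regularity of $E^c$.
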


In particular, every discretized Anosov flow in the connected component of the time 1 map of an Anosov flow has a uniquely integrable center bundle (see Corollary \ref{cortime1uniqint}). This extends \cite[Remark 10.9]{BFP} to any dimension.

In the already mentioned Example \ref{exnonuniqint} we give an example of a discretized Anosov flow $f(x)=\varphi_{\tau(x)}(x)$ such that $E^c$ is not uniquely integrable. The center flow $\varphi_t$ is orbit equivalent to the suspension of a linear Anosov diffeomorphism $A:\mathbb{T}^2\to \mathbb{T}^2$ on the 2-torus, yet by Proposition \ref{propC} the map $f$ is not in the same connected component as the time 1 map of the suspension of $A$. We conclude the following.

\begin{cor}\label{corotherccs} There exists connected components of discretized Anosov flows that do not contain the time 1 map of an Anosov flow.
\end{cor}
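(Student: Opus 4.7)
The plan is to exhibit one discretized Anosov flow that provably lies in a different $C^1$ connected component of $\PH_{c=1}(M)$ than any time $1$ map of an Anosov flow, and to use unique integrability of $E^c$ as the distinguishing invariant.

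First, I would invoke Example \ref{exnonuniqint}, which furnishes a discretized Anosov flow $f(x)=\varphi_{\tau(x)}(x)$ on the relevant manifold (a circle bundle built from the suspension of a hyperbolic toral automorphism $A:\mathbb{T}^2\to\mathbb{T}^2$) whose center bundle $E^c$ is \emph{not} uniquely integrable. The nonunique integrability is the crucial feature and is where all the real work sits; in this outline I take it as established by that example.

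Next, I would observe that if $g$ is the time $1$ map of any $C^1$ Anosov flow $\psi_t$ on a closed manifold, then $E^c_g$ is spanned at every point by the generating vector field $X=\tfrac{\partial \psi_t}{\partial t}|_{t=0}$, which is $C^1$ (in fact as smooth as the flow). Hence $E^c_g$ is tangent to a Lipschitz (indeed $C^1$) line field, and ordinary ODE uniqueness gives a unique integral curve through every point up to reparametrization. Thus $E^c_g$ is uniquely integrable for every time $1$ map of an Anosov flow.

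Finally, I would apply Proposition \ref{propC}: unique integrability of $E^c$ is constant along $C^1$ connected components of the space of discretized Anosov flows. If the example from Example \ref{exnonuniqint} were in the same connected component as the time $1$ map of some Anosov flow, then its center bundle would have to be uniquely integrable, contradicting the defining property of the example. Therefore the connected component containing this $f$ contains no time $1$ map of an Anosov flow, proving Corollary \ref{corotherccs}.

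The only genuinely nontrivial ingredient is the construction and verification of Example \ref{exnonuniqint}, i.e.\ producing a discretized Anosov flow whose center bundle fails to be uniquely integrable while still lying in $\PH_{c=1}(M)$; this is the step one should expect to be the main obstacle, and it is handled by modifying the examples of \cite{HHU} as indicated in the discussion preceding the corollary. Once that example is in hand, the corollary follows immediately from Proposition \ref{propC} together with the smoothness of the Anosov vector field.
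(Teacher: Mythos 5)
Your plan is the same as the paper's: use unique integrability of $E^c$ as a $C^1$-connected-component invariant (Proposition~\ref{propC}), together with Example~\ref{exnonuniqint}. The structure is right.

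There is, however, a genuine gap in the middle step. You claim that the generating vector field $X=\tfrac{\partial \psi_t}{\partial t}|_{t=0}$ of an Anosov flow is ``$C^1$ (in fact as smooth as the flow)'' and thus, being a Lipschitz line field, has unique integral curves by Picard--Lindel\"of. This is wrong under the paper's definition: an Anosov flow here is only required to be a $C^1$ flow, meaning $(x,t)\mapsto\psi_t(x)$ is a $C^1$ map, and then $X$ is only continuous (you lose one derivative taking $\partial_t$). A merely $C^0$ line field need not satisfy any ODE uniqueness theorem, and indeed nonunique integrability of a continuous one-dimensional bundle is exactly what Example~\ref{exnonuniqint} exhibits, so you cannot wave this away. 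The paper does not assert unique integrability for $\psi_1$ directly; instead Corollary~\ref{cortime1uniqint} approximates the continuous vector field by a $C^\infty$ one $X'$ so that the time-$1$ maps $\psi_1$ and $X'_1$ are $C^1$-close, notes that $X'_1$ has uniquely integrable center bundle because $X'$ is smooth, and then invokes Proposition~\ref{propC} to propagate unique integrability to the whole connected component containing $\psi_1$. You should replace your ODE-uniqueness step with a citation of Corollary~\ref{cortime1uniqint} (or reproduce its approximation argument); with that substitution your proof becomes the paper's proof.
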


\subsection{Uniformly compact center and quasi-isometric center behavior}

Recall that the center foliation $\W^c$ of a partially hyperbolic system is said to be \emph{uniformly compact} if the leaves of $\W^c$ are compact and their volume is uniformly bounded on $M$. In particular, this includes \emph{partially hyperbolic skew-products} where the center foliation induces a fiber bundle structure on $M$.

An analogous statement to Theorem \ref{thmA} is satisfied for this class of systems provided the center dimension is one. As far as we are aware this was not stated elsewhere:

\begin{teoAprima}\label{thmAprima}
The systems in $\PH_{c=1}(M)$ admitting an invariant uniformly compact center foliation form a $C^1$ open and closed subset of $\PH_{c=1}(M)$.
\end{teoAprima}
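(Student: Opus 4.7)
The approach is to mirror the proof of Theorem \ref{thmA}, using Theorem \ref{thmunifHPSintro} as the main stability tool. The two tasks specific to the uniformly compact setting are to verify that having a uniformly compact center foliation implies $\delta$-plaque expansivity for a $\delta$ that can be controlled in a $C^1$ neighborhood, and to check that the property ``uniformly compact'' is preserved under leaf conjugacy and $C^1$-limits.

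For openness, suppose $f_0\in\PH_{c=1}(M)$ admits an invariant uniformly compact center foliation $\W^c_0$. Since $\dim E^c=1$ and leaves are compact with uniformly bounded volume, every leaf is a circle of length at most some $L$. I would first establish $\delta$-plaque expansivity of $(f_0,\W^c_0)$ for some $\delta=\delta(L)>0$: a pair of $\delta$-close pseudo-orbits respecting plaques that are \emph{not} eventually in a common plaque would have to accumulate transverse separation because of the uniform hyperbolicity of $E^s\oplus E^u$, while the bounded leaf length prevents them from escaping each other's plaques. Theorem \ref{thmunifHPSintro} then produces a $C^1$-neighborhood $\U$ of $f_0$ such that every $g\in \U$ admits an invariant center foliation $\W^c_g$ leaf-conjugate to $\W^c_0$. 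Since leaf conjugacy is a homeomorphism taking leaves to leaves, each leaf of $\W^c_g$ is a circle. A $C^0$-continuity argument on the center bundle yields a bound $L_g$ on the lengths of these circles that depends continuously on $\|g-f_0\|_{C^1}$, so $\W^c_g$ is uniformly compact, proving openness.

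For closedness, let $f_n\to f_0$ in $C^1$ with each $(f_n,\W^c_n)$ uniformly compact, and fix a $C^1$ neighborhood $\U$ of $f_0$ containing eventually all $f_n$. The crucial step is to obtain a uniform length bound $L'$ on the leaves of all the $\W^c_n$: after shrinking $\U$, cone estimates on $E^c$ give uniform Lipschitz control on center curves, and one can combine this with the continuity of the hyperbolic splitting to prevent the leaf lengths from blowing up along the sequence (alternatively, Bohnet-Bonatti-type control on the finite holonomy/degree of $f_n$ along its compact center leaves, valid throughout $\U$, gives the bound). With such a uniform $L'$, the argument from the openness step yields a common $\delta>0$ for which each $(f_n,\W^c_n)$ is $\delta$-plaque expansive. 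Theorem \ref{thmunifHPSintro} applied at $f_0$ then shows that, for large $n$, $f_0$ itself lies in a stability neighborhood provided by $f_n$, and hence admits an invariant center foliation $\W^c_{f_0}$ leaf-conjugate to $\W^c_n$. The same leaf conjugacy/continuity argument as in openness shows $\W^c_{f_0}$ is uniformly compact.

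The main obstacle I anticipate is the uniform length bound $L'$ in the closedness step: a priori, each $f_n$ has its own constant $L_n$, and one must prevent $L_n\to\infty$ without knowing yet that $f_0$ has a uniformly compact center. This should be treatable by noting that the center bundles $E^c_{f_n}$ converge in $C^0$ to a continuous bundle on $M$, that in dimension one integral curves of continuous line fields are well controlled, and that the integer-valued degree by which $f_n$ wraps each center leaf cannot blow up locally in $C^1$; these ingredients should force $L_n$ to remain bounded and thus unlock the application of Theorem \ref{thmunifHPSintro}.
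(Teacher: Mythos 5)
Your overall strategy is the same as the paper's: reduce both openness and closedness to a uniform stability statement of the form of Theorem \ref{thmunifHPSintro}, using $\delta$-plaque expansivity as the verification condition. Where the paper and your proposal diverge is exactly at the point you yourself flag as the ``main obstacle'', and that obstacle is not correctly resolved.

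In the paper's Proposition \ref{propopenclosedunifcompact}, the constant $\delta$ in the $\delta$-plaque expansivity verification is \emph{not} a function $\delta(L)$ of the leaf length bound; it is the constant coming from the neighborhood $\U_\delta(f_0)$ of Lemma \ref{rmkUdeltaDAF}, hence fixed once and for all by $f_0$. The key observation is that, once dynamical coherence is known (from Proposition \ref{propdyncoh} via Remark \ref{rmkunifcompactQI}, i.e.\ from \cite[Theorem 1]{BoBo}), $\delta$-plaque expansivity at this scale holds for \emph{any} invariant foliation by compact circles — no quantitative bound on leaf lengths is used. The argument is: given two $2\delta$-pseudo orbits inside $\W^{cu}$ with $y_0\notin\W^c_{8\delta}(x_0)$, one produces points $w_n=f^{-n}(x_n')\in\W^c(x_0)$ and $z_n=f^{-n}(y_n)\in\W^c(y_0)$ with $d(w_n,z_n)\to 0$ and $z_n\neq w_n$; after passing to a subsequence accumulating at some $w_\infty$, the $z_{n_k}$ lie in pairwise distinct center plaques of a fixed small foliation box around $w_\infty$, which is impossible because the single compact circle $\W^c(y_0)$ cannot meet that box in infinitely many disjoint plaques. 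This uses only that the individual leaf $\W^c(y_0)$ is a compact $1$-manifold, not that all leaf lengths are uniformly bounded; that is precisely what makes the closedness half go through without first producing your constant $L'$.

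Consequently, the step where you try to derive a uniform $L'$ along the sequence $f_n\to f_0$ is both unnecessary and, as written, incorrect. Cone field estimates on $E^c$ control the direction but not the length of integral curves — a curve tangent to a narrow cone field can be arbitrarily long — so they cannot ``prevent leaf lengths from blowing up.'' Likewise, there is no general $C^1$-local bound on the ``degree'' by which $f_n$ wraps a compact center leaf that does not already presuppose exactly the kind of uniform control you are trying to establish. If you follow the paper's route — a neighborhood $\U_\delta(f_0)$ fixed in advance, dynamical coherence from the quasi-isometric action, and $\delta$-plaque expansivity derived from compactness of individual leaves rather than a length bound — the closedness step becomes immediate, and your openness step simplifies accordingly (one proposition of the form ``if some $f\in\U$ has the property, every $g\in\U$ does'' gives both halves at once, as in Corollary \ref{corleafconjDAFs}).

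Finally, two small remarks. First, you never explicitly invoke dynamical coherence, but it is needed to run the plaque-expansivity argument inside $cs$- and $cu$-leaves (Claim \ref{claimplaqueexp2}). Second, once $h$ is a homeomorphism, the fact that $\W^c_g=h(\W^c_f)$ is again uniformly compact follows directly from $\frac12<\|Dh|_{E^c}\|<2$ in Theorem \ref{thmgraphtransf}~(\ref{(1)thmplaqueexp}); your appeal to a separate $C^0$-continuity argument for the length bound on $\W^c_g$ is superfluous.
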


Again, Theorem \ref{thmAprima} shows that the systems in $\PH_{c=1}(M)$ admitting an invariant uniformly compact center foliation comprise whole connected components of $\PH_{c=1}(M)$. Moreover, two maps in the same connected component need also be leaf-conjugate (see Corollary \ref{corleafconjunifcompact}).

We say that a partially hyperbolic diffeomorphism \emph{acts quasi-isometrically} on $\W^c$ if there exist constants $l,L>0$ such that $f^n(\W^c_l(x))\subset \W^c_L(f^n(x))$
for every $x\in M$ and $n\in \mathbb{Z}$. This property is satisfied by discretized Anosov flows as well as by systems admitting a uniformly compact center foliation (see Remark \ref{rmkDAFQI} and Remark \ref{rmkunifcompactQI}). Some parts of Theorem \ref{thmB} extend to systems acting quasi-isometrically on a center foliation.

\begin{teoBprima}\label{thmB'} Suppose $f\in \PH_{c=1}(M)$ acts quasi-isometrically on an $f$-invariant center foliation $\W^c$. Then:
\begin{enumerate}
\item\label{thm2'i}(Dynamical coherence). The map $f$ is dynamically coherent. Moreover, it admits a center-stable foliation $\W^{cs}$ and a center-unstable foliation $\W^{cu}$ such that $\W^c=\W^{cs}\cap \W^{cu}$.
\item\label{thm2'ii}(Uniqueness of foliations).  The foliations $\W^{cs}$ and $\W^{cu}$ are the only $f$-invariant foliations tangent to $E^s\oplus E^c$ and $E^c\oplus E^u$, respectively.
\item\label{thm2'iii}(Completeness of leaves). The leaves of $\W^{cs}$ and $\W^{cu}$ satisfy that $\W^{cs}(x)=\bigcup_{y\in \W^c(x)}\W^s(y)$ and $\W^{cu}(x)=\bigcup_{y\in \W^c(x)}\W^u(y)$ for every $x\in M$.
\end{enumerate}
\end{teoBprima}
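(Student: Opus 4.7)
My plan is to adapt the proof of Theorem~\ref{thmB} by using the quasi-isometric action of $f$ on the given $\W^c$ as a replacement for the center flow $\varphi_t$. Quasi-isometry provides exactly the boundedness control on $\W^c$-distances that was previously supplied by the flow's action on its own orbits, and this is what is needed to run the same constructions in the absence of a flow.

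For dynamical coherence (\ref{thm2'i}) I would build $\W^{cs}$ explicitly as the $\W^s$-saturation of $\W^c$. Fix $r,\epsilon>0$ small and set $\W^{cs}_{\mathrm{loc}}(x) := \bigcup_{y\in \W^c_r(x)}\W^s_\epsilon(y)$; local product structure shows this is a $C^0$ disk tangent to $E^s\oplus E^c$. To globalize, I would take $\W^{cs}(x) := \bigcup_{n\geq 0} f^{-n}(\W^{cs}_{\mathrm{loc}}(f^n(x)))$. The quasi-isometric hypothesis guarantees that $f^n(\W^c_r(x))\subset \W^c_L(f^n(x))$ uniformly in $n$, so the local plaques glue consistently under the iteration and the resulting union is a bona fide $\W^s$-saturated leaf. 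The symmetric construction with $\W^u$ in place of $\W^s$ yields $\W^{cu}$, and the transverse intersection $\W^{cs}\cap \W^{cu}$ coincides with $\W^c$ by construction.

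Completeness (\ref{thm2'iii}) then drops out of the construction: the inclusion $\supseteq$ is automatic, while for $\subseteq$ a chain of local center-stable boxes connecting $x$ to a given $y\in \W^{cs}(x)$ can be concatenated to produce an arc in $\W^c$ joining $x$ to a point $z$ with $y\in \W^s(z)$. The quasi-isometric bound is precisely what forces the successive $\W^c$-plaques appearing in the chain to lie on a single leaf $\W^c(x)$ rather than jumping to nearby leaves, since otherwise iterated backward images by $f$ would eventually produce $\W^c$-segments of unbounded length, violating the uniform constant $L$ in the quasi-isometric action.

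Uniqueness (\ref{thm2'ii}) is the step I expect to be the main obstacle. Given any $f$-invariant foliation $\F$ tangent to $E^s\oplus E^c$, the foliation $\W^s$ subfoliates $\F$ and every local transversal to $\W^s$ inside $\F(x)$ is a curve tangent to $E^c$; if $E^c$ is not uniquely integrable this curve need not a priori agree with $\W^c(x)$. To rule out alternative foliations I would argue by contradiction: if $\F\neq \W^{cs}$ then $\W^s$-holonomy inside a common $\F$-leaf would produce a pair of $E^c$-integral curves $\gamma,\gamma'$ through nearby points, both $f$-invariant in family by invariance of $\F$ and of the $\W^s$-holonomy. Applying $f^{-n}$ and invoking the quasi-isometric bound on $\W^c$ one would derive a two-sided uniform bound on their $\W^c$-distance, which in combination with the expansion of $\W^s$-holonomy under $f^{-n}$ should force $\gamma$ and $\gamma'$ to coincide and, by a limiting argument, identify the transverse direction of $\F(x)$ with $\W^c(x)$. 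The difficulty lies in making this argument precise without the canonical $E^c$-integral curves coming from the orbits of $\varphi_t$ used in Theorem~\ref{thmB}(\ref{thm2iii}); the quasi-isometric hypothesis is exactly the replacement substituting for the flow structure in this more general setting.
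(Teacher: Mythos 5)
The central idea—defining $\W^{cs}$ as the $\W^s$-saturation of $\W^c$ and using the quasi-isometric bound to control center lengths under iteration—is the right one, and it is what the paper does. But the proposal has a concrete gap in the globalization step and misses the two lemmas that carry the actual proof.

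Your globalization formula $\W^{cs}(x):=\bigcup_{n\geq 0}f^{-n}\bigl(\W^{cs}_{\mathrm{loc}}(f^n(x))\bigr)$ does not produce the correct set. Applying the quasi-isometric hypothesis with $k=-n$ at the point $f^n(x)$ gives $f^{-n}\bigl(\W^c_r(f^n(x))\bigr)\subset\W^c_L(x)$ for every $n$. So while the stable part of each pullback grows without bound, the center part stays confined to the fixed bounded arc $\W^c_L(x)$. Your union is therefore contained in $\bigcup_{z\in\W^c_L(x)}\W^s(z)$, which is strictly smaller than $\bigcup_{z\in\W^c(x)}\W^s(z)$ whenever the center leaf has length greater than $L$. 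In particular the ``automatic'' inclusion you claim in (\ref{thm2'iii}) fails for your construction. The paper simply declares $\W^{cs}(x):=\W^s(\W^c(x))$; completeness is then tautological, and the real work is showing this family is a partition. That requires showing $\W^c(y)\subset\W^s(\W^c(x))$ whenever $y\in\W^s(\W^c(x))$, and the tool is Lemma~\ref{lemmaetainWsWc}: if $y\in\W^s(x)$ and $\eta\subset\W^c(y)$ has $\{\length(f^n\eta)\}_{n\geq 0}$ bounded, then $\eta\subset\W^s(\W^c(x))$. Quasi-isometry supplies exactly the required length bound. Your outline contains no analogue of this lemma, and without it the partition property—and hence dynamical coherence—is not established.

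For uniqueness (\ref{thm2'ii}) the $\W^s$-holonomy argument you sketch is both vaguer and, as you note yourself, not obviously closable. The paper's mechanism is different and cleaner: Lemma~\ref{lemmaetac} shows that for any $f$-invariant foliation $\W^{cs}_1$ tangent to $E^s\oplus E^c$, a $C^1$ curve $\eta$ not contained in a single $\W^{cs}_1$-leaf satisfies $\length(f^n\eta)\to\infty$. If $y\in\W^{cs}_1(x)$ but some center arc $\eta\subset\W^c(y)$ escapes $\W^{cs}_1(x)$, this contradicts the quasi-isometric bound, which keeps $\length(f^n\eta)$ uniformly bounded. Combined with the standard fact that $\W^s$-leaves must be contained in $\W^{cs}_1$-leaves, this forces $\W^s(\W^c(x))\subset\W^{cs}_1(x)$; the completeness of both leaves for the intrinsic metric then gives equality. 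Your holonomy approach would have to recover the content of Lemma~\ref{lemmaetac} in disguise, and it is not clear that the ``two-sided uniform bound'' you invoke suffices, since it pins down the tangent direction at a point but does not by itself show that the entire leaf $\W^c(y)$ lies inside $\W^{cs}_1(x)$, which is what is needed before the completeness comparison can close the argument.
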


Note that item (\ref{thm2'i}) for systems admitting a uniformly compact center (and for any center dimension) has been proven in \cite[Theorem 1]{BoBo} .

Item (\ref{thm2'ii}) in Theorem \ref{thmB'} shows that $\W^c$ is the only $f$-invariant center foliation where $f$ acts quasi-isometrically. For uniformly compact center foliations this gives a partial answer to \cite[Question 8.4.]{BoBo} (the general question is for any center dimension).

An analogous result to Proposition \ref{propC} is also verified in this context:

\begin{propCprima}\label{propC'}
Suppose $f\in \PH_{c=1}(M)$ admits a uniformly compact center foliation such that $E^c$ is uniquely integrable. Then every systems in the same $C^1$ connected component of $f$ in $\PH_{c=1}(M)$ has a uniquely integrable center bundle.
\end{propCprima}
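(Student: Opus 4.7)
The plan is analogous to the proof of Proposition~\ref{propC}. By Theorem~\ref{thmAprima}, the $C^1$-connected component $\mathcal{C}$ of $f$ in $\PH_{c=1}(M)$ consists entirely of systems admitting a uniformly compact center foliation $\W^c_g$, which by Theorem~\ref{thmB'}(\ref{thm2'ii}) is the unique $g$-invariant foliation tangent to $E^c(g)$. Corollary~\ref{corleafconjunifcompact} furnishes a leaf-conjugacy between any two systems of $\mathcal{C}$. Set
$$
\mathcal{U} = \{\, g\in\mathcal{C} : E^c(g)\text{ is uniquely integrable}\,\};
$$
since $f\in\mathcal{U}$ by hypothesis and $\mathcal{C}$ is connected, it is enough to prove that $\mathcal{U}$ is both $C^1$-open and $C^1$-closed in $\mathcal{C}$.

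Both assertions can be attacked by the same blueprint. Assume towards contradiction a sequence $g_n\to g$ in $\mathcal{C}$ on which unique integrability fails on one side of the convergence: either infinitely many $g_n\notin\mathcal{U}$ with $g\in\mathcal{U}$ (openness), or $g\notin\mathcal{U}$ with every $g_n\in\mathcal{U}$ (closedness). On the failing side, pick $x_n\in M$ (or $x\in M$) together with two distinct unit-speed $C^1$ integral curves of the corresponding $E^c$ through that point that share initial velocity. Uniform compactness of the center foliations permits the common domain $[-T,T]$ to be chosen independently of $n$. Arzel\`a--Ascoli extraction combined with the $C^0$-continuous dependence of $E^c$ on the diffeomorphism yields, after passing to subsequences, two $C^1$ integral curves of $E^c$ on the opposite side of the convergence, through the limit point, still sharing initial data; distinctness of these limit curves would contradict unique integrability on that side.

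The main obstacle is ensuring the limit curves remain distinct: the classical Peano phenomenon allows distinct integral curves of a continuous vector field to collapse to the same $C^0$ limit under perturbation. To counter it I would choose the candidate pairs so as to maximize a separation functional such as $\sup_t d(\gamma_n(t),\eta_n(t))$, whose upper bound is controlled by the uniform leaf diameter. The required lower bound on this separation would come from exploiting Theorem~\ref{thmB'}(\ref{thm2'ii}): a family of rogue $C^1$ integral curves with vanishing separation would, through the $g$-dynamics, generate a second $g$-invariant structure tangent to $E^c(g)$, incompatible with the uniqueness of $\W^c_g$. Executing this topological and dynamical argument in detail is the most delicate part of the proof.
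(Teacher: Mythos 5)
Your reduction --- show that $\mathcal{U}=\{g\in\mathcal{C}: E^c_g\text{ uniquely integrable}\}$ is both open and closed in the connected component $\mathcal{C}$ --- matches the paper's strategy for Propositions~C and~C'. The execution, however, diverges: the paper obtains open-and-closedness from Proposition~\ref{propstableuniqint}, which rests on Lemma~\ref{lemmaintuniq} and the uniform continuation of complete $C^1$ center immersions (Theorem~\ref{thmunifcontC1completecenterimm}). That graph-transform machinery tracks \emph{every} $C^1$ curve tangent to $E^c_g$ (not only leaves of a center foliation), matches each one to a $\W^c_f$-leaf using unique integrability of $E^c_f$, and then identifies the original curve with the $h$-image of that leaf via the uniqueness clause of Theorem~\ref{thmunifcontC1completecenterimm}. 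Your direct Arzel\`a--Ascoli argument is an attempt to bypass all of that, and it is precisely there that the gap lies.

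You correctly name the obstacle (distinct integral curves of $E^c_{g_n}$ can collapse to a single limit curve), but your proposed fix does not close it. Theorem~\ref{thmB'}(\ref{thm2'ii}) concerns uniqueness of the \emph{$g$-invariant foliations} $\W^{cs}_g$, $\W^{cu}_g$ (and hence of $\W^c_g$ as a quasi-isometric center foliation); a stray $C^1$ curve tangent to $E^c_g$ is just a curve --- it need not be a leaf of any $g$-invariant foliation, let alone one on which $g$ acts quasi-isometrically --- so that uniqueness statement has no purchase on it. The paper itself makes the distinction explicit: Example~\ref{exnonuniqint} produces a discretized Anosov flow whose canonical invariant center foliation coexists with a non-uniquely integrable $E^c$, so uniqueness of the invariant foliation simply does not forbid rogue tangent curves. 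There is also a secondary problem with symmetry: in the closedness direction your ``same blueprint'' would need to manufacture two distinct integral curves of $E^c_{g_n}$ from a rogue pair for the limit $g$, but Arzel\`a--Ascoli extraction runs the wrong way and Peano's theorem only gives existence, not non-uniqueness. Without a substitute for the graph-transform control of Theorem~\ref{thmunifcontC1completecenterimm} the argument does not go through.
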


In particular, Proposition \ref{propC'} shows that
if $f=A\times \Id$ is the product of an Anosov diffeomorphism $A:N\to N$ and the identity map $\Id:S^1\to S^1$, then every system in the same $C^1$ connected component of $\PH_{c=1}(N\times S^1)$ than $f$ has a uniquely integrable center bundle.

\subsection{Diffeomorphisms that are center fixing or admit a compact center foliations in dimension 3} 

This text focuses on partially hyperbolic diffeomorphisms on any ambient dimension. However, particularly on dimension 3 we are able to conclude a classification result for transitive systems modulo conditions on a center foliation:

\begin{teoA}\label{thmD}
Suppose $f\in \PH_{c=1}(M^3)$ is transitive and admits an $f$-invariant center foliation $\W^c$.
\begin{enumerate}
\item\label{item1propcenterfixingdim3} If $f(W)=W$ for every $W\in \W^c$ then $f$ is a discretized Anosov flow.
\item\label{item2propcenterfixingdim3} If $W$ is compact for every $W\in \W^c$ then, modulo double cover, $f$ is a partially hyperbolic skew-product.
\end{enumerate}
\end{teoA}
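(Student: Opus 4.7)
For item (\ref{item1propcenterfixingdim3}), the plan is to exhibit a uniform constant $L>0$ such that $f(x)\in \W^c_L(x)$ for every $x\in M$; once this is in hand, the alternative characterization of discretized Anosov flows stated after Definition \ref{defDAFintro} (made precise in Proposition \ref{propcenterfixingL}) immediately identifies $f$ as a discretized Anosov flow. The leaf displacement $\tau_c(x):=d_{\W^c}(x,f(x))$ is well-defined and finite because $f$ fixes every center leaf, and it is locally bounded on any foliation box for $\W^c$ by continuity of $f$ and of the plaque family. The substance of the argument is to promote this to a global bound. Here both partial hyperbolicity and transitivity must enter: iterating $f$ along a stable or unstable leaf propagates local control to the closure of the orbit, and transitivity makes such a closure dense. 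Alternatively, one may invoke \cite[Theorem 2]{BW} together with the equivalence of that setting with Definition \ref{defDAFintro} proved in Section \ref{sectionequivalences}.

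For item (\ref{item2propcenterfixingdim3}), every leaf of $\W^c$ is a compact connected one-dimensional manifold, hence a circle. By Epstein's theorem, a foliation of a compact manifold by circles has uniformly bounded leaf lengths, so $\W^c$ is uniformly compact. Theorem \ref{thmAprima} then places $f$ in a $C^1$ open-closed class of $\PH_{c=1}(M)$, and Theorem \ref{thmB'} yields dynamical coherence together with completeness and uniqueness of $\W^{cs}$ and $\W^{cu}$. In dimension three a uniformly compact one-dimensional foliation endows $M$ with a Seifert fibration. Passing to a suitable double cover simultaneously orients $E^c$ and removes the $\mathbb{Z}/2$-multiple fibers, producing a genuine $S^1$-bundle $\pi:\tilde M\to N$ over a closed surface $N$. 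The lift of $f$ permutes fibers, so it descends to a homeomorphism $\bar f:N\to N$, and the hyperbolic splitting $E^s\oplus E^u$ descends to an $\bar f$-invariant hyperbolic splitting on $TN$; hence $\bar f$ is Anosov and $f$ is a bundle map over $\bar f$, i.e.\ a partially hyperbolic skew-product. Transitivity is used to rule out exceptional fibers of order $\geq 3$: such a fiber would give an isolated periodic orbit of the orbifold base map, incompatible with the descended Anosov dynamics together with transitivity.

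The main obstacle, I expect, is the uniform displacement bound in item (\ref{item1propcenterfixingdim3}). Local continuity of $\tau_c$ is straightforward, but a pathological non-compact leaf could in principle accumulate on itself in such a way that nearby plaques sit at wildly different leafwise distances from their $f$-images. Transitivity combined with the action of $f$ on the strong foliations must be exploited to rule this out, and this is the step where the dimension $3$ assumption plays its role. Once past this step, the remaining ingredients — Epstein's theorem, the open-closed results Theorem \ref{thmA} and Theorem \ref{thmAprima}, and the classical topology of Seifert fibered three-manifolds — are standard.
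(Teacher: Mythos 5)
Your plan for item (\ref{item1propcenterfixingdim3}) has a genuine gap that you yourself flag: you want to produce a uniform $L>0$ with $f(x)\in\W^c_L(x)$ directly from leafwise-fixing, but this is precisely the content of Question \ref{questWcfixed}, which the paper explicitly leaves open in general. The vague mechanism you sketch (propagating a local bound by iteration along strong leaves plus density) does not crystallize into an argument, and the paper does not attempt it. Your fallback --- invoking \cite[Theorem 2]{BW} --- does not close the gap either, for two reasons. First, as paraphrased in Remark \ref{rmkBW05}, that theorem takes \emph{dynamical coherence} as a hypothesis; the paper's main new step is precisely to establish dynamical coherence here (Proposition \ref{propdim3dyncoh}), via density of compact $f$-periodic center leaves (Lemma \ref{lemmaperiodiccenters}, a nonwandering/index argument) fed into a criterion for dynamical coherence (Lemma \ref{lemmadyncohdriterion}, which rules out a transverse ``jump'' by noting two distinct compact periodic center leaves cannot share a stable leaf). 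You do not address dynamical coherence at all. Second, \cite[Theorem 2]{BW} only yields that some power $f^n$ is a discretized Anosov flow; the paper needs an extra claim to pass from $f^n$ to $f$ (using that $f$ has no fixed points on noncompact center leaves and preserves their orientation, so the displacement bound for $f^n$ controls $f$). This bridge is missing from your sketch.

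For item (\ref{item2propcenterfixingdim3}) you take a genuinely different route: Epstein-type bounded-lengths for compact foliations by circles in dimension $3$ to obtain uniform compactness directly, then Theorem \ref{thmB'} (via quasi-isometric center action) for dynamical coherence, then a direct Seifert-fibration argument to produce the skew-product structure. The paper instead establishes dynamical coherence first (again via Proposition \ref{propdim3dyncoh}), then cites \cite{DMM} for uniform compactness in the partially hyperbolic setting and \cite{Boh} for the skew-product conclusion. Your route is plausible but hides two nontrivial steps that the paper outsources to the literature: whether the $C^0$ center foliation with merely $C^1$ leaves is regular enough for the Epstein/Vogt bounded-volume results you invoke, and the entire content of Bohnet's classification --- descending $f$ to an Anosov map of the base orbifold, handling exceptional fibers, and upgrading to a genuine bundle after a double cover --- which you reprove in a paragraph; your claim that transitivity rules out exceptional fibers of order $\geq 3$ is asserted rather than argued, whereas this is precisely what \cite{Boh} handles. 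Using the existing references would be more economical and safer than re-deriving them.
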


The main new step in order to prove Theorem \ref{thmD} is to show that $f$ is dynamically coherent (see Proposition \ref{propdim3dyncoh}). Then the rest follows from previous results on the matter. Namely, \cite[Theorem 2]{BW} for showing item (\ref{item1propcenterfixingdim3}), and \cite{DMM} and \cite{Boh} for showing item (\ref{item2propcenterfixingdim3}).

\subsection{Some words on `global stability'}

We may see Theorem \ref{thmA} and Theorem \ref{thmAprima} as a `global stability' result where a plaque expansive center system induces leaf-conjugacy among its whole $C^1$ partially hyperbolic connected component. 

This has also been shown to be true in \cite{FPS} whenever $f$ is a hyperbolic linear automorphism of the torus $\mathbb{T}^n$ (seen as a partially hyperbolic diffeomorphism), and generalized in \cite{Pi} for linear Anosov automorphisms on nilmanifolds.

We may ask if this is true in general:

\begin{quest}\label{questintro} Suppose $f\in \PH(M)$ admits an $f$-invariant center foliation $\W^c$ such that $(f,\W^c)$ is plaque expansive. Does every $g$ in the $C^1$ partially hyperbolic connected component of $f$ admits a $g$-invariant center foliation $\W^c_g$ such that $(g,\W^c_g)$ is plaque expansive and leaf-conjugate to $(f,\W^c)$?
\end{quest}

Particular examples where an answer to Question \ref{questintro} is unknown (as far as we are aware)  are $f=\varphi_1\times \Id$ and  $f=\varphi_1\times \varphi_1$ for $\varphi_t$ an Anosov flow. To the best of our knowledge, Question \ref{questintro} has a positive answer for every known system $(f,\W^c)$  such that $\dim(E^c)=1$.

Recall that it is not known if every system admitting a center foliation needs to be plaque expansive. This is commonly referred to as the \emph{plaque expansivity conjecture}. Question \ref{questintro} is of course related to this conjecture but may a priori be a different problem. On the one hand, a positive answer to Question \ref{questintro} may not rule out the existence of center foliations that are not plaque expansive. On the other hand, if the plaque expansivity conjecture happens to be true then the systems that are leaf-conjugate to a given plaque expansive system $f$ may a priori form a $C^1$-open set that fails to be the whole $C^1$ partially hyperbolic connected component containing $f$.

\section{Preliminaries}\label{sectionprelims}

\vspace{0.15cm}
\noindent 
\emph{ \bf Partially hyperbolic diffeomorphisms.} A $C^1$-diffeomorphism $f:M\to M$ in a closed Riemannian manifold $M$  is called \emph{partially hyperbolic} if it preserves a continuous splitting $TM=E^s\oplus E^c \oplus E^u$, with non-trivial \emph{stable} $E^s$ and \emph{unstable} $E^u$ bundles, such that for some positive integer $\ell>0$ it satisfies
\begin{center}$\|Df^\ell_xv^s\|< \frac{1}{2}\|v^s\|$, \hspace{0.2cm} $\|Df^{-\ell}_xv^u\|< \frac{1}{2} \|v^u\|$ \hspace{0.2cm} and

$\|Df^{\ell}_xv^s\|<\|Df^{\ell}_xv^c\|< \|Df^{\ell}_xv^u\|$
\end{center}
for every $x\in M$ and unit vectors $v^\sigma\in E^\sigma(x)$ for $\sigma\in\{s,c,u\}$. Modulo changing the constant $\ell>0$, the property of being partially hyperbolic is independent of the Riemannian metric in $M$.

\vspace{0.15cm}
\noindent \emph{\bf Invariant manifolds.} If $f$ is a partially hyperbolic diffeomorphism it is known since \cite{HPS} that the bundles $E^s$ and $E^u$ uniquely integrate to $f$-invariant foliations. We denote them as $\W^s$ and $\W^u$, respectively. The bundles $E^s\oplus E^c$ and $E^c\oplus E^u$ may or may not be integrable. Whenever they integrate to  $f$-invariant foliations ($\W^{cs}$ and $\W^{cu}$, respectively) we say that $f$ is \emph{dynamically coherent}. If this is the case then $\W^c=\W^{cs}\cap \W^{cu}$ is an $f$-invariant foliation whose leaves are tangent to $E^c$. 

Notations: Whenever a foliation $\W^\sigma$ tangent to $E^\sigma$ is well defined for $\sigma\in\{s,c,u,cs,cu\}$ we will denote by $\W^\sigma_\delta(x)$ the ball of radius $\delta>0$ and center $x$ inside the leaf $\W^\sigma(x)$ with respect to the intrinsic metric induced by the Riemannian metric in $M$. In this context, if $A$ is any subset of $M$ we will denote by $\W^\sigma(A)$ the saturation of $A$ by $\W^\sigma$-leaves, that is, the set $\bigcup_{y\in A}\W^\sigma(y)$. We will also denote by $\W^\sigma_\delta(A)$ the set $\bigcup_{y\in A}\W^\sigma_\delta(y)$.

Every invariant foliation $\W^\sigma$ for $\sigma\in\{s,c,u,cs,cu\}$ has $C^1$ leaves that are tangent to the continuous bundle $E^\sigma$. All the foliations considered on this article will have, by definition, $C^1$ leaves that are tangent to a continuous subbundle of $TM$. Foliations with this type of regularity have the following easy to check property that we will implicitly use several times along the text: For every $R>0$ and $\epsilon>0$ there exists $\delta>0$ such that if $d(x,y)<\delta$ then $d_H(\W^\sigma_R(x),\W^\sigma_R(y))<\epsilon$, where $d_H$ denotes the Hausdorff distance among subsets of $M$.

\vspace{0.15cm}
\noindent 
\emph{\bf Invariant cone fields.}
We say that $\C$ is a continuous cone field in the Riemannian manifold $M$ if there exists a continuous splitting $TM=E \oplus F$ such that for every $x\in M$ the cone $\C(x)\subset T_xM$ is given by $\C(x)=\{v=v_E+v_F\in T_xM:\|v_E\|_E\geq \|v_F\|_F\}$ 
for some continuous norms $\|\cdot\|_E$ and $\|\cdot\|_F$ in $E$ and $F$, respectively (not necessarily the ones induced by the underlying Riemannian metric). In this context we say that $\C$ has dimension $\dim(E)$. We define the interior of the cone by $\intt \C(x)=\{v=v_E+v_F\in T_xM:\|v_E\|_E> \|v_F\|_F\}\cup \{0\}$ for every $x\in M$. 

We say that $\C$ is $f$-invariant if for some $N>0$ one has $Df^N\C(x)\subset \intt \C(f^N(x))$ for every $x\in M$. If this is the case, we say that $\C$ is uniformly expanded by $f$ if $\|f^N(v)\|>\|v\|$ for every $v\in \C\setminus \{0\}$.

If $f:M\to M$ is a partially hyperbolic diffeomorphism one can check  that there exists $\C^u$ and $\C^{cu}$ continuous cone fields of dimension $\dim(E^u)$ and $\dim(E^{cu})$, respectively, that are $f$-invariant and such that $E^u$ is uniformly expanded by $f$ and $E^{u}(x)=\bigcap_{n\geq 0} Df^n(\C^u(f^{-n}(x))$ and $E^{cu}(x)=\bigcap_{n\geq 0} Df^n(\C^{cu}(f^{-n}(x))$ for every $x\in M$. Analogously for $f^{-1}$-invariant cone fields $\C^s$ and $\C^{cs}$.

In fact, the \emph{cone criterion} gives us a kind of reciprocal of the above: A $C^1$ diffeomorphism $f:M\to M$ is partially hyperbolic whenever there exists an $f$-invariant cone field $\C^u$ uniformly expanded by $f$ and a $f^{-1}$-invariant cone field $\C^s$ uniformly expanded by $f^{-1}$. As a consequence, it is immediate to check that $\PH(M)$ is $C^1$ open in $\Diff^1(M)$.  See for example \cite{CP}.

\vspace{0.15cm}
\noindent 
\emph{\bf Anosov flows.} A $C^1$ flow $\varphi_t:M\to M$ (that is, such that $(x,t)\mapsto \varphi_t(x)$ is a $C^1$ map) is called an \emph{Anosov flow} if there exists a continuous $D\varphi_t$-invariant splitting $TM=E^s\oplus E^c \oplus E^u$ such that $E^c$ is the bundle generated by $\frac{\partial \varphi_t}{\partial t}|_{t=0}$ and such that for some $t_0\neq 0$ the map $f=\varphi_{t_0}$ is a partially hyperbolic diffeomorphism with respect to the decomposition $TM=E^s\oplus E^c \oplus E^u$. If $\varphi_t$ is an Anosov flow it is immediate to check that $g=\varphi_{t_1}$ is a partially hyperbolic diffeomorphism for every $t_1\neq 0$.
\vspace{0.15cm}

\noindent 
\emph{\bf Notation.} Throughout the text we will write \emph{disc} to mean `ball of dimension $d$', where $d$ may not necessarily be 2.
\vspace{0.15cm}

The following will be used several times along the text.
\begin{lemma}\label{lemmaWcWsC1}
Suppose $f\in \PH_{c=1}(M)$. There exists $\delta>0$ such that for every $C^1$ arc $\eta$ tangent to $E^c$ with $\length(\eta)<\delta$ the set $\W^s_\delta(\eta)$ is a $C^1$ submanifold tangent to $E^s\oplus E^c$.
\end{lemma}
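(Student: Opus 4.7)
The plan is to prove the statement locally around an arbitrary point $p_0\in\eta$. Fix such $p_0$ and choose a smooth chart $\varphi\colon U\to\mathbb{R}^{d_s}\times\mathbb{R}\times\mathbb{R}^{d_u}$ with $d\varphi(p_0)$ sending the splitting $E^s(p_0)\oplus E^c(p_0)\oplus E^u(p_0)$ to the three coordinate factors. By continuity of the splitting, for $\delta$ small enough the bundles $E^s$, $E^c$, $E^u$ stay close in the chart to these coordinate directions throughout $U$. By the Hirsch--Pugh--Shub stable manifold theorem, for every $x$ in a neighborhood of $\eta$ the local stable disc $\W^s_\delta(x)$ can be written as the graph of a $C^1$ map $g_x\colon B^s_\delta\to\mathbb{R}\times\mathbb{R}^{d_u}$ of small $C^1$ norm, with $x\mapsto g_x$ continuous in the $C^1$ topology.

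Next I would parametrize $\W^s_\delta(\eta)\cap U$ by the continuous map $\Phi(y,t)=(y,g_{\eta(t)}(y))$ and split $g_{\eta(t)}(y)=(g^c_{\eta(t)}(y),g^u_{\eta(t)}(y))$ according to the center and unstable factors. Since $\eta'(t)\in E^c(\eta(t))$ is close to the middle coordinate direction and $Dg_{\eta(t)}$ is small, the composite $(y,t)\mapsto(y,g^c_{\eta(t)}(y))$ is a homeomorphism onto an open set $V\subset\mathbb{R}^{d_s}\times\mathbb{R}$, so $\W^s_\delta(\eta)\cap U$ is the graph of a continuous function $G\colon V\to\mathbb{R}^{d_u}$. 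This already exhibits $\W^s_\delta(\eta)$ as a topological $(d_s+1)$-submanifold whose slices at fixed second coordinate are $C^1$ discs tangent to $E^s$.

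The heart of the proof is to upgrade $G$ from $C^0$ to $C^1$ with differential corresponding to the continuous plane field $E^s\oplus E^c$. For this I would exploit that $f$ maps stable leaves to stable leaves while contracting them and preserves the distribution $E^c$, so $f(\W^s_\delta(\eta))\subset\W^s_{\delta}(f(\eta))$ and $f(\eta)$ is again a $C^1$ arc tangent to $E^c$. This sets up, in the spirit of the classical Hadamard--Perron / HPS construction, a graph transform on a closed ball in the space of $C^1$ graphs over $E^s\oplus E^c$ of controlled slope containing the corresponding arc; the unique fixed point of this transform in $C^1$ must coincide with $G$ by the transversality of $E^c$ to $\W^s$ and the uniqueness of the stable leaf through each point of $\eta$, forcing $G\in C^1$. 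The tangent space of the resulting submanifold at any point contains $E^s$ from the stable slice, and by a dimension count against the continuous $(d_s+1)$-dimensional subbundle $E^s\oplus E^c$ it must equal $E^s\oplus E^c$. The main obstacle is exactly this $C^1$ upgrade: the naive parametrization $\Phi$ is only $C^0$ in $t$ because HPS gives $x\mapsto g_x$ merely continuous, not $C^1$, in the base point; arranging the graph transform to contract in the $C^1$ norm rather than only in $C^0$ requires choosing the right invariant function space and sharp enough cone estimates for $Df$ on $E^s\oplus E^c$.
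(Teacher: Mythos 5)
Note that the paper does not actually prove this lemma: it refers the reader to \cite{BBI} (Proposition~3.4), \cite{HPS} (Theorem~6.1) and \cite{BoBo} (Remark~4.7). Your plan --- pass to adapted coordinates, write $\W^s_\delta(\eta)$ as a $C^0$ graph over the $E^s\oplus E^c$ coordinates using continuity of the stable lamination, then promote the graph to $C^1$ by a Hadamard--Perron/graph-transform argument exploiting the invariance $f(\W^s_\delta(\eta))\subset \W^s_\delta(f(\eta))$ --- is the same route taken in those references, so the overall strategy is the intended one.

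Two steps in your sketch, however, do not work as written. (i) The graph transform you describe is not a self-map of a fixed ball of $C^1$ graphs: applying $f$ carries a graph near $\eta$ to a graph near $f(\eta)$, a different (and typically longer) arc, and, more seriously, $f$ \emph{expands} the $E^u$ fiber, so pushing forward a graph over $E^s\oplus E^c$ with values in $E^u$ amplifies its slope instead of contracting it. The correct iteration is a pull-back: for each $n$ choose arbitrary $C^1$ discs $\Sigma_n'$ tangent to the $cs$-cone $\C^{cs}$ along $f^n(\eta)$ (covering $f^n(\eta)$ by subarcs of length $<\delta$ to keep estimates uniform) and show $f^{-n}(\Sigma_n')\to \W^s_\delta(\eta)$ in $C^1$; equivalently, bypass the fixed-point packaging and use Whitney's $C^1$ criterion, showing that every secant direction to $S=\W^s_\delta(\eta)$ lies in $\C^{cs}$, that this is preserved by $Df^n$ because $f^n(S)\subset\W^s_\delta(f^n(\eta))$, and hence that secants at $p$ accumulate only in $\bigcap_{n\geq 0}Df^{-n}\C^{cs}(f^n p)=E^s(p)\oplus E^c(p)$, a continuous plane field. (ii) The closing dimension count does not pin down the tangent plane. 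A $(\dim E^s+1)$-plane that contains $E^s$ and is transverse to $E^u$ has the form $E^s\oplus \mathbb{R}v$ for some $v$ in a $cs$-cone, and nothing in the count forces $v\in E^c$; indeed at points off $\eta$ there is no a priori center direction in the manifold. The tangency to $E^s\oplus E^c$ has to come from the $Df$-invariance of the limiting plane field together with cone contraction, i.e.\ from the argument in (i), not from a dimension count.
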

A proof of the lemma above can be found in \cite[Proposition 3.4.]{BBI} (it is stated for \emph{absolute} partially hyperbolic diffeomorphism but the proof does not use this fact). See also \cite[Theorem 6.1]{HPS} and \cite[Remark 4.7.]{BoBo}.

\section{Discretized Anosov flows}

\subsection{Definition and first properties}

Recall from Definition \ref{defDAFintro} that we call $f\in \PH_{c=1}(M)$ a \emph{discretized Anosov flow} if there exist a continuous flow $\varphi_t:M\to M$, with $\frac{\partial \varphi_t}{\partial t}|_{t=0}$ a continuous vector field without singularities, and a continuous function $\tau:M\to \mathbb{R}$ satisfying
$$f(x)=\varphi_{\tau(x)}(x)$$ for every $x\in M$.

In contrast with the definition given in \cite{BFFP}, we do not ask for $\varphi_t$ to be a topological Anosov flow. This is derived as a consequence in Proposition \ref{propDAFtopAnosov}. Nor do we ask a priori that the orbits of $\varphi_t$ form a center foliation. This follows from the next proposition.

\begin{prop}\label{prop1DAFs} If $f$ is a discretized Anosov flow then:
\begin{enumerate}[label=(\roman*)]
\item\label{item1prop1} The vector field $\frac{\partial \varphi_t}{\partial t}|_{t=0}$ generates the bundle $E^c$ and the flow lines of $\varphi_t$ form a center foliation $\W^c$ whose leaves are fixed by $f$. 
\item\label{item2prop1} The function $\tau$ has no zero and is $C^1$ restricted to each leaf of $\W^c$.
\end{enumerate}
\end{prop}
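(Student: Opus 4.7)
The argument organizes into three stages.

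\textbf{Stage 1: orbit invariance and $Df$-invariance of $\mathbb{R}X$.} From $f(\varphi_s(x))=\varphi_{s+\tau(\varphi_s(x))}(x)$ one sees $f$ maps every $\varphi$-orbit to itself. Since $X(f(x))\neq 0$, the parametrization $s\mapsto\varphi_s(x)$ is a $C^1$ local embedding near $s=\tau(x)$; the $C^1$ curve $t\mapsto f(\varphi_t(x))$ can therefore be written uniquely as $\varphi_{u(t)}(x)$ with $u$ of class $C^1$, and since necessarily $u(t)=t+\tau(\varphi_t(x))$, this already establishes the $C^1$-on-leaves assertion in (ii). Differentiating at $t=0$ produces $Df_xX(x)=\rho(x)\,X(f(x))$ with $\rho:=1+X(\tau)$ continuous. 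Because $f$ restricts to a $C^1$ diffeomorphism of each one-dimensional $\varphi$-orbit, $\rho$ is nowhere zero, so $L:=\mathbb{R}X$ is a continuous $Df$-invariant line bundle.

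\textbf{Stage 2: identification $L=E^c$.} This is the crux and the step I expect to require the most care. My plan is a maximum-principle argument on $\alpha(x):=\angle(L(x),E^c(x))$. At a maximizer $x^*$ with $\alpha(x^*)>0$, decompose $L(x^*)=a^s+a^c+a^u$ in the invariant splitting. If $a^c\ne 0$ together with either $a^u\ne 0$ or $a^s\ne 0$, the domination in $E^s\oplus E^c\oplus E^u$ makes the appropriate iterate $Df^{\pm\ell}$ strictly amplify the transverse-to-$E^c$ ratio, so that $\alpha(f^{\pm\ell}(x^*))>\alpha(x^*)$, contradicting maximality. The delicate case is $a^c=0$, i.e.\ $L(x^*)\subset E^s\oplus E^u$, where $\alpha$ saturates at $\pi/2$: here I would pass to an $f$-minimal set $K$ on which $L$ reduces to one of the extremes, say $L\subset E^u$, and seek a contradiction. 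After possibly replacing $K$ by a suitable $\varphi$-saturation (which remains $f$-invariant by Stage~1) and extending $L=E^u$ there by continuity, unique integrability of $\W^u$ forces every $\varphi$-orbit to lie in a $\W^u$-leaf. Then on this set the estimate
\[
2^n\, d^u(y,f(y))\leq d^u(f^{n\ell}(y),f^{n\ell+1}(y))\leq |\tau(f^{n\ell}(y))|\,\|X\|_\infty\leq C
\]
forces $d^u(y,f(y))=0$, meaning $f$ is the identity on the set, contradicting partial hyperbolicity. The symmetric argument with $f^{-1}$ rules out $L\subset E^s$, so $L=E^c$. The orbits of $\varphi_t$ therefore form a continuous one-dimensional foliation $\W^c$ tangent to $E^c$, fixed leafwise by $f$. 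The main technical obstacle I foresee is making the reduction from ``$L\subset E^u$ on an $f$-invariant $K$'' to ``$L\subset E^u$ on a $\varphi$-invariant set'' clean enough to invoke unique integrability of $\W^u$.

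\textbf{Stage 3: $\tau$ vanishes nowhere.} Suppose $\tau(x_0)=0$; then $f(x_0)=x_0$. Since $X$ is continuous and non-vanishing on compact $M$, one has $\varphi_t(x)=x+tX(x)+o(t)$ uniformly in $x$, whence $f(x)-x=\tau(x)X(x)+o(\tau(x))$. Combining with the $C^1$ Taylor expansion of $f$ at $x_0$ produces
\[
(Df_{x_0}-\Id)(x-x_0)=\tau(x)\,X(x)+o(\|x-x_0\|).
\]
Projecting on $E^s(x_0)$: the left side contributes $(Df_{x_0}|_{E^s}-\Id)v^s$, with $\|(Df_{x_0}|_{E^s}-\Id)v^s\|$ comparable to $\|v^s\|$ because $Df_{x_0}|_{E^s}$ has no eigenvalue $1$ (partial hyperbolicity); the right side has $E^s$-component $o(\|x-x_0\|)$, since $X(x_0)\in E^c(x_0)$ (by Stage~2) makes the $E^s$-projection of $X(x)$ of order $\|x-x_0\|$ while $\tau(x)\to 0$. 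Choosing $x$ with $\|v^s\|$ comparable to $\|x-x_0\|$ yields the contradiction, so $\tau$ vanishes nowhere.
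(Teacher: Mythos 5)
Your three-stage organization mirrors the paper's proof structure, and Stages 1 and 3 are essentially sound, but Stage 2 has a genuine gap exactly where you flag it, and the paper supplies precisely the missing device.

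\textbf{Stage 2: the gap is real, and the fix is a cone-field argument.} After reducing to an $f$-minimal set $K$ on which $L\subset E^u$, you want to pass to a $\varphi$-invariant set where $L=E^u$, so that unique integrability of $E^u$ forces whole $\varphi$-orbits into $\W^u$-leaves. This does not work as stated: $L\subset E^u$ on $K$ does not propagate to the $\varphi$-saturation of $K$, because a point $\varphi_t(x)$ with $x\in K$ and $t$ small may already lie outside $K$, and there $L(\varphi_t(x))$ need not lie in $E^u(\varphi_t(x))$. The $\varphi$-saturation is indeed $f$-invariant (your Stage 1), but the containment $L\subset E^u$ is not automatically carried there, and there is no ``extension by continuity'' available. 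The paper avoids the whole-orbit requirement: it only needs a \emph{short piece} $\eta$ of the $\varphi$-orbit through $x$ to lie in $\W^u(x)$, and gets it by a cone argument. Since $L(f^{-n}(x))\subset E^u(f^{-n}(x))$ for all $n\geq 0$ (your $K$ is precisely the set where this happens along backward orbits), a uniform-size piece of orbit through $f^{-n}(x)$ is tangent to an $f$-invariant unstable cone $\C^u$; pushing forward by $Df^n$ and using $\bigcap_n Df^n\C^u(f^{-n}(\cdot))=E^u(\cdot)$, one gets in the limit a fixed nontrivial piece $\eta$ of the orbit through $x$ tangent to $E^u$, hence contained in $\W^u(x)$. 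Then the boundedness-of-$\tau$ versus uniform $\W^u$-expansion contradiction (which you already have) finishes the argument. Note also that the paper's reduction to ``$F\not\subset E^s$ and $F\not\subset E^u$ everywhere'' is more economical than your maximum principle on $\alpha=\angle(L,E^c)$: once $F$ avoids $E^s$ and $E^u$, compactness bounds $\angle(F,E^s)$ and $\angle(F,E^u)$ below, and $Df^n F$ accumulates on $E^{cu}$; invariance of $F$ then forces $F\subset E^{cu}$, and symmetrically $F\subset E^{cs}$, so $F=E^c$. Your maximum-principle phrasing also needs a minor repair (a single iterate $Df^{\pm\ell}$ need not increase $\alpha$ when both $a^s$ and $a^u$ are nonzero; one needs to iterate until the dominant transverse component wins).

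\textbf{Stage 3: different route, works with one extra observation.} Your infinitesimal argument at a fixed point $x_0$ differs from the paper's topological one (flow box, stable-manifold contradiction), and it does go through, but the passage from $o(\tau(x))$ to $o(\|x-x_0\|)$ needs justifying. This follows by first taking norms in $\tau(x)X(x)+o(\tau(x))=(Df_{x_0}-\Id)(x-x_0)+o(\|x-x_0\|)$ to deduce $|\tau(x)|=O(\|x-x_0\|)$, after which the $E^s$-projection estimate closes as you describe (using that $Df_{x_0}$ preserves the splitting at the fixed point and $Df_{x_0}|_{E^s}-\Id$ is invertible). This is a valid alternative to the paper's argument, which instead traps the forward orbit of a nearby stable point inside a single flow-box orbit segment and derives a contradiction with $f^n(y)\to x_0$.
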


\begin{proof} 
Let $F$ be the one-dimensional bundle generated by $\frac{\partial \varphi_t}{\partial t}|_{t=0}$. In order to show \ref{item1prop1} let us see that $F=E^c$. This has essentially been done for $\dim(M)=3$ in \cite[Proposition G.2.]{BFFP} and the arguments are equally valid in any dimension. We will briefly reproduce them for the sake of completeness.

We claim first that it is enough to show that $F$ is never contained in $E^s$ nor $E^u$. Indeed, if $F(x)$ is not contained in $E^s(x)$ nor $E^u(x)$ for every $x\in M$ then the angle formed by $F$ and $E^s$ is bounded away from zero by a positive constant independent of the point in $M$. As a consequence, for every $x\in M$ the subspace $Df^n(F(f^{-n}(x)))$ gets arbitrarily close to $E^{cu}(x)$ as $n$ tends to $+\infty$. As $F$ is $Df$-invariant (see justification below) we deduce that $F(x)$ needs to be contained in $E^{cu}(x)$. Arguing analogously for backwards iterates using the never-zero angle between $F$ and $E^u$ one obtains that $F(x)$ has to be contained in $E^{cs}(x)$ for every $x$ in $M$. We conclude that $F$ coincides everywhere with $E^c=E^{cs}\cap E^{cu}$.

The bundle $F$ needs to be $Df$-invariant as every small piece of $\varphi_t$-orbit through a point $x\in M$ is sent by $f$ to a $C^1$ curve that is a reparametrization of a small piece of $\varphi_t$-orbit through $f(x)$. Thus $F(f(x))$ that is generated by $\frac{\partial \varphi_t}{\partial t}|_{t=0}(f(x))$ coincides with $DfF(f(x))$ that is generated by $\frac{\partial f\varphi_t}{\partial t}|_{t=0}(x)$.

It remains to see now that $F$ is never contained in $E^s$ nor $E^u$. Without loss of generality suppose by contradiction that $F(x)$ is contained in $E^u(x)$ for some $x$. Note that $F(f^{-n}(x))$ is then contained in $E^u(f^{-n}(x))$ for every $n\geq 0$.

Let $\mathcal{C}^u$ be a continuous $f$-invariant unstable cone field  such that $Df^N\mathcal{C}^u\subset \intt \mathcal{C}^u$ for some $N>0$ and $\bigcap_{n\geq 0} Df^n(\mathcal{C}^u(f^{-n}y))=E^u(y)$ for every $y\in M$ (see preliminaries for more details). Since for every $n\geq 0$ a piece of $\varphi_t$-orbit containing $f^{-n}(x)$ is tangent to $\mathcal{C}^u$ we obtain in the limit with $n$ that at least a piece $\eta$ of $\varphi_t$-orbit containing $x$ is contained in $\W^u(x)$. 

As $\tau:M\to \mathbb{R}_{>0}$ is continuous it has some positive upper bound so there exists $L>0$ such that every forward iterate of $\eta$ has length less than $L$. This contradicts the fact that $f$ expands uniformly the length of any $C^1$ arc tangent to $E^u$. This end the proof of $F(x)=E^c(x)$ for every $x\in M$.

It follows that the flow lines of $\varphi_t$ are tangent to $E^c$ and consequently they form a center foliation $\W^c$ whose leaves are fixed by $f$. Property \ref{item1prop1} is settled.

Since $f$ is $C^1$ and preserves the bundle $E^c$ it is immediate to check that the function $\tau$ needs to be $C^1$ restricted to each leaf of $\W^c$. In order to end \ref{item2prop1} it remains to show that $\tau$ has no zeros. For this we will use a similar argument as in  \cite[Lema 1.2.]{BG1} or \cite[Proposition 5.14.]{BFP}.

Let us suppose by contradiction that $\tau(x)=0$ for some $x\in M$ and consider $U$ a small $\varphi_t$ flow box neighborhood of $x$. By the continuity of $f$ there exists $\epsilon>0$ such that $B_{2\epsilon}(x)\subset U$ and $f(B_{2\epsilon}(x))\subset U$. 

We claim that $\epsilon$ can be considered small enough so that $y$ and $f(y)$ need to lie in the same segment of $\varphi_t$-orbit of $U$ for every $y\in B_\epsilon(x)$. Indeed, let $l>0$ be a constant smaller than the distance between $B_\epsilon(x)$ and $M\setminus U$ and let $C>0$ be a constant larger than $\|\frac{\partial \varphi_t}{\partial t}|_{t=0}(y)\|$ for every $y\in M$. By the continuity of $\tau$  we can consider $\epsilon$ small enough so that $\tau(y)C<l$ for every $y\in B_\epsilon(x)$. It follows that the center arc $[y,f(y)]_c$ from $y$ to $f(y)$ along $\varphi_t$ needs to have length less that $l$ for every $y\in B_\epsilon(x)$. Hence, $[y,f(y)]_c$ needs to be contained in $U$ for every $y\in B_{\epsilon}(x)$ and this proves the claim.

As $f$ contracts distances inside $\W^s$-leaves for large enough forward iterates there exists $\delta>0$ such that $\W^s_\delta(x)$ and $f^n(\W^s_\delta(x))$ for every $n\geq 0$ are contained in $B_\epsilon(x)$. Moreover, for every $y\in \W^s_\delta(x)\setminus \{x\}$ the sequence $f^n(y)$ tends to $x$. This contradicts the fact that, by the previous claim, every point in $\{f^n(y)\}_{n\geq 0}$ must lie in the same segment of $\varphi_t$-orbit of $U$ than $y$ (which is at positive distance from $x$).
\end{proof}

Note that because of \ref{item2prop1} in the previous proposition one can always assume that $\tau$ is positive (modulo inverting the time of $\varphi_t$ if needed). 

The next remark shows that Definition \ref{defDAFintro} is independent of reparametrizations of the flow $\varphi_t$. In particular, one can always assume that $\varphi_t$ has been parametrized by arc-length. 

\begin{remark}\label{rmkDAFreparam}
Suppose $f$ is a discretized Anosov flow such that $f(x)=\varphi_{\tau(x)}(x)$ for every $x\in M$ as in Definition \ref{defDAFintro}. Let $\alpha:M\to \mathbb{R}_{>0}$ be a continuous function. If $\tilde{\varphi}_t$ is the reparametrization of $\varphi_t$ generated by the continuous vector field $\alpha \frac{\partial \varphi_t}{\partial t}|_{t=0}$ then there exists $\tilde{\tau}:M\to M$ continuous such that $f(x)=\tilde{\varphi}_{\tilde{\tau}(x)}(x)$ for every $x\in M$.
\end{remark}

\begin{proof}
Let $\W^c$ be the foliation by flow lines of $\varphi_t$. As 
$\frac{\partial \varphi_t}{\partial t}|_{t=0}(x)\neq 0$ for every $x\in M$, the vector field $\alpha \frac{\partial \varphi_t}{\partial t}|_{t=0}$ is a continuous vector field without singularities restricted to each leaf of $\W^c$. It follows that it uniquely integrates inside each leaf of the one-dimensional foliation $\W^c$. The flow $\tilde{\varphi}_t:M\to M$ obtained in this way has the same flow lines as $\varphi_t$.

Moreover, there exists $r:M\times \mathbb{R} \to \mathbb{R}$ continuous such that $\varphi_t(x)=\tilde{\varphi}_{r(x,t)}(x)$ for every $x\in M$ and $t\in \mathbb{R}$. Then $\tilde{\tau}(x)=r(x,\tau(x))$ satisfies that $f(x)=\tilde{\varphi}_{\tilde{\tau}(x)}(x)$.
\end{proof}

\subsection{Fixed center foliation and bounded displacement along center}\label{sectionfixedcenterfoliation}

As pointed out in Proposition \ref{prop1DAFs}, an immediate consequence of Definition \ref{defDAFintro} is that discretized Anosov flows fix the leaves of a one dimensional center foliation $\W^c$. We do not know if, conversely, this property is enough for characterizing discretized Anosov flows:

\begin{quest}\label{questWcfixed}
Suppose $f\in \PH_{c=1}(M)$ admits a center foliation $\W^c$ such that $f(W)=W$ for every leaf $W\in \W^c$. Is $f$ a discretized Anosov flow?
\end{quest}

In \cite[Question 1.3.]{Gog} a similar question has been posed. For transitive systems in dimension 3 a positive answer to Question \ref{questWcfixed} is given in Section \ref{sectionfixedcenterindim3}.

For any ambient dimension the following characterization result gives a partial answer to Question \ref{questWcfixed} provided an extra condition is satisfied.

\begin{prop}\label{propcenterfixingL} Suppose $f\in\PH_{c=1}(M)$. The following are equivalent:
\begin{enumerate}[label=(\roman*)]
\item\label{item1prop2} The map $f$ is a discretized Anosov flow.
\item \label{item2prop2}  There exists a center foliation $\W^c$ and a constant $L>0$  such that $f(x)\in \W^c_L(x)$ for every $x\in M$.
\end{enumerate}
\end{prop}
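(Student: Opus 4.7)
The direction $(i)\Rightarrow(ii)$ is immediate. Given $f(x)=\varphi_{\tau(x)}(x)$, by Proposition \ref{prop1DAFs} the orbits of $\varphi_t$ form the center foliation $\W^c$. On compact $M$, both $|\tau|$ and $\|\partial_t\varphi_t|_{t=0}\|$ are bounded, by $T$ and $V$ say; the arc $\{\varphi_s(x):s\in[0,\tau(x)]\}$ then has intrinsic length at most $TV$, so $f(x)\in \W^c_L(x)$ with $L:=TV$.

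For $(ii)\Rightarrow(i)$, given $\W^c$ and $L$ satisfying the bound, the plan is to construct a continuous flow $\varphi_t$ whose orbits are the leaves of $\W^c$ and a continuous $\tau:M\to \mathbb{R}$ with $f=\varphi_\tau$. The crucial first step is to show that $f$ has no fixed points, adapting the argument used in Proposition \ref{prop1DAFs}(\ref{item2prop1}). Suppose $f(x_0)=x_0$. By Lemma \ref{lemmaWcWsC1}, a local $C^1$ center-stable submanifold $\Sigma^{cs}$ through $x_0$ exists with a product structure whose $\W^c$-plaques are indexed by a transverse disk inside $\W^s_\delta(x_0)$. Shrinking a foliation chart $U$ around $x_0$ so that its $\W^c$-plaques have intrinsic length well above $L$, for $y\in \W^s_\delta(x_0)\setminus\{x_0\}$ sufficiently close to $x_0$ the arc $[y,f(y)]_{\W^c}$ (of intrinsic length $\le L$) stays in $U$ and in fact in the same $\W^c$-plaque as $y$. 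Iterating, using the stable contraction $f^n(y)\to x_0$ together with the per-step center drift bound $L$, one produces an iterate $f^N(y)$ that is simultaneously in the plaque of $y$ (hence at a fixed nonzero transverse coordinate) and arbitrarily close to $x_0$ (transverse coordinate zero), contradicting $y\notin \W^c(x_0)$.

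With $f$ fixed-point-free, $f|_W$ is a fixed-point-free diffeomorphism on every leaf $W$. On a line leaf, orientation-reversing diffeomorphisms of $\mathbb{R}$ yield unbounded displacements $|f(x)-x|$, contradicting the uniform bound $L$; on a circle leaf, every orientation-reversing diffeomorphism of $S^1$ has two fixed points. Hence $f|_W$ is orientation-preserving on each leaf, and the direction from $x$ to $f(x)$ along the shorter arc in $\W^c(x)$ has constant sign on each leaf. Continuity of $f$ and of the foliation $\W^c$ makes this orientation depend continuously on $x$ (the only potential ambiguity, arising on circle leaves where $f$ acts as a half-period rotation, is resolved by a local continuity argument, possibly after passing to a finite cover). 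One obtains a continuous nowhere-vanishing vector field $X$ tangent to $E^c$, and the flow $\varphi_t$ generated by $X$ has orbits equal to the leaves of $\W^c$.

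Finally, for each $x$ there exists a unique $\tau(x)\in [-L,L]$ with $\varphi_{\tau(x)}(x)=f(x)$, uniqueness following from the bound $d_{\W^c}(x,f(x))\le L$ together with the orientation choice above; continuity of $\tau$ is inherited from continuity of $f$ and of $\varphi_t$. The main obstacle is the fixed-point-free step: the delicate interplay between the stable contraction and the uniform center-displacement bound must be carried out carefully, with Lemma \ref{lemmaWcWsC1} and its associated local product structure playing the role that flow boxes play in the proof of Proposition \ref{prop1DAFs}(\ref{item2prop1}).
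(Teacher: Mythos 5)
The direction $(i)\Rightarrow(ii)$ is correct and matches the paper. The direction $(ii)\Rightarrow(i)$ follows the same general outline as the paper (no-fixed-point step, construct a vector field tangent to $E^c$, produce $\tau$), but there is a genuine gap: you do not account for compact center leaves of length less than $2L$.

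Concretely, your no-fixed-point argument asks to ``shrink a foliation chart $U$ around $x_0$ so that its $\W^c$-plaques have intrinsic length well above $L$.'' A plaque of a genuine foliation chart is an \emph{embedded} arc, so its length cannot exceed the length of the leaf; if $\W^c(x_0)$ is a compact leaf of length $K<L$ this step is impossible, and the argument breaks down. The paper only proves the absence of fixed points on the open set $U$ of points whose leaves have length $\geq 2L$ (after first observing, via transverse hyperbolicity, that there are only finitely many compact leaves shorter than $2L$), and treats those finitely many short leaves by a separate extension argument. Your claim that $f$ is globally fixed-point-free is therefore not established and should not be expected: along a short periodic leaf $f$ may wind around an integer number of times and fix a point of it, exactly as $\tau$ having no zeros does not preclude $f$ from having fixed points.

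This same omission undermines your final step. You assert that $\tau(x)\in[-L,L]$ with $\varphi_{\tau(x)}(x)=f(x)$ is unique and continuous. On a compact leaf of length $K<L$ the time parameter is only defined modulo $K$, so uniqueness in $[-L,L]$ fails even after fixing an orientation; more seriously, for a sequence $x_n\to x_0$ with $x_0$ on a short compact leaf $\eta$, the center arcs $[x_n,f(x_n)]_c$ can wrap around $\eta$ a nonzero number $N$ of times as $n\to\infty$, forcing any $\tau$ confined to $[-L,L]$ to be \emph{discontinuous} at $x_0$. The paper's proof identifies precisely this phenomenon and defines $\tau$ on $\eta$ as $\length[x,f(x)]_c + N\length\eta$, where the integer $N$ (the ``number of turns'') is shown to be locally constant near $\eta$; that is the step your proposal is missing. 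Likewise, your remark that the orientation ambiguity on circle leaves can be ``resolved by a local continuity argument, possibly after passing to a finite cover'' is too vague precisely where the paper has to do real work to extend the orientation from $U$ across the short compact leaves.
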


\begin{proof}
Suppose $f$ is a discretized Anosov flow. Let $\varphi_t$ be the flow appearing in the definition of $f$ such that $f(x)=\varphi_{\tau(x)}(x)$ for every $x\in M$. Proposition \ref{prop1DAFs} shows  that $f$ fixes the leaves of the center foliation $\W^c$ given by the flow lines of $\varphi_t$. If $T>0$ denotes an upper bound for $\tau$ and $C>0$ an upper bound for $y \mapsto \|\frac{\partial \varphi_t}{\partial t}|_{t=0}(y)\|$  it follows that $f(x)\in \W^c_{TC}(x)$ for every $x$ in $M$. Thus \ref{item1prop2} implies \ref{item2prop2}.

Let us see that \ref{item2prop2} implies \ref{item1prop2}. Suppose that there exists $L>0$ such that $f(x)\in \W^c_L(x)$ for every $x\in M$. In particular, $f(W)=W$ for every leaf $W\in \W^c$. 

Note first that, by transverse hyperbolicity, every compact leaf of $\W^c$ of length less than $2L$ can not be accumulated by compact leaves of $\W^c$ of length less that $2L$. Then the number of compact leaves of length less than $2L$ needs to be finite. 

Let $U\subset M$ denote the union of leaves of $\W^c$ with length larger or equal to $2L$. For every $x\in U$ let $[x,f(x)]_c$ denote the center segment in $\W^c_L(x)$ joining $x$ with $f(x)$. It is immediate to check that $[x,f(x)]_c$ varies continuously in the Hausdorff topology for every $x$ in $U$.

Essentially the same argument used to show \ref{item2prop1} in Proposition \ref{prop1DAFs} shows that $f$ has no fixed points in $U$: If $x$ is a fixed point of $f$ consider $U_\epsilon(x)\subset U$ a small foliation box neighborhood of $\W^c$ containing $x$ such that $\W^c_{L}(y)\cap U_\epsilon(x)$ has only one connected component for every $y\in U_\epsilon(x)$. For $\delta>0$ small enough, if $y\in \W^s_\delta(x)\setminus\{x\}$ then $f^n(y)\in U_\epsilon(x)$ for every $n\geq 0$ and $\lim_n f^n(y)=x$. However, $f^n(y)\in \W^c_L(f^{n-1}(y))$ and $f^n(y)\in U_\epsilon$ implies that $f^n(y)$ must lie in the center segment $\W^c_L(y)\cap U_\epsilon(x)$ for every $n\geq 0$. This gives us a contradiction with $\lim_n f^n(y)=x$ and ends the proof that $f$ has no fixed points in $U$.

As $f$ has no fixed points in $U$, for every $x\in U$ we can define $X^c(x)$ to be the unit vector in $E^c(x)$ pointing inwards to the segment $[x,f(x)]_c$. As $[x,f(x)]_c$ varies continuously with $x$ in $U$ it follows that $X^c$ is a continuous vector field in $U$.

Let $\varphi_t:U\to U$ be the flow whose orbits are the leaves of $\W^c$ in $U$ and such that $\frac{\partial \varphi_t}{\partial t}|_{t=0}$ is equal to $X^c$. Let us define $\tau(x)$ to be the length of $[x,f(x)]_c$ for every $x$ in $U$. Clearly $f(x)=\varphi_{\tau(x)}(x)$ for every $x\in U$. It remains to see that $X^c$, $\varphi_t$ and $\tau$, which are a priori defined only in $U$, extend well to $M$. That is, that they extend well to the union of compact center leaves of length less than $2L$.

Let $\eta$ be a compact center leaf of length less than $2L$. For every $x\in \eta$ consider $V_x$ a small $\W^c$-box neighborhood containing $x$ so that if $V_x\cap V_y\neq \emptyset$ then $\W^c|_{V_x\cup V_y}$ is orientable. We can suppose that for every $x$ the neighborhood $V_x$ is small enough so that it is disjoint from every other compact center leaf of length less than $2L$. 

Consider $V$ be the neighborhood of $\eta$ that is the union of the elements of $\{V_x\}_{x\in \eta}$.
It follows that $\W^c|_{V}$ is orientable since any orientation given to $\eta$ can be extended to an orientation on each $V_x$ and this orientations coincide in $V_x\cap V_y$ whenever $V_x\cap V_y\neq \emptyset$. Then, as the set $U\cap V$ is connected, it follows that the orientation induced by $X^c$ in $\W^c|_U$ can be extended to $\W^c|_{U\cap V}$. Now that the a priori orientation issue has been ruled out, it follows immediately that $X^c$ and $\varphi_t$ extend continuously to $\eta$.

It remains to extend  $\tau$ continuously to $\eta$ so that $f(x)=\varphi_{\tau(x)}(x)$ for every $x\in \eta$. To this end, for every $x$ in $\eta$ let us denote by $[x,f(x)]_c$ the center segment from $x$ to $f(x)$ such that $X^c(x)$ points inwards in $[x,f(x)]_c$. Note that it may be the case that if $x_n\to x$ with $(x_n)_n\subset U$ then $[x_n,f(x_n)]_c$ `turns around' $\eta$ many times so that $[x_n,f(x_n)]_c$ accumulates in the Hausdorff topology to $\eta$ instead of $[x,f(x)]_c$

However, since $\W^c$ is a continuous foliation tangent to a continuous subbundle there exists $\epsilon>0$ such that if $d(y,x)<\epsilon$ then $\varphi_t(y)$ is in $V_{\varphi_t(x)}$ for every $t\in [0,L]$ and $x\in\eta$. It follows that the `number of turns' (measured, for example, as the number of connected component of $[x_n,f(x_n)]_c\cap V_x$ minus 1) needs to be constant for $x_n$ close enough to $x$. As this integer number varies continuously with $x$ in $\eta$ it has to be a constant $N$ independent of the point $x$.  Hence by defining $\tau$ in $\eta$ as $$\tau(x)=\length [x,f(x)]_c + N\length \eta$$ it follows that $\tau$ extends continuously to $\eta$. 

By doing the above for every center leaf $\eta$ of length less than $2L$ it follows that $\tau$ is well defined and continuous in $M$, and that $$f(x)=\varphi_{\tau(x)}(x)$$ is satisfied for every $x\in M$. This settles $(ii)$ implies $(i)$.
\end{proof}

\subsection{Quasi-isometrical center action and dynamical coherence}\label{sectionqi}

A key property for discretized Anosov flows turns out to be that segments inside $\W^c$ do not get arbitrarily long for past and future iterates of $f$. We will use this fact to show that every discretized Anosov flow is dynamically coherent. 

It is worth noting that this property sets an essential bridge between the class of discretized Anosov flows and that of partially hyperbolic systems admitting a uniformly compact center foliation.

The following definition is valid for any center dimension.

\begin{definition} A partially hyperbolic diffeomorphism $f$ admitting an $f$-invariant center foliation $\W^c$ is said to \emph{act quasi-isometrically on $\W^c$} if there exist constants $l,L>0$ such that \begin{equation*} f^n(\W^c_l(x))\subset \W^c_L(f^n(x))
\end{equation*} 
for every $x$ in $M$ and $n\in \mathbb{Z}$.
\end{definition}

The following is immediate to check.

\begin{remark}\label{rmkDAFQI} Every discretized Anosov flow acts quasi-isometrically on the center foliation $\W^c$ given by the flow lines of the flow $\varphi_t$ as in Definition \ref{defDAFintro}. Indeed, since $f(x)=\varphi_{\tau(x)}(x)$ for every $x\in M$ then $f$ acts quasi-isometrically on $\W^c$ with constants $l=\min \|\frac{\partial \varphi_t}{\partial t}|_{t=0}\|. \min\tau$ and $L=\max \|\frac{\partial \varphi_t}{\partial t}|_{t=0}\|.\max \tau$.
\end{remark}

\begin{remark}\label{rmkunifcompactQI}
Every partially hyperbolic diffeomorphism admitting an invariant uniformly compact center foliation $\W^c$ acts quasi-isometrically on $\W^c$. Indeed, it is enough to show that under these circumstances the diameter of the center leaves is uniformly bounded and then set $L>0$ larger than this bound.

To show that the diameter of $\W^c$-leaves is uniformly bounded one can argue as follows. Let $\delta,\epsilon>0$ be such that for every $x\in M$ the set $\W^c_\delta(x)$ has volume less than $\epsilon$. Suppose by contradiction that there exist center leaves with arbitrarily large diameter. It follows that for every $N>0$ one can find $N$ points in the same center leaf such that any two points are separated more than $2\delta$. Then the volume of the center leaf containing these points is larger that $N\epsilon$. This contradicts the fact that center leaves have a uniformly bounded volume.
\end{remark}

By Remark \ref{rmkDAFQI} the following proposition shows that discretized Anosov flows are dynamically coherent and have \emph{complete} $\W^{cs}$ and $\W^{cu}$ leaves (items (\ref{thm2ii}) and (\ref{thm2iv}) in Theorem  \ref{thmB}, respectively). Moreover, it also shows Theorem  \ref{thmB'} items (\ref{thm2i}) and (\ref{thm2iii}).

\begin{prop}[Dynamical coherence]\label{propdyncoh}
Suppose $f\in \PH_{c=1}(M)$ acts quasi-isometrically on a center foliation $\W^c$. Then $f$ is dynamically coherent, admitting center-stable foliation $\W^{cs}$ and center-unstable foliation $\W^{cu}$ such that $\W^c=\W^{cs}\cap \W^{cu}$. Moreover, $\W^{cs}(x)=\W^s(\W^c(x))$ and $\W^{cu}(x)=\W^u(\W^c(x))$ for every $x\in M$.
\end{prop}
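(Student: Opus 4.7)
The natural candidate foliations are defined setwise by $\W^{cs}(x):=\W^s(\W^c(x))$ and $\W^{cu}(x):=\W^u(\W^c(x))$. The plan is to prove: (a) each $\W^{cs}(x)$ is an injectively immersed $C^1$ submanifold tangent to $E^{cs}$; (b) the sets $\{\W^{cs}(x)\}_{x\in M}$ are pairwise equal or disjoint; (c) they assemble into a foliation; (d) $f$-invariance; and (e) $\W^c=\W^{cs}\cap\W^{cu}$. The analogous statements for $\W^{cu}$ will follow by applying the same argument to $f^{-1}$, which also acts quasi-isometrically on $\W^c$.

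For (a), I fix $\delta_0>0$ as in Lemma \ref{lemmaWcWsC1}, so that every $\W^s_{\delta_0}(\W^c_{\delta_0}(y))$ is a $C^1$ submanifold tangent to $E^{cs}$. Given $z\in\W^{cs}(x)$, write $z\in\W^s(y)$ with $y\in\W^c(x)$. Since stable distance contracts under forward iteration, for $n$ large enough one has $d^s(f^n(z),f^n(y))<\delta_0/2$, and hence $f^n(z)$ lies in the local plaque $V_n:=\W^s_{\delta_0}(\W^c_{\delta_0}(f^n(y)))$. Its preimage $f^{-n}(V_n)$ is a $C^1$ submanifold tangent to $E^{cs}$ by $Df$-invariance of $E^{cs}$, it contains $z$, and it is contained in $f^{-n}(\W^s(\W^c(f^n(y))))=\W^s(\W^c(y))=\W^{cs}(x)$. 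Letting $z$ vary yields the desired injectively immersed $C^1$ structure.

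For (b), suppose $z\in\W^{cs}(x)\cap\W^{cs}(y)$ and pick $a\in\W^c(x)$, $b\in\W^c(y)$ with $z\in\W^s(a)\cap\W^s(b)$, so that $a\in\W^s(b)$. It suffices to show $\W^c(b)\subset\W^{cs}(x)$, i.e.\ that every $c\in\W^c(b)$ satisfies $\W^s(c)\cap\W^c(a)\neq\emptyset$; the opposite inclusion $\W^{cs}(x)\subset\W^{cs}(y)$ then follows by symmetry, using that $\W^{cs}(x)$ is automatically $\W^s$-saturated. The idea is to iterate forward: the quasi-isometric hypothesis bounds $d_c(f^n(b),f^n(c))$ uniformly in $n$, while $d^s(f^n(a),f^n(b))\to 0$. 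Hence by continuity of $\W^c$, a fixed-length segment $\W^c_R(f^n(b))$ containing $f^n(c)$ becomes Hausdorff-close to a segment of $\W^c(f^n(a))$. Combined with the local product structure of the invariant foliations and the local stable holonomy between nearby center leaves (a standard consequence of the local integrability provided by Lemma \ref{lemmaWcWsC1}), this yields a point of $\W^s(f^n(c))\cap\W^c(f^n(a))$, and applying $f^{-n}$ produces the required point in $\W^s(c)\cap\W^c(a)$. I expect this step to be the main obstacle: the quasi-isometric hypothesis is essential here, since otherwise $f^n(\W^c_R(b))$ could stretch unboundedly and there would be no way to follow the stable holonomy along the whole segment after iteration.

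The foliation structure in (c) follows from (a) and (b), since locally every $\W^{cs}(x)$ agrees with the plaque $\W^s_{\delta_0}(\W^c_{\delta_0}(x))$, whose continuous variation in $x$ provides trivializing charts. Item (d) is immediate from $f(\W^s(\W^c(x)))=\W^s(\W^c(f(x)))$, using the $f$-invariance of $\W^s$ and $\W^c$. Finally, for (e) the inclusion $\W^c(x)\subseteq\W^{cs}(x)\cap\W^{cu}(x)$ is tautological; the reverse inclusion at the level of connected components follows from the local identity $\W^s_{\delta_0}(\W^c_{\delta_0}(x))\cap\W^u_{\delta_0}(\W^c_{\delta_0}(x))=\W^c_{\delta_0}(x)$ given by local product structure, propagated along $\W^c(x)$ using that both $\W^{cs}$ and $\W^{cu}$ are $\W^c$-saturated by the partition property established in (b).
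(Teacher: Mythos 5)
Your overall skeleton runs parallel to the paper's: define $\W^{cs}(x)=\W^s(\W^c(x))$, show it is a $C^1$ immersed submanifold via Lemma~\ref{lemmaWcWsC1}, show the collection is a partition, and intersect. Your step~(e) is handled by the same local-product-structure identity the paper implicitly uses, and you correctly identify step~(b) — that the sets $\W^s(\W^c(x))$ are pairwise equal or disjoint, equivalently that every $\W^{cs}(x)$ is $\W^c$-saturated — as the heart of the matter.

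The gap is in the argument you sketch for~(b). You assert that the Hausdorff-closeness of the uniformly bounded segment $\W^c_R(f^n(b))$ to a segment of $\W^c(f^n(a))$, combined with \emph{``local stable holonomy between nearby center leaves''}, yields a point of $\W^s(f^n(c))\cap\W^c(f^n(a))$. This does not follow: two nearby center curves need not be connected by stable plaques, and the natural projection of $f^n(c)$ onto the local $cs$-plaque $\W^s_\delta(\W^c_\delta(\cdot))$ of $\W^c(f^n(a))$ goes along $\W^u$-plaques, producing a point $P^u$ with a small but a priori nonzero $u$-displacement from $f^n(c)$ that one still has to kill. The paper isolates exactly this in Lemma~\ref{lemmaetainWsWc}: it builds the full $su$-transport $H^{su}\gamma$ once, at a fixed iterate $N$ where $f^N(a),f^N(b)$ are $\delta'$-close, and then uses the $f$-equivariance $H^{su}(f^n\circ\gamma)=f^n\circ H^{su}\gamma$ to get $f^n(P^u(t))\in\W^u_\delta(f^n\gamma(t))$ for all $n\geq 0$; letting $n\to\infty$ forces $P^u(t)=\gamma(t)$, i.e.\ $\gamma\subset\W^s(\W^c(f^N(x)))$. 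In your scheme the projection is (re)defined at each iterate $n$, so there is no compatibility between the $P^u$'s across $n$, and the argument does not close: $f^{-n}$ of the $n$-th projection point need not lie in $\W^s(\W^c(a))$, and the sequence of such pullbacks has no reason to converge. Once Lemma~\ref{lemmaetainWsWc} is in place, your~(b) is exactly the paper's first paragraph (QI bounds $\length(f^n\W^c_l(y))$ for all $l$, hence $\W^c(y)\subset\W^s(\W^c(x))$), and the rest of the argument — completeness of the leaves, the partition property, and $\W^c=\W^{cs}\cap\W^{cu}$ — runs as you outline.
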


Recall that for every $x\in M$ the set $\W^s(\W^c(x))$ is by definition equal to $\bigcup_{y\in\W^c(x)}\W^s(y)$ and the set $\W^u(\W^c(x))$ is equal to $\bigcup_{y\in\W^c(x)}\W^u(y)$.

Proposition \ref{propdyncoh} will be derived from the following lemma that may be of independent interest.

\begin{lemma}\label{lemmaetainWsWc}
Suppose $f\in\PH_{c=D}(M)$ for some $D>0$ admits an invariant center foliation $\W^c$. Let $y$ be a point in $\W^s(x)$ for some $x\in M$ and suppose $\eta\subset \W^c(y)$ is a $C^1$ curve through $y$ such that $\{\length(f^n\eta)\}_{n\geq 0}$ is bounded. Then $\eta$ is contained in $\W^s(\W^c(x))$.
\end{lemma}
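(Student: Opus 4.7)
The plan is to produce, for each $z\in\eta$, an explicit $w\in\W^c(x)$ with $z\in\W^s(w)$. Write $z_n=f^n(z)$, $y_n=f^n(y)$, $x_n=f^n(x)$: the assumption $y\in\W^s(x)$ gives $d(y_n,x_n)\to 0$, while the bounded-length hypothesis gives $z_n\in\W^c_L(y_n)$ for $L:=\sup_{n\geq 0}\length(f^n\eta)<\infty$.

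The first ingredient I would establish is a \emph{local stable holonomy lemma}: there exist uniform constants $\delta_0,C>0$ such that whenever $p\in\W^s(q)$ with $d(p,q)<\delta_0$ and $p'\in\W^c_{\delta_0}(p)$, there is a unique $q'\in\W^c(q)$ with $d(q,q')<2\delta_0$ and $p'\in\W^s(q')$; moreover $d(p',q')\leq C\, d(p,q)$. In a foliation chart adapted to the continuous splitting $E^s\oplus E^c\oplus E^u$ near $q$, the plaques $\W^c_{\delta_0}(p)$ and $\W^c_{\delta_0}(q)$ are $C^1$ discs tangent to $E^c$ differing by a displacement lying along $\W^s$ (namely the one going from $p$ to $q$ along the stable leaf); transversality of the splitting gives existence and uniqueness of the intersection $q'=\W^s_{\mathrm{loc}}(p')\cap\W^c(q)$, and the bound $d(p',q')\leq C\,d(p,q)$ comes from the uniform transverse angle between $E^s$ and $E^c\oplus E^u$. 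Compactness of $M$ together with continuity of the invariant bundles yields the uniformity of $\delta_0$ and $C$.

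The second step is to propagate this local holonomy along the whole plaque $\W^c_L(y_n)$ from $y_n$ to $z_n$. Set $K:=\lceil L/\delta_0\rceil$ and take $n$ so large that $d(y_n,x_n)<\delta_0 C^{-K}$. Subdivide the arc from $y_n$ to $z_n$ inside $\W^c(y_n)$ into $K$ subarcs of length at most $\delta_0$, with successive endpoints $y_n=y_n^{(0)},\ldots,y_n^{(K)}=z_n$. Set $x_n^{(0)}:=x_n$ and inductively define $x_n^{(k+1)}\in\W^c(x_n)$ by applying the local lemma to the pair $(y_n^{(k)},x_n^{(k)})$ and the point $y_n^{(k+1)}\in\W^c_{\delta_0}(y_n^{(k)})$. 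The estimate $d(y_n^{(k+1)},x_n^{(k+1)})\leq C\, d(y_n^{(k)},x_n^{(k)})$, combined with the choice of $n$, keeps $d(y_n^{(k)},x_n^{(k)})<\delta_0$ at every step, so the local lemma is applicable throughout, and the final $w_n:=x_n^{(K)}\in\W^c(x_n)$ satisfies $z_n\in\W^s(w_n)$.

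Finally, setting $w:=f^{-n}(w_n)$ gives $w\in f^{-n}(\W^c(x_n))=\W^c(x)$ and $z=f^{-n}(z_n)\in f^{-n}(\W^s(w_n))=\W^s(w)\subset\W^s(\W^c(x))$, as required. The main obstacle is the uniform control in the local holonomy lemma: one must produce $\delta_0$ and $C$ that work at every base point of $M$, which is where compactness of $M$ and continuity of the invariant splitting are essential. Once this uniformity is secured, the bounded-length hypothesis on $\eta$ fixes $K$ independently of $n$, so that the decay $d(y_n,x_n)\to 0$ provided by $y\in\W^s(x)$ is precisely what is needed to drive the propagation all the way along the plaque of length $L$.
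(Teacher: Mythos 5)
Your ``local stable holonomy lemma'' is false as stated, and the gap is precisely the one the lemma is designed to close. You claim that for $p\in\W^s(q)$ close to $q$ and $p'\in\W^c_{\delta_0}(p)$ there is a unique $q'\in\W^c(q)$ with $p'\in\W^s(q')$, arguing that ``transversality of the splitting gives existence and uniqueness of the intersection $q'=\W^s_{\mathrm{loc}}(p')\cap\W^c(q)$.'' But $\W^s_{\mathrm{loc}}(p')$ has dimension $\dim E^s$ and $\W^c(q)$ has dimension $D$, inside an ambient manifold of dimension $\dim E^s+D+\dim E^u$ with $\dim E^u\geq 1$; the generic intersection has negative expected codimension and is empty, not a single point. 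Equivalently, you are implicitly assuming that the center plaque through $p$ lies in the stable saturation $\W^s(\W^c(q))$ (so that the $\W^s$ and $\W^c$ plaques have a local product structure within that $(s+D)$-dimensional submanifold), but that containment is exactly what the lemma asserts and must be proven. Absent dynamical coherence, $\W^c(p)$ can a priori drift out of $\W^s(\W^c(q))$ in the $E^u$ direction, and your induction never sees or controls that drift.

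The paper's proof handles exactly this: instead of intersecting $\W^s_{\mathrm{loc}}(p')$ with $\W^c(q)$ (dimensionally impossible), it intersects $\W^u_\delta(\gamma(t))$ with the $cs$-submanifold $\W^s_\delta(\W^c_\delta(H\gamma(t)))$ (complementary dimensions, hence a well-defined point $P^u(t)$), and then uses dynamics to show the unstable displacement is actually zero: the construction commutes with $f$, so $f^n(P^u(t))\in\W^u_\delta(f^n\circ\gamma(t))$ for all $n\geq 0$ (here the bounded-length hypothesis is used to keep the holonomy well-defined for all forward iterates), and pulling back $n$ times and letting $n\to\infty$ forces $P^u(t)=\gamma(t)$ by the uniform backward contraction of $\W^u$. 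That last dynamical step --- iterate forward, then use unstable contraction under $f^{-1}$ --- is the idea your proposal is missing; without it you can only conclude that $z_n$ is \emph{close} to $\W^s(\W^c(x_n))$, not that it lies inside it.
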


\begin{proof}
Let $x\in M$, $y\in \W^s(x)$ and $\eta:[0,1]\to \W^c(y)$ be a $C^1$ curve with $\eta(0)=y$. Suppose that $\{\length(f^n\eta)\}_{n\geq 0}\}$ is bounded by some constant $L>0$.

Let $\delta>0$ be as in Lemma \ref{lemmaWcWsC1} so that $\W^s_\delta(\W^c_\delta(z))$ is a $C^1$ submanifold tangent to $E^s\oplus E^c$ for every $z\in M$. Recall that the bundles $E^s$, $E^c$ and $E^u$ vary continuously in $M$. By taking $\delta$ small enough we can ensure that for every $z$ and $z'$ in $M$ such that $d(z,z')<\frac{\delta}{2}$ the sets $\W^u_\delta(z')$ and $\W^s_\delta(\W^c_\delta(z))$ intersect, and that this intersection takes place in a unique point.

We claim that there exists a constant $\delta'>0$ such that if $d(z,z')<\delta'$ and $\gamma:[0,1]\to\W^c(z')$ is a curve of length at most $L$ with $\gamma(0)=z'$ then there exists a continuous curve $H^{su}\gamma:[0,1]\to \W^c(z)$ such that $\W^u_\delta(\gamma(t))\cap \W^s_\delta(H^{su}\gamma(t))\neq \emptyset$ for every $t\in [0,1]$ and $H^{su}\gamma(0)\in \W^c_\delta(z)$. Note that if this claim is true then $H^{su}\gamma$ is a particular choice of continuation by center holonomy of $\gamma$ along $\W^c(z)$ that is uniquely determined by the properties $\W^u_\delta(\gamma(t))\cap \W^s_\delta(H^{su}\gamma(t))\neq \emptyset$ for every $t\in [0,1]$ and $H^{su}\gamma(0)\in\W^c_\delta(z)$.

Let us prove the claim. As $\W^c$ is a foliation tangent to a continuous bundle we can consider $\delta'>0$ so that whenever $z$ and $z'$ are points in $M$ satisfying $d(z,z')<\delta'$ and $\gamma:[0,1]\to\W^c(z')$ is a curve of length at most $L$ with $\gamma(0)=z'$, then there exists a continuous curve $H\gamma:[0,1]\to \W^c(z)$ with $H\gamma(0)=z$ and $d(\gamma(t),H\gamma(t))\leq \frac{\delta}{2}$ for every $t\in [0,1]$. 

The curve $H\gamma$ is an auxiliary curve used to define $H^{su}\gamma$. Indeed, we can consider $P^u(t)$ as the intersection point of $\W^u_\delta(\gamma(t))$ and $\W^s_\delta(\W^c_\delta(H\gamma(t)))$ for every $t\in [0,1]$. Then $H^{su}\gamma(t)$ can be defined as the unique point in $\W^c_\delta(H\gamma(t))$ such that $P^u(t)$ is contained in $\W^s_\delta(H^{su}\gamma(t))$. This proves the claim.

Let $N>0$ be such that $d(f^n(x),f^n(y))<\delta'$ for every $n\geq N$. For simplicity, let $\gamma$ denote the curve $f^N\circ \eta$. Then $H^{su}(f^n\circ\gamma)$ is well defined for every $n\geq 0$. Moreover, as $f$ preserves  $\W^s$, $\W^c$ and $\W^u$-leaves, the special choice of $H^{su}$ gives us the following invariance: the curve $H^{su}(f^n\circ\gamma)$ coincides with the curve $f^n \circ H^{su} \gamma$ for every $n\geq 0$.

In particular, $f^n(P^u(t))$ lies in $\W^u_\delta(f^n\circ \gamma (t))$ for every $t\in [0,1]$ and $n\geq 0$. Iterating $n$ times backwards yields that $P^u(t)$ lies in $\W^u_{C(1/2)^{n/\ell} \delta}(\gamma(t))$ for some constants $\ell\in \mathbb{Z}^+$ and $C>0$ given by the partial hyperbolicity of $f$.

It follows that $P^u(t)=\gamma(t)$ for every $t\in [0,1]$. That is, $f^N\circ \eta$ is contained in $\W^s(\W^c(f^N(x))$. Then $\eta$ is contained in $\W^s(\W^c(x))$.
\end{proof}

\begin{proof}[Proof of Proposition \ref{propdyncoh}]
Suppose $f\in \PH_{c=1}(M)$ acts quasi-isometrically on a center foliation $\W^c$.

Given $x\in M$ and $y \in \W^s(\W^c(x))$ let us see first that $\W^c(y)$ is contained in $\W^s(\W^c(x))$. Indeed, as $f$ acts quasi-isometrically on $\W^c$ for every $l>0$ there exists $L>0$ such every $f$-iterate of $\W^c_l(y)$ is bounded in length by $L$. By Lemma \ref{lemmaetainWsWc} it follows that $\W^c_l(y)\subset \W^s(\W^c(x))$. Since this happens for every $l>0$ it follows that $\W^c(y)\subset \W^s(\W^c(x))$.

By Lemma \ref{lemmaWcWsC1} for every $x\in M$ the set $\W^s(\W^c(x))$ is a $C^1$ injectively immersed submanifold tangent to $E^s\oplus E^c$. As $\W^s(\W^c(x))$ is saturated by $\W^s$ and $\W^c$ leaves it follows that its intrinsic metric is complete and that, if $y \in \W^s(\W^c(x))$, then $\W^s(\W^c(y))=\W^s(\W^c(x))$.

Then $\{\W^s(\W^c(x))\}_{x\in M}$ defines a partition of $M$ whose elements are the leaves of an $f$-invariant foliation tangent to $E^s\oplus E^c$ and subfoliated by leaves of $\W^s$ and $\W^c$. Thus a center-stable invariant foliation $\W^{cs}$ whose leaves are \emph{complete} (meaning that $\W^{cs}(x)=\W^s(\W^c(x))$ for every $x\in M$). The same arguments show that the sets $\{\W^u(\W^c(x))\}_{x\in M}$ define an invariant center-unstable foliation with complete leaves.
\end{proof}

Note that acting quasi-isometrically on a center foliation is preserved under finite lifts and finite powers. One can build other examples of quasi-isometrically center actions as follows: 

\begin{example} Let $\varphi_t:M\to M$ be an Anosov flow, $\pi:N\to M$ be a finite cover of $M$ and $\tilde{\varphi_t}:N\to N$ be the lift of $\varphi_t$ to $N$. Note that $\tilde{\varphi_t}$ is also an Anosov flow in $N$. One can define $f:N\to N$ as the composition of the time $1$ map of $\tilde{\varphi_t}$ with a non-trivial deck transformation of order $k>1$. It follows that $f$ is a partially hyperbolic diffeomorphism acting quasi-isometrically on the center (in fact, isometrically) that is not a discretized Anosov flow or a partially hyperbolic skew-product, but such that the power $g=f^k$ is a discretized Anosov flow. 
\end{example}

A construction from \cite{BPP} gives an example of a system $f\in \PH_{c=1}(M^3)$ acting quasi-isometrically on an $f$-invariant center foliation $\W^c$ such that $f^k$ is not a discretized Anosov for every 
$k \neq 0$ nor $\W^c$ is uniformly compact. This is done via a \emph{$h$-transversality surgery} over the time 1 map of a non-transitive Anosov flow. One can easily check from its construction that this example is not transitive.

One more type of examples of partially hyperbolic diffeomorphisms acting quasi-isometrically on a center foliation can be constructed by taking the product $f\times A :M\times N \to M \times N$ of a discretized Anosov flow $f:M\to M$ and an Anosov map $A:N\to N$.

In view of the above known examples of quasi-isometrically center actions we may ask the following:

\begin{quest}\label{questQIisDAForSkewP}
Suppose $f\in \PH_{c=1}(M)$ acts quasi-isometrically on a center foliation $\W^c$. If $\W^c$ is transitive (i.e. has a dense leaf) then does there exists $k\in \mathbb{Z}^+$ such that $f^k$ is a discretized Anosov flow?
\end{quest}

In \cite{Zh} and \cite{BZ} the notions of partially hyperbolic diffeomorphisms that are \emph{neutral along center} and \emph{topologically neutral along center} were introduced. One can check easily that these systems act quasi-isometrically on the center. However, these notions are strictly stronger than quasi-isometrical action on the center as they forbid, for example, the existence of hyperbolic periodic points (see \cite[Remark 2.11.]{GM}).

In \cite{BZ} a positive answer to Question \ref{questQIisDAForSkewP} in dimension 3 is obtained for systems that are (topologically) neutral along center.

\subsection{Uniqueness of invariant foliations}

The goal of this subsection is to show uniqueness of invariant center-stable and center-unstable foliation for discretized Anosov flows (Theorem \ref{thmB} item (\ref{thm2ii})), and more generally for partially hyperbolic systems acting quasi-isometrically on a one-dimensional center foliation (Theorem \ref{thmB'} item (\ref{thm2ii})). 

We will rely on the following lemma. 

\begin{lemma}\label{lemmaetac}
Suppose $f\in \PH(M)$ admits an $f$-invariant center-stable foliation $\W^{cs}$. If $\eta$ is a $C^1$ curve that is not contained in a leaf of $\W^{cs}$ then $\lim_{n\to +\infty}\length(f^n\circ \eta)=\infty$.
\end{lemma}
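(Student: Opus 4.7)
The plan is to argue by contradiction: suppose $\length(f^{n_k}\eta)\leq K$ along some subsequence $n_k\to\infty$, and derive that $\eta$ must in fact lie in a single leaf of $\W^{cs}$, contradicting the hypothesis. The derivation has two ingredients: a pointwise expansion estimate turning any nonzero $E^u$-component of $\dot\eta$ into exponential length blow-up, and the foliation-theoretic fact that $C^1$ curves tangent to $E^{cs}$ are trapped inside leaves.

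For the first ingredient, at any fixed $t\in[0,1]$ decompose $\dot\eta(t)=v^{cs}+v^u$ along $E^{cs}\oplus E^u$. If $v^u\neq 0$, uniform expansion of $E^u$ together with the domination inequalities from the definition of partial hyperbolicity give constants $\lambda>1$, $C>0$ with $\|Df^n v^u\|\geq C\lambda^n\|v^u\|$ and $\|Df^n v^{cs}\|\leq \tfrac12\|Df^n v^u\|$ for $n$ large, so $\|Df^n\dot\eta(t)\|\to\infty$ exponentially. By continuity of $\dot\eta$ and of the splitting $E^{cs}\oplus E^u$, the same exponential lower bound extends uniformly to a small interval around $t$, and integrating would contradict the uniform bound $\length(f^{n_k}\eta)\leq K$. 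Hence $\dot\eta(t)\in E^{cs}(\eta(t))$ for every $t\in[0,1]$.

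I then invoke the foliation-theoretic fact that any $C^1$ curve tangent to $E^{cs}$ at every point is contained in a single leaf of $\W^{cs}$: locally, in a foliation chart $\phi\colon U\to D^{cs}\times D^u$ of $\W^{cs}$, the plaques $\phi^{-1}(D^{cs}\times\{y\})$ are $C^1$-embedded with tangent bundle equal to $E^{cs}$ along them, and a modulus-of-continuity argument using continuity of $E^{cs}$ and the $C^1$ character of the plaques shows that the transverse projection $\pi_u\circ\phi\circ\eta$ cannot vary, so $\eta$ stays on its initial plaque; patching across a finite chart-cover globalizes this to $\W^{cs}$-leaves. Applying this to $\eta$ yields the required contradiction. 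The main obstacle is precisely this foliation-theoretic step, since $\W^{cs}$ is only a $C^0$ foliation with $C^1$ leaves and $E^{cs}$ is only continuous, so unique integrability of $E^{cs}$ is not automatic and the local chart argument must be performed with care rather than by a direct Frobenius-type appeal.
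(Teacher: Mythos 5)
Your first ingredient is sound: if $\length(f^{n_k}\eta)$ stayed bounded along a subsequence and $\dot\eta(t_0)$ had a nonzero $E^u$-component, then domination plus uniform transversality of $E^{cs}$ and $E^u$ would give a lower bound $\|Df^n\dot\eta(t)\|\geq c\,\lambda^n$ on a small interval around $t_0$, contradicting the bound after integrating. So one does get $\dot\eta(t)\in E^{cs}(\eta(t))$ for every $t$.

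The gap is the second ingredient. You claim, as a general foliation-theoretic fact, that a $C^1$ curve tangent everywhere to $E^{cs}$ must lie in a single leaf of $\W^{cs}$, justified by a ``modulus-of-continuity argument'' in a foliation chart. This is false for the regularity at hand. The chart $\phi$ is only a homeomorphism, so the transverse projection $\pi_u\circ\phi\circ\eta$ is merely continuous; there is no derivative available to differentiate, and continuity of $E^{cs}$ together with $C^1$ plaques does not force it to be constant. A concrete counterexample: foliate $\R^2$ by the cubics $L_c=\{\,y=(x-c)^3\,\}$, $c\in\R$. The leaves are $C^\infty$, and the tangent line field $E(x,y)=\mathrm{span}\{(1,\,3y^{2/3})\}$ is continuous (at $(x,y)\in L_c$ one has $(x-c)^2=y^{2/3}$), yet the horizontal axis $\gamma(t)=(t,0)$ is $C^1$, is tangent to $E$ at every point, and meets a different leaf $L_t$ at each $t$. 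This is exactly the failure of unique integrability of a continuous but non-Lipschitz distribution even in the presence of an integral foliation with $C^1$ leaves, and $E^{cs}$ for a partially hyperbolic diffeomorphism is in general only continuous. The paper's own Example \ref{exnonuniqint}, where $E^c$ is not uniquely integrable despite integrating to a foliation, illustrates the same phenomenon; and the fact that Remark \ref{rmkWsinWcs} (that $\W^s$-leaves, which are tangent to $E^s\subset E^{cs}$, lie inside $\W^{cs}$-leaves) is itself derived from this lemma rather than being immediate, is another indication that tangency to $E^{cs}$ alone does not confine a curve to a leaf.

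What actually makes the lemma work is the dynamics, and the paper's proof uses it directly without passing through the tangency reduction: one places $\eta$ inside a uniform box $D$ subfoliated by $u$-plaques and $cs$-plaques; if $\eta$ meets $K$ distinct $cs$-plaques, then, because $f^{-n}$ uniformly contracts $\W^u$, the $n$-th forward iterates of those plaques become pairwise $\delta$-separated in the intrinsic metric of $f^nD$, forcing $f^n\circ\eta$ to contain $K$ disjoint segments of length at least $\delta/2$. Taking $K$ large gives $\length(f^n\circ\eta)\to\infty$. If you wanted to retain your two-step structure, you would still need this same separation argument to prove the second ingredient in the partially hyperbolic setting, so the intermediate reduction to ``$\dot\eta\in E^{cs}$'' does not actually bypass the dynamical work.
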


\begin{proof}
Let $\delta>0$ be a constant as in Lemma \ref{lemmaWcWsC1}. As the invariant bundles vary continuously in $M$ we can suppose that $\delta$ is small enough such that at scale $\delta$ the invariant bundles are `nearly constant' so that for every $0<\delta'\leq \delta$, if $x,y\in M$ satisfy $d(x,y)<\delta'$, then $\W^\sigma_{2\delta'}(x)$ and $\W^\sigma_{2\delta'}(y)$ intersect and the intersection point is unique for every $(\sigma,\sigma')\in\{(cs,u),(cu,s)\}$. More precisely, one can consider, for example, a constant $\delta=\delta(f)>0$ and a metric in $M$ as in Lemma \ref{rmkUdelta}.

Suppose $\eta$ is a $C^1$ curve that is not contained in a leaf of $\W^{cs}$. Let us see that that $\lim_{n\to +\infty}\length(f^n\circ \eta)=\infty$. Note that it is enough to show this for $\length(\eta)<\delta/4$ since otherwise one can divide $\eta$ is finite pieces of length less than $\delta/4$ and argue from there. Then, suppose from now on that $\length(\eta)<\delta/4$. 

Let $x$ be a point in $\eta$. For every $y\in \W^u_{\delta/4}(x)$ let $\D^{cs}(y)$ be the intersection of $\W^{cs}_\delta(y)$ with $\W^u_\delta(\W^{cs}_{\delta/4}(x))$. It follows that $D:=\bigcup_{y\in \W^u_{\delta/4}(x)}\D^{cs}(y)$ is an open subset of $M$ that is subfoliated by $u$-plaques and $cs$-plaques. The latter being the plaques $\{\D^{cs}(y)\}_{y\in \W^u_{\delta/4}(x)}$. Analogously $f^n(D)$ is subfoliated by $u$-plaques and the $cs$-plaques $\{f^n\D^{cs}(y)\}_{y\in \W^u_{\delta/4}(x)}$ for every $n>0$.

Note that, since $\length(\eta)<\delta/4$ then $\eta$ is contained in $D$. Informally, forwards iterates of $f$ will separate indefinitely the $cs$-plaques of $D$. If $\eta$ is not contained in a unique $cs$-plaque this will force the length of $\eta$ to increase indefinitely.

We will work with the intrinsic metric in $D$ and in its forward iterates $\{f^nD\}_{n> 0}$. Given $\D^{cs}(y)$ and $\D^{cs}(y')$ two different $cs$-plaques in $D$ let us denote $d_u(\D^{cs}(y),\D^{cs}(y'))$ the infimum length among all unstable arcs inside $u$-plaques of $D$ joining $\D^{cs}(y)$ and $\D^{cs}(y')$. Analogously for every $f^nD$.

Note that, as backwards iterates of $f$ contract distances uniformly inside $\W^u$-leaves, then for every pair of disjoint $cs$-plaques $\D^{cs}(y)$ and $\D^{cs}(y')$ in $D$ there exists $N>0$ such that $d_u(f^n\D^{cs}(y),f^n\D^{cs}(y'))>2\delta$ for every $n\geq N$.

Moreover, we claim that if for some $n>0$ one has that the distance $d_u(f^n\D^{cs}(y),f^n\D^{cs}(y'))$ is greater that $\delta$ and $\W^u_\delta(f^n\D^{cs}(y)$ is contained in $f^nD$ then in the intrinsic metric of $f^nD$ every point of $\D^{cs}(y)$ is at distance greater than $\delta/2$ from every other point in $f^n\D^{cs}(y')$. Indeed, by contradiction, if $z\in f^n\D^{cs}(y)$ and $z'\in f^n\D^{cs}(y')$ are at distance less than $\delta/2$ and $\W^u_\delta(f^n\D^{cs}(y))\subset f^nD$
then $\W^u_\delta(z)$ intersects $\W^{cs}_\delta(z')$ and this intersection point needs to be a point in $f^n\D^{cs}(y')$ since $\W^u_\delta(z)$ is contained in $f^n\D^{cs}(y')$. It follows that $d_u(f^n\D^{cs}(y),f^n\D^{cs}(y'))<\delta$ and we get to a contradiction. This proves the claim.

Finally, given any constant $L>0$, let $K>0$ be an integer larger than $L/2\delta$. As $\eta$ is not contained in $\W^{cs}(x)$, there exist $K$ different $cs$-plaques in $D$ intersecting $\eta$. Let us denote them as $\D^{cs}(y_1)$, \ldots, $\D^{cs}(y_{K})$. There exists $N>0$ such that $d_u(f^n\D^{cs}(y_i),f^nD^{cs}(y_j))>2\delta$ for every $n\geq N$ and $i\neq j$. 

Moreover, for every $1\leq i \leq K$ there exist $\epsilon_i$ such that $\W^u_{\epsilon_i}(\D^{cs}(y_i))$ is contained in $D$. By taking $N$ larger, if needed, one can ensure that $\W^u_\delta(f^n\D^{cs}(y_i))$ is contained in $f^nD$ for every $n\geq N$.

It follows that $\length(f^n\circ \eta)>L$ for every $n\geq N$ since $f^n\circ \eta$ must contain at least $K$ disjoint subsegments of length at least $\delta/2$, each one of them corresponding to an intersection of $f^n\circ \eta$ with $f^n\D^{cs}(y_i)$ for every $1\leq i \leq K$.
\end{proof}

\begin{remark}\label{rmkWsinWcs}
From Lemma \ref{lemmaetac} one can easily justify that every $f\in \PH(M)$ admitting an $f$-invariant center-stable foliation $\W^{cs}$ satisfies that the leaves of $\W^{cs}$ are saturated by leaves of $\W^s$. 

Indeed, for every $x\in M$ and $y\in \W^s(x)$ one can join $x$ and $y$ by a $C^1$ curve $\eta$ contained in $\W^s(x)$. Since $\eta$ gets contracted uniformly by forward iterates of $f$ it follows that $\eta$ must be contained in $\W^{cs}(x)$. Then $\W^s(x)\subset \W^{cs}(x)$.
\end{remark}

\begin{proof}[Proof of Theorem \ref{thmB} item (\ref{thm2ii}) and Theorem \ref{thmB'} item (\ref{thm2ii})]
By Remark \ref{rmkDAFQI} if $f$ is a discretized Anosov flow then $f$ acts quasi-isometrically on the center foliation $\W^c$ given by the flow lines of the flow appearing in the definition of $f$. Hence for proving Theorem \ref{thmB} item (\ref{thm2ii}) it is enough to show that if $f$ acts quasi-isometrically on a one-dimensional center foliation $\W^c$ then $f$ admits a unique $f$-invariant center-stable foliation and a unique $f$-invariant center-unstable foliation. That is, it is enough to  show that Theorem \ref{thmB'} item (\ref{thm2ii}) is true.

Suppose $f$ acts quasi-isometrically on a one-dimensional center foliation $\W^c$. By Proposition \ref{propdyncoh} there exist $f$-invariant foliations $\W^{cs}$ and $\W^{cu}$ whose leaves are characterized as $\W^{cs}(x)=\W^s(\W^c(x))$ and $\W^{cu}(x)=\W^u(\W^c(x))$ for every $x\in M$.

Suppose $\W^{cs}_1$ is an $f$-invariant center-stable foliation. As $f$ acts quasi-isometrically in $\W^c$, one has that by Lemma \ref{lemmaetac} the leaf $\W^c(y)$ needs to be contained in $\W^{cs}_1(x)$ for every $x\in M$ and $y\in \W^{cs}_1(x)$. Moreover, as pointed out in Remark \ref{rmkWsinWcs} the leaf $\W^s(y)$ must also be contained in $\W^{cs}_1(x)$ for every $y\in \W^{cs}_1(x)$. It follows that $\W^{cs}(x)=\W^s(\W^c(x))$ needs to be a subset of $\W^{cs}_1(x)$ for every $x\in M$. 

For the intrinsic leaf metric induced by the Riemannian metric in $M$ each leaf of $\W^{cs}$ and $\W^{cs}_1$ is a complete metric space. This imply that the boundary of $\W^{cs}(x)$ in $\W^{cs}_1(x)$ needs to be empty. We conclude that $\W^{cs}(x)=\W^{cs}_1(x)$ for every $x\in M$.

Analogously for $f$-invariant center-unstable foliations.
\end{proof}

\subsection{Flow center foliation}

If $f$ is a discretized Anosov flow of the form $f(x)=\varphi_{\tau(x)}(x)$ as in Definition \ref{defDAFintro}, then by Proposition \ref{prop1DAFs} the flow lines of $\varphi_t$ form a center foliation $\W^c$ whose leaves are individually fixed by $f$.

By Remark \ref{rmkDAFQI} and Theorem \ref{thmB} items (\ref{thm2ii}) and (\ref{thm2iii}) we can deduce the following characterization of $\W^c$.

\begin{remark}
The foliation $\W^c$ is:
\begin{itemize}
\item The only foliation tangent to $E^c$ that is the intersection of $f$-invariant foliations $\W^{cs}$ and $\W^{cu}$. 
\item The only $f$-invariant foliation tangent to $E^c$ such that $f$ acts quasi-isometrically on it.
\end{itemize}
In particular, if $f$ is of the form $f(x)=\varphi'_{\tau'(x)}(x)$ for some other flow $\varphi'_t$ as in Definition \ref{defDAFintro}, then $\varphi'_t$ need to be a reparametrization of $\varphi_t$.
\end{remark}

In light of the above, we will designate from now on $\W^c$ as the \emph{flow center foliation of $f$}. And in view of the statement of Remark \ref{rmkDAFreparam}, if not otherwise stated we may implicitly assume from now on that the \emph{center flow} $\varphi_t:M\to M$ is parametrized by arc-length.

It would be interesting to know if, in general, the flow center foliation of a discretized Anosov is the only $f$-invariant center foliation, or at least if it is the only center foliation whose leaves are individually fixed by $f$. We do not have a proof for either of these statements.

\subsection{Topological Anosov flows}

\begin{definition}\label{deftopAnosov}
We say that a flow $\varphi_t:M\to M$ is a \emph{topological Anosov flow} if it is a continuous flow, with $\frac{\partial \varphi_t}{\partial t}|_{t=0}$ a continuous vector field without singularities, such that it preserves two topologically transverse continuous foliations $\F^{ws}$ and $\F^{wu}$ satisfying the following:

\begin{itemize}
\item[(i)] The foliation $\F^{ws}\cap\F^{wu}$ is the foliation given by the orbits of $\varphi_t$.
\item[(ii)] Given $x$ in $M$ and $y\in \F^{ws}(x)$ (resp. $y\in \F^{wu}(x)$) there exists an increasing continuous reparametrization $h:\mathbb{R}\to \mathbb{R}$ such that $d(\varphi_t(x),\varphi_{h(t)}(y))\to 0$ as $t\to +\infty$ (resp. $t\to -\infty$).
\item[(iii)] There exists $\epsilon>0$ such that for every $x\in M$ and $y\in \F^{ws}_\epsilon(x)$ (resp. $y\in \F^{wu}_\epsilon(x)$), with $y$ not in the same orbit as $x$, and for every increasing continuous reparametrization $h:\mathbb{R}\to \mathbb{R}$ with $h(0)=0$, there exists $t\leq 0$ (resp. $t\geq 0$) such that $d(\varphi_t(x),\varphi_{h(t)}(y))>\epsilon$.
\end{itemize}
\end{definition}

It is worth noting that Definition \ref{deftopAnosov} is a priori more restrictive than other definitions of topological Anosov flows appearing in the literature since we are asking for $\frac{\partial \varphi_t}{\partial t}|_{t=0}$ to be a continuous vector field.

It has been a long standing problem to determine whether in general every topological Anosov flow is orbit equivalent to an Anosov flow. Just recently in \cite{Sha} every transitive topological Anosov flow in dimension 3 (for a more general definition of topological Anosov flow that covers Definition \ref{deftopAnosov}) has been shown to be orbit equivalent to a smooth Anosov flow.

\begin{prop}[Theorem \ref{thmB} item (\ref{thm2i})]\label{propDAFtopAnosov} Let $f$ be a discretized Anosov flow and $\varphi_t$ be the flow appearing in the definition of $f$. Then $\varphi_t:M\to M$ is a topological Anosov flow.
\end{prop}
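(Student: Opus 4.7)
The plan is to take $\F^{ws}:=\W^{cs}$ and $\F^{wu}:=\W^{cu}$, the center-stable and center-unstable foliations produced by Proposition \ref{propdyncoh}, which applies since by Remark \ref{rmkDAFQI} the map $f$ acts quasi-isometrically on the orbit foliation $\W^c$. Continuity and topological transversality are immediate from the transversality of the continuous subbundles $E^s\oplus E^c$ and $E^c\oplus E^u$. The completeness $\W^{cs}(x)=\W^s(\W^c(x))$ given by Proposition \ref{propdyncoh} (equivalently Theorem \ref{thmB}(\ref{thm2iv})) shows that each $\W^{cs}$-leaf is saturated by entire orbits of $\varphi_t$, hence $\varphi_t$-invariant, and analogously for $\W^{cu}$; their intersection $\W^{cs}\cap\W^{cu}=\W^c$ is the orbit foliation. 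This verifies item (i) of Definition \ref{deftopAnosov}.

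For item (ii), fix $y\in\W^{cs}(x)$; completeness gives $y\in\W^s(z)$ with $z=\varphi_{s_0}(x)\in\W^c(x)$. Writing $\tau_n(p):=\sum_{k=0}^{n-1}\tau(f^k(p))$ so that $f^n(p)=\varphi_{\tau_n(p)}(p)$, the stable manifold theorem yields $d(f^n(y),f^n(z))\to 0$ exponentially. Setting $s_n:=\tau_n(y)$ and $t_n:=\tau_n(z)+s_0$, the two sequences are strictly increasing to $+\infty$ because $\tau\geq\tau_{\min}>0$, and $\varphi_{s_n}(y)$ is exponentially close to $\varphi_{t_n}(x)$. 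I define $h$ on each $[t_n,t_{n+1}]$ as the affine bijection onto $[s_n,s_{n+1}]$, obtaining a continuous strictly increasing reparametrization. For $t\in[t_n,t_{n+1}]$ I write $\varphi_t(x)=\varphi_{t-t_n}(\varphi_{t_n}(x))$ and $\varphi_{h(t)}(y)=\varphi_{h(t)-s_n}(\varphi_{s_n}(y))$; the time shifts lie in $[0,\tau_{\max}]$, their difference is controlled by $|\tau(f^n(y))-\tau(f^n(z))|\to 0$ (uniform continuity of $\tau$ together with the exponential contraction), and uniform continuity of $\varphi$ on compact time intervals then yields $d(\varphi_t(x),\varphi_{h(t)}(y))\to 0$. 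The $\W^{cu}$-case is symmetric, using $f^{-1}$.

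Item (iii) is the main technical step. I fix $\epsilon>0$ below a uniform local-product scale for the subfoliation of $\W^{cs}$ by $\W^s$-plaques and orbit arcs and below a uniform flow-box size for $\varphi_t$. Assume for contradiction that some $y\in\W^{cs}_\epsilon(x)\setminus\W^c(x)$ and some increasing continuous $h$ with $h(0)=0$ satisfy $d(\varphi_t(x),\varphi_{h(t)}(y))\leq\epsilon$ for all $t\leq 0$. Write $y\in\W^s(w)\setminus\{w\}$ with $w=\varphi_{s_0}(x)$, and set $\delta:=d^s(y,w)>0$. For each $n$ pick $t_n\leq 0$ with $\varphi_{h(t_n)}(y)=f^{-n}(y)$ (the degenerate case of $h$ bounded below is treated separately via a compactness/trapping argument forcing $y$ onto a periodic orbit of $\varphi_t$ shared with $x$). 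Backward expansion in $\W^s$ gives $d^s(f^{-n}(y),f^{-n}(w))\geq\mu^{-n}\delta\to\infty$, while the hypothesis gives $d(\varphi_{t_n}(x),f^{-n}(y))\leq\epsilon$. Using the local product structure of $\W^{cs}$ around $\varphi_{t_n}(x)$, the global stable leaf $\W^s(f^{-n}(w))$ must both pass through the $\epsilon$-neighborhood of $\varphi_{t_n}(x)$ (since it contains $f^{-n}(y)$) and contain the far-away orbit point $f^{-n}(w)$; this produces two orbit-points of $x$ on the same stable leaf at large stable distance. Pushing forward by $f^n$ contracts that stable distance only by $\mu^n$, so the pair persists at stable distance $\gtrsim\delta$ on $\W^s(w)$, and combined with the quasi-isometric center action this ultimately contradicts either the discreteness of $\W^s(w)\cap\W^c(x)$ or the bound $d^s(y,w)=\delta$. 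The $\W^{cu}$-half of (iii) is symmetric, using forward iteration. The principal obstacle is in this last step: one must rule out that $\W^s(f^{-n}(w))$ re-enters the $\epsilon$-neighborhood of $\varphi_{t_n}(x)$ at many points, which is arranged by choosing $\epsilon$ below the uniform injectivity radius of $\varphi_t$ and carefully tracking the non-commutation of $f^n$ with $\varphi_t$ via the bounded cocycle distortion.
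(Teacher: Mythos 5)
Your item (i) is exactly the paper's: take $\F^{ws}=\W^{cs}$ and $\F^{wu}=\W^{cu}$ from Proposition~\ref{propdyncoh} together with the completeness $\W^{cs}(x)=\W^s(\W^c(x))$.

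Your item (ii) is correct and takes a genuinely different route from the paper. The paper tracks the nearby orbit pointwise by center holonomy along stable transversals: it defines $\gamma_y(t)$ to be the point of $\W^c(y)$ lying on $\W^s(\varphi_t(x))$, observes the $f$-equivariance identity $\gamma_y(\tau(x))=f(y)$ (equation~(\ref{eqTopAflow})), and then deduces $d(\gamma_y(t),\varphi_t(x))\to 0$ from contraction inside stable leaves; the reparametrization $h$ is defined implicitly by $\varphi_{h(t)}(y)=\gamma_y(t)$. You instead build $h$ from the dynamics of $f$ on the two center orbits, sending $[\tau_n(z)+s_0,\tau_{n+1}(z)+s_0]$ affinely onto $[\tau_n(y),\tau_{n+1}(y)]$, then compare $\varphi_t(x)$ and $\varphi_{h(t)}(y)$ on each piece via uniform continuity of $\tau$ and of $\varphi$ on $M\times[0,\max\tau]$. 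Both arguments are sound; yours avoids the holonomy curve at the cost of a slightly longer continuity estimate, and is a reasonable alternative.

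Item (iii) has genuine gaps, and you are right to flag the final step as the obstacle. First, the inequality ``pushing forward by $f^n$ contracts that stable distance only by $\mu^n$, so the pair persists at stable distance $\gtrsim\delta$'' is wrong in direction. If $\mu\in(0,1)$ is the constant giving $d^s(f^{-n}(a),f^{-n}(b))\geq\mu^{-n}d^s(a,b)$, that is precisely the statement that $f^n$ contracts stable distances by \emph{at most} $\mu^n$; the lower bound on $d^s(f^n(a),f^n(b))$ is only $\ubar{\lambda}^n d^s(a,b)$ for some $\ubar{\lambda}<\mu$, and $(\ubar{\lambda}/\mu)^n\to 0$, so the persistence you claim does not follow. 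Second, even granting persistence, exhibiting two distinct points of $\W^c(x)$ on the same strong stable leaf is not in itself a contradiction: this is exactly what happens on cylinder $\W^{cs}$-leaves (Proposition~\ref{propsTopAF}), a case you cannot exclude a priori. Neither ``the discreteness of $\W^s(w)\cap\W^c(x)$'' nor ``the bound $d^s(y,w)=\delta$'' has been established; indeed the statement $\W^s(w)\cap\W^c(x)=\{w\}$ is essentially equivalent to the expansivity you are trying to prove, so invoking it is circular. Third, the degenerate case of $h$ bounded below is asserted, not argued.

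The paper's (iii) avoids all of this by a one-step equivariance. For the $\W^{cu}$-direction: set $y'=\W^c_{2\epsilon}(y)\cap\W^u_{2\epsilon}(x)$ and $\gamma_{y'}(t)=\W^c_{2\epsilon}(\varphi_{h(t)}(y))\cap\W^u_{2\epsilon}(\varphi_t(x))$; then $f(y')=\gamma_{y'}(\tau(x))\in\W^u_{2\epsilon}(f(x))$, and inductively $f^n(y')\in\W^u_{2\epsilon}(f^n(x))$ for all $n\geq 0$. Iterating backwards and using contraction of unstable distances under $f^{-1}$ gives $d^u(y',x)\leq\lambda^n\cdot 2\epsilon\to 0$, so $y'=x$ and $y\in\W^c_{2\epsilon}(x)$. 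The $\W^{cs}$-direction is symmetric. The moral is that one should place the shadowing orbit at the transverse $\W^c$-$\W^u$ (resp. $\W^c$-$\W^s$) intersection and exploit the $f$-equivariance of that intersection, rather than tracking a secondary basepoint $w$ and trying to control its backward orbit.
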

\begin{proof}
The map $f$ is of the form $f(x)=\varphi_{\tau(x)}(x)$ for some $\tau:M\to \R$ continuous. By Proposition \ref{prop1DAFs} the function $\tau$ has constant sign. Without loss of generality we can assume that $\tau$ is positive, otherwise we can argue analogously using $f^{-1}$ instead of $f$.

The flow $\varphi_t$ is a continuous flow with $\frac{\partial \varphi_t}{\partial t}|_{t=0}$ a continuous vector field. By Proposition \ref{propdyncoh} the map $f$ is dynamically coherent with center-stable foliation $\W^{cs}$ and center-unstable foliation $\W^{cu}$ such that $\W^c=\W^{cs}\cap \W^{cu}$ is the flow center foliation of $f$. Hence property (i) in  the definition of topological Anosov flow is immediately satisfied for $\F^{ws}=\W^{cs}$ and $\F^{wu}=\W^{cu}$.

Let us see property (ii). Suppose $x$ and $y$ are points in $M$ such that $y$ belongs to $\W^{cs}(x)$. By Proposition \ref{propdyncoh} the leaf $\W^{cs}(x)$ coincides with $\W^s(\W^c(x))$. Then $y$ belongs to $\W^s(z)$ for some $z\in \W^c(x)$.  

Let us assume first that $z=x$. Consider $\gamma_y:\R\to M$ the continuous curve in $\W^c(y)$ such that $\gamma_y(0)=y$ and $\gamma_y(t)\in \W^s(\varphi_t(x))$ for every $t$. The curve $\gamma_y$ is the transport by center holonomy of $y$ along stable transversals with respect to the $\varphi_t$-orbit of $x$.

The key property to note is that $\gamma_y(\tau(x))=f(y)$ for every $y\in \W^s(x)$. In fact, more generally, if $y'=\gamma_y(t)$ for some $t\in \R$ then \begin{equation}\label{eqTopAflow}
f(y')=\gamma_y(\tau(\varphi_t(x))).\end{equation}
This property follows immediately from the continuity of $\tau$ and local product structure of the foliations $\W^c$ and $\W^s$ restricted to $\W^{cs}(x)$. For more details, one can see Section \ref{sectioncenterholonomy} for a precise characterization of discretized Anosov flows in terms of center holonomy.

Let $R>0$ denote a constant such that $\gamma_y(t)\in \W^s_R(\varphi_t(x))$ for every $t\in [0,\tau(x)]$. This constant exists since the stable distance $d_s(\gamma_y(t),\varphi_t(x))$ varies continuously with $t$. Then, as $f$ contracts distances uniformly inside stable leaves, it follows from (\ref{eqTopAflow}) that $\lim_{t\to +\infty}d(\gamma_y(t),\varphi_t(x))=0$. Defining $h_y:\R\to \R$ as the increasing reparametrization such that $\varphi_{h_y(t)}(y)=\gamma_y(t)$ for every $t$ we obtain (ii) for the case $z=x$.

If $z$ is different from $x$ consider some $t_0>0$ and $h:(-\infty,t_0]\to \R$ continuous and increasing so that $h(0)=0$ and $y'=\varphi_{h(t_0)}(y)$ lies in  $\W^s(x')$ for $x'=\varphi_{t_0}(x)$. Defining as above $h_{y'}:\R \to \R$ so that $\varphi_{h_{y'}(t)}(y')=\varphi_t(x')$ for every $t$, then the function $h$ can be extended to $h:\R \to \R$ by the formula $h(t)=h(t_0)+h_{y'}(t-t_0)$ for every $t>t_0$. It follows that $\lim_{t\to +\infty}d(\varphi_t(x),\varphi_{h(t)}(y))=0$ as above. 

In the case $y$ lies in $\W^{cu}(x)$ one argues analogously for $\varphi_t$-past iterations. This settles property (ii).

Finally, let us see property (iii). As the bundles $E^c$ and $E^u$ vary continuously there exists a small constant $\epsilon>0$ such that for every $z$ and $z'$ satisfying $z'\in \W^{cu}_\epsilon(z)$ it follows that $\W^c_{2\epsilon}(z')$ and $\W^u_{2\epsilon}(z)$ intersect and that this intersection point is unique.

Let $x$ and $y$ be points in $M$ such that $y\in \W^{cu}_\epsilon(x)$. Suppose that $h:\mathbb{R}\to\mathbb{R}$ is an increasing continuous reparametrization with $h(0)=0$ such that $d(\varphi_t(x),\varphi_{h(t)}(y))\leq \epsilon$ for every $t\geq 0$. Let $y'$ denote the intersection $\W^c_{2\epsilon}(y)$ and $\W^u_{2\epsilon}(x)$ and let $\gamma_{y'}(t)=\W^c_{2\epsilon}(\varphi_{h(t)}(y))\cap \W^u_{2\epsilon}(\varphi_t(x))$ for every $t\geq 0$. The curve $\gamma_{y'}$ is no other than the transport by center holonomy of $y'$ along unstable transversals with respect to the $\varphi_t$-orbit of $x$. In analogy with (\ref{eqTopAflow}) is follows that $f(y')=\gamma_{y'}(\tau(x))$, so $f(y')$ lies in $\W^u_{2\epsilon}(f(x))$. Inductively, $f^n(y')$ lies in $\W^u_{2\epsilon}(f^n(x))$ for every $n>0$. Iterating  $n$ times backwards and taking limit with $n$ we conclude that $y'$ needs to coincide with $x$. Then $y$ lies in $\W^c_{2\epsilon}(x)$ and, in particular, lies in the $\varphi_t$-orbit of $x$.

In the case $x$ and $y$ are points such that $y\in \W^{cs}_\epsilon(x)$ one can argue analogously for past iterates of $f$ and $\varphi_t$. Property (iii) is settled.
\end{proof}

Let us end this subsection with a proposition showing that some classical properties for Anosov flows are satisfied (by means of the same type of arguments) by the topological Anosov flows arising as center foliations of discretized Anosov flows. Some of these properties will be needed later in the text. For the sake of completeness we will sketch their proofs.

We say that a leaf of a foliation of dimension $d>0$ is a \emph{plane} if it is homeomorphic to $\R^d$, and that it is a \emph{cylinder} if it is homeomorphic to a fiber bundle over the circle whose fibers are homeomorphic to $\R^{d-1}$. We say that two foliations $\W$ and $\W'$ have \emph{global product structure} if $\W(x)$ and $\W'(y)$ intersect for every pair $x$ and $y$, and this intersection is a unique point. 

The following proves Theorem \ref{thmB} item (\ref{thm2iv}).

\begin{prop}\label{propsTopAF} Suppose $f$ is a discretized Anosov flow. Let $\varphi_t$ and $\W^c$ denote the flow and center foliation appearing in the definition of $f$. Let $\W^{cs}$ and $\W^{cu}$ denote the center-stable and center-unstable foliations such that $\W^c=\W^{cs}\cap\W^{cu}$. Then:
\begin{enumerate}
\item\label{item1propTAFs} Every leaf of $\W^{cs}$ and $\W^{cu}$ is a plane or a cylinder.
\item\label{item2propTAFs} If a leaf $\W^{cs}(x)$ is a plane then $\W^c$ and $\W^s$ restricted to $\W^{cs}(x)$ have global product structure. Analogously for $\W^{cu}$-leaves.
\item\label{item3propTAFs} If a leaf $\W^{cs}(x)$ is a cylinder then $\W^c$ restricted to $\W^{cs}(x)$ contains a unique compact leaf $L$ and the omega limit set under $\varphi_t$ of every point $y$ in $\W^{cs}(x)$ is $L$. Analogously for $\W^{cu}$-leaves and alpha limit sets.
\item\label{item4propTAFs} There exists at least one compact leaf for $\W^c$.
\end{enumerate}

\end{prop}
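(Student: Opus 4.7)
The plan is to mimic the classical arguments for smooth Anosov flows, leveraging that $\varphi_t$ is a topological Anosov flow (Proposition~\ref{propDAFtopAnosov}), dynamical coherence (Proposition~\ref{propdyncoh}), and the completeness formula $\W^{cs}(x) = \W^s(\W^c(x))$ (Theorem~\ref{thmB} item (\ref{thm2iv})). For item (\ref{item1propTAFs}), I fix $x$ and describe $\W^{cs}(x)$ as the stable saturation of the orbit $\W^c(x)$. Being an orbit of the non-singular flow $\varphi_t$, this orbit is homeomorphic to either $\R$ or $S^1$. The stable holonomy trivialization along $\varphi_t$ then yields, in the first case, $\W^{cs}(x) \cong \R \times \R^{\dim E^s}$, a plane; in the second case, with period $T$, $\W^{cs}(x)$ is realized as the mapping torus of the first-return stable holonomy on a local transversal, a fiber bundle over $S^1$ with fiber $\R^{\dim E^s}$, a cylinder. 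The argument for $\W^{cu}$-leaves is analogous.

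For item (\ref{item2propTAFs}), if $\W^{cs}(x)$ is a plane then, by (\ref{item1propTAFs}), it contains no compact center leaf. Given $y, z \in \W^{cs}(x)$, the inclusion $z \in \W^{cs}(y) = \W^s(\W^c(y))$ gives some $w \in \W^c(y)$ with $z \in \W^s(w)$, yielding existence of a point in $\W^c(y) \cap \W^s(z)$. For uniqueness, transversality of the two foliations $\W^c$ and $\W^s$ inside $\W^{cs}$ produces a uniform local product scale $\epsilon_0 > 0$ such that any two points of $\W^c(y) \cap \W^s(z)$ at distance less than $\epsilon_0$ must coincide. If $w_1 \neq w_2$ both lay in $\W^c(y) \cap \W^s(z)$, then $d(w_1, w_2) \geq \epsilon_0$; since $f$ fixes each center leaf and sends each stable leaf to a stable leaf, the iterates $f^n(w_1), f^n(w_2)$ remain in the common leaf $\W^c(y)$ and in a common stable leaf $\W^s(f^n(z))$, while $d(f^n(w_1), f^n(w_2)) \to 0$ by stable contraction, contradicting $d(w_1,w_2) \geq \epsilon_0$.

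For item (\ref{item3propTAFs}), if $\W^{cs}(x)$ is a cylinder then (\ref{item1propTAFs}) identifies a compact center leaf $L \subset \W^{cs}(x)$ as the base circle of the fibration. Uniqueness of $L$: a second compact center leaf $L' \subset \W^{cs}(x) = \W^s(L)$ would, via stable contraction of $f^n$, bring $L'$ arbitrarily close to $L$, contradicting that both are compact, disjoint and $f$-invariant. For the omega-limit, given $y \in \W^{cs}(x)$ I choose $z \in L$ with $y \in \W^s(z)$ and apply property (ii) of the topological Anosov flow (Proposition~\ref{propDAFtopAnosov}), together with minimality of the closed orbit $L$, to conclude $\omega(y) = L$. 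The statement for $\W^{cu}$-leaves and $\alpha$-limits follows by reversing time.

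For item (\ref{item4propTAFs}), it suffices to produce a periodic point $p$ of $f$, because then $\varphi_T(p) = p$ for $T = \sum_{i=0}^{k-1}\tau(f^i(p)) > 0$ and $\W^c(p)$ is a compact center leaf. Such a point $p$ will be obtained via the Anosov closing lemma applied to $f$, which is applicable since $f$ admits transverse stable and unstable foliations with local product structure (Proposition~\ref{propdyncoh}) and a non-empty non-wandering set. I expect the main obstacle of the whole scheme to lie here: the usual closing lemmas are stated in a uniformly hyperbolic setting, so one must either verify that the neutral center behavior of $f$ (fixing each $\W^c$-leaf and shifting points by the bounded continuous function $\tau$) does not obstruct the shadowing argument, or alternatively invoke the general existence of closed orbits for expansive flows on compact manifolds.
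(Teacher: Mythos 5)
Your case analysis for item (\ref{item1propTAFs}) is incomplete and, in fact, internally inconsistent. You claim that $\W^c(x)\cong\R$ forces $\W^{cs}(x)$ to be a plane via a ``stable holonomy trivialization,'' but the map $(t,w)\mapsto\gamma_w(t)$ (where $\gamma_w(t)$ is the point of $\W^s(\varphi_t(x))\cap\W^c(w)$) need not be injective: a $\W^{cs}$-leaf can be a cylinder even when the center leaf through the chosen base point $x$ is a line. Already for the time-one map of the suspension of an Anosov diffeomorphism, if $p$ lies on a closed orbit and $q\in\W^s(p)\setminus\{p\}$, then $\W^c(q)\cong\R$ while $\W^{cs}(q)=\W^{cs}(p)$ is a cylinder containing the compact leaf $\W^c(p)$; your dichotomy applied at $q$ predicts a plane and applied at $p$ predicts a cylinder, yet these are the same leaf. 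The paper runs a trichotomy: $x$ is $\varphi_t$-periodic, or $x$ is not but some $y\in\W^s(x)$ is, or no point of $\W^s(x)$ is periodic. Only in the third case does planarity hold, and this is proved via the holonomy return map $H(w)=\gamma_w(t_0)$: if $\gamma_w(t_0)\in\W^s(x)$ for some $t_0\neq 0$, then thanks to the identity $\gamma_w(\tau(x))=f(w)$ and stable contraction of $f$, some iterate $H^N$ is a contraction of $\W^s(x)$, producing a $\varphi_t$-periodic point in $\W^s(x)$.

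This is exactly the ingredient missing from item (\ref{item2propTAFs}) as well. Your assertion that transversality yields a uniform $\epsilon_0$ such that two points of $\W^c(y)\cap\W^s(z)$ at ambient distance $<\epsilon_0$ coincide is false without further hypotheses: a single center leaf can meet a small ball in several distinct plaques (inside any cylinder leaf, center leaves spiral and accumulate on the compact one, producing intersections of $\W^c(y)$ with $\W^s(z)$ that cluster). The uniqueness you want is equivalent to the non-return property ``$\gamma_y(t)\in\W^s(x)$ only for $t=0$,'' which is what the contraction argument above establishes when $\W^s(x)$ has no periodic point --- so as stated your step presupposes the conclusion. Finally, your caution about item (\ref{item4propTAFs}) is well placed but unresolved: the Anosov closing lemma for the partially hyperbolic $f$ does not apply off the shelf, and general expansive flows on compact manifolds are not known to have periodic orbits without an additional shadowing-type hypothesis. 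The paper sidesteps this entirely: a Poincar\'e-section contraction argument near a $\varphi_t$-recurrent point produces $z'$ and $t_{z'}>0$ with $\varphi_{t_{z'}}(z')\in\W^u(z')$; item (\ref{item3propTAFs}), already proved, then forces $\W^{cu}(z')$ to be a cylinder, which contains a compact center leaf.
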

\begin{proof}
Let $x$ be a point in $M$. For every $y \in \W^s(x)$ we can define $\gamma_y:\R\to M$ as the continuous curve in $\W^c(y)$ such that $\gamma_y(0)=y$ and $\gamma_y(t)\in \W^s(\varphi_t(x))$ for every $t$. The curve $\gamma_y$ is a transport by center holonomy of $y$ with respect to the $\varphi_t$-orbit of $x$. As in the previous proposition, note the key property: $\gamma_y(\tau(x))=f(y)$ for every $y\in \W^s(x)$.

If $x$ is a periodic point for $\varphi_t$ of period $t_x>0$ let us denote $H(y)\in \W^s(x)$ to the point $\gamma_y(t_x)$ for every $y\in \W^s(x)$. For some $N>0$ large enough $H^N:\W^s(x)\to \W^s(x)$ is a contraction with $x$ the unique fixed point. In this case $\W^{cs}$ is a cylinder and it is immediate to check that the $\varphi_t$-omega limit of every point in $\W^{cs}(x)$ is the orbit of $x$.

If $x$ is not periodic for $\varphi_t$ but some point $y$ in $\W^s(x)$ is periodic then we can argue as above and conclude that $\W^{cs}(x)$ is a cylinder and that the $\varphi_t$-omega limit of every point in $\W^{cs}(x)$ is the orbit of $y$.

If none of the points in $\W^s(x)$ are periodic for $\varphi_t$ then for every $y\in \W^s(x)$ the point $\gamma_y(t)$ lies in $\W^s(x)$ if and only if $t=0$, otherwise a contraction $H^N:\W^s(x)\to\W^s(x)$ as above can be constructed and some $\varphi_t$-periodic point in $\W^s(x)$ should be found. It follows that $\bigcup_{y\in \W^s(x)}\gamma_y(t)=\W^s(\varphi_t(x))$ for every $t$ and, since $\W^{cs}(x)=\W^s(\W^c(x))$ by Proposition \ref{propdyncoh}, then $\W^{cs}(x)$ is a plane and $\W^c$ and $\W^s$ have a global product structure inside $\W^{cs}(x)$.

Properties (\ref{item1propTAFs}), (\ref{item2propTAFs}) and (\ref{item3propTAFs}) are settled. Let us see that $\varphi_t$ must have at least one periodic orbit and this will settle the last property.

For some $x$ in $M$ let $z$ be a point in the $\varphi_t$-omega limit of $x$. Consider $D$ a small $C^1$ disc transverse to $\W^c$ and containing $z$ in its interior. Let $D$ be such that the leaves of $\W^{cs}$ and $\W^{cu}$ intersect $D$ in $C^1$ discs. For every $z'\in D$ let $w^s(z')$ and $w^u(z')$ denote the connected components of $\W^{cu}(z')\cap D$ and $\W^{cs}(z')\cap D$ containing $z'$, respectively.

Let $D'\subset D$ be such that if $z',z''\in D'$ then $w^s(z')\cap w^u(z'')\neq \emptyset$ and $w^u(z')\cap w^s(z'')\neq \emptyset$. For every $z'\in D'$ let $\pi^u(z')$  denote the point in $w^s(z)$ such that $w^u(z')\cap w^s(z)=\pi^u(z')$.

Let $t_x>0$ be a time such that $\varphi_{t_x}(x)$ lies in $D'$ close to $z$ and let $T_x>t_x$ be a large enough time so that $\varphi_{T_x}(x)$ lies also in $D'$, is close to $z$ and the Poincaré return map $P$ from $w^s(\varphi_{t_x}(x))$ to $D'$ is well defined. Then $\pi^u \circ P$ needs to be a contraction if $T_x$ is large enough. Let $z'$ denote the fixed point of this contraction. It follows that $P(z')$ lies in $w^u(z')$ so there exists some positive time $t_{z'}$ close to $T_x-t_x$ such that $\varphi_{t_{z'}}(z')$ lies in $\W^u(z')$. By (\ref{item3propTAFs}) it follows that $\W^{cu}(z')$ has to be a cylinder leaf and, as a consequence, it has to contain a periodic orbit for $\varphi_t$.
\end{proof}

\subsection{Equivalence with other definitions}\label{sectionequivalences}

Discretized Anosov flows have been richly studied in the literature, though not always under this name. Without trying to be exhaustive, it is worth establishing that many of these classes studied before are in fact discretized Anosov flows as in Definition \ref{defDAFintro}. This is one of the primary goals of this text.

In \cite{BFFP}, \cite{BFP}, \cite{BaGo} and \cite{GM} a map $f\in \PH_{c=1}(M)$ was called a `discretized Anosov flow' if it satisfied the following: there exist a topological Anosov flow $\varphi_t:M\to M$ and a continuous function $\tau:M\to \mathbb{R}_{>0}$ such that $f(x)=\varphi_{\tau(x)}(x)$ for every $x$ in $M$.

As a direct consequence of Theorem \ref{thmB} item (\ref{thm2i}) and Proposition \ref{prop1DAFs} item \ref{item2prop2} we obtain:

\begin{cor}\label{corequivDAFS}
The definition of discretized Anosov flow given in \cite{BFP},  \cite{BFFP}, \cite{BaGo} and \cite{GM} is equivalent with Definition \ref{defDAFintro}.
\end{cor}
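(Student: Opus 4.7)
The proof of the corollary is essentially a bookkeeping exercise that packages together the two substantial results already established in the text. I would organize it as a two-way implication and show that each direction is immediate modulo one small adjustment.

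For the forward direction, suppose $f$ satisfies the definition from \cite{BFP, BFFP, BaGo, GM}, that is, $f(x)=\varphi_{\tau(x)}(x)$ for some topological Anosov flow $\varphi_t$ and continuous $\tau:M\to \mathbb{R}_{>0}$. By Definition \ref{deftopAnosov}, a topological Anosov flow is in particular a continuous flow whose infinitesimal generator $\frac{\partial \varphi_t}{\partial t}|_{t=0}$ is a continuous vector field without singularities. Hence $f$ satisfies Definition \ref{defDAFintro} with the same flow $\varphi_t$ and function $\tau$.

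For the converse, suppose $f$ satisfies Definition \ref{defDAFintro}, so that $f(x)=\varphi_{\tau(x)}(x)$ for a continuous flow $\varphi_t$ whose generator is a continuous nonvanishing vector field. By Theorem \ref{thmB} item (\ref{thm2i}) (proved as Proposition \ref{propDAFtopAnosov}), the flow $\varphi_t$ is already a topological Anosov flow, so the only remaining point is to arrange for $\tau$ to take strictly positive values. By Proposition \ref{prop1DAFs} item \ref{item2prop1}, $\tau$ has no zeros and is therefore of constant sign. If $\tau>0$ there is nothing to do; if $\tau<0$, I would replace $\varphi_t$ by its time-reversed flow $\tilde{\varphi}_t:=\varphi_{-t}$ and $\tau$ by $\tilde{\tau}:=-\tau>0$. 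One then checks that $\tilde{\varphi}_t$ is again a topological Anosov flow (the foliations $\F^{ws}$ and $\F^{wu}$ in Definition \ref{deftopAnosov} simply exchange roles, and the infinitesimal generator remains continuous and nonvanishing), while $f(x)=\tilde{\varphi}_{\tilde{\tau}(x)}(x)$, completing the verification.

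There is no real obstacle; the content of the equivalence is entirely absorbed into the already-established Theorem \ref{thmB}\,(\ref{thm2i}) and Proposition \ref{prop1DAFs}. The role of the corollary is just to record that Definition \ref{defDAFintro}, which a priori looks weaker than the definition in \cite{BFFP} (no topological Anosov hypothesis, no sign condition on $\tau$), is in fact equivalent to it.
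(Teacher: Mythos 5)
Your proof is correct and follows essentially the same route as the paper: the paper also derives the corollary directly from Theorem~\ref{thmB}\,(\ref{thm2i}) (the flow $\varphi_t$ is a topological Anosov flow) together with Proposition~\ref{prop1DAFs}\,\ref{item2prop1} (that $\tau$ has no zeros, hence constant sign, so one may assume $\tau>0$ after reversing time if needed). The forward direction being immediate from Definition~\ref{deftopAnosov} is likewise treated as obvious in the text, so there is no substantive difference from the paper's argument.
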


It is worth noting the following two other classes of systems studied before that are also discretized Anosov flows. 

\begin{remark}\label{rmkBW05}
Partially hyperbolic diffeomorphisms on $3$-manifolds were investigated in the seminal article \cite{BW}. The statement of \cite[Theorem 2. items 1. and 2.]{BW} can be paraphrased as the following criterion for detecting discretized Anosov flows (in particular, using Proposition \ref{propcenterfixingL} to conclude):

\emph{Suppose $f\in \PH_{c=1}(M^3)$ is transitive and dynamically coherent with invariant foliations $\W^{cs}$, $\W^{cu}$ and $\W^c=\W^{cs}\cap \W^{cu}$. Then $f^n$ is a discretized Anosov flow for some $n>0$ if and only if there exists a periodic compact leaf $\eta\in\W^c$ and every center leaf through $\W^s_{loc}(\eta)$ is also periodic by $f$.}
\end{remark}

\begin{remark} In \cite{BG1} and \cite{BG2} diffeomorphisms in $\PH_{c=1}(M)$ that are Axiom A and admit a center foliation tangent to an Anosov vector field $X^c$ were studied. In \cite{BG1} it is shown that these systems can be written as $f(x)=X^c_{\tau(x)}(x)$ for some $\tau:M\to \R^+$ continuous. It follows that, in particular, they are all discretized Anosov flows.
\end{remark}

, we can establish the equivalence with the notion of \emph{flow-type partially hyperbolic diffemorphism}. In \cite{BFT} a diffeomorphism $f\in \PH_{c=1}(M)$ is called \emph{flow-type} if it is dynamically coherent with orientable center foliation $\W^c=\W^{cs}\cap \W^{cu}$ admitting a compact leaf and such that $f$ can be written as $f(x)=\varphi_{\tau(x)}(x)$ for every $x\in M$, where $\varphi_t$ is the flow of unit positive speed along the leaves of $\W^c$ and $\tau:M\to \mathbb{R}_{>0}$ is some continuous function.

As a consequence of what we have seen so far we get the following:

\begin{cor}\label{corequivflowtypeDAF}
The definition of flow-type partially hyperbolic diffeomorphism as given in \cite{BFT} is equivalent with Definition \ref{defDAFintro} of a discretized Anosov flow.
\end{cor}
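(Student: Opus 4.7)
The plan is to prove each implication by directly translating the defining data; neither direction requires new arguments, only assembling results already established in the excerpt.

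For flow-type implies discretized Anosov flow: suppose $f$ is flow-type in the sense of \cite{BFT}, so $f(x)=\varphi_{\tau(x)}(x)$, where $\varphi_t$ is the unit positive speed flow along the leaves of an orientable center foliation $\W^c$ and $\tau\colon M\to\mathbb{R}_{>0}$ is continuous. Since $E^c$ is a continuous line bundle and $\W^c$ is orientable, there is a continuous unit tangent field along $\W^c$, so $\varphi_t$ is a continuous flow with $\partial_t\varphi_t|_{t=0}$ a continuous vector field without singularities. This matches Definition \ref{defDAFintro} verbatim.

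For discretized Anosov flow implies flow-type: suppose $f(x)=\varphi_{\tau(x)}(x)$ is a discretized Anosov flow. By Proposition \ref{prop1DAFs}\,\ref{item2prop1}, the function $\tau$ has no zeros, so after inverting the time direction of $\varphi_t$ if necessary we may assume $\tau>0$. Theorem \ref{thmB}\,(\ref{thm2ii}) then yields dynamical coherence together with $\W^c=\W^{cs}\cap\W^{cu}$, where $\W^c$ is the foliation by orbits of $\varphi_t$; this foliation is automatically orientable, the orientation being induced by the direction of $\varphi_t$. Proposition \ref{propsTopAF}\,(\ref{item4propTAFs}) supplies a compact center leaf. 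Finally, to pass to unit positive speed we apply Remark \ref{rmkDAFreparam} with $\alpha=1/\|\partial_t\varphi_t|_{t=0}\|$, a continuous strictly positive function; this produces a unit speed flow $\tilde\varphi_t$ together with a continuous function $\tilde\tau$ such that $f(x)=\tilde\varphi_{\tilde\tau(x)}(x)$. Because the reparametrization factor is positive, it preserves the sign of time along each orbit, so $\tilde\tau>0$. Together these verify all conditions of the flow-type definition.

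I expect no real obstacle. The only small point worth checking is that unit speed reparametrization preserves positivity of the displacement function, and this is immediate from the positivity of the reparametrization factor together with the explicit formula $\tilde\tau(x)=r(x,\tau(x))$ of Remark \ref{rmkDAFreparam}.
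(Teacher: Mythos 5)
Your proof is correct and follows essentially the same route as the paper's: the forward implication is immediate, and the converse is assembled from Proposition \ref{prop1DAFs} ($\tau$ nowhere zero, so of constant sign), Theorem \ref{thmB}(\ref{thm2ii}) (dynamical coherence with $\W^c=\W^{cs}\cap\W^{cu}$), Proposition \ref{propsTopAF}(\ref{item4propTAFs}) (existence of a compact center leaf), and Remark \ref{rmkDAFreparam} (reparametrization to unit speed). Your explicit observations that the flow orients $\W^c$, and that positivity of $\tilde\tau$ follows from positivity of the reparametrization factor via $\tilde\tau(x)=r(x,\tau(x))$, are worthwhile details the paper leaves implicit but do not change the argument.
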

\begin{proof} It is immediate to check that every flow-type partially hyperbolic diffemorphism is a discretized Anosov flow as in Definition \ref{defDAFintro}.

Conversely, suppose $f$ is a discretized Anosov flow and let $\varphi_t$ and $\W^c$ denote the flow and center foliation appearing in the definition of $f$. Theorem \ref{thmB} item (\ref{thm2ii}) shows that every discretized Anosov is dynamically coherent with center-stable foliation $\W^{cs}$ and center-unstable foliation $\W^{cu}$ such that $\W^c=\W^{cs}\cap\W^{cu}$. Moreover, modulo reparametrization and inverting the time of $\varphi_t$, Proposition \ref{prop1DAFs} and Remark \ref{rmkDAFreparam} show that $f$ can be written down as $f(x)=\varphi_{\tau(x)}(x)$ where $\varphi_t$ is parametrized by arc-length and $\tau:M\to \mathbb{R}$ is continuous and positive. Finally, Proposition \ref{propsTopAF} shows that $\W^c$ has a compact leaf. Thus $f$ needs to be a flow-type partially hyperbolic diffeomorphism.
\end{proof}

\subsection{Discretized Anosov flows and center holonomy}
\label{sectioncenterholonomy} Let us end this section by pointing out a characterization of discretized Anosov flows in terms of center holonomy. 

Recall the definition of a holonomy map for a foliation:

\begin{remark}[Holonomy map along a curve]\label{rmkDAFiffcenterholonomy} Suppose $\W$ is a foliation with $C^1$ leaves tangent to a continuous subbundle in the compact Riemannian manifold $M$. The construction that follows is standard to check.

Suppose $x$ in $M$, $y$ in  $\W(x)$ and $\gamma:[0,1]\to \W(x)$ a $C^1$ curve such that $\gamma(0)=x$ and $\gamma(1)=y$. Suppose $D_x$ and $D_y$ are $C^1$ discs transverse to $\W$, containing $x$ and $y$ respectively. Let $\delta>0$ be a constant such that every ball in $M$ of radius $2\delta$ is contained in a foliation box neighborhood of $\W$.

Every small enough $C^1$ disc $D'_x\subset D_x$ containing $x$ has the property that for every $z\in D_x'$ there exists a $C^1$ curve $\gamma_z:[0,1]\to\W(z)$ such that $\gamma_z(0)=z$, $\gamma_z(1)\in D_y$ and $d(\gamma(t),\gamma_z(t))<\delta$ for every $t\in [0,1]$. Moreover, the point $\gamma_z(1)$ in $D_y$ is independent of the choice of such a $\gamma_z$. In particular, there exists a well defined \emph{holonomy map along $\gamma$} $$H:D_x'\to D_y$$ given by $H(z)=\gamma_z(1)$ for every $z\in D_x'$. 

Furthermore, one can chose the curves $\gamma_z$ so that $z\mapsto \gamma_z$ varies continuously in the $C^1$ topology as $z$ varies continuously in  $D'_x$.
\end{remark}

The following characterizes discretized Anosov flows in terms of center holonomy:

\begin{prop}\label{propDAFiffH=f}
Suppose $f\in \PH_{c=1}(M)$. The following are equivalent:
\begin{enumerate}[label=(\roman*)]
\item\label{item1propHol} The map $f$ is a discretized Anosov flow.

\item\label{item2propHol} The bundle $E^c$ integrates to an $f$-invariant foliation $\W^c$ such that for every $x\in M$ there exist:
\begin{itemize}
\item A curve $\gamma:[0,1]\to \W^c(x)$ with $\gamma(0)=x$ and $\gamma(1)=f(x)$,
\item A $C^1$ disc $D$ transverse to $\W^c$ with $x\in D$ such that the $\W^c$ holonomy map $H$ along $\gamma$ is well defined from $D$ to $f(D)$ and satisfies $$H(y)=f(y)$$ for every $y \in D$.
\end{itemize}
\end{enumerate}
\end{prop}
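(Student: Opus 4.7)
The plan is to handle the two directions separately, building on Proposition \ref{propcenterfixingL} (which characterizes discretized Anosov flows by the existence of a leaf-fixing center foliation with uniformly bounded displacement) and the description of holonomy from Remark \ref{rmkDAFiffcenterholonomy}.

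For \ref{item1propHol} $\Rightarrow$ \ref{item2propHol}, write $f(x)=\varphi_{\tau(x)}(x)$ as in Definition \ref{defDAFintro} and let $\W^c$ be the flow center foliation of $f$, which integrates $E^c$ by Proposition \ref{prop1DAFs}. Fix $x\in M$. Define $\gamma:[0,1]\to \W^c(x)$ by $\gamma(s)=\varphi_{s\tau(x)}(x)$; this is $C^1$ (since $\varphi_t$ has a continuous generating vector field) and joins $x$ to $f(x)$ inside $\W^c(x)$. Pick any small $C^1$ transverse disc $D$ through $x$ and set $D'\subset D$ to be a sub-disc. For $y\in D'$, define $\gamma_y(s)=\varphi_{s\tau(y)}(y)$, which is $C^1$ with $\gamma_y(0)=y$ and $\gamma_y(1)=f(y)\in f(D)$. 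By joint continuity of $\varphi$ and of $\tau$, the curves $\gamma_y$ converge to $\gamma$ uniformly in $s\in[0,1]$ as $y\to x$, so for $D'$ small enough they stay within the tolerance $\delta$ required by Remark \ref{rmkDAFiffcenterholonomy}. This identifies $\gamma_y$ as the holonomy transport of $y$ along $\gamma$, and therefore the holonomy map $H:D'\to f(D)$ satisfies $H(y)=\gamma_y(1)=f(y)$.

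For \ref{item2propHol} $\Rightarrow$ \ref{item1propHol}, first note that the existence of the curve $\gamma$ at every $x$ immediately gives $f(x)\in \W^c(x)$, so $f$ preserves each leaf of $\W^c$. By Proposition \ref{propcenterfixingL}, it then suffices to produce a uniform constant $L>0$ with $f(x)\in \W^c_L(x)$ for all $x\in M$. Apply the hypothesis \ref{item2propHol} at each point $x$ to obtain a curve $\gamma_x$ from $x$ to $f(x)$ and a transverse disc $D_x$ on which the holonomy $H$ along $\gamma_x$ is defined, equals $f$, and (by Remark \ref{rmkDAFiffcenterholonomy}) is realized by a continuous $C^1$-family of holonomy curves $z\mapsto \gamma_{x,z}$. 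By compactness of $M$ extract a finite sub-cover by sub-discs $D'_{x_1},\ldots,D'_{x_N}$ on which this construction is valid, and set
\[
L=\max_{1\le i\le N}\sup_{z\in D'_{x_i}}\length(\gamma_{x_i,z}).
\]
This supremum is finite for each fixed $i$ because $z\mapsto \gamma_{x_i,z}$ is continuous in the $C^1$ topology on a relatively compact set, and hence the length varies continuously in $z$. Since $\gamma_{x_i,z}$ is a curve in $\W^c(z)$ from $z$ to $H(z)=f(z)$, we conclude $f(z)\in \W^c_L(z)$ for every $z\in M$, as desired.

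The main technical point will be the second direction: turning the pointwise local data provided by \ref{item2propHol} into a \emph{uniform} displacement bound along $\W^c$. This is where the continuous dependence clause at the end of Remark \ref{rmkDAFiffcenterholonomy} is essential, since it allows the length of the local holonomy curves to be controlled on a whole neighborhood of each $x_i$, and then the finite-cover compactness argument closes the gap. Once the uniform $L$ is in hand, everything reduces to Proposition \ref{propcenterfixingL}.
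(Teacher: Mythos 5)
Your forward direction \ref{item1propHol}~$\Rightarrow$~\ref{item2propHol} is essentially the same as the paper's: parametrize the flow segment from $z$ to $f(z)$ over $[0,1]$, observe by continuity of $\varphi$ and $\tau$ that these curves stay within the tolerance of Remark~\ref{rmkDAFiffcenterholonomy} for $z$ in a small sub-disc, and conclude that holonomy along $\gamma$ agrees with $f$. That part is fine.

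The reverse direction has a genuine gap. You invoke compactness of $M$ to ``extract a finite sub-cover by sub-discs $D'_{x_1},\ldots,D'_{x_N}$'' and then conclude $f(z)\in\W^c_L(z)$ \emph{for every $z\in M$}. But the $D'_{x_i}$ are $C^1$ discs \emph{transverse} to the one-dimensional foliation $\W^c$, hence codimension-one submanifolds of $M$ with empty interior; a finite (or infinite) union of such discs cannot cover $M$, so the compactness argument as stated does not apply. Moreover, even after a cover is arranged, your bound on the center displacement is only established for points lying on the transverse discs, since the holonomy curves $\gamma_{x_i,z}$ are only defined there. The paper's proof handles exactly these two points: for each base point $x$ it thickens $D'$ to a full-dimensional foliation box neighborhood $U'=\bigcup_{y\in D'}\W^c_{\epsilon_2}(y)$, uses continuity of $f$ to control the image of $\W^c_{\epsilon_2}(y)$ inside $\W^c_{\epsilon_1}(f(y))$, and so obtains $f(w)\in\W^c_{K+\epsilon_1+\epsilon_2}(w)$ for all $w$ in the \emph{open} neighborhood $U'$; only then is a finite subcover of $M$ by open sets $U_x$ extracted. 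Your argument would be repaired by inserting this thickening step (bound displacement on a full flow-box around each $D'_{x_i}$, then cover $M$ by these open flow boxes), after which the reduction to Proposition~\ref{propcenterfixingL} is as you describe.
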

\begin{proof}
Suppose that $f$ is a discretized Anosov flow. By Definition \ref{defDAFintro}, Proposition \ref{prop1DAFs} and Remark \ref{rmkDAFreparam} the map $f$ can be written down as $f(x)=\varphi_{\tau(x)}(x)$, where $\tau:M\to \R_{>0}$ is continuous and $\varphi_t:M\to M$ is a unit speed flow whose flow lines coincide with the leaves of the flow center foliation $\W^c$ of $f$.

Given $x\in M$ let $\gamma:[0,1]\to \W^c(x)$ be the reparametrization of the piece of $\varphi_t$ orbit from $x$ to $f(x)$ so that $\|\frac{\partial \gamma_t}{\partial t}\|=\frac{1}{\tau(x)}$ for every $t\in [0,1]$. Let $D_x$ be a $C^1$ disc containing $x$ and transverse to $\W^c$. Then $f(D_x)$ contains $f(x)$ and is also a $C^1$ disc transverse to $\W^c$. 

Let $\delta>0$ be a constant such that every ball of radius $2\delta$ is contained in a foliation box neighborhood of $\W^c$. As in Remark \ref{rmkDAFiffcenterholonomy}, let $D_x'\subset D_x$ be such that $x\in D_x'$ and the holonomy map along $\gamma$ $$H:D_x'\to f(D_x)$$ is well defined.

For every $z\in D_x'$ let $\gamma_z:[0,1]\to \W^c$ denote the piece of $\varphi_t$ orbit from $z$ to $f(z)$ reparametrized so that $\|\frac{\partial (\gamma_y)_t}{\partial t}\|=\frac{1}{\tau(y)}$ for every $t\in [0,1]$.

We can assume that $D_x'$ is small enough so that $d(\gamma_z(t),\gamma(t))<\delta$ for every $z\in D_x'$ and $t\in [0,1]$. It follows that $f(z)=\gamma_z(1)$ for every $z\in D_x'$. This shows that \ref{item1propHol} implies \ref{item2propHol}.

Conversely, suppose that \ref{item2propHol} is satisfied. In particular, $f$ individually fixes each leaf of $\W^c$. Given $x\in M$ let $\gamma$ and $D$ be as in \ref{item2propHol}. Let us see that locally in a neighborhood of $x$ the condition $f(w)\in \W^c_L(w)$ is satisfied for some $L>0$.

Let $\delta>0$ be such that every ball of radius $2\delta$ is contained in a foliation box neighborhood of $\W^c$. As in Remark \ref{rmkDAFiffcenterholonomy} let $D'\subset D$ be a $C^1$ disc containing $x$ so that its closure is a subset of $D$ and such that for every $y\in D'$ a $C^1$ holonomy curve $\gamma_y:[0,1]\to \W^c(y)$ with the following properties is well defined: $\gamma_y(0)=y$, $\gamma_y(1)=f(y)$ and $d(\gamma(t),\gamma_y(t))<\delta$ for every $t\in [0,1]$. 

Moreover, the curves $\gamma_y$ can be considered so that $y\mapsto \gamma_y$ varies continuously with $y$. Then $y\mapsto \length (\gamma_y)$ varies continuously and as consequence there exists $K>0$ a constant larger than $\sup_{y\in D'} \length(\gamma_y)$.

Let $U$ be a foliation box neighborhood of $\W^c$ obtained as $\bigcup_{z\in f(D')}\W^c_{\epsilon_1}(z)$ for some small $\epsilon_1>0$. Let $\epsilon_2>0$ be such that $f(\W^c_{\epsilon_2}(y))$ is a subset of $\W^c_{\epsilon_1}(f(y))$ for every $y\in D'$ and let $U'$ be the neighborhood  $\bigcup_{y\in D'}\W^c_{\epsilon_2}(y)$. It follows that $f(w)$ lies in $\W^c_{K+\epsilon_1+\epsilon_2}(w)$ for every $w\in U'$.

Let us rename $U'$ as $U_x$ and $K+\epsilon_1+\epsilon_2$ as $L_x$ to highlight the dependence on the point $x$. We conclude that for every $x\in M$ there exists a neighborhood $U_x$ and a constant $L_x$ so that $f(w)\in \W^c_{L_x}(y)$ for every $w\in U_x$. By taking a finite subcover $\{U_{x_i}\}_{i\in I}$ of the cover $\{U_x\}_{x\in M}$ of $M$ it follows that $f(w)\in \W^c_L(w)$ for every $w\in M$ and $L=\max_{x\in I}L_{x_i}$. Then \ref{item1propHol} follows as a consequence of Proposition \ref{propcenterfixingL}.
\end{proof}

\section{Continuation of normally hyperbolic foliations revisited}\label{normhypfolrevisited}\label{appendixA}

In this section we revisit the stability of normally hyperbolic foliations of \cite{HPS} (see also \cite{PSW}). The main goal is to prove Theorem \ref{thmgraphtransf} which guarantees that, in a certain sense, the continuation of a normally hyperbolic foliation can be carried out along sets of uniform size in $\PH_{c=1}(M)$. The immediate antecedent for this result is \cite[Theorem 4.1]{BFP} (see also  \cite[Section 4.1]{BFP} and \cite[Appendix B]{BFP}).

Everything in this section is independent from the previous one.

\subsection{Statements}

From now on throughout this section let $M$ be a closed (compact and without boundary) Riemannian manifold. 

Suppose $\C^1$ and $\C^2$ are continuous cone fields in $M$ of complementary dimension. 
Given constants $\epsilon,\delta>0$ we will say that the metric in $M$ and the cone fields $(\C^1,\C^2)$ are \emph{$\epsilon$-nearly euclidean at scale $\delta$} if for every $x\in M$ the exponential map $\exp_x:T_xM\to M$ restricted to $B_\delta(0)\subset T_xM$ is a diffeomorphism onto its image $B_\delta(x)\subset M$ satisfying that, if one identifies $T_xM$ isometrically with the euclidean space $\R^n$ by a linear map $A:T_xM\to \R^n$, then $$\big| \|D(A\circ \exp_x^{-1})_y(v_y)\|-1\big|<\epsilon$$ and $$\big| \measuredangle \big(D(A\circ \exp_x^{-1})_y(v_y^1), D(A\circ \exp_x^{-1})_{y'}(v_{y'}^2)\big)-\frac{\pi}{2}\big|<\epsilon \pi$$ 
for every $y,y'\in B_{\delta}(x)$, every unit vector $v_y$ in $T_yM$ and every unit vectors $v_y^1\in\C^1(y)$ and $v_{y'}^2\in\C^2(y')$.

Informally, for $\epsilon>0$ small the property of being $\epsilon$-nearly euclidean at scale $\delta$ indicates that in restriction to balls of radius $\delta$ the metric is close to being euclidean and the cone fields are fairly narrow, almost constant and almost pairwise orthogonal.

\begin{lemma}\label{rmkUdelta} Suppose $f_0\in \PH_{c=D}(M)$ for some $D>0$. There exists a Riemannian metric in $M$, constants $0<\lambda<1$, $\kappa>1$ and $\delta(f_0)>0$, and for every $\delta$ with $0 <\delta\leq\delta(f_0)$ a $C^1$-neighborhood $\mathcal{U}_\delta(f_0)\subset\PH_{c=D}(M)$ of $f_0$, such that:

\begin{itemize}

\item[(P1)]\label{P1} One has $\max\{\|Df_x\|,\|Df^{-1}_x\|\}<\kappa$ for every $x\in M$ and every $f \in \U_\delta(f_0)$.

\item[(P2)]\label{P2} One has $\|Df|_{E^s(x)}\|<\lambda$ and $\|Df^{-1}|_{E^u(x)}\|<\lambda$ for every $x\in M$ and every $f \in \U_\delta(f_0)$.

\item[(P3)]\label{P3} There exist continuous cone fields $\mathcal{C}^s$, $\mathcal{C}^{cs}$, $\mathcal{C}^u$ and $\mathcal{C}^{cu}$ on $M$ such that for every $f\in \U_\delta(f_0)$ and $x\in M$:  
\begin{enumerate}

\item The dimension of $\C^\sigma$ is equal to $\dim(E^\sigma)$ and the bundle $E^\sigma_f(x)$ is contained in $\C^\sigma(x)$ for every $x\in M$ and $\sigma\in\{s,cs,u,cu\}$.

\item The cones $\C^s$ and $\C^{cs}$ are $f^{-1}$-invariant and satisfy $E^\sigma_f(x)=\bigcap_{n\geq 0} Df^{-n}\mathcal{C}^\sigma_{f^n(x)}$ for every $x\in M$ and $\sigma\in\{s,cs\}$.

\item The cones $\C^u$ and $\C^{cu}$ are $f$-invariant and satisfy that $E^\sigma_f(x)=\bigcap_{n\geq 0} Df^n\mathcal{C}^\sigma_{f^{-n}(x)}$ for every $x\in M$ and $\sigma\in\{u,cu\}$.
\end{enumerate}

\item[(P4)]\label{P4} The metric and the cone fields $(\C^s,\C^{cu})$ and $(\C^{cs},\C^u)$ are $\frac{1}{16}$-nearly euclidean at scale $20\delta$.

\item[(P5)]\label{P5}
The $C^0$ distance $d_0(f,g)$ is smaller than $\frac{\delta}{64\kappa^2}(1+\lambda+\lambda^2+\cdots)^{-1}$ and smaller than $\frac{1}{10}(\lambda^{-1}-1)$ for every $f,g\in \mathcal{U}_\delta(f_0)$.
\end{itemize}
\end{lemma}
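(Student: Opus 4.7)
The plan is to choose the Riemannian metric and the cone fields using $f_0$ alone, and then carve out $\U_\delta(f_0)$ as an intersection of finitely many $C^1$-open conditions around $f_0$. First I will replace the given metric by an \emph{adapted} one: a metric under which the three invariant bundles $E^s_{f_0},E^c_{f_0},E^u_{f_0}$ are mutually orthogonal and under which the partial hyperbolicity of $f_0$ is realized in a single iterate. Both requirements can be met simultaneously by first orthogonalizing the splitting (take any inner products on each bundle and declare them pairwise orthogonal) and then averaging the resulting metric along $\ell$ iterates of $f_0$; since $Df_0$ preserves the orthogonal splitting, orthogonality survives the averaging. With this metric fixed I choose $\lambda\in(0,1)$ strictly between the one-step contraction/expansion rates of $Df_0$ along $E^s/E^u$ and $1$, and $\kappa>1$ slightly larger than $\sup_{x\in M}\max\{\|Df_{0,x}^{\pm 1}\|\}$.

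Next I define narrow continuous cone fields $\C^s,\C^{cu},\C^{cs},\C^u$ as slight thickenings of the invariant bundles along the orthogonal splittings $E^s_{f_0}\oplus E^{cu}_{f_0}$ and $E^{cs}_{f_0}\oplus E^u_{f_0}$. For $f_0$, the strict invariance relations of (P3) and the identifications $E^\sigma_{f_0}(x)=\bigcap_n Df_0^{\pm n}\C^\sigma$ are standard consequences of partial hyperbolicity, and they persist under small $C^1$ perturbation since strict cone invariance is a $C^0$-open condition on $Df$. To obtain (P4) I invoke compactness and uniform continuity: at sufficiently small scale, $\exp_x^{-1}$ is arbitrarily close to a linear isometry on $B_{20\delta}(0)\subset T_xM$, and the continuous bundles, hence the cones if chosen narrow enough, are nearly constant on $B_{20\delta}(x)$; the pairs $(\C^s,\C^{cu})$ and $(\C^{cs},\C^u)$ are then nearly orthogonal because the underlying splittings are. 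This produces a threshold $\delta(f_0)>0$ so that (P4) holds at every scale $20\delta$ with $0<\delta\leq\delta(f_0)$, the scale-monotonicity being automatic.

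Finally, for each $0<\delta\leq\delta(f_0)$ I will define $\U_\delta(f_0)$ as the intersection of three $C^1$-open conditions around $f_0$: (a) the derivative bounds (P1) and the hyperbolic rates (P2), which remain valid under small $C^1$ perturbation because $E^s_f$ and $E^u_f$ vary continuously (by the openness of the cone criterion); (b) strict $f^{\pm 1}$-invariance of the cone fields $\C^\sigma$ demanded in (P3), together with the cone-intersection characterization of $E^\sigma_f$, all open around $f_0$; and (c) a $C^0$-diameter bound enforcing the two inequalities of (P5). Since (P4) depends only on the metric and the cone fields, it holds automatically for every $f\in\U_\delta(f_0)$.

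I expect the principal technical subtlety to be the compatibility between (P3) and (P4): the cones need to be narrow enough to be nearly orthogonal in pairs at scale $20\delta$, yet wide enough that strict invariance is robust under $C^1$ perturbation. This compatibility is afforded precisely by making the full splitting $E^s_{f_0}\oplus E^c_{f_0}\oplus E^u_{f_0}$ orthogonal at the outset, after which both the opening of the cones and the scale $\delta(f_0)$ can be shrunk freely without losing either property.
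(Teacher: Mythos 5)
Your overall strategy coincides with the paper's: build an adapted Riemannian metric making the $f_0$-invariant splitting (nearly) orthogonal, construct narrow cone fields around the invariant bundles, use compactness and uniform continuity to get (P4) at a uniform scale $\delta(f_0)$, and carve out $\U_\delta(f_0)$ by finitely many $C^1$-open conditions. You diverge from the paper in one substantive way: you obtain the adapted metric by averaging a preliminary orthogonalizing metric along $\ell$ iterates of $f_0$, whereas the paper invokes Gourmelon's adapted metric theorem. Your averaging argument is valid here since (P2) only asks for uniform one-step contraction on $E^s$ and expansion on $E^u$, and your observation that orthogonality of the $Df_0$-invariant splitting survives the averaging (because $g(Df_0^i v^s, Df_0^i v^c)=0$ for each $i$) is correct; this is a more elementary alternative to citing \cite{Gou}.

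There is, however, a genuine gap that the paper addresses explicitly and you do not: the metric you build by declaring the continuous bundles $E^s_{f_0},E^c_{f_0},E^u_{f_0}$ pairwise orthogonal inherits only $C^0$-regularity from those bundles, and the subsequent averaging does not improve this. A merely continuous Riemannian metric has no Christoffel symbols, no geodesics, and hence no exponential map, so the condition ``$\tfrac{1}{16}$-nearly euclidean at scale $20\delta$'' in (P4) is not even well-defined for it. The paper repairs this by replacing the continuous orthogonalizing metric with a nearby $C^\infty$ metric; the price is that the invariant bundles are then only approximately orthogonal (within $\pi/64$ of $\pi/2$), and this small deviation has to be absorbed into the $\tfrac{1}{16}$-nearly-euclidean tolerance and carried through (P4). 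You should insert this smoothing step and verify that the adapted contraction rates in (P2) and the near-orthogonality survive the $C^0$-small perturbation of the metric.
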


Recall from the preliminaries that by `$f$-invariant cone field' we mean the usual containment property given by $Df^N\C(x)\subset \intt \C(f^N(x))$ for every $x\in M$ and some uniform $N>0$.

\begin{proof}[Proof of Lemma \ref{rmkUdelta}] Let us start by considering $\U$ a $C^1$ open neighborhood of $f_0$ in $\PH_{c=D}(M)$. If $\U$ is small enough then property (P1) is automatically satisfied for some constant $\kappa>1$.

By \cite{Gou} there exists a constant $0<\lambda<1$ and an \emph{adapted metric} $\mathit{g_1}$  in $M$ such that $f_0$ satisfies $\|D{f_0}|_{E^s(x)}\|<\lambda$ and $\|D{f_0}^{-1}|_{E^u(x)}\|<\lambda$ for every $x\in M$. 

Let $\mathit{g_2}$ be the metric that makes the subbundles $E^s_{f_0}$, $E^c_{f_0}$ and $E^u_{f_0}$ pairwise orthogonal and coincides with $\mathit{g_1}$ in restriction to each of them. Note that since the invariant bundles of $f_0$ vary a priori only continuously with respect to the point in $M$ we can not guarantee that $\mathit{g_1}$ has better regularity than continuous. Nevertheless, if we consider $\mathit{g}$ a $\C^\infty$ metric close enough to $\mathit{g_2}$ we can ensure that $\|D{f_0}|_{E^s(x)}\|<\lambda$ and $\|D{f_0}^{-1}|_{E^u(x)}\|<\lambda$ is still satisfied for every $x\in M$ and that the pairwise angles between the subbundles $E^s_{f_0}$, $E^c_{f_0}$ and $E^u_{f_0}$ lie in $(\pi/2-\pi/64,\pi/2+\pi/64)$.

Since the invariant bundles vary continuously in the $C^1$ topology we can shrink $\U$, if necessary, so that (P2) is satisfied for every $f\in\U$ with respect to the same constant $\lambda$ and such that the pairwise angles between the subbundles $E^s_f$, $E^c_f$ and $E^u_f$ also lie in $(\pi/2-\pi/64,\pi/2+\pi/64)$ for every $f\in\U$.

In order to obtain (P3) and (P4) let $\bar{\C}^s$, $\bar{\C}^{cs}$, $\bar{\C}^u$ and $\bar{\C}^{cu}$ be invariant cone fields, given by the partial hyperbolicity of $f_0$, satisfying that $E^\sigma_{f_0}(x)=\bigcap_{n\geq 0} Df_0^{-n}\bar{\C}^\sigma_{f_0^n(x)}$
for every $x\in M$ and every $\sigma\in\{s,cs\}$, and $E^\sigma_{f_0}(x)=\bigcap_{n\geq 0} Df_0^n\bar{\C}^\sigma_{f_0^{-n}(x)}$ for every $x\in M$ and every $\sigma\in\{u,cu\}$.

Let us define $\C^s_x=Df_0^{-N}\bar{\C}^s_{f_0^N(x)}$, $\C^u_x=Df_0^{N}\bar{\C}^u_{f_0^{-N}(x)}$, $\C^{cu}_x=Df_0^N\bar{\C}^{cu}_{f_0^{-N}(x)}$ and $\C^{cs}_x=Df_0^{-N}\bar{\C}^{cs}_{f_0^N(x)}$ for $N>0$ large enough so that the angle between every vector of $\C^\sigma_x$ and $E^\sigma_{f_0}(x)$ is less than $\pi/64$, for every $x\in M$ and every $\sigma\in \{s,u,cs,cu\}$.

By shrinking $\U$ even more in the $C^1$ topology, if necessary, one obtains that (1), (2) and (3) of property (P3) need to be fulfilled by every $f\in \U$.

Moreover, it is not difficult to check that for every $x\in M$ there exists $\delta_x>0$ such that for every $z\in B_{\delta_x}(x)$ the exponential map $\exp_z:T_zM\to M$ restricted to $B_{\delta_x}(0)\subset T_zM$ is a diffeomorphism onto its image $B_{\delta_x}(z)\subset M$ and, if one identifies isometrically $T_zM$ with euclidean $\mathbb{R}^n$ by a linear map 
$A:T_zM\to \R^n$, then 

$$\big| \|D(A\circ \exp_z^{-1})_y(v_y)\|-1\big|<\frac{1}{16}$$ and $$\big| \measuredangle \big(D(A\circ \exp_z^{-1})_y(v_y^\sigma), D(A\circ \exp_z^{-1})_{y'}(v_{y'}^{\sigma'})\big)-\frac{\pi}{2}\big|<\frac{\pi}{16}$$ 
for every $y,y'\in B_{\delta_x}(z)$, every unit vector $v_y$ in $T_yM$ and every unit vectors $v_y^\sigma\in \mathcal{C}^{\sigma}_y$ and $v_{y'}^{\sigma'}\in \mathcal{C}^{\sigma'}_{y'}$ for every pair $(\sigma,\sigma')\in\{(s,cu),(cs,u)\}$.

By taking a finite subcover $\{B_{\delta_{x_i}}(x_i)\}_{1\leq i\leq k}$ of $M$ it follows that $\delta(f_0)=\frac{1}{20}\min\{\delta_{x_i}\}_{1\leq i\leq k}$ guarantees that property (P4) is satisfied by every $f\in \U$ for $\delta=\delta(f_0)$.

Given $0<\delta\leq\delta(f_0)$, properties (P1),\ldots , (P4) are still fulfilled for every $f\in\U$. It is enough now to shrink $\U$ in the $C^0$ topology even more, if necessary, to a neighborhood $\U_{\delta}(f_0)$ so that property (P5) is satisfied for every $f,g\in \U_{\delta}(f_0)$.
\end{proof}

\begin{remark}\label{rmkafterUdelta} It is worth pointing out that, according to the order in which each property of Lemma \ref{rmkUdelta} is proven, one can always obtain what follows. Given $f_0\in \PH_{c=D}(M)$ for some $D>0$, there exist constants $0<\lambda<1$ and $\kappa>1$, a sequence of metrics $\mathit{g_n}$ in $M$, a non-increasing sequence of constants $\delta_n(f_0)>0$, for every $n\geq 0$ and $\delta$ with $0 <\delta\leq\delta_n(f_0)$ a $C^1$-neighborhood $\mathcal{U}^{(n)}_\delta(f_0)\subset\PH_{c=D}(M)$ of $f_0$, and sequences $C_n\xrightarrow{n}+\infty$ and $\epsilon_n\xrightarrow{n} 0$ so that for each $n$ properties (P1), (P2), (P3) and (P5) always satisfied for every $f,g\in \mathcal{U}^{(n)}_\delta(f_0)$, and property (P4) states that the cone fields $(\C^s,\C^{cu})$ and $(\C^{cs},\C^u)$ are $\epsilon_n$-nearly euclidean at scale $C_n\delta$. This will be used in Section \ref{sectionproofthmA} where a narrower version of property (P4) is needed.
\end{remark}

Suppose $E$ is a continuous subbundle of $TM$. If $N$ is a connected manifold of dimension $\dim(E)$ we say that $\eta:N\to M$ is a \emph{complete $C^1$ immersion tangent to $E$} if $\eta$ is a (not necessarily injective) $C^1$ map such that $D_x\eta(T_xN)=E(\eta(x))$ for every $x\in N$ and such that the pull-back metric in $N$ is complete. Moreover, if $L\subset M$ denotes the image of $\eta$ we say that $L$ is a \emph{complete $C^1$ immersed submanifold tangent to $E$}.

\begin{thm}\label{thmgraphtransf} (Uniform continuation of normally hyperbolic foliations).
Suppose $f_0\in \PH_{c=1}(M)$. Consider a metric in $M$ and a constant $\delta(f_0)>0$ as in Lemma \ref{rmkUdelta}. Then for every $\delta$ with $0<\delta\leq\delta(f_0) $ a $C^1$ neighborhood $\U_\delta(f_0)$ as in Lemma \ref{rmkUdelta} satisfies the following properties.

For every pair $f$ and $g$ in  $\U_\delta(f_0)$, if $\W^c$ is an $f$-invariant center foliation, then there exists \begin{itemize}
\item A map $h:M\to M$ continuous, surjective and $\delta$-close to identity,

\item A homeomorphism $\rho:M\to M$ such that for every leaf $L\in\W^c$, one has $\rho(L)=L$ and the map $\rho|_L:L\to L$ is a $C^1$ diffeomorphism that is $\delta$-close to the identity on $L$,

\end{itemize}
such that
\begin{enumerate}

\item\label{(1)thmplaqueexp} 
For every leaf $L\in\W^c$ the set $h(L)$ is a complete $C^1$ immersed submanifold tangent to $E^c_{g}$. Furthermore, the map $h|_L:L\to M$ is $C^1$ with respect to the inner differentiable structure of $L$, the derivative $D(h|_L)|_{E^c_f}$ varies continuously in $M$ and satisfies $D(h|_L)_x(E^c_f(x))=E^c_{g}(h(x))$ and $\frac{1}{2}<\|D(h|_L)_x|_{E^c_f(x)}\|<2$ for every $x\in L$. 

\item  The equation $h \circ \rho \circ f(x)=g\circ h(x)$ is satisfied for every $x\in M$. In particular,  $h\circ f(L)= g \circ h(L)$ for every $L\in\W^c$.
\end{enumerate}
\end{thm}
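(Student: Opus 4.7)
The plan is to construct $h$ as the unique fixed point of a contraction---the \emph{graph transform} associated with the pair $(f,g)$---acting on a suitable space of continuous sections $M \to M$ close to the identity, and then to define $\rho$ leaf by leaf so as to correct the mismatch between $g \circ h$ and $h \circ f$. The uniformity across $\U_\delta(f_0)$ will come from the nearly Euclidean charts granted by (P4): on each ball of radius $20\delta$ the shared invariant cones $\C^s, \C^{cs}, \C^u, \C^{cu}$ are almost orthogonal and almost constant, so every local $\W^c$-plaque of diameter at most $\delta$ is a nearly horizontal $C^1$-graph over $E^c_{f_0}(x)^\perp$, and likewise for the local stable and unstable manifolds of both $f$ and $g$.

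For each $x \in M$ I will fix a codimension-one transversal $D_x$ to $\W^c$ through $x$ of Euclidean diameter comparable to $\delta$, and take $\Sigma$ to be the complete metric space of continuous sections $\sigma : M \to M$ with $\sigma(x) \in D_x$ for every $x$, equipped with the sup metric. The graph transform $\Gamma = \Gamma_{f,g} : \Sigma \to \Sigma$ will be defined by
\[
\Gamma(\sigma)(x) \;=\; \pi_x\bigl(g(\sigma(f^{-1}(x)))\bigr),
\]
where $\pi_x$ projects onto $D_x$ along the local plaques of $\W^c$. By (P1) and (P5) the argument of $\pi_x$ stays inside a ball where this projection is well posed, so $\Gamma$ does take values in $\Sigma$. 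The core step is then to check, using (P2)--(P4), that $\Gamma$ contracts the sup metric uniformly over the whole of $\U_\delta(f_0)$: split the variation of $\sigma$ along the cone decomposition, so that the $\C^s$-component is contracted directly by $g$ with rate at most $\lambda$, while the $\C^u$-component is absorbed by the center projection $\pi_x$ (which exchanges the $g$-expansion along $\C^u$ for a comparable contraction), the residual cross-error being $O(\delta)$ coming from non-orthogonality in the chart. Let $\sigma^*$ be the unique fixed point and set $h := \sigma^*$.

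The identity $\Gamma \sigma^* = \sigma^*$ says exactly that $g(h(y))$ and $h(f(y))$ lie on a common $\W^c_f$-plaque for every $y$, so $g$ preserves $h(L)$ up to this identification for each leaf $L \in \W^c$. To obtain item~(1), I will run a parallel graph transform at the level of one-dimensional subspaces in $\C^{cs} \cap \C^{cu}$ over each leaf, producing simultaneously with $h$ a continuous line field along leaves; iterated invariance of $\C^{cs}$ and $\C^{cu}$ under $g$ then forces this line field to coincide with $E^c_g \circ h$, and the bounds $\tfrac{1}{2} < \|D(h|_L)|_{E^c_f(x)}\| < 2$ follow from (P4) together with $\delta$-smallness. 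Completeness of $h(L)$ as a $C^1$ immersed submanifold and the $\delta$-closeness of $h$ to $\id$ come directly from the construction; surjectivity of $h$ will follow from $h$ being homotopic to $\id_M$ through the straight-line homotopy in charts.

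Finally, for every $y \in L \in \W^c$, since $g(h(y))$ and $h(f(y))$ lie on a common $f$-center plaque and $h|_{f(L)}$ is a near-identity $C^1$ parametrization of a neighborhood of $h(f(y))$ in $h(f(L))$, there is a unique $\rho(f(y)) \in f(L)$ with $h(\rho(f(y))) = g(h(y))$; this defines $\rho|_L : L \to L$ as a $C^1$ diffeomorphism $\delta$-close to $\id_L$, and the global $\rho : M \to M$ is then continuous because $h$ is and $\W^c$ varies continuously transversally. The decisive difficulty will be the contraction estimate for $\Gamma$: the ``$\C^u$ through center projection'' trick and the parallel slope transform must both be executed with constants depending only on $\lambda$, $\kappa$ and $\delta$ of Lemma~\ref{rmkUdelta}, never on the particular pair $(f, \W^c)$. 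This uniformity is precisely what distinguishes Theorem~\ref{thmgraphtransf} from the classical HPS graph transform and what makes it available for the limiting arguments used to prove $C^1$-closedness of the class of discretized Anosov flows.
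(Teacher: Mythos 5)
Your central step fails: the map $\Gamma(\sigma)(x)=\pi_x\bigl(g(\sigma(f^{-1}(x)))\bigr)$, with $\pi_x$ the projection along local $\W^c$-plaques onto a full $su$-transversal $D_x$, is not a contraction on $\Sigma$. When you split the variation $\sigma'-\sigma$ along the cones, $Dg$ contracts the $\C^s$-component by $\lambda$ but \emph{expands} the $\C^u$-component by roughly $\lambda^{-1}>1$; the center projection $\pi_x$ only removes the $E^c$-component and leaves the unstable component essentially unchanged (up to the $O(\delta)$ chart error), so it cannot ``absorb'' the unstable expansion. There is no value of $\delta$ for which the resulting Lipschitz constant of $\Gamma$ drops below $1$. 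This is precisely the standard obstruction that makes the normally hyperbolic stability problem require two one-sided constructions rather than one fixed-point scheme on full transversals. The paper handles it by running \emph{two} graph transforms with opposite time directions: a forward transform on $cu$-strips (the zero section $\tilde\W^u_{\delta_3}(f^nL)$ is pushed forward by $g$, and the $s$-component contracts by $\lambda$, see Claim~\ref{claimgraphtransf}), and a backward transform on $cs$-strips (pushed by $g^{-1}$, so the $u$-component contracts). The continuation $L'$ of the leaf $L$ is obtained as the \emph{intersection} $\graph(\xi^\infty_{cs})\cap\graph(\xi^\infty_{cu})$, and only then is $h$ defined by holonomy onto $L'$. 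If you want to salvage a fixed-point phrasing, you would have to work with a \emph{pair} of sections $(\sigma^s,\sigma^u)$ valued in stable and unstable discs respectively, with $\sigma^s$ evolved by $g$ and $\sigma^u$ by $g^{-1}$, and then reconstitute $h$ from their $cs/cu$-intersections.

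A secondary, but still real, gap is your treatment of the $C^1$-regularity of $h$ along leaves. You invoke a ``parallel graph transform on one-dimensional subspaces'' to produce a line field equal to $E^c_g\circ h$. That argument, even granting it, only shows that the continuation $h(L)$ is a $C^1$ curve tangent to $E^c_g$ (the analogue of Claim~\ref{claimgraphC1} in the paper, proved there by uniform convergence of the graph derivatives and Arzel\`a--Ascoli). It does not show that the parametrization $h|_L$ itself is $C^1$ with $\tfrac12<\|D(h|_L)|_{E^c_f}\|<2$: the raw holonomy map $h_1$ constructed from the limit strips need not even be injective along $L$, and the paper has to introduce a separate ``regulating'' averaging step to replace $h_1$ by a genuine $C^1$ diffeomorphism $h|_L$ with the stated derivative bounds. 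Your proposal conflates regularity of the image leaf with regularity of the leaf parametrization, and it is the latter that item~(\ref{(1)thmplaqueexp}) asserts.
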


From the proof of Theorem \ref{thmgraphtransf} we will also derive the following.

\begin{thm}[Uniform continuation of complete $C^1$ center immersions]\label{thmunifcontC1completecenterimm}

Suppose $f_0\in \PH_{c=1}(M)$. Consider a metric in $M$ and a constant $\delta(f_0)>0$ as in Lemma \ref{rmkUdelta}. Then for every $\delta$ with $0<\delta\leq\delta(f_0)$ a $C^1$ neighborhood $\U_\delta(f_0)$ as in Lemma \ref{rmkUdelta} satisfies the following properties.

If $f$ and $g$ are maps in  $\U_\delta(f_0)$ then for every $\eta:\R \to M$ a complete $C^1$ immersion tangent to $E^c_f$ there exists a sequence $\{\gamma_n:\R\to M\}_{n\in \mathbb{Z}}$ of complete $C^1$ immersions tangent to $E^c_{g}$ such that 
\begin{equation}\label{eq1thmC1imm}d(f^n\circ \eta(t),\gamma_n(t))<\delta\end{equation} and
\begin{equation}\label{eq2thmC1imm}\gamma_{n+1}\mbox{ is a reparametrization of }g\circ \gamma_n
\end{equation}
for every $t\in \mathbb{R}$ and $n\in \mathbb{Z}$.

Moreover, if $\{\gamma_n':\R\to M\}_{n\in \mathbb{Z}}$ is another sequence of complete $C^1$ immersions tangent to $E^c_{g}$ satisfying (\ref{eq1thmC1imm}) and (\ref{eq2thmC1imm}), then $\gamma_n'$ is a reparametrization of $\gamma_n$ for every $n\in \mathbb{Z}$.
\end{thm}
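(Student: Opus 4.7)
The plan is to adapt the graph transform method from the proof of Theorem \ref{thmgraphtransf} to the setting where $\eta$ is a single $C^1$ center immersion rather than a leaf of an invariant foliation. The output requested is, up to reparametrization, a $g$-orbit of an immersed center curve shadowing $\{f^n\circ\eta\}$. Since reparametrization is permitted, the condition \eqref{eq2thmC1imm} amounts to requiring that the images $L_n=\gamma_n(\R)$ form a $g$-invariant sequence of complete $C^1$-immersed submanifolds tangent to $E^c_g$; the parametrizations are then chosen a posteriori to encode \eqref{eq1thmC1imm}.

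First, using property (P4) of Lemma \ref{rmkUdelta} at scale $\delta$, attach to each base point $x_n(t):=f^n\circ\eta(t)$ a $(\dim M-1)$-dimensional transversal disc $D_n(t)$ of radius $\delta$ through $x_n(t)$, transverse to the center cone $\C^c_g:=\C^{cs}_g\cap\C^{cu}_g$; the near-Euclidean property ensures these discs vary continuously in $(n,t)$ and that any $C^1$ arc of length at most $\delta$ whose tangent lies in $\C^c_g$ meets $D_n(t)$ in at most one point. Let $\mathcal{X}$ be the space of sequences $\sigma=\{\sigma_n\}_{n\in\Z}$ with $\sigma_n(t)\in D_n(t)$ such that each image $\sigma_n(\R)$ is a complete $C^1$ immersion with tangent everywhere in $\C^c_g$, endowed with the uniform metric. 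Define the transfer operator $T\colon\mathcal{X}\to\mathcal{X}$ by setting $T(\sigma)_n(t)$ to be the unique point of $g(\sigma_{n-1}(\R))\cap D_n(t)$ lying within $\delta$ of $x_n(t)$. Existence of this intersection uses (P5): the $C^0$-distance $d_0(f,g)$ is small enough that $g(\sigma_{n-1}(\R))$ passes within $\delta/2$ of $x_n(t)$, and $g$-invariance of the center cone (P3) guarantees that the new curve has tangent in $\C^c_g$, so the transversality condition persists.

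A fixed point $\gamma=T(\gamma)$ gives $g(\gamma_{n-1}(\R))=\gamma_n(\R)$ as sets, which is exactly \eqref{eq2thmC1imm}, while the constraint $\gamma_n(t)\in D_n(t)$ delivers \eqref{eq1thmC1imm}. To produce the fixed point, decompose displacements inside $D_n(t)$ into $E^s_g$- and $E^u_g$-components by means of the $\frac{1}{16}$-near-Euclidean splitting from (P4). Using (P2), the forward $E^s_g$-component contracts by a factor $\lambda+o(1)$ under $T$, while the $E^u_g$-component contracts by $\lambda+o(1)$ under $T^{-1}$ (since overshoot along $E^u_g$ becomes overshoot along the ambient $\W^u_g$, which must be killed by back-projection onto $D_{n-1}(t)$); combining via a standard adapted weighted sup-norm for bi-infinite sequences in a hyperbolic splitting, $T$ becomes a uniform contraction on $\mathcal{X}$. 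The unique fixed point furnishes the images $L_n$, and bootstrapping the $C^1$-regularity of the tangent distribution along $L_n$ proceeds by the $C^1$-section theorem (using cone invariance), exactly as in the proof of Theorem \ref{thmgraphtransf}. Uniqueness in the conclusion follows from uniqueness of the fixed point of $T$: if $\{\gamma_n'\}$ is another candidate, then reparametrizing each $\gamma_n'$ so that $\gamma_n'(t)\in D_n(t)$ produces an element of $\mathcal{X}$ fixed by $T$, hence equal to $\gamma$ as a sequence of images.

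I expect the main obstacle to be Step 3 of the contraction argument, namely setting up a weighted norm on sequences that simultaneously captures forward $E^s_g$-contraction and backward $E^u_g$-contraction for $T$. A related but easier subtlety is verifying that the transversal intersection defining $T(\sigma)_n(t)$ is well-defined (exists uniquely in the $\delta$-ball), for which the combination of near-Euclidean geometry (P4), cone invariance (P3), and the quantitative $C^0$-closeness in (P5) has to fit together precisely; this is where the role of $\delta(f_0)$ and the specific small constants of Lemma \ref{rmkUdelta} becomes visible. Once these are in place, the passage from fixed point to complete $C^1$-immersion tangent to $E^c_g$ is a standard invariant-section argument, parallel to the one carried out in Theorem \ref{thmgraphtransf}.
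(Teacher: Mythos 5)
Your proposal follows the paper's broad strategy, but the central contraction step is broken. The operator $T$ you define pushes every sequence forward by $g$ and reprojects onto the transversals; symbolically, $T(\sigma)_n$ depends on $\sigma_{n-1}$ through $Dg$. Under such a pure forward shift the $E^u_g$-component of the displacement is genuinely \emph{expanded} by a factor roughly $\lambda^{-1}>1$ at each step, so no choice of norm on the space of bi-infinite sequences can make $T$ a uniform contraction. In particular the ``adapted weighted sup-norm'' trick fails: if a weight $w_n$ were to tame the expansion one would need $w_n/w_{n-1}<\lambda$ for every $n\in\mathbb{Z}$, forcing $w_n\to+\infty$ as $n\to-\infty$ and making the weighted sup-norm identically infinite on nontrivial bounded sequences. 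The observation that $T$ contracts $E^s_g$ while $T^{-1}$ contracts $E^u_g$ does not imply $T$ is a contraction after renorming — it is precisely what tells you that $T$ is \emph{hyperbolic}, not contracting.

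The correct resolution — and the route the paper takes — decouples the two directions into separate one-sided graph transforms. One builds the abstract unfolded manifold $V(\eta)$ (essentially what you describe, but realized as a disjoint union of $\delta$-balls with gluing along $\eta$, which also handles the non-injectivity of $\eta$ cleanly), then iterates the $cu$-strips forward by $g$ as in Claim \ref{claimgraphtransf}: the $s$-direction contracts and the limit $\xi^\infty_{cu}$ is a $C^1$-submanifold tangent to $\tilde{E}^{cu}_{g}$ by Claim \ref{claimgraphC1}. The analogous $cs$-graph transform is run backward (with $g^{-1}$), producing $\xi^\infty_{cs}$ tangent to $\tilde{E}^{cs}_{g}$. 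Their intersection is the desired continuation $\gamma_0$, and uniqueness is free because that intersection is characterized intrinsically as the locus of points in $U(\eta)$ whose full $g$-orbit stays in $V(\eta)$ (Remark \ref{rmkcharactcontinuation}). If you insist on a single fixed-point equation, the correct operator must push the stable displacement forward while \emph{pulling} the unstable displacement backward, i.e.\ roughly $v_n^s\mapsto A^s_{n-1}v^s_{n-1}$ and $v_n^u\mapsto (A^u_n)^{-1}v^u_{n+1}$; that mixed operator is a contraction in the ordinary sup-norm, but it is not your $T$, and formulating it at the level of sequences of $C^1$ curves rather than points essentially forces you back to the two separate graph transforms.
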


\subsection{Plaque expansivity and leaf-conjugacies}\label{subsectionplaqueexpappA} 

It is worth noting in this subsection some consequences of Theorem \ref{thmgraphtransf} before getting into its proof.

\begin{remark}\label{rmkleafconj}(Leaf-conjugacy)  Note that if $h$ is injective then $h(\W^c)$ is a $g$-invariant center foliation and $h$ is a homeomorphism taking leaves of $\W^c$ into leaves of $g$ such that $$h\circ f(L)= g \circ h(L)$$ for every $L$ in $\W^c$. That is, $(f,\W^{c}_{f})$ and $(g,\W^{c}_{g})$ are \emph{leaf-conjugate}.
\end{remark}

As detailed in Lemma \ref{lemmadeltaplaqueexp=>leafconj} below, a sufficient condition for $h$ to be injective is given by the following property.

\begin{definition}\label{defdeltape} Suppose $f\in \PH(M)$ admits an $f$-invariant center foliation $\W^c$. Assume that a metric in $M$ has been fixed. We say that $(f,\W^c)$ is \emph{$\delta$-plaque expansive} if every pair of $\delta$-pseudo orbits $(x_n)_n$ and $(y_n)_n$ satisfying 
\begin{itemize}
\item $x_{n+1}\in \W^c_\delta(f(x_n))$ for every $n \in \mathbb{Z}$
\item $y_{n+1} \in \W^c_{\delta}(f(y_n))$ for every $n \in \mathbb{Z}$,
\item $d(x_n,y_n)<2\delta$  for every $n \in \mathbb{Z}$,
\end{itemize}
also satisfy $y_0\in \W^c_{3\delta}(x_0)$.
\end{definition}

\begin{lemma}\label{lemmadeltaplaqueexp=>leafconj}
In the context of Theorem \ref{thmgraphtransf}, if $f\in \U_{\delta}(f_0)$ is $\delta$-plaque expansive then $h$ is a homeomorphism and $(f,\W^c)$ and $(g,h(\W^c))$ are leaf-conjugate.
\end{lemma}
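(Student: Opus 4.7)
By Remark~\ref{rmkleafconj}, the leaf-conjugacy will be automatic once $h$ is known to be a homeomorphism, and since $M$ is compact and $h$ is continuous and surjective, this reduces to proving $h$ injective. My plan is: assume for contradiction that $h(x)=h(y)$ with $x\neq y$; apply $\delta$-plaque expansivity to the orbits of $x$ and $y$ under $F:=\rho\circ f$ to place $x$ and $y$ on a short common center segment; then derive a contradiction from the existence of a short closed $C^1$ loop tangent to $E^c_g$ sitting inside a ball small enough for the cone geometry of Lemma~\ref{rmkUdelta} to preclude it.

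For the first step I would set $F=\rho\circ f$, which is a homeomorphism. Since $\rho$ preserves each leaf $L\in\W^c$ and $\rho|_L$ is $\delta$-close to the identity in the intrinsic metric, one has $F(z)\in\W^c_\delta(f(z))$ for every $z\in M$. The relation $h\circ\rho\circ f=g\circ h$ from Theorem~\ref{thmgraphtransf} reads $h\circ F=g\circ h$, and hence $h\circ F^n=g^n\circ h$ for every $n\in\mathbb{Z}$. Setting $x_n=F^n(x)$ and $y_n=F^n(y)$ then gives $h(x_n)=h(y_n)$ for all $n$, and $d(x_n,y_n)<2\delta$ because $h$ is $\delta$-close to the identity. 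Combined with the pseudo-orbit property just recorded, $\delta$-plaque expansivity yields $y\in\W^c_{3\delta}(x)$, so $x$ and $y$ lie on a common leaf $L$ at intrinsic distance $d_c(x,y)\leq 3\delta$.

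Next, parametrizing the center segment from $x$ to $y$ by arc length as $\eta:[0,d_c(x,y)]\to L$ and setting $\gamma:=h\circ\eta$, item~(1) of Theorem~\ref{thmgraphtransf} ensures that $\gamma$ is a $C^1$ curve tangent to $E^c_g$ with $\|\gamma'\|\in(1/2,2)$ and $\gamma(0)=\gamma(d_c(x,y))$. Thus $\gamma$ is a closed $C^1$ loop of length in $(d_c(x,y)/2,\,2d_c(x,y)]\subseteq(0,6\delta]$; and since $h$ is $\delta$-close to the identity and $[x,y]_c$ has diameter at most $3\delta$, $\gamma$ is contained in $B_{4\delta}(x)\subseteq B_{20\delta}(x)$.

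The final step---and the main technical obstacle---is to produce a contradiction from the existence of such a loop. Working in the nearly-Euclidean chart on $B_{20\delta}(x)$ provided by property~(P4) of Lemma~\ref{rmkUdelta}, the cones $\C^{cs}$ and $\C^{cu}$ appear as narrow cones around orthogonal-complementary linear subspaces, and one can verify that their intersection (which contains $E^c_g$) sits inside a narrow cone around a fixed line $\mathbb{R}\bar{e}$ corresponding to $E^c_{f_0}(x)$, of angular width strictly less than $\pi/2$ (shrinking the nearness-to-Euclidean constant via Remark~\ref{rmkafterUdelta} if necessary). The continuous unit tangent field $\gamma'/\|\gamma'\|$ then has values in this narrow cone; its image lies in one of the two antipodal hemispheres near $\pm\bar{e}$, say near $+\bar{e}$ by connectedness. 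The $\bar{e}$-component of the Euclidean-chart integral $\int_0^{d_c(x,y)}\gamma'(t)\,dt$ would then be bounded below by a positive multiple of the length of $\gamma$, contradicting $\gamma(0)=\gamma(d_c(x,y))$. Hence $x=y$, and the lemma follows via Remark~\ref{rmkleafconj}. The delicate point is precisely this quantitative angular bookkeeping, juggling $\delta$, the nearness-to-Euclidean constant, and the angular width of $\C^{cs}\cap\C^{cu}$.
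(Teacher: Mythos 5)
Your proof is correct and follows the same route as the paper: iterate $\rho\circ f$ to build two $\delta$-pseudo-orbits with $d(x_n,y_n)<2\delta$, apply $\delta$-plaque expansivity to place $x$ and $y$ on a common short center arc, and then rule out $x\neq y$ using the $C^1$ immersion property of $h|_L$ together with the local cone geometry of property (P4). The paper simply asserts at this last step that (1) of Theorem~\ref{thmgraphtransf} plus (P4) make $h|_{\W^c_{3\delta}(x_0)}$ a diffeomorphism onto its image, whereas you unpack this into the explicit "no short closed $C^1$ loop tangent to a narrow cone" argument in the nearly-Euclidean chart — a correct elaboration of what the paper leaves implicit.
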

\begin{proof} Suppose $h(x_0)=h(y_0)=z_0$ for some $x_0,y_0\in M$. The orbit of $z_0$ by $g$ defines two $\delta$-pseudo orbits for $f$ with `jumps' in $\W^c$-plaques as follows. 

Since $h\circ \rho \circ f=g\circ h$, the points $x_1=\rho(f(x_0))$ and $y_1=\rho(f(y_0))$, and inductively the points $x_{n+1}=\rho(f(x_n))$ and $y_{n+1}=\rho(f(y_n))$ for every $n\in \mathbb{Z}$, satisfy $$g^n(z_0)=h(x_n)=h(y_n)$$ for every $n\in \mathbb{Z}$. 

As $h$ and $\rho$ are $\delta$-close to the identity, the sequences $(x_n)_{n\in\mathbb{Z}}$ and $(y_n)_{n\in \mathbb{Z}}$ satisfy $x_{n+1}\in \W^c_\delta(f(x_n))$, $y_{n+1}\in \W^c_\delta(f(y_n))$ and $d(x_n,y_n)<2\delta$.

If $f$ is $\delta$-plaque expansive the above implies that $y_0$ belongs to $\W^c_{3\delta}(x_0)$. By (\ref{(1)thmplaqueexp}) in Theorem \ref{thmgraphtransf} and property (P4) in Lemma \ref{rmkUdelta} the image by $h$ of $\W^c_{3\delta}(x_0)$ is a $C^1$ arc tangent to $E^c_{g}$ and $h$ restricted to $\W^c_{3\delta}(x_0)$ is a $C^1$ diffeomorphism over its image. As $y_0$ belongs to $\W^c_{3\delta}(x_0)$ and $h(x_0)=h(y_0)$ it follows that $x_0=y_0$. 

This proves the global injectivity of $h$. By Remark \ref{rmkleafconj} one concludes that $(f,\W^{c}_{f})$ and $(g,h(\W^c))$ are leaf-conjugate.
\end{proof}

It is important to note that, in contrast with the usual definition of plaque-expansivity (as given in the introduction and below), the notion of $\delta$-plaque expansivity is sensible to the metric one chooses for $M$. 

Note also that, for $\delta>0$ small, if $(f,\W^c)$ is $\delta$-plaque expansive with respect to some metric, then $(f,\W^c)$ is $\delta'$-plaque expansive with respect to the same metric for every $0<\delta'\leq\delta$.

Recall that $(f,\W^c)$ is called \emph{plaque expansive} if for some metric and some $\delta>0$ every pair of sequences $(x_n)_{n\in \mathbb{Z}}$ and $(y_n)_{n\in \mathbb{Z}}$ satisfying that $x_{n+1}\in \W^c_\delta(f(x_n))$, $y_{n+1}\in \W^c_\delta(f(y_n))$ and $d(x_n,y_n)<\delta$ for every $n\in \mathbb{Z}$ must also satisfy $y_0\in \W^c_{loc}(x_0)$. Note that here $\W^c_{loc}(x)$ should be understood as $\W^c_\epsilon(x)$ for some small $\epsilon>0$ independent of $x\in M$.

It is immediate to check that:

\begin{remark}\label{rmkdeltaplaqueexpisplaqueexp}
If $(f,\W^c)$ is $\delta$-plaque expansive with respect to some metric then $(f,\W^c)$ is plaque expansive.
\end{remark}
\begin{proof}
It is enough to consider $0<\delta'\leq \delta$ so that $\W^c_\delta(x)\subset \W^c_{loc}(x)$ for every $x\in M$. Then $(f,\W^c)$ being $\delta'$-plaque expansive automatically implies that $(f,\W^c)$ is plaque expansive.
\end{proof}

Conversely, the following is also satisfied.

\begin{lemma}\label{lemmaplaqueexp=>deltaplaqueexp}
Suppose $(f,\W^c)$ is a plaque expansive system in $M$. Given a metric in $M$ there exists $\delta>0$ such that $(f,\W^c)$ is $\delta$-plaque expansive with respect to that metric.
\end{lemma}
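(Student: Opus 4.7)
The plan is to take the constant $\delta_0$ furnished by plaque expansivity and pick $\delta>0$ small enough that (i) any pair of $\delta$-pseudo orbits meeting the hypotheses of Definition \ref{defdeltape} automatically satisfies the hypotheses of ordinary plaque expansivity with constant $\delta_0$, and (ii) the purely topological conclusion $y_0\in \W^c_{\mathrm{loc}}(x_0)$ can be upgraded to the quantitative conclusion $y_0\in \W^c_{3\delta}(x_0)$ by exploiting the extra information $d(x_0,y_0)<2\delta$.

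For (i), I would first transfer plaque expansivity from the (a priori arbitrary) metric in its definition to the given metric on $M$. Since all Riemannian metrics on the compact manifold $M$ are Lipschitz equivalent, and the induced intrinsic metrics on leaves of $\W^c$ scale accordingly, plaque expansivity holds in the fixed metric with some constants $\delta_0>0$ and $\epsilon_0>0$: every pair of sequences $(x_n),(y_n)$ with $x_{n+1}\in \W^c_{\delta_0}(f(x_n))$, $y_{n+1}\in \W^c_{\delta_0}(f(y_n))$ and $d(x_n,y_n)<\delta_0$ for every $n\in\mathbb{Z}$ satisfies $y_0\in \W^c_{\epsilon_0}(x_0)$.

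For (ii), the key ingredient is a uniform local comparison between the intrinsic leaf distance $d_c$ on $\W^c$ and the ambient distance $d$. Because $\W^c$ has $C^1$ leaves tangent to the continuous subbundle $E^c$, small center plaques are nearly straight $C^1$ arcs whose arc length and chord length are comparable at small scales, and compactness of $M$ makes this comparison uniform: for any $c>1$ there exists $\rho>0$ such that whenever $y\in \W^c_{\epsilon_0}(x)$ and $d(x,y)<\rho$, one has $d_c(x,y)\le c\cdot d(x,y)$.

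With both pieces in hand, I would fix $\delta>0$ with $2\delta\le\delta_0$ and $2\delta<\rho$, where $\rho$ corresponds to the choice $c=3/2$ above. Then any pair of sequences satisfying the hypotheses of $\delta$-plaque expansivity also satisfies the hypotheses of ordinary plaque expansivity with constant $\delta_0$, so $y_0\in \W^c_{\epsilon_0}(x_0)$. Combined with $d(x_0,y_0)<2\delta<\rho$, the local comparison yields $d_c(x_0,y_0)\le (3/2)\cdot 2\delta=3\delta$, i.e. $y_0\in \W^c_{3\delta}(x_0)$, as required. The only step requiring any real argument is the uniform arc-versus-chord comparison, but this is a routine consequence of $C^1$ leaves tangent to a continuous bundle on compact $M$; both remaining steps amount to a careful choice of constants.
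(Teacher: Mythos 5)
Your proposal is correct and follows essentially the same strategy as the paper's proof: transfer plaque expansivity to the given metric via Lipschitz equivalence of metrics (both ambient and intrinsic on center leaves), then upgrade the qualitative conclusion $y_0\in\W^c_{\epsilon_0}(x_0)$ to the quantitative one $y_0\in\W^c_{3\delta}(x_0)$ by exploiting $d(x_0,y_0)<2\delta$ together with a small-scale comparison of center-leaf distance with ambient distance. One caveat worth making explicit: the comparison ``$y\in\W^c_{\epsilon_0}(x)$ and $d(x,y)<\rho$ imply $d_c(x,y)\le c\,d(x,y)$'' is not purely a local near-straightness observation when $\epsilon_0$ is not small relative to the scale at which $E^c$ is almost constant, since one must also rule out that the plaque $\W^c_{\epsilon_0}(x)$ returns close to $x$ in $M$ while staying at intrinsic distance bounded away from zero; this follows from a compactness argument (the set of pairs $(x,y)$ with $r\le d_c(x,y)\le\epsilon_0$ is compact, so $d$ is bounded below on it), or, more simply, from taking $\epsilon_0$ small enough to begin with, which the ``small $\epsilon$'' convention in the paper's statement of plaque expansivity licenses.
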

\begin{proof}
Suppose $(f,\W^c)$ is plaque expansive. Then for some metric in $M$ and some small $\epsilon>0$ there exists $\delta>0$ such that every pair of sequences $(x_n)_{n\in \mathbb{Z}}$ and $(y_n)_{n\in \mathbb{Z}}$ satisfying $x_{n+1}\in \W^c_\delta(f(x_n))$, $y_{n+1}\in \W^c_\epsilon(f(y_n))$ and $d(x_n,y_n)<\delta$ for every $n\in \mathbb{Z}$ must also satisfy $y_0\in \W^c_\epsilon(x_0)$. Note that $\delta>0$ can be considered as small as wanted so that the previous property remains to be true. At first, let $\delta$ be smaller than $\epsilon$.

%Let $\delta$ be small enough so that $y_0\in \W^c_\epsilon(x_0)$ and $d(x_0,y_0)<\delta$  implies $y_0\in \W^c_{3\delta/2}(x_0)$.

Suppose we consider another metric in $M$  and let us denote by $d'$ the distance induced by this new metric (in contrast with $d$ for the first one). As $M$ is a compact manifold there exists $C\geq 1$ such that $\frac{1}{C}d'(x,y)\leq d(x,y) \leq C d'(x,y)$ for every $x,y\in M$. 

Let $d_c$ and $d'_c$ denote the distances inside center leaves with respect to $d$ and $d'$, respectively. Note that we can consider $C$ so that it also satisfies $\frac{1}{C}d'_c(x,y)\leq d_C(x,y) \leq C d'_c(x,y)$ for every $x$ and $y$ in the same center leaf.

Finally, suppose that $\delta$ is small enough so that for every $0<\delta'\leq C\delta$, if $d_c'(x,y)<C\delta$ and $d'(x,y)<\delta'$, then $d_c'(x,y)<(3/2)\delta'$.

Under these conditions it is immediate to check that $f$ needs to be $\frac{\delta}{C}$-plaque expansive with respect to the new metric. Indeed, let $(x_n)_{n\in \mathbb{Z}}$ and $(y_n)_{n\in \mathbb{Z}}$ be such that $d_c'(x_{n+1},f(x_n))<\delta/C$, $d_c'(y_{n+1},f(y_n))<\delta/C$ and $d'(x_n,y_n)<2\delta/C$ for every $n\in \mathbb{Z}$. It follows that $d_c(x_{n+1},f(x_n))<\delta$, $d_c(y_{n+1},f(y_n))<\delta$ and $d(x_n,y_n)<2\delta$ for every $n\in \mathbb{Z}$. Then $y_0$ lies in $\W^c_\epsilon(x_0)$. Since $\W^c_\epsilon(x_0)$ is a subset of $\W^c_\delta(x_0)$, one has $d_c(x_0,y_0)<\delta$. Which in turns implies  $d_c'(x_0,y_0)<C\delta$. As $d'(x_0,y_0)<2\delta/C$ then from the last constraint imposed to $\delta$ it follows that $d'_c(x_0,y_0)<3\delta/C$.  
\end{proof}

Note that from the proof of the previous lemma one can also deduce the following:
\begin{lemma}\label{lastlemma?}
Consider two distinct metrics in $M$. Given $\delta>0$ there exists $C>0$ such that, if $(f,\W^c)$ is $\delta C$-plaque expansive with respect to the first metric, then $(f,\W^c)$ is $\delta$-plaque expansive with respect to the second one.
\end{lemma}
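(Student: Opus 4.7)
The plan is to transcribe, with reversed roles of the two metrics, the calculation at the end of the proof of Lemma \ref{lemmaplaqueexp=>deltaplaqueexp}. Denoting the two metrics by $d_1$ and $d_2$ with intrinsic leaf metrics $d_{c,1}, d_{c,2}$, the first two ingredients I would extract are: a bilipschitz constant $K \geq 1$ (supplied by compactness of $M$) satisfying $K^{-1}d_1 \leq d_2 \leq K d_1$ and $K^{-1}d_{c,1} \leq d_{c,2} \leq K d_{c,1}$; and a small-scale threshold $\delta_0 > 0$ (supplied by the $C^1$-regularity of the leaves of $\W^c$ tangent to the continuous bundle $E^c$) such that any two points $x, y$ on a common center leaf with $d_{c,2}(x, y) < \delta_0$ automatically satisfy $d_{c,2}(x, y) < \tfrac{3}{2}\, d_2(x, y)$. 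The existence of $\delta_0$ is exactly the same small-scale comparison between intrinsic and extrinsic leaf distance that is invoked in the proof of Lemma \ref{lemmaplaqueexp=>deltaplaqueexp}.

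Given $\delta > 0$, the constant $C$ is to be chosen large enough that $C \geq K$ (so that $\delta$-plaque-expansive hypotheses in $d_2$ translate, via bilipschitz, into $\delta C$-plaque-expansive hypotheses in $d_1$) and simultaneously small enough that $3KC\delta \leq \delta_0$ (so that the $d_{c,2}$-conclusion obtained after translating back falls within the small-scale comparison regime). The two constraints are jointly satisfiable whenever $\delta \leq \delta_0/(3K^2)$, in which case $C = K$ already suffices. This is in any case the only regime in which $\delta$-plaque expansivity is a useful notion, by the monotonicity comment sitting between Remark \ref{rmkdeltaplaqueexpisplaqueexp} and Lemma \ref{lemmaplaqueexp=>deltaplaqueexp}.

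The core estimate is then a direct computation. Given witness sequences for $\delta$-plaque expansivity in $d_2$, i.e.\ $d_{c,2}(x_{n+1}, f(x_n)) < \delta$, $d_{c,2}(y_{n+1}, f(y_n)) < \delta$ and $d_2(x_n, y_n) < 2\delta$, the bilipschitz inequalities immediately yield $d_{c,1}(x_{n+1}, f(x_n)) < K\delta \leq \delta C$ (similarly for the $y$-jumps) and $d_1(x_n, y_n) < 2K\delta \leq 2\delta C$. Applying the assumed $\delta C$-plaque expansivity in $d_1$ gives $d_{c,1}(x_0, y_0) < 3\delta C$, hence $d_{c,2}(x_0, y_0) \leq 3KC\delta \leq \delta_0$, and the small-scale comparison finally upgrades this to $d_{c,2}(x_0, y_0) < \tfrac{3}{2}\, d_2(x_0, y_0) < 3\delta$, as required. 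The only genuine obstacle is the simultaneous calibration of the two constraints on $C$, which is what forces the argument to be carried out at a scale small enough for the leaf geometry to be essentially euclidean; any numerical slack can then be absorbed into enlarging $C$.
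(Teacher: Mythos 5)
Your proposal is correct and follows exactly the route the paper intends: the paper does not write out a proof of this lemma, instead stating that it follows from the proof of Lemma \ref{lemmaplaqueexp=>deltaplaqueexp}, and you have carried out precisely that transcription with the roles of the two metrics reversed. The two ingredients you extract (the bilipschitz constant $K$ and the small-scale threshold $\delta_0$ at which intrinsic and extrinsic leaf distances are comparable up to a factor $3/2$) are the same ones the paper uses, and the chain of estimates $d_{c,2}\text{-jumps}\rightsquigarrow d_{c,1}\text{-jumps}\rightsquigarrow d_{c,1}(x_0,y_0)<3\delta C\rightsquigarrow d_{c,2}(x_0,y_0)<3KC\delta\rightsquigarrow d_{c,2}(x_0,y_0)<\tfrac{3}{2}d_2(x_0,y_0)<3\delta$ is the correct one.

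The only point worth flagging is the range of validity: your argument, like the paper's implicit one, produces $C$ only when $\delta\leq\delta_0/(3K^2)$, whereas the lemma is phrased as ``given $\delta>0$.'' You observe (correctly) that this is the only regime in which $\delta$-plaque expansivity is useful, and indeed in the corollary that invokes the lemma $\delta$ is taken equal to the small constant $\delta(f_0)$ from Lemma \ref{rmkUdelta}, so your restriction is harmless in context. If one insisted on making the statement literally true for every $\delta>0$, one could remark that for $\delta$ outside this range one may simply take $C$ large enough that $2\delta C$ exceeds the $d_1$-diameter of $M$; then the $\delta C$-pseudo-orbit conditions for $d_1$ place no constraint on the ambient separation, so two genuine orbits on distinct center leaves already violate $\delta C$-plaque expansivity and the hypothesis becomes vacuous. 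This makes the large-$\delta$ case trivially true, but it is not needed for the way the lemma is used.
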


As a corollary we obtain:

\begin{cor}
Theorem \ref{thmunifHPSintro} follows from Theorem \ref{thmgraphtransf}.
\end{cor}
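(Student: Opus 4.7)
The plan is to derive Theorem \ref{thmunifHPSintro} as a direct combination of Theorem \ref{thmgraphtransf} with the injectivity criterion for $h$ already established in Lemma \ref{lemmadeltaplaqueexp=>leafconj}. First I would invoke Lemma \ref{rmkUdelta} on the given $f_0 \in \PH_{c=1}(M)$ to fix a Riemannian metric on $M$ and a constant $\delta(f_0) > 0$; then choose any $\delta$ with $0 < \delta \leq \delta(f_0)$ and set $\U := \U_\delta(f_0)$ to be the $C^1$-neighborhood provided by Theorem \ref{thmgraphtransf}. These are the $\delta$ and $\U$ claimed by Theorem \ref{thmunifHPSintro}.

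Now suppose some $f \in \U$ admits a center foliation $\W^c$ with $(f, \W^c)$ being $\delta$-plaque expansive (with respect to the fixed metric). For an arbitrary $g \in \U$, Theorem \ref{thmgraphtransf} applied to the pair $(f, g)$ yields continuous maps $h, \rho : M \to M$, both $\delta$-close to the identity, with $\rho$ preserving each leaf of $\W^c$, with $h$ sending each $L \in \W^c$ to a complete $C^1$ immersed submanifold tangent to $E^c_g$, and with $h \circ \rho \circ f = g \circ h$. Lemma \ref{lemmadeltaplaqueexp=>leafconj} applies verbatim in this setting and upgrades $h$ to a homeomorphism of $M$. Setting $\W^c_g := h(\W^c)$ and invoking Remark \ref{rmkleafconj}, the relation $h \circ f(L) = g \circ h(L)$ for each $L \in \W^c$ (which follows from $h \circ \rho \circ f = g \circ h$ and $\rho(L) = L$) says exactly that $h$ realizes a leaf-conjugacy between $(f, \W^c)$ and $(g, \W^c_g)$. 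This is the conclusion of Theorem \ref{thmunifHPSintro}.

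The only point requiring a small remark is that $\delta$-plaque expansivity is metric-sensitive, whereas Theorem \ref{thmunifHPSintro} does not specify a metric. The natural reading inside this section is that the metric witnessing $\delta$-plaque expansivity is the one furnished by Lemma \ref{rmkUdelta}; if instead the metric is prescribed externally, Lemma \ref{lastlemma?} lets one rescale $\delta$ so that $\delta$-plaque expansivity in the prescribed metric implies $\delta(f_0)$-plaque expansivity in the metric used in Theorem \ref{thmgraphtransf}, with no further change to the argument. I do not expect any substantive obstacle beyond this bookkeeping, since all the analytic content sits inside Theorem \ref{thmgraphtransf} and Lemma \ref{lemmadeltaplaqueexp=>leafconj}.
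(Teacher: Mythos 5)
Your proposal is correct and follows essentially the same route as the paper: apply Theorem \ref{thmgraphtransf} within a neighborhood $\U_\delta(f_0)$, upgrade $h$ to a homeomorphism via Lemma \ref{lemmadeltaplaqueexp=>leafconj}, and handle the metric sensitivity of $\delta$-plaque expansivity by invoking Lemma \ref{lastlemma?}. The paper states the metric-conversion step directly (setting $\delta := \delta(f_0)C$ and $\U := \U_{\delta(f_0)}(f_0)$) whereas you present it as a closing remark, and you are a bit more explicit about the injectivity step, but the substance is identical.
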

\begin{proof}
The statement of Theorem \ref{thmunifHPSintro} presupposes a metric in $M$. In parallel, let us consider $\delta(f_0)>0$ and the metric in $M$ as in Lemma \ref{rmkUdelta}. 

Let $C>0$ be as in Lemma \ref{lastlemma?} so that, if $(f,\W^c)$ is $\delta(f_0)C$-plaque expansive with respect to the first metric, then $(f,\W^c)$ is $\delta(f_0)$-plaque expansive with respect to the second one. It is enough to consider now $\delta:=\delta(f_0)C$ and $\U:=\U_{\delta(f_0)}(f_0)$. The rest follows by Theorem \ref{thmgraphtransf}.
\end{proof}

We recover also the classical stability statement for normally hyperbolic foliations (see \cite[Theorem 7.1]{HPS}):

\begin{cor}\label{lemmaplaqueexpopen}
Suppose $(f_0,\W^c)$ is a plaque expansive system in $\PH_{c=1}(M)$. There exists a neighborhood $\U\subset \PH_{c=1}(M)$ of $f_0$ such that every $f\in \U$ admits a $f$-invariant center foliation $\W^c_{f}$ such that $(f,\W^c_{f})$ is plaque expansive and leaf conjugate to $(f_0,\W^c)$.
\end{cor}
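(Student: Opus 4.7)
The plan is to extract the neighborhood $\U$ and the candidate leaf-conjugacy directly from Theorem \ref{thmgraphtransf}, and then to show that the quantitative information it provides --- the continuation map $h$ is close to the identity and bi-Lipschitz along $\W^c$-leaves --- is enough to transport plaque expansivity from $(f_0,\W^c)$ to any $(f,\W^c_f)$ in the neighborhood.

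First I would fix the metric on $M$ and the scale $\delta(f_0)$ given by Lemma \ref{rmkUdelta}. Since $(f_0,\W^c)$ is plaque expansive, Lemma \ref{lemmaplaqueexp=>deltaplaqueexp} produces a constant $\delta_1>0$ (which we may shrink freely and take $\leq\delta(f_0)$) such that $(f_0,\W^c)$ is $\delta_1$-plaque expansive with respect to the fixed metric. Next I would choose $\delta$ with $0<\delta\leq\delta_1/4$ and set $\U:=\U_\delta(f_0)$. For every $f\in\U$, Theorem \ref{thmgraphtransf} applied to the pair $(f_0,f)$ gives a continuous surjection $h=h_f:M\to M$ and a leaf-preserving homeomorphism $\rho=\rho_f$, both $\delta$-close to the identity (the latter along each leaf of $\W^c$), with $h\circ\rho\circ f_0=f\circ h$, and such that $h$ restricted to any leaf of $\W^c$ is a $C^1$ immersion whose leafwise differential has norm in $(1/2,2)$. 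Since $f_0$ is $\delta$-plaque expansive, Lemma \ref{lemmadeltaplaqueexp=>leafconj} forces $h$ to be a homeomorphism, so $\W^c_f:=h(\W^c)$ is an $f$-invariant center foliation and $(f_0,\W^c)$, $(f,\W^c_f)$ are leaf-conjugate via $h$.

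It remains to show that $(f,\W^c_f)$ is itself plaque expansive, and for this I would pull pseudo-orbits back through $h$. Pick $\eta>0$ with $3\eta+2\delta\leq\delta_1$ and let $(y_n),(y'_n)$ be sequences in $M$ with $y_{n+1}\in\W^c_{f,\eta}(f(y_n))$, $y'_{n+1}\in\W^c_{f,\eta}(f(y'_n))$, and $d(y_n,y'_n)<2\eta$ for every $n$. Set $x_n:=h^{-1}(y_n)$ and $x'_n:=h^{-1}(y'_n)$. Because $h\circ\rho\circ f_0=f\circ h$, the point $h^{-1}(f(y_n))$ equals $\rho(f_0(x_n))$ and therefore lies in $\W^c(f_0(x_n))$ within leaf distance $\delta$ of $f_0(x_n)$. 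Using that $h$ is $2$-bi-Lipschitz along $\W^c$-leaves, $x_{n+1}$ is within leaf distance $2\eta$ from $\rho(f_0(x_n))$, hence within leaf distance $2\eta+\delta\leq\delta_1$ from $f_0(x_n)$; the same bound holds for the primed sequence. Since $h$ being $\delta$-close to the identity implies $h^{-1}$ is as well, $d(x_n,x'_n)\leq d(y_n,y'_n)+2\delta<2\eta+2\delta\leq 2\delta_1$. Thus $(x_n),(x'_n)$ meet the hypothesis of $\delta_1$-plaque expansivity for $(f_0,\W^c)$, and we conclude $x'_0\in\W^c_{3\delta_1}(x_0)$. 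Pushing forward by the $2$-Lipschitz map $h$ along leaves yields $y'_0\in\W^c_{f,6\delta_1}(y_0)$, which is plaque expansivity of $(f,\W^c_f)$ with pseudo-orbit threshold $\eta$ and uniform local radius $6\delta_1$.

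The main obstacle is the third step, the quantitative transfer of plaque expansivity across $h$. Mere existence of a leaf-conjugating homeomorphism would not suffice, since plaque expansivity involves intrinsic leafwise distances and a purely topological conjugacy need not respect them. What makes the transfer go through is the extra structure packaged into Theorem \ref{thmgraphtransf}: the uniform $\delta$-closeness of $h$ and $\rho$ to the identity together with the uniform bi-Lipschitz control of $h$ along $\W^c$-leaves. These are precisely the tools needed to convert the $\delta_1$-plaque expansivity constants of $f_0$ into plaque expansivity constants of $f$.
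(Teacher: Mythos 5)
Your proof is correct and follows essentially the same route as the paper's: fix the metric and $\delta(f_0)$ from Lemma \ref{rmkUdelta}, use Lemma \ref{lemmaplaqueexp=>deltaplaqueexp} to obtain $\delta$-plaque expansivity of $(f_0,\W^c)$, invoke Theorem \ref{thmgraphtransf} together with Lemma \ref{lemmadeltaplaqueexp=>leafconj} to get the leaf-conjugating homeomorphism $h$, and then pull pseudo-orbits back through $h$ using the quantitative controls ($h$, $\rho$ $\delta$-close to the identity, $\frac{1}{2}<\|Dh|_{E^c}\|<2$) to feed them into the $\delta_1$-plaque expansivity of $f_0$. The only substantive difference is that the paper pushes one step further: after obtaining $y_0'\in(\W^c_f)_{6\delta}(x_0')$ it uses the nearly-constant-bundle property (P4) together with the ambient bound $d(x_0',y_0')<2\delta'$ to improve the leafwise estimate to $(\W^c_f)_{3\delta'}(x_0')$, thereby establishing the stronger quantitative conclusion that $(f,\W^c_f)$ is $\delta'$-plaque expansive. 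This refinement is not needed for the corollary itself but is recorded in Remark \ref{rmkuniformdelta'plaqexp} and used later in Proposition \ref{propstableuniqint}; your version stops at the qualitative conclusion (plaque expansivity with threshold $\eta$ and radius $6\delta_1$), which suffices for the statement but would need the extra step if one wanted the remark.
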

\begin{proof}
Let $(f_0,\W^c)$ be a plaque expansive system in $\PH_{c=1}(M)$. Let us consider the metric in $M$ and the constant $\delta(f_0)>0$ given by Lemma \ref{rmkUdelta}. 

By Lemma \ref{lemmaplaqueexp=>deltaplaqueexp} there exists $\delta>0$ such that $(f_0,\W^c)$ is $\delta$-plaque expansive (with respect to the metric we have just fixed). We can suppose that $\delta$ is smaller than $\delta(f_0)$.

Let $\delta'>0$ be such that $3\delta'<\delta$. Let $\U_{\delta'}(f_0)$ be the $C^1$ neighborhood of $f_0$ given by Lemma \ref{rmkUdelta}. If $f$ is a system in $\U_{\delta'}(f_0)$ then by Lemma \ref{lemmadeltaplaqueexp=>leafconj} the map $h$ given by Theorem \ref{thmgraphtransf} is a homeomorphism, and $(f_0,\W^c)$ and $(f,h(\W^c))$ are leaf conjugate. Let $\W^c_f$ denote $h(\W^c)$. It remains to see that $(f,\W^c_f)$ is plaque expansive.

Always with respect to the metric in $M$ given by Lemma \ref{lemmaplaqueexp=>deltaplaqueexp} suppose that $(x'_n)_{n\in \mathbb{Z}}$ and $(y'_n)_{n\in \mathbb{Z}}$ are $\delta'$ pseudo-orbits for $f$ so that $x'_{n+1}\in (\W^c_f)_{\delta'}(f(x'_n))$, $y'_{n+1} \in (\W^c_f)_{\delta'}(f(y_n))$ and $d(x'_n,y'_n)<2\delta'$  for every $n \in \mathbb{Z}$. Let us see that $y'_0$ must lie in $(\W^c_f)_{3\delta'}(x'_0)$.

Consider $x_n=h^{-1}(x_n')$ and $y_n=h^{-1}(y_n')$ for every $n\in \mathbb{Z}$. Let $\rho$ be the map given by Theorem \ref{thmgraphtransf}. As $\rho\circ f_0 (x_n)=h^{-1}\circ f(x'_n)$ and $\rho$ is $\delta'$ close to the identity it follows that $h^{-1}\circ f(x'_n)$ lies in $\W^c_{\delta'}(f_0(x_n))$. Moreover, as $1/2<\|Dh|_{E^c}\|<2$ and $x'_{n+1}\in (\W^c_f)_{\delta'}(f(x_n'))$, the point $h^{-1}\circ f(x'_n)$ lies in $\W^c_{2\delta'}(x_{n+1})$. It follows that $$x_{n+1}\in\W^c_{3\delta'}(f_0(x_n))$$ for every $n\in \mathbb{Z}$. Analogously for $(y_n)_{n\in \mathbb{Z}}$.

Moreover, as $d(h^{-1}(x_n'),x_n')<\delta'$ and $d(h^{-1}(y_n'),y_n')<\delta'$ because $h$ is $\delta'$-close to the identity, then $d(x_n',y_n')<2\delta'$ implies $$d(x_n,y_n)<4\delta'$$
for every $n\in \mathbb{Z}$.

As $3\delta'<\delta$ and $4\delta'<2\delta$ it follows from the $\delta$-plaque expansivity of $(f_0,\W^c)$ that $y_0$ needs to lie in $\W^c_{3\delta}(x_0)$. Then $x_0'$ needs to lie in $\W^c_{6\delta}(x_0')$ because of $1/2<\|Dh|_{E^c}\|<2$. Since $d(x_0',y_0')<2\delta'$ and because at scale $20\delta(f_0)$ the center bundles are almost constant (property (P4) in Lemma \ref{rmkUdelta}) it follows that $y_0'$ needs to lie in $(\W^c_f)_{3\delta'}(x_0')$. 

This shows that $(f,\W^c_f)$ is $\delta'$-plaque expansive. Then $(f,\W^c_f)$ is plaque expansive.
\end{proof}

\begin{remark}\label{rmkuniformdelta'plaqexp} Note that from the proof of the previous corollary the following statement can also be deduced: If $(f_0,\W^c)$ in $\PH_{c=1}(M)$ is plaque expansive and a metric as in Lemma \ref{rmkUdelta} has been fixed, then the $C^1$ neighborhood $\U\subset \PH_{c=1}(M)$ of $f_0$ given by Corollary \ref{lemmaplaqueexpopen} can be chosen so that there exists $\delta'>0$ such that $(f,\W^c_f)$ is $\delta'$-plaque expansive for every $f\in \U$ (with respect to the metric that has been fixed).
\end{remark}

\begin{remark}
To show the open property in Theorem \ref{thmA} and Theorem \ref{thmAprima} it is enough to show that the systems are plaque expansive.

In contrast, the following brief remark concerning the closed property is worth mentioning. In general, suppose $f_0\in \PH_{c=1}(M)$ is the limit of a sequence $f_n$ in $\PH_{c=1}(M)$ such that $(f_n,\W^c_{f_n})$ is plaque expansive for some invariant center manifold $\W^c_{f_n}$. Consider a metric in $M$, a constant $\delta(f_0)>0$ and, for every $0<\delta\leq \delta(f_0)$, a neighborhood $\U_{\delta}(f_0)$ as in Lemma \ref{rmkUdelta}. 

As $(f_n,\W^c_{f_n})$ is $\delta'$-plaque expansive for every small enough $\delta'>0$ we can consider $\delta_n>0$ the largest constant such that $(f_n,\W^c_{f_n})$ is $\delta'$-plaque expansive for every $\delta'\in (0,\delta_n)$. 

The key point to note is that, a priori, we can not rule out that for every $0<\delta\leq\delta(f_0)$ and $f_n\in \U_\delta(f_0)$ the constant $\delta_n$ may be smaller than $\delta$. Thus a priori we can not conclude that $f_0$ has to admit a center foliation and that there exists a leaf-conjugacy with some $(f_n,\W^c_{f_n})$. To show the closed property Theorem \ref{thmA} and Theorem \ref{thmAprima} an extra `uniform' argument will be needed.
\end{remark}

\subsection{Stability of unique integrability for plaque expansive systems}\label{appendixsubsectionuniqueint} It is also worth noting the following consequences of Theorem \ref{thmgraphtransf} and Theorem \ref{thmunifcontC1completecenterimm}.

\begin{lemma}\label{lemmaintuniq} In the context of Theorem \ref{thmgraphtransf} and Theorem \ref{thmunifcontC1completecenterimm} suppose $0<\delta\leq \delta_0(f_0)$ and $f,g\in \U_\delta(f_0)$. If $E^c_f$ is uniquely integrable then for every $C^1$ curve $\gamma$ tangent to $E^c_{g}$ there exists $L\in \W^c_f$ such that $\gamma\subset h(L)$.
\end{lemma}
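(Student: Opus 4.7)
The plan is to invoke Theorem \ref{thmunifcontC1completecenterimm} with the roles of $f$ and $g$ swapped in order to lift $\gamma$ to a complete $C^1$ immersion tangent to $E^c_f$, use the unique integrability of $E^c_f$ to confine this lift to a single leaf $L\in\W^c_f$, and then use the map $h$ of Theorem \ref{thmgraphtransf} to transport $L$ forward and identify the image with $\gamma$.

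Concretely, I first extend $\gamma$ to a complete unit-speed $C^1$ immersion $\eta:\mathbb{R}\to M$ tangent to $E^c_g$, which is possible by Peano's theorem, the compactness of $M$ and the continuity of the line field $E^c_g$ (so that a maximal extension cannot stop at finite time). Applying Theorem \ref{thmunifcontC1completecenterimm} with $f$ and $g$ exchanged to this $\eta$ produces a sequence $\{\tilde\eta_n\}_{n\in\mathbb{Z}}$ of complete $C^1$ immersions tangent to $E^c_f$ satisfying $d(g^n\circ\eta(t),\tilde\eta_n(t))<\delta$, with $\tilde\eta_{n+1}$ a reparametrization of $f\circ\tilde\eta_n$. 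Because $E^c_f$ is uniquely integrable, $\tilde\eta_0$ agrees locally with the unique integral curve of $E^c_f$ through each of its points, so by connectedness of $\mathbb{R}$ its image lies in a single leaf $L\in\W^c_f$.

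It remains to prove $\eta(\mathbb{R})\subset h(L)$. For this I exhibit two valid forward continuations of $\tilde\eta_0$ toward $E^c_g$ in the sense of Theorem \ref{thmunifcontC1completecenterimm} and compare them via the uniqueness clause. The first is $\tau_n := h\circ f^n\circ\tilde\eta_0$ (suitably reparametrized at each step), which makes sense because each $f^n(L)$ is a leaf of $\W^c_f$ and hence $h(f^n(L))$ is a complete $C^1$ immersed submanifold tangent to $E^c_g$ by Theorem \ref{thmgraphtransf}; the recursive identity that $\tau_{n+1}$ is a reparametrization of $g\circ\tau_n$ follows from $g\circ h=h\circ\rho\circ f$ together with the fact that $\rho$ fixes the leaf $f^{n+1}(L)$ and acts on it as a $C^1$ diffeomorphism, while the pointwise closeness bound is immediate from $h$ being $\delta$-close to the identity. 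The second continuation is $\sigma_n(u):=g^n\circ\eta(S_n^{-1}(u))$, where $S_n$ is the composition of reparametrizations arising from the identity (up to reparametrization) $\tilde\eta_n=f^n\circ\tilde\eta_0\circ S_n$; a direct substitution shows $d(f^n\tilde\eta_0(u),\sigma_n(u))<\delta$ and that $\sigma_{n+1}$ is a reparametrization of $g\circ\sigma_n$. By the uniqueness clause of Theorem \ref{thmunifcontC1completecenterimm} the curves $\sigma_0$ and $\tau_0$ are reparametrizations of each other; since $\sigma_0$ is a reparametrization of $\eta$ and $\tau_0=h\circ\tilde\eta_0$ has image contained in $h(L)$, we obtain $\eta(\mathbb{R})=h(\tilde\eta_0(\mathbb{R}))\subset h(L)$, and hence $\gamma\subset h(L)$.

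The main obstacle is the careful tracking of the reparametrizations when constructing the two candidate continuations $\{\tau_n\}$ and $\{\sigma_n\}$ of $\tilde\eta_0$. Both the pointwise closeness bound at every level $n$ and the recursive relation ``$\tau_{n+1}$ (resp.\ $\sigma_{n+1}$) is a reparametrization of $g\circ\tau_n$ (resp.\ $g\circ\sigma_n$)'' must be simultaneously verified, and for $\{\tau_n\}$ this relies crucially on the compatibility identity $g\circ h=h\circ\rho\circ f$ together with the $\rho$-invariance of every center leaf supplied by Theorem \ref{thmgraphtransf}. Once these two candidates are in place, the uniqueness statement in Theorem \ref{thmunifcontC1completecenterimm} closes the argument.
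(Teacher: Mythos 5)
Your proof is correct and follows essentially the same route as the paper: extend $\gamma$ to a complete $C^1$ center immersion, apply Theorem \ref{thmunifcontC1completecenterimm} with $f$ and $g$ swapped to produce an $E^c_f$-tangent sequence, invoke unique integrability to place $\tilde\eta_0$ in a single leaf $L$, and then compare two candidate continuations of $\tilde\eta_0$ (one coming from $\gamma$ and one from $h\circ f^n\circ\tilde\eta_0$) via the uniqueness clause of Theorem \ref{thmunifcontC1completecenterimm}. Your bookkeeping of reparametrizations, in particular choosing $\tau_n := h\circ f^n\circ\tilde\eta_0$ so that the closeness bound is immediate from $h$ being $\delta$-close to the identity, and deriving the recursion from $g\circ h = h\circ\rho\circ f$ with $\rho$ leaf-preserving, is if anything slightly more explicit than the paper's $\gamma_n' := h\circ\eta_n$, which requires an implicit reparametrization to meet the pointwise bound; this is a cosmetic difference, not a different argument.
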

\begin{proof}
Suppose $\gamma:(0,1)\to M$ is a $C^1$ curve tangent to $E^c_{g}$. By a little abuse of notation we denote both the curve and its image by $\gamma$. By Peano's existence theorem we can extend $\gamma$, if needed, and redefine its domain so that $\gamma:\R\to M$ is a complete $C^1$ immersion tangent to $E^c_{g}$. Let us see that $\gamma$ needs to be contained in $h(L)$ for some leaf $L\in \W^c$. 

By Theorem \ref{thmunifcontC1completecenterimm} (with the names of $f$ and $g$, and the etas and gammas, interchanged) there exists a sequence $\eta_n:\R \to M$ of complete $C^1$ immersions tangent to $E^c_f$ such that $\eta_{n+1}$ is a reparametrization of $f\circ\eta_n$ for every $n\in \mathbb{Z}$ and \begin{equation}\label{equsubsuniqint} d(g^n\circ \gamma(t),\eta_n(t))<\delta\end{equation} for every $t\in \R$ and $n\in\mathbb{Z}$.

Since $E^c_f$ is uniquely integrable the key observation to note is that each $\eta_n$ needs to be the $C^1$ parametrization of a leaf of $\W^c$ (as these are the only $C^1$ curves tangent to $E^c_f$). If $L$ denote the leaf of $\W^c_f$ whose parametrization is $\eta_0:\R \to M$, let us see that $\gamma$ must be contained in the continuation $h(L)$ of $L$.

On the one hand, as $\eta_{n+1}$ is a reparametrization of $f\circ \eta_n$ then (\ref{equsubsuniqint}) implies that $g^n\circ \gamma$ can be reparametrized to a $C^1$ curve $\gamma_n$ satisfying that $$d(f^n\circ \eta_0(t),\gamma_n(t))<\delta$$ for every $t\in \R$ and $n\in\mathbb{Z}$. It is immediate to check that, in addition, the curve $\gamma_{n+1}$ is a reparametrization of $g\circ \gamma_n$ for every $n\in \mathbb{Z}$.

On the other hand, since $h\circ f^n(L)=g^n\circ h (L)$ for every $n\in \mathbb{Z}$ and $h$ is $\delta$-close to the identity the curves $\gamma_n':=h\circ \eta_n$ satisfy that $\gamma_{n+1}'$ is a reparametrization of $g\circ \gamma_n'$ for every $n
\in \mathbb{Z}$ and $$d(f^n\circ \eta_0(t),\gamma_n'(t))<\delta$$ for every $t\in \R$ and $n\in\mathbb{Z}$.

By the uniqueness part of Theorem \ref{thmunifcontC1completecenterimm} (for $f$ and $g$ not interchanged) it follows that $\gamma$ is a reparametrization of $h\circ \eta_0$. In particular, $\gamma$ is contained in $h(L)$ for $L\in \W^c_f$ the image of $\eta_0$.

\end{proof}

As an immediate consequence of Lemma \ref{lemmaintuniq} one gets the following.

\begin{cor}\label{corintuniq}In the context of Theorem \ref{thmgraphtransf}, if $h$ is a homeomorphism and $E^c_f$ is uniquely integrable then $E^c_{g}$ is uniquely integrable.
\end{cor}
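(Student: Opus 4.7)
The plan is to apply Lemma \ref{lemmaintuniq} as a black box, leveraging the fact that $h$ is now assumed to be a homeomorphism. Let $\gamma_1,\gamma_2$ be two $C^1$ curves tangent to $E^c_g$ passing through a common point $x\in M$, and suppose without loss of generality (by Peano extension as in the proof of Lemma \ref{lemmaintuniq}) that each $\gamma_i$ has domain $\R$. By Lemma \ref{lemmaintuniq} there exist leaves $L_1,L_2\in\W^c_f$ with $\gamma_i\subset h(L_i)$ for $i=1,2$. The first step is to show $L_1=L_2$.

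For this I would use that $h$ is bijective and that distinct leaves of $\W^c_f$ are disjoint: from $x\in h(L_1)\cap h(L_2)$ one gets $h^{-1}(x)\in L_1\cap L_2$, hence $L_1=L_2=:L$. Thus both curves lie in the single one-dimensional $C^1$ immersed submanifold $h(L)$ described in Theorem \ref{thmgraphtransf}(\ref{(1)thmplaqueexp}). Moreover, by Remark \ref{rmkleafconj}, $h(\W^c_f)$ is a genuine $g$-invariant center foliation, so $h(L)$ carries a well-defined plaque structure inside any small foliation box of $h(\W^c_f)$.

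The last step is to pass from containment in $h(L)$ to local coincidence modulo reparametrization. In a foliation chart of $h(\W^c_f)$ around $x$, each $\gamma_i$ is a $C^1$ curve through $x$ whose tangent vectors lie in $E^c_g$; since $E^c_g$ is precisely the tangent direction of the plaques of $h(\W^c_f)$, each $\gamma_i$ remains in the local plaque of $h(L)$ through $x$. This plaque is a $C^1$ one-dimensional submanifold, so the two $C^1$ curves $\gamma_1,\gamma_2$ through $x$ tangent to it must agree on a neighborhood of $0$ up to reparametrization. This is precisely the unique integrability of $E^c_g$.

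I do not expect any real obstacle here: essentially all the analytic content has been pushed into Lemma \ref{lemmaintuniq} and Theorem \ref{thmgraphtransf}(\ref{(1)thmplaqueexp}). The only point requiring care is confirming that the immersed submanifold $h(L)$ behaves like an honest $C^1$ arc locally, and this follows because $h$ is a homeomorphism of $M$ (so $h|_L$ is at worst an injective immersion) together with the fact that $h(\W^c_f)$ is an actual foliation, which rules out pathological self-accumulation of $h(L)$ inside small foliation boxes.
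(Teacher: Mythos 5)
Your proposal is correct and follows essentially the same route as the paper: use Lemma \ref{lemmaintuniq} to place every $C^1$ curve tangent to $E^c_{g}$ inside a leaf of the $g$-invariant center foliation $h(\W^c_f)$, then conclude local uniqueness modulo reparametrization from the plaque structure (your extra observation that injectivity of $h$ forces $L_1=L_2$ is a clean way to phrase what the paper leaves implicit). One small correction to your last paragraph: the foliation structure does not rule out self-accumulation of $h(L)$ in a foliation box --- leaves can accumulate densely on themselves --- but this does no harm: what forces a continuous curve with image in $h(L)$ to stay locally in one plaque is that the transversal projection of a foliation box is continuous and $h(L)$ meets the box in only countably many plaques, so the projected curve is a continuous map from a connected interval into a countable set, hence constant.
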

\begin{proof}
If $h$ is a homeomorphism then $h(\W^c_f)$ is a center foliation for $g$. If $\gamma$ is a $C^1$ curve tangent to $E^c_{g}$ then by Lemma \ref{lemmaintuniq} it has to be contained in a leaf of $h(\W^c_f)$. We conclude that through every point of $M$ there exists a unique $C^1$ curve tangent to $E^c_{g}$, modulo reparametrizations.
\end{proof}

As a consequence of the previous corollary one can show the following proposition.

\begin{prop}\label{propstableuniqint}
Let $(f,\W^c)$ be a plaque expansive system in $\PH_{c=1}(M)$.
There exists a $C^1$ neighborhood $\U\subset \PH_{c=1}(M)$ of $f$ such that, if $E^c_g$ is uniquely integrable for some $g\in \U$, then $E^c_{g'}$ is uniquely integrable for every $g'\in \U$.
\end{prop}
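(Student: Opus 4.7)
My plan is to combine the open property of plaque expansivity (Corollary \ref{lemmaplaqueexpopen} together with the uniform improvement in Remark \ref{rmkuniformdelta'plaqexp}) with Corollary \ref{corintuniq}. The idea is that, on a suitable neighborhood of $f$, \emph{any} two elements $g,g'$ can be related by a \emph{homeomorphism} $h$ produced by Theorem \ref{thmgraphtransf} applied with base point $g$; then Corollary \ref{corintuniq} transports unique integrability from $g$ to $g'$.

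More concretely, the first step is to fix a metric in $M$ and the constant $\delta(f)>0$ as in Lemma \ref{rmkUdelta}, and then invoke Corollary \ref{lemmaplaqueexpopen} together with Remark \ref{rmkuniformdelta'plaqexp} to obtain a $C^1$ neighborhood $\U_0$ of $f$ and a uniform constant $\delta_1>0$ such that every $g\in\U_0$ admits an invariant center foliation $\W^c_g$ and $(g,\W^c_g)$ is $\delta_1$-plaque expansive with respect to the fixed metric. The second step is to shrink $\U_0$ to a still smaller $C^1$ neighborhood $\U$ of $f$ so that, for any $g,g'\in\U$, a fixed small parameter $\tilde\delta<\delta_1$ satisfies $g'\in\U_{\tilde\delta}(g)$ in the sense of Lemma \ref{rmkUdelta}. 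This uniformity across different base points $g$ is a soft compactness argument: the construction of $\U_\delta(f_0)$ in the proof of Lemma \ref{rmkUdelta} depends only on $C^1$-continuous data (adapted metric, cone fields, hyperbolicity rates), so one can pick the cone fields and rates produced for $f$ and verify that, after shrinking $\U$, properties (P1)--(P5) hold for every pair $(g,g')\in\U\times\U$ with the same $\tilde\delta$.

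The third step is the main use of the theorems: given $g,g'\in\U$ with $E^c_g$ uniquely integrable, apply Theorem \ref{thmgraphtransf} with base point $g$ and perturbation $g'$, producing a continuous surjection $h:M\to M$ that is $\tilde\delta$-close to the identity and satisfies $h\circ\rho\circ g=g'\circ h$. Since $\tilde\delta<\delta_1$ and $(g,\W^c_g)$ is $\delta_1$-plaque expansive, Lemma \ref{lemmadeltaplaqueexp=>leafconj} gives that $h$ is in fact a homeomorphism. Now Corollary \ref{corintuniq} applied to the pair $(g,g')$ (playing the roles of $(f,g)$ there) yields that $E^c_{g'}$ is uniquely integrable, which is what we wanted.

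The main obstacle is step two, namely ensuring that the sizes of the neighborhoods $\U_{\tilde\delta}(g)$ can be chosen uniformly as $g$ ranges over a neighborhood of $f$; this is not automatic from the statement of Lemma \ref{rmkUdelta} but follows by inspecting its proof, as all ingredients (adapted metrics, cone fields, hyperbolicity constants, $C^0$ separation bounds) depend continuously on the base system in the $C^1$ topology. Once this uniformity is in place, the argument is symmetric in $g,g'$, so the dichotomy in the statement is immediate: either every system in $\U$ has uniquely integrable center bundle, or none does.
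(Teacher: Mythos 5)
Your overall strategy is the right one — uniform plaque expansivity (Corollary \ref{lemmaplaqueexpopen} plus Remark \ref{rmkuniformdelta'plaqexp}), then Lemma \ref{lemmadeltaplaqueexp=>leafconj} to get that $h$ is a homeomorphism, then Corollary \ref{corintuniq} to transport unique integrability — and these are exactly the ingredients the paper uses. However, your second step, which you correctly flag as "the main obstacle," is an unnecessary detour that introduces a nontrivial gap. You propose re-basing the construction of Lemma \ref{rmkUdelta} at an arbitrary $g\in\U$, which requires a uniformity claim ($g'\in\U_{\tilde\delta}(g)$ for all $g,g'\in\U$) that you assert follows from a ``soft compactness argument'' by re-inspecting the proof of Lemma \ref{rmkUdelta}; you do not actually carry this out, and in particular the metric-dependence of $\delta$-plaque expansivity would need to be addressed with care (the metric produced by Lemma \ref{rmkUdelta} is specific to the base point $f_0$, so one must check that the \emph{same} metric and cone fields serve for all nearby base points, and that $\delta_1$-plaque expansivity of $(g,\W^c_g)$ is with respect to the metric one is actually using in the re-based application).

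The observation that makes this entire step avoidable is that Theorem \ref{thmgraphtransf} is already stated for \emph{pairs} of maps within a single $\U_\delta(f_0)$: ``For every pair $f$ and $g$ in $\U_\delta(f_0)$, \ldots.'' The paper therefore keeps the base point $f_0=f$ fixed throughout, sets $\U'=\U_{\delta'}(f)\cap\U$ with $\delta'\leq\delta(f)$ (here $\U$ and $\delta'$ come from Corollary \ref{lemmaplaqueexpopen} and Remark \ref{rmkuniformdelta'plaqexp}), and applies Theorem \ref{thmgraphtransf} directly to the pair $(g,g')$, both in $\U'\subset\U_{\delta'}(f)$, with $\W^c_g$ playing the role of the $f$-invariant center foliation in the theorem. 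Then $\delta'$-plaque expansivity of $(g,\W^c_g)$ (all with respect to the one metric fixed for $f$) gives via Lemma \ref{lemmadeltaplaqueexp=>leafconj} that $h$ is a homeomorphism, and Corollary \ref{corintuniq} transfers unique integrability from $E^c_g$ to $E^c_{g'}$. No re-basing and hence no compactness argument is needed; your proof would close cleanly if you replace step two with this direct application.
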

\begin{proof}

Suppose $(f,\W^c)$ is a plaque expansive system in $\PH_{c=1}(M)$. Consider $\delta(f)>0$ and a metric in $M$ as in Lemma \ref{rmkUdelta} for $f_0=f$. 

Consider $\U\subset \PH_{c=1}(M)$ a $C^1$ neighborhood of $f$ and $\delta'>0$ given by Corollary \ref{lemmaplaqueexpopen} and Remark \ref{rmkuniformdelta'plaqexp} so that every $g\in \U$ is $\delta'$-plaque expansive.

We can suppose without loss of generality that $\delta'\leq \delta(f)$. Consider $\U_{\delta'}(f)\subset \PH_{c=1}(M)$ the $C^1$ neighborhood of $f$ given by Lemma \ref{rmkUdelta} with respect to the metric already fixed. Consider $\U'=\U_{\delta'}(f)\cap \U$. Let us see that if $\U'$ contains a systems with uniquely integrable center bundle then every system in $\U'$ has this property.

Suppose $E^c_g$ is uniquely integrable for some $g\in \U'$ and let $\W^c_g$ denote the corresponding center foliation for $g$. As $\U'\subset \U_{\delta'}(f)$,	 for $g'\in \U'$ we can consider $h:M\to M$ given by Theorem \ref{thmgraphtransf} so that $h(L)$ is a complete $C^1$ immersion tangent to $E^c_{g'}$ for every $L\in \W^c_g$. Since $(g,\W^c_g)$ is $\delta'$-plaque expansive, $h$ needs to be a homeomorphism (Lemma \ref{lemmadeltaplaqueexp=>leafconj}). By Corollary \ref{corintuniq} we conclude that $E^c_{g'}$ has to be uniquely integrable.
\end{proof}

\subsection{Proof of Theorem \ref{thmgraphtransf}} From now on throughout this subsection let us fix a metric in $M$, a $C^1$ open set $\U_{\delta}(f_0)$  and a pair of partially hyperbolic diffeomorphisms $f,g\in\U_{\delta}(f_0)$ as in the hypothesis of Theorem \ref{thmgraphtransf}. Let $0<\lambda<1$ and $\kappa>1$ denote the constants, and $\mathcal{C}^\sigma$ for every $\sigma\in \{s,u,cs,cu\}$ the invariant cone fields, given by Lemma \ref{rmkUdelta}. Note that $f$ and $g$ satisfy properties (P1),\ldots , (P5) from Lemma \ref{rmkUdelta}. We will refer to properties (P1),\ldots , (P5) implicitly referring to the ones from Lemma \ref{rmkUdelta}.

Informally, for every leaf $L$ of $\W^c$ we will consider $U(L)$ an `unfolded' $\delta$-wide tubular neighborhood of $L$ (see next subsection for the formal construction) and a manifold $V(L)$ which is the disjoint union of the manifolds $U(f^n(L))$ for every integer $n$. As $f$ and $g$ are $C^0$ close enough we will be able to `lift' the map $g$ to $V(L)$ in a neighborhood of $\bigcup_n f^nL$, sending points of each connected component $U(f^nL)$ to the `next' connected component $U(f^{n+1}L)$. By `transverse hyperbolicity' and the constraints imposed by Lemma \ref{rmkUdelta} there will exists a non empty set $L'$ in $U(L)$ whose points are exactly those ones whose $g$ orbit remains in $V(L)$ for every backwards and forwards iterate. We will call $L'$ the \emph{continuation} of $L$.

The set of points in $U(L)$ whose whole $g$ backwards orbit remains in $V(L)$ will be obtained as the limit set in $n$ of the `$cu$-strips' $\W^u_\delta(f^{-n}L)$ iterated $n$ times forwards by  $g$, where $\W^u$ stands for the unstable foliation of $f$. As $\W^u_\delta(f^{-n}L)$ is tangent to the $cu$-cone and $g$ contracts uniformly this cone for positive iterates, the limit set would be a $C^1$ submanifold tangent to $E^c_{g}\oplus E^u_{g}$. The same argument shows that the points whose $g$ forwards orbit is well defined in $V(L)$ is a $C^1$ strip tangent $E^s_{g}\oplus E^c_{g}$. Hence $L'$, the intersection of both sets, would be tangent to $E^c_{g}$.

Once the continuation of every center leaf has been constructed it will remain to define the maps $h$ and $\rho$ that coherently identify each leaf $L$ with it continuation $L'$ so that the identity $h\circ\rho\circ f=g\circ h$ holds.

\subsubsection{Good cover of every center leaf}\label{subsubsectiongoodcover}

For every leaf $L$ of $\W^c$ let us consider the set which is the disjoint union of the balls $\{B_\delta(x)\}_{x\in L}$. Namely $\bigcup_{x\in L}\{(y,x):y\in B_\delta(x)\}$. On this set let us identify two points $(y,x)$ and $(y',x')$ if and only if $y=y'$ and  $x'\in L_{6\delta}(x)$. (Recall the notation $L_r(x)$ for the points in the leaf $L$ at intrinsic distance less than $r$ from $x$). Note that by property (P4) every ball of radius $10\delta$ lies in a foliation box neighborhood of $\W^c$ and if two balls $B_\delta(x)$ and $B_\delta(x')$ intersect for some $x'\in L_{6\delta}(x)$ then $x'$ must lie in $L_{3\delta}(x)$ (from which the transitive property for the above equivalence relation is derived straightforwardly). Let us denote by $U(L)$ the space obtained after the above identification.

Let us see how $U(L)$ has a natural structure of (open) Riemannian manifold. The space $U(L)$ has a natural projection $\pi:U(L)\to M$, defined explicitly by $\pi(y,x)=x$. For every $x\in L$ let $\phi_x:B_\delta(x)\to U(L)$ be such that $\pi\circ \phi_x$ is the identity in $B_\delta(x)$. The topology given to $U(L)$ is the one such that every open set in $U(L)$ is of the form $\bigcup_{x\in F} \phi_{x}(O_x)$, where each $O_x$ is an open subset of $B_\delta(x)$ and $F$ is any subset of $L$. By considering $\{x_n\}_{n\geq 0}$ a countable dense subset of $L$ for the intrinsic topology in $L$, and $\{O_{x_n}^m\}_{m\geq 0}$ a countable base of $B_\delta(x_n)$ for each $n\geq 0$, one obtains that $\{\phi_{x_n}(O_{x_n}^m)\}_{n\geq 0,m\geq 0}$ is a countable base of $U(L)$.

A differentiable atlas for $U(L)$ is given by $\{\phi_x\}_{x\in L}$. The transition functions for this atlas are identity maps in sets of the form $B_\delta(x)\cap B_\delta(x')$ for $x'\in L_{6\delta}(x)$. Finally, the Riemannian metric in $U(L)$ is obtained by taking push-forward of the metric in $M$ by the maps $\{\phi_x\}_{x\in L}$.

As informally stated before, let $V(L)$ be the manifold which is the disjoint union of the manifolds $U(f^n(L))$ for every integer $n$. Note that in the case $L$ is periodic by $f$ of period $N>0$ then $V(L)$ has exactly $N$ connected components, namely  $\{U(f^nL)\}_{0\leq n\leq N-1}$. Otherwise $V(L)$ has countable connected components, namely $\{U(f^nL)\}_{n\in \mathbb{Z}}$. Note also that the projection $\pi:V(L)\to M$ is well defined as it is well defined on each connected component.

For every $\epsilon<\delta$ let us denote by $U_{\epsilon}(L)$  the subset of $U(L)$ given by the points at distance less than $\epsilon$ from $L$. That is, $U_{\epsilon}(L)=\bigcup_{x\in L}B_\epsilon(x)\subset U(L)$. Accordingly let $V_{\epsilon}(L)$ be the subset of $V(L)$ whose connected components are $\{U_\epsilon(f^nL)\}_{n\in \mathbb{Z}}$.

Recall that the $C^0$ distance $d_0(f,g)$ is smaller than $\delta/2$ by property (P5). Recall also that by property (P1) the constant $\kappa>1$ satisfies the condition $\max\{\|Dg_x\|,\|Dg^{-1}_x\|\}<\kappa$ for every $g\in \U_\delta$. Let us fix from now on $\delta_1=\frac{\delta}{2\kappa}$.

\begin{af}\label{claimlifts} For every leaf $L$ of $\W^c$ the maps $f$ and $g$ lift to maps $$f,g:V_{\delta_1}(L)\to V(L)$$
such that the connected component $U_{\delta_1}(f^nL)$ is sent by $f$ and by $g$ inside $U_\delta(f^{n+1}L)$, and is sent by $f^{-1}$ and $(g)^{-1}$ inside $U_\delta(f^{n-1}L)$, for every $n\in \mathbb{Z}$. 
\end{af}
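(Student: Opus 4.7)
The plan is to define the lifts locally in the charts $\{\phi_x\}_{x\in f^nL}$ of $U(f^nL)$ and then verify compatibility with the equivalence relation defining the spaces $U(\cdot)$. For $\tilde z\in U_{\delta_1}(f^nL)$, write $\tilde z=\phi_x(z)$ with $x\in f^nL$ and $z\in B_{\delta_1}(x)$, and set
\[ f(\tilde z):=\phi_{f(x)}(f(z)),\qquad g(\tilde z):=\phi_{f(x)}(g(z)), \]
both viewed in $U(f^{n+1}L)$; the inverses are defined analogously with anchor $f^{-1}(x)$ in $U(f^{n-1}L)$. That these land in $U_\delta(f^{n+1}L)$ follows from (P1) and the choice $\delta_1=\delta/(2\kappa)$: one has $d(f(z),f(x))\leq\kappa\delta_1=\delta/2$, and combined with (P5), $d(g(z),f(x))\leq d_0(f,g)+\delta/2<\delta$. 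The analogous bounds for $f^{-1},g^{-1}$ come from $\|Df^{-1}\|\leq\kappa$ together with $d(g^{-1}(z),f^{-1}(z))\leq\kappa\,d_0(f,g)$, derived by applying $f^{-1}$ to the estimate $d(f\circ g^{-1}(z),z)\leq d_0(f,g)$.

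The core of the claim is to verify well-definedness: if distinct anchors $x_1,x_2\in f^nL$ both satisfy $z\in B_{\delta_1}(x_i)$ and $\phi_{x_1}(z)=\phi_{x_2}(z)$, one must show $\phi_{f(x_1)}(f(z))=\phi_{f(x_2)}(f(z))$ in $U(f^{n+1}L)$, and similarly for $g$. By definition of the equivalence, this reduces to showing $f(x_2)\in(f^{n+1}L)_{6\delta}(f(x_1))$. From $z\in B_{\delta_1}(x_1)\cap B_{\delta_1}(x_2)$ one has $d(x_1,x_2)<2\delta_1$, and combining this with the observation recorded just before the claim (intersecting $\delta$-balls anchored in $L_{6\delta}(x)$ force the intrinsic distance down to $<3\delta$) one gets $d_c(x_1,x_2)<3\delta$. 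Applying (P1) yields both $d(f(x_1),f(x_2))\leq\delta$ and a leaf-path in $f^{n+1}L$ between them of length at most $3\kappa\delta$.

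The main obstacle is promoting these two estimates to the bound $d_c(f(x_1),f(x_2))<6\delta$. The tool is (P4): inside the $20\delta$-ball around $f(x_1)$, which is a foliation box for $\W^c$, the near-Euclidean estimates and the narrow cones $\C^{cs},\C^{cu}$ force plaques of $\W^c$ to be nearly straight arcs along which intrinsic and extrinsic distances agree up to the factor $1+\tfrac{1}{16}$. The image leaf-path of length at most $3\kappa\delta$ stays inside this foliation box---if necessary after enlarging the scale in (P4) via Remark \ref{rmkafterUdelta} so that it dominates $3\kappa\delta$---and therefore $f(x_1)$ and $f(x_2)$ lie on the same plaque. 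The plaque estimate then gives $d_c(f(x_1),f(x_2))\leq\tfrac{17}{16}d(f(x_1),f(x_2))<2\delta$, comfortably inside the required $6\delta$. Continuity of the resulting lifts follows from continuity of $f,g$ and of the charts, and the verifications for $g$, $f^{-1}$, $g^{-1}$ proceed identically.
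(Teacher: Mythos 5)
Your proof is correct and follows essentially the same approach as the paper's own proof: define the lift chartwise via $\phi_{f(x)}\circ f\circ\phi_x^{-1}$ (and similarly for $g$), check the image lands in the $\delta$-ball using (P1) and (P5), and verify consistency of the definition across overlapping charts. The paper dispatches the overlap/well-definedness verification with ``it is easy to check''; you are right that the crux is showing $f(x_2)\in(f^{n+1}L)_{6\delta}(f(x_1))$, and you supply the details.

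One small correction to the final step. To show $d_c(f(x_1),f(x_2))<6\delta$, you push the leaf arc forward (getting length $\leq 3\kappa\delta$) and then want to apply the (P4) plaque estimate at $f(x_1)$; since $3\kappa\delta$ may exceed $20\delta$, you propose enlarging the scale via Remark \ref{rmkafterUdelta}. But that remark describes an alternative version of Lemma \ref{rmkUdelta} one could set up from the start; the statement of Theorem \ref{thmgraphtransf} fixes $\U_\delta(f_0)$ with (P4) at scale $20\delta$, so it is not available to you mid-proof. The detour is unnecessary: apply the plaque estimate at $x_1,x_2$ \emph{before} iterating by $f$. You already have $d_c(x_1,x_2)<3\delta$ (so the leaf arc stays in $B_{10\delta}(x_1)$, well within the (P4) scale) together with $d(x_1,x_2)<2\delta_1$, so (P4) gives $d_c(x_1,x_2)\leq\tfrac{17}{16}\cdot 2\delta_1<3\delta_1$. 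Applying $f$ then yields $d_c(f(x_1),f(x_2))<\kappa\cdot 3\delta_1=\tfrac{3}{2}\delta<6\delta$, with no change to the ambient setup. The rest of your argument (the estimates for landing in $U_\delta(f^{n\pm 1}L)$, the handling of $g^{-1}$ via $d(g^{-1},f^{-1})\leq\kappa\,d_0(f,g)$, and continuity from chart continuity) is fine.
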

\begin{proof}
First of all, note that $f$ lifts directly to $\bigcup_{n\in\mathbb{Z}}f^nL\subset V_{\delta_1}(L)$.

For $y\in U_{\delta_1}(f^nL)$ let $x$ be a point in $f^nL\subset U_{\delta_1}(f^nL)$ such that $d(x,y)<\delta_1$. Let us denote $y'=\pi(y)$ and $x'=\pi(x)$.
Since $y'\in B_{\delta_1}(x')$, one has $d(f(y'),f(x'))<\kappa\delta_1$. Moreover, as $f$ and $g$ are $\frac{\delta}{2}$-close by property (P5) then $d(f(y'),g(y'))<\delta/2$. We conclude that $f(y')$ and $g(y')$ lie $B_\delta(f(x'))$ as $\delta_1+\delta/2<\delta$. 

As $\pi$ is bijective from $B_\delta(f(x'))$ to $B_\delta(f(x))$, the points $f(y)$ and $g(y)$ can be lifted to $B_\delta(f(x))$ to points $f(y')$ and $g(y')$, respectively. In this way, it is easy to check that $f$ and $g$ are well defined $C^1$ maps from $V_{\delta_1}(L)\to V(L)$.
\end{proof}

The proof of the theorem is going to show that $h(L)$, the \emph{continuation of $L$}, will be the projection by $\pi$ of set of points in $U_{\delta_1}(L)$ whose $g$ orbit in $V(L)$ is well defined for every future and past iterate (see Remark \ref{rmkcharactcontinuation}).

\vspace{0.15cm}
\noindent \emph{Notations.} We will denote by $E^\sigma$ and $E^{\sigma,g}$ the $f$-invariant and $g$-invariant bundles in $M$, respectively, for every $\sigma \in \{s,c,u,cs,cu\}$. Analogously for the $f$ and $g$-invariant foliations $\W^{\sigma}$ and $\W^{\sigma,g}$. Note that we can lift these bundles and leaves to $V(L)$. Let us denote these lifted bundles as $\tilde{E}^\sigma$ and $\tilde{E}^{\sigma,g}$, and the lifted foliations as $\tilde{\W}^\sigma$ and $\tilde{\W}^{\sigma,g}$, respectively. Note that they are (locally) invariant wherever $f$ and $g$ are well defined. The same for the $f$ and $g$-invariant cone-fields $\mathcal{C}^\sigma$ lifting to cone-fields $\tilde{\mathcal{C}}^\sigma$.
\vspace{0.15cm}

\subsubsection{Graph transform for $cu$-strips}

Let us fix from now on the constant $\delta_2>0$ such that $\delta_2=\frac{\delta_1}{2}=\frac{\delta}{4\kappa}$.

For every leaf $L$ of $\W^c$ and every $\epsilon\leq \delta_2$ let us define $U^{su}_{\epsilon}(L)$ in $U(L)$ as $$U^{su}_{\epsilon}(L)=\tilde{\W}^{s,g}_{\epsilon}(\tilde{\W}^u_{\epsilon}(L)).$$ Note that the unstable plaques are considered with respect to $f$ and the stable plaques with respect to $g$. This is not essential but will make some arguments simpler.

Recall that by Lemma \ref{lemmaWcWsC1} the sets $\tilde{\W}^u_{\epsilon}(L)$ are $C^1$ submanifolds tangent to $\tilde{E}^{cu}$. These sets are what we call \emph{$cu$-strips}. As the $g$ stable local manifolds  $\tilde{\W}^{s,g}_{\epsilon}(x)$ are transverse to $\tilde{\W}^u_{\epsilon}(L)$ for every $x\in \tilde{\W}^u_{\epsilon}(L)$ it is easy to check that $U^{su}_{\epsilon}(L)$ is an open subset of $U(L)$.

Note that $U^{su}_{\epsilon}(L)$ is a subset of $U_{2\epsilon}(L)$ since every point in $U^{su}_{\epsilon}(L)$ can be joined to a point in $L$ by a concatenation of an $g$-stable and an $f$-unstable arc of lengths less than $\epsilon$.  Moreover, by property (P4) it follows that $U_{\epsilon/2}(L)$ is contained in $U^{su}_{\epsilon}(L)$.

Let us define $V^{su}_{\epsilon}(L)$ as the subset of $V_{\delta_1}(L)$ which is the union of the sets $U^{su}_{\epsilon}(f^nL)$ for every integer $n$. And let us define $$\pi^s:V^{su}_{\delta_2}(L)\to \bigcup_{n\in \mathbb{Z}} \tilde{\W}^u_{\delta_2}(f^nL)$$ the projection along local stable $g$-plaques.

Let $\delta_3>0$ be the constant $\delta_3=\frac{\delta_2}{4\kappa}$. Recall that the $C^0$ distance $d_0(f,g)$ is smaller than $\frac{\delta_2}{2}$ by (P5). By the same arguments as in Claim \ref{claimlifts}, the image by $g$ of $U_{2\delta_3}(f^nL)$ is a subset of $U_{\delta_2}(f^{n+1}L)$ for every $n\in \mathbb{Z}$. Because  $U^{su}_{\delta_3}(f^{n+1}L)$ is contained in $U_{2\delta_3}(f^nL)$, we have the following:

\begin{remark} The map $g$ from $V^{su}_{\delta_3}(L)$ to $V^{su}_{\delta_2}(L)$ is well defined.
\end{remark}

Let us consider the set of continuous functions
$$\Pi^{cu}(L)=\{ \xi:
\bigcup_{n\in \mathbb{Z}} \tilde{\W}^u_{\delta_3}(f^nL) \to V^{su}_{\delta_2}(L) \mbox{ such that }\pi^s\circ \xi= \id\}.$$ 
Note that if $V(L)$ has $N>0$ (resp. countably many) connected components then $\xi\in \Pi^{cu}(L)$ is given by functions $\xi|_{\tilde{\W}^u_{\delta_3}(f^nL)}:\tilde{\W}^u_{\delta_3}(f^nL) \to U^{su}_{\delta_2}(f^nL)$ for each $0\leq n \leq N-1$ (resp. for each $n\in\mathbb{Z}$).

Given two maps $\xi,\xi'$ in $\Pi^{cu}(L)$ we can define a distance between them $$d(\xi,\xi')=\sup d_s(\xi(x),\xi'(x))$$ where $d_s$ denotes the distance inside the plaque $\WW^{s,g}_{\delta_2}(x)$ and the supremum is taken over all $x$ in $\bigcup_{n\in\mathbb{Z}}\tilde{\W}^u_{\delta_3}(f^nL)$. 

The \emph{zero-section} is the function $\xi^0$ in $\Pi^{cu}(L)$ defined by $\xi^0(x)=x$ for every $x$. For every $\xi$ in $\Pi^{cu}(L)$ we denote by $\graph (\xi)$ the set which is the image of $\xi$. For simplicity, let us denote from now on by $\delta'$ the $C^0$ distance $d_0(f,g)$.

\begin{af}[Graph transform]\label{claimgraphtransf} \noindent

\begin{enumerate} 
\item\label{claimgraphtransf_i} The image by $g$ of $\graph(\xi^0)$ induces a new map $g\xi^0$ in $\Pi^{cu}(L)$ such that $\graph(g\xi^0) \subset g\graph(\xi^0)$ and $d(\xi^0,g\xi^0)<2\delta'$. 

\item\label{claimgraphtransf_ii} Moreover, for every $\xi$ in $\Pi^{cu}(L)$ such that $d(\xi^0,\xi)< \delta_3$ the image by $g$ of $\graph(\xi)$ induces a new map $g\xi$ in $\Pi^{cu}(L)$ such that $\graph(g\xi)\subset g\graph(\xi)$ and  $d(\xi^0,g\xi)<2\delta'+\lambda d(\xi^0,\xi)$.

\item\label{claimgraphtransf_iii} Finally, for every $\xi,\xi'$ in $\Pi^{cu}(L)$ with $d(\xi,\xi^0)<\delta_3$ and $d(\xi',\xi^0)<\delta_3$ we have $d(g\xi,g\xi')<\lambda d(\xi,\xi')$.
\end{enumerate}
\end{af}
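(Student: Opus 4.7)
The three items are verified by a standard graph-transform argument that uses only the structure provided by Lemma \ref{rmkUdelta}: (a) $g$-invariance of $\mathcal{C}^{cu}$, so that $g$ sends sets tangent to $\tilde{\mathcal{C}}^{cu}$ to sets of the same kind; (b) the uniform stable contraction $\|Dg|_{E^{s,g}}\|<\lambda$; (c) the unstable expansion $\|Df^{-1}|_{E^u}\|<\lambda$, giving $f(\tilde{\W}^u_{\delta_3}(f^nL))\supset\tilde{\W}^u_{\delta_3/\lambda}(f^{n+1}L)\supsetneq\tilde{\W}^u_{\delta_3}(f^{n+1}L)$; (d) the $C^0$-proximity $\delta':=d_0(f,g)<\delta_2/2$; and (e) the $\tfrac{1}{16}$-nearly-Euclidean, nearly-orthogonal geometry of $(\mathcal{C}^s,\mathcal{C}^{cu})$ from (P4) at scale $20\delta\gg\delta_2$, which ensures that inside $V^{su}_{\delta_2}(L)$ any $g$-stable plaque intersects any $C^1$ set tangent to $\tilde{\mathcal{C}}^{cu}$ in at most one point.

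For item (\ref{claimgraphtransf_i}), the plan is to work with the map $F_g(x):=\pi^s(g(x))$ from $\tilde{\W}^u_{\delta_3}(f^nL)$ to $\tilde{\W}^u$. This is a $C^0$-perturbation of $F_f(x)=\pi^s(f(x))=f(x)$, which by (c) is a diffeomorphism onto a set strictly containing $\tilde{\W}^u_{\delta_3}(f^{n+1}L)$. A continuity/degree argument combined with the uniqueness from (e) shows that $F_g$ restricts to a homeomorphism from $F_g^{-1}(\tilde{\W}^u_{\delta_3}(f^{n+1}L))$ onto $\tilde{\W}^u_{\delta_3}(f^{n+1}L)$, and setting $(g\xi^0)(y):=g(F_g^{-1}(y))$ yields the desired section with $\graph(g\xi^0)\subset g\graph(\xi^0)$. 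To bound $d_s(y,g(x))$ for $y=F_g(x)$, note that $f(x)\in\tilde{\W}^u$ and $d(g(x),f(x))<\delta'$; decomposing $g(x)-f(x)$ in the nearly-orthogonal $E^{s,g}\oplus E^{cu,g}$ frame at $f(x)$ via (P4), the (nearly) stable component has norm below $\delta'(1+\tfrac{1}{16})<2\delta'$, which bounds $d_s(y,g(x))$.

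For item (\ref{claimgraphtransf_ii}), the same existence/uniqueness argument applied to $\graph(\xi)$ (a $C^0$-perturbation of $\graph(\xi^0)$ of size $<\delta_3$) produces a well-defined section $g\xi\in\Pi^{cu}(L)$. The crucial observation is that for each $y$ the preimage $x$ with $\pi^s(g(\xi(x)))=y$ coincides with the one defining $(g\xi^0)(y)$: since $\xi(x)\in\tilde{\W}^{s,g}(x)$ by definition of $\Pi^{cu}(L)$ and $g$ preserves $g$-stable plaques, the condition $g(\xi(x))\in\tilde{\W}^{s,g}(y)$ forces $g(x)\in\tilde{\W}^{s,g}(y)$, i.e., $\pi^s(g(x))=y$. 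The triangle inequality
\[
d_s(y,g(\xi(x)))\le d_s(y,g(x))+d_s(g(x),g(\xi(x)))<2\delta'+\lambda\,d_s(x,\xi(x))\le 2\delta'+\lambda\,d(\xi^0,\xi),
\]
combining the bound from item (\ref{claimgraphtransf_i}) with the stable contraction (b), then gives the claim upon taking supremum over $y$.

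For item (\ref{claimgraphtransf_iii}), applying the same transversality argument to both $\xi$ and $\xi'$ shows that the preimages $x,x'\in\tilde{\W}^u_{\delta_3}(f^nL)$ defining $(g\xi)(y)$ and $(g\xi')(y)$ both lie on the common $g$-stable plaque $\tilde{\W}^{s,g}(g^{-1}(y))$, so by (e) one has $x=x'$. Hence $(g\xi)(y)$ and $(g\xi')(y)$ are the $g$-images of $\xi(x),\xi'(x)\in\tilde{\W}^{s,g}(x)$, and the stable contraction of $g$ yields
\[
d_s((g\xi)(y),(g\xi')(y))=d_s(g(\xi(x)),g(\xi'(x)))\le \lambda\,d_s(\xi(x),\xi'(x))\le \lambda\,d(\xi,\xi').
\]
The main obstacle throughout is the verification that the preimage $x$ is unique and independent of the chosen section; once this coincidence is granted via the nearly-orthogonal transversality from (e) together with the $g$-invariance of stable plaques, the remaining estimates are routine combinations of cone-field geometry, $C^0$-closeness, and uniform stable contraction.
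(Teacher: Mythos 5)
Your proposal is correct and follows essentially the same route as the paper's own proof: establish that $\pi^s\circ g$ covers $\tilde{\W}^u_{\delta_3}(f^{n+1}L)$ and is injective via (P2)-(P5), use the near-orthogonality (P4) to convert the $C^0$-bound $d_0(f,g)<\delta'$ into the stable-distance bound $2\delta'$, and then observe that because $g$ preserves $g$-stable plaques the preimage point is the same regardless of the section, so items (\ref{claimgraphtransf_ii}) and (\ref{claimgraphtransf_iii}) reduce to a triangle inequality plus the contraction $\|Dg|_{E^{s,g}}\|<\lambda$.
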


\begin{proof}
Let us start by looking at the image by $g$ of the zero section $\xi^0$. Recall that $\graph(\xi^0)$ is the union in $n\in \mathbb{Z}$ of the $C^1$ submanifolds $\tilde{\W}^u_{\delta_3}(f^nL)$. Let $n$ be any fixed integer. Let us see that the image by $\pi^s\circ g$ of $\tilde{\W}^u_{\delta_3}(f^nL)$ covers $\tilde{\W}^u_{\delta_3}(f^{n+1}L)$, and that $\pi^s\circ g$ restricted to $\tilde{\W}^u_{\delta_3}(f^nL)$ is injective. As a consequence the map $g\xi^0$ at any point $y\in \tilde{\W}^u_{\delta_3}(f^{n+1}L)$ will be unambiguously defined as the unique point in the image by $g$ of $\tilde{\W}^u_{\delta_3}(f^nL)$  whose projection by $\pi^s$ is $y$. It will be clear from the construction that $g\xi^0(y)$ defined in this way will vary continuously with $y$.

Note that by property (P2) the set $\tilde{\W}^u_{\lambda^{-1}\delta_3}(f^{n+1}L)$ is contained in the image by $f$ of  $\tilde{\W}^u_{\delta_3}(f^nL)$. Thus for every $y\in \tilde{\W}^u_{\lambda^{-1}\delta_3}(f^{n+1}L)$ there exists $y'$ in $\tilde{\W}^u_{\delta_3}(f^nL)$ such that $f(y')=y$. 

As $\tilde{\W}^u_{\delta_3}(f^nL)$ is a $C^1$ submanifold tangent to the cone field  $\tilde{\C}^{cu}$ it follows that its image by $g$ is also a $C^1$ submanifold tangent to $\tilde{\C}^{cu}$. By property (P4) 	it follows that $\pi^s\circ g$ has to be injective restricted to $\tilde{\W}^u_{\delta_3}(f^nL)$.

Since $f(y')$ and $g(y')$ are at distance less than $\delta'=d_0(f,g)$, again by property (P4) it follows that $\tilde{\W}^{s,g}_{2\delta'}(g(y'))$ and $\tilde{\W}^{cu}_{2\delta'}(y)$ intersect. In particular $\pi^s\circ g(y')$ and $y$ need to be at distance less than $2\delta'$ for the intrinsic metric of $\tilde{\W}^u_{\delta_2}(f^{n+1}L)$. We conclude that $\pi^s\circ g\circ f^{-1}$ is a well defined continuous and injective function from $\tilde{\W}^u_{\lambda^{-1}\delta_3}(f^{n+1}L)$ to $\tilde{\W}^u_{\delta_2}(f^{n+1}L)$ that is $2\delta'$-close to the identity.

For every $y\in \tilde{\W}^u_{\delta_3}(f^{n+1}L)$ the ball of radius $10\delta'$ in $\tilde{\W}^u_{\delta_2}(f^{n+1}L)$ is contained in $\tilde{\W}^u_{\lambda^{-1}\delta_3}(f^nL)$ by property (P5). By a standard topology argument using that $\pi^s\circ g \circ f$ is $2\delta'$-close to the identity we obtain that $y$ needs to be in the image of this ball. So the image by $\pi_s\circ g$ of $\tilde{\W}^u_{\delta_3}(f^nL)$ covers $\tilde{\W}^u_{\delta_3}(f^{n+1}L)$ as we wanted to prove. This settles (\ref{claimgraphtransf_i}).

In order to see (\ref{claimgraphtransf_ii}) suppose $\xi$ is not the zero section but $d(\xi^0,\xi)<\delta_3$. For simplicity let $d$ denote $d(\xi^0,\xi)$. For every $w$ in $\tilde{\W}^u_{\delta_3}(f^nL)$ the point $\xi(w)$ lies in $\tilde{\W}^{s,g}_d(w)$ so $g\circ \xi(w)$ needs to lie in  $\tilde{\W}^{s,g}_{\lambda d }(g(w))$. Moreover, as seen before, the point $g(w)$ lies in $\W^{s,g}_{2\delta'}(\pi^s\circ g(w))$. It follows that $g\circ \xi(w)$ lies in $\W^{s,g}_{2\delta'+\lambda d}(\pi^s\circ g(w))$. 

As the image of $\pi^s\circ g\circ \xi$ coincides with that of $\pi^s\circ g$ it follows that $\graph g\circ \xi$ defines a function  $g\xi$ in $\Pi^{cu}(L)$ such that $d(\xi^0,g\xi)<2\delta' + \lambda d(\xi^0,\xi)$. This proves (\ref{claimgraphtransf_ii}).

Finally, (\ref{claimgraphtransf_iii}) follows immediately from the previous arguments.
\end{proof}

\noindent \emph{Notations.} Let us denote $g(g\xi^0)$ by $g^2\xi^0$ and, inductively, $g(g^n\xi^0)$ by $g^{n+1}\xi^0$ for every $n>0$.
\vspace{0.15cm} 

From (\ref{claimgraphtransf_i}) and (\ref{claimgraphtransf_ii}) of the previous claim it follows that $d(\xi^0,g\xi^0)<2\delta'$, then $d(\xi^0,g^2\xi^0)<2\delta'+\lambda 2\delta'=2\delta'(1+\lambda)$, and inductively $$d(\xi^0,g^n\xi^0)<2\delta'(1+\lambda+\cdots  +\lambda^{n-1})
$$ for every $n>0$. Note that $g^n\xi^0\in\Pi^{cu}(L)$ is well defined for every $n>0$ since $\delta'=d_0(f,g)$ satisfies $\delta'(1+\lambda+\cdots )<\frac{\delta}{64\kappa^2}<\delta_3/2$ by property (P5). 

Moreover, by (\ref{claimgraphtransf_iii}) of the previous claim it follows from $d(\xi^0,g\xi^0)<2\delta'$ that $d(g\xi^0,g^2\xi^0)<2\delta'\lambda$, and inductively $$d(g^n\xi^0,g^{n+1}\xi^0)<2\delta'\lambda^n
$$ for every $n>0$. 

Hence we obtain a well defined limit function $\xi^\infty\in \Pi^{cu}(L)$ given by $$\xi^\infty(x):=\lim_n g^n\xi^0(x)$$
for every $x\in \bigcup_{n\in \mathbb{Z}} \tilde{\W}^u_{\delta_3}(f^nL)$. Clearly $\xi^\infty$ satisfies $d(\xi^0,\xi^\infty)\leq 2\delta'(1+\lambda+\ldots) <\delta_3/2$. 

Moreover, note that $g\xi^\infty=\xi^\infty$
since the image by $g$ of $g^n\xi^0(x)$ coincides with $g^{n+1}\xi^0(\pi^s\circ f(x))$ and the image by $g$ of $\lim_n g^n\xi^0(x)$ coincides with $\lim_n g^{n+1}\xi^0(\pi^s\circ f(x))$. In particular $$\graph{g\xi^\infty}\subset g(\graph{\xi^\infty}).$$
As $g^{-1}$ expands $g$-stable arcs uniformly, the points in $\graph(\xi^\infty)$ are precisely the points in $V_{\delta_3}(L)$ whose $g$ backwards orbit is well defined for every past iterate in $V_{\delta_3}(L)$.

\begin{af}\label{claimgraphC1} The set $\graph\xi^\infty$  is a $C^1$-submanifold tangent to $\tilde{E}^{cu}_{g}$. 
\end{af}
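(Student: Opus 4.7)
The plan is to present $\graph \xi^\infty$ as the $C^0$-limit of a sequence of $C^1$ submanifolds $\graph(g^n\xi^0)$ whose tangent planes converge uniformly to $\tilde{E}^{cu,g}$, and then invoke the standard calculus principle that a uniform limit of $C^1$ graphs with uniformly convergent derivatives is itself a $C^1$ graph with derivative equal to the limit of the derivatives.

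First, the base set $\graph(\xi^0) = \bigcup_k \tilde{\W}^u_{\delta_3}(f^k L)$ is a disjoint union of $C^1$ submanifolds tangent to $\tilde{E}^{cu}$ by Lemma \ref{lemmaWcWsC1}. Because the lift of $g$ furnished by Claim \ref{claimlifts} is a $C^1$ diffeomorphism onto its image, the iterate $g^n(\graph(\xi^0))$ remains a $C^1$ submanifold whose tangent plane at $y = g^n(y')$ equals $Dg^n_{y'}(\tilde{E}^{cu}(y'))$. By the construction in Claim \ref{claimgraphtransf}, $\graph(g^n\xi^0)$ is precisely the portion of $g^n(\graph(\xi^0))$ lying over $\bigcup_k \tilde{\W}^u_{\delta_3}(f^k L)$, which is open there by transversality of $\tilde{\W}^{s,g}$ to $\tilde{E}^{cu}$; hence it inherits the $C^1$ structure and the same tangent description. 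Next, by property (P3) the cone $\tilde{\C}^{cu}$ is $g$-invariant and $\tilde{E}^{cu,g}(y) = \bigcap_{n\geq 0} Dg^n(\tilde{\C}^{cu}(g^{-n}y))$, with the cone criterion providing exponential narrowing of the iterated cones. Since $\tilde{E}^{cu}(y') \subset \tilde{\C}^{cu}(y')$, the plane $Dg^n_{y'}(\tilde{E}^{cu}(y'))$ lies inside $Dg^n(\tilde{\C}^{cu})$ and therefore its angle with $\tilde{E}^{cu,g}(y)$ tends to zero uniformly in $y$ as $n \to \infty$.

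To conclude, I would combine the uniform $C^0$-convergence $g^n\xi^0 \to \xi^\infty$ already established above with the uniform tangent-plane convergence just obtained. Around any point of $\graph\xi^\infty$, property (P4) provides a chart at scale $20\delta$ in which $\tilde{E}^{cu,g}$ and $\tilde{E}^{s,g}$ are nearly orthogonal and almost constant; in such a chart, each $\graph(g^n\xi^0)$ is, for all large $n$, the $C^1$ graph of a map $\Gamma_n$, the $\Gamma_n$ converge uniformly to a $C^0$ map $\Gamma_\infty$ whose graph is the corresponding piece of $\graph\xi^\infty$, and the derivatives $D\Gamma_n$ converge uniformly to the continuous linear field representing $\tilde{E}^{cu,g}$ in the chart. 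The standard real-analysis fact then gives that $\Gamma_\infty$ is $C^1$ with derivative recovering the expected tangent plane, yielding the claim. The main technical step is this third one, where transversality and uniform angle control must be marshalled to set up consistent local graphings valid for all large $n$; once property (P4) is exploited, the remainder is direct cone-contraction bookkeeping.
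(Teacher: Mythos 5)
Your proof is correct and takes essentially the same approach as the paper: you present $\graph\xi^\infty$ locally as a uniform limit of the $C^1$ submanifolds $\graph(g^n\xi^0)$, use the cone contraction of property (P3) to obtain uniform convergence of their tangent planes to $\tilde{E}^{cu}_{g}$, and conclude via the standard calculus fact on $C^1$-limits after passing to local graph coordinates furnished by property (P4). The paper makes the chart explicit using the exponential map $\exp_x$ at each $x\in L$ and writes the local graphs as functions $\psi_n:B^{cu}_\epsilon\to B^s_\delta$, but the structure of the argument is identical.
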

\begin{proof}
We will make a local argument near every $x$ in $L$. Let us consider the local exponential map $\exp_x:B^{cu}_\delta\times B^{s}_\delta\subset T_xM \to M$ where $B^{cu}_\delta$ and $B^{s}_\delta$ denote the balls of center $x$ and radius $\delta$ in $\tilde{E}^{cu}(x)$ and $\tilde{E}^s(x)$, respectively.

Let $\tilde{E}^{cu}$ and $\tilde{\C}^{cu}$ denote the pull-back by $\exp_x$ of the bundle $\tilde{E}^{cu}_{g}$ and the cone field $\tilde{\C}^{cu}$.

Let $S_n\subset B^{cu}_\delta\times B^{s}_\delta$ denote the preimage by $\exp_x$ of $\graph g^n\xi^0$ for every $n>0$. Since $\graph g^n\xi^0$ is a $C^1$ submanifold tangent to the cone field $g^n\C^{cu}$, by property (P4) there exists $\epsilon>0$ small enough so that the sets $\{z\}\times B^{s}_\delta(0)$ intersects $S_n$ and this intersection point is a unique point for every $z\in B^{cu}_{\epsilon}$.

This defines $C^1$ functions $$\psi_n: B^{cu}_{\epsilon} \to B^s_\delta$$ for every $n>0$ given by  $\psi_n(z):=(\{z\}\times B^{s}_\delta)\cap S_n$.

For every $z \in B^{cu}_\epsilon$ it is immediate to check that the limit $\psi_\infty(z):=\lim_n \psi_n(z)$ exists and defines a function $\psi_\infty:B^{cu}_\epsilon \to B^s_\delta$. Moreover, by property (P3)(3) the sequence $D(\psi_n)_z(\tilde{E}^{cu}(x))$ needs to converge uniformly to $\tilde{E}^{cu}(\psi_\infty(z))$ for $z\in B^{cu}_{\epsilon}$.

We obtain that $\psi_\infty$ is of class $C^1$ and that $D_z\psi_\infty(\tilde{E}^{cu}(x))$ is equal to $\tilde{E}^{cu}_{g}(\psi_\infty(z))$ for every $z\in B^{cu}_\epsilon$  by the following standard fact from multivariable calculus that is a consequence of  Arzelà-Ascoli's theorem:

\begin{itquote}
If $\psi_n:U\subset \R^{d_1}\to \R^{d_2}$ is a sequence of $C^1$ maps defined in an open subset $U\subset  \R^{d_1}$ such that:
\begin{itemize}\item The limit $\psi_\infty(x):=\lim_n\psi_n(x)$ exists for every $x\in U$.

\item The limit $A(x):=\lim_n D_x\psi_n$ given by the rule $(A(x))_{ij}=\lim_n (D_x\psi_n)_{ij}$ exists for every $x\in U$, varies continuously with $x$ and $\sup_{x\in U}\|D_x\psi_n-A(x)\|\xrightarrow{n}0$.
\end{itemize}
Then $\psi_\infty:U \subset\R^{d_1}\to \R^{d_2}$ is a $C^1$ map and $D_x\psi_\infty(x)=A(x)$ for every $x\in U$.
\end{itquote}

This proves that $\graph\xi^\infty$  is a $C^1$-submanifold tangent to $\tilde{E}_{g}^{cu}$. 

\end{proof}

\subsubsection{Construction of $h$ and $\rho$}\label{subsectionhandrho}

For every leaf $L$ of $\W^c$ we have constructed a limit map $\xi^\infty$ in $\Pi^{cu}(L)$ such that $d(\xi^0,\xi^\infty)<\delta_3/2$. As this limit map corresponds to a limit graph for $cu$-strips let us rename it as $\xi^\infty_{cu}$. And let us also rename by $\xi^0_{cu}$ the zero-section $\xi^0$.

Analogously as before one can define neighborhoods $U^{su}_{\delta_3}(f^nL)$ for every $n\in \mathbb{Z}$, a map $\pi^u$, a family of maps $\Pi^{cs}(L)$ and a limit map $\xi^\infty_{cs}$ for $cs$-strips satisfying analogous properties than the $cu$ ones (interchanging the roles of $g$ and $g^{-1}$).

Following Claim \ref{claimgraphC1} we obtain that the intersection $$\graph(\xi_{cs})\cap \graph(\xi_{cu})$$ defines a $C^1$ manifold in $V(L)$ that is $g$-invariant and tangent to $\tilde{E}^{c}_{g}$. Let us denote by $L'$ the connected component of this intersection that lies in $U(L)$, and in general let us denote by $(f^nL)'$ the one that lies in $U(f^nL)$. 

\begin{remark}\label{rmkcharactcontinuation} Note that from the properties of $\xi^\infty_{cs}$ and $\xi^\infty_{cu}$ (see, in particular, the discussion before Claim \ref{claimgraphC1}) the points in $L'$ are characterized as the points in $V(L)$ for which its $g$-orbit is well defined for every future and past iterate. 
\end{remark}

The projection $\pi(L')$ in $M$ is going to be $h(L)$, the continuation of $L$. Let us see how we can construct $h:M\to M$ and $\rho:M\to M$ so that the properties detailed in the statement of the theorem are verified.

For every $L$ in $\W^c$ let us start by defining a map $h_1$ from $L$ to $L'$ in $U(L)$. For every $x\in L$ we define $h_1(x)\in L'$ by $$h_1(x):=\xi^\infty_{cs}\circ  \pi^u \circ \xi^\infty_{cu}(x).$$  In other words, $h_1(x)$ is the unique point in $L'$ such that $\WW^{s,g}_{\delta_3}(x)$ and $\WW^{u,g}_{\delta_3}(h_1(x))$ intersect. As $L'$ is tangent to $\tilde{E}^{c,g}$, the leaf $\WW^{u,g}_{\delta_3}(L')$ is tangent to $\tilde{E}^{cu,g}$ by Lemma \ref{lemmaWcWsC1}. This justifies why the intersection of $\WW^{u,g}_{\delta_3}(L')$ with $\WW^{s,g}_{\delta_3}(x)$ is a unique point.

It is immediate that $h_1$ is continuous. Moreover, by property (P4) it is easy to check that for every $x,y\in L$:
\begin{equation}\label{eq1h1}d_L(x,y)=\delta_3\hspace{0.2cm}\text{ implies }\hspace{0.2cm}\delta_3/2<d_{L'}(h_1(x),h_1(y))<2\delta_3
\end{equation} In particular, $h_1$ continuous and (\ref{eq1h1}) imply that $h_1$ from $L$ to $L'$ is also surjective.

Let us see now what happens when we iterate by $g$. Since $g \graph(\xi_{cs})\subset \graph(\xi_{cs})$ and $g^{-1} \graph(\xi_{cu})\subset \graph(\xi_{cu})$ it follows that $$gL'=(fL)'.$$
Given $x$ in $L$ the point $h_1(x)$ lies in $L'$ and the point $f(x)$ lies in $fL$. Then $g\circ h_1(x)$ and $h_1\circ f(x)$ both lie in $gL'=(fL)'$. We want to justify that the distance between $g\circ h_1(x)$ and $h_1\circ f(x)$ inside $(fL)'$ needs to be small.

Indeed, note first that $d(f(x),g(x))\leq \delta'$ (recall that $\delta'$ denotes $d_0(f,g)$). Then, on the one hand $h_1\circ f(x)$ is given as the unique point in $(fL)'$ such that $\WW^{s,g}_{\delta_3}(f(x))$ and $\WW^{u,g}_{\delta_3}(h_1\circ f(x))$ intersect. On the other hand, $h_1(x)$ is given as the unique point in $L'$ such that $\WW^{s,g}_{\delta_3}(x)$ and $\WW^{u,g}_{\delta_3}(h_1(x))$ intersect, and then by the $g$-invariance of the foliations $\W^{s,g}$ and $\W^{cu,g}$ one obtains that $\WW^{s,g}_{\delta_3}(g(x))$ intersects $\WW^{u,g}_{\delta_3}(g\circ h_1(x))$. That is, $g\circ h_1(x)$ is given as the unique point (unique by the same reasons a before) such that $\WW^{s,g}_{\delta_3}(g(x))$  and $\WW^{u,g}_{\delta_3}(g\circ h_1(x))$ intersect. 

By property (P4) one can derive the following two properties. If two points $z$ and $w$ satisfy $d(z,w)\leq\delta'$ then $\WW^{s,g}_{\delta_3}(z)$ and $\WW^{s,g}_{\delta_3}(w)$ are at Hausdorff distance less than $\frac{3\delta'}{2}$. And if two points $w$ and $z$ lie in $(fL)'$ at distance not smaller than $2\delta'$ then $\WW^{u,g}_{\delta_3}(z)$ and $\WW^{u,g}_{\delta_3}(w)$ are Hausdorff distance greater than $\frac{3\delta'}{2}$.

By applying the two properties above together with the properties that $d(f(x),g(x))\leq\delta'$, that $\WW^{s,g}_{\delta_3}(f(x))$ has no trivial intersection with $\WW^{u,g}_{\delta_3}(h_1\circ f(x))$ and that $\WW^{s,g}_{\delta_3}(g(x))$ has no trivial intersection with $\WW^{u,g}_{\delta_3}(g\circ h_1(x))$, one obtains that 
\begin{equation}\label{eq2h1} g\circ h_1(x)\in (fL)'_{2\delta'}(h_1\circ f(x))\end{equation} for every $x\in L$.

A priori $h_1$ from $L$ to $L'$ may not be injective. However, by a `regulating' process we can rely on $h_1$ to construct the desired $C^1$ diffeomorphism $h$ from $L$ to $L'$. Let $\gamma:\R\to L$ and $\gamma':\R\to L'$ be parametrizations by arc-length and let $\Psi_1:\R \to \R$ denote the map $$\Psi_1(t)=\gamma'^{-1}\circ h_1\circ \gamma(t).$$ We can assume that $L$ and  $L'$ are parametrized with the same orientation, that is, such that $\lim_{t\to+\infty} \Psi_1(t)=+\infty$. Note that by (\ref{eq1h1}) it follows that $$\frac{\delta_3}{2}<\Psi_1(t+\delta_3)-\Psi_1(t)<2\delta_3$$ for every $t\in \R$. If we define $\Psi:\R \to \R$ as $$\Psi(t)=\frac{1}{\delta_3}\int_{t-\frac{\delta_3}{2}}^{t+\frac{\delta_3}{2}}\Psi_1(s)\,ds$$ it follows that the derivative $\D\Psi(t)$ exists everywhere, varies continuously with $t$ and satisfies $\frac{1}{2}<\D\Psi(t)<2$. Defining $h$ as $$h(x)=\gamma' \circ \Psi \circ \gamma^{-1}(x)$$ for every $x\in L$ we conclude that $h|_L:L\to L'$ is a $C^1$ diffeomorphism satisfying $$\frac{1}{2}<\|Dh(v^c)\|<2$$ for every unit vector $v^c$ in $\tilde{E^c}$. 

Moreover by (\ref{eq1h1}) it follows that
$$h(x)\in L'_{2\delta_3}(h_1(x))$$ for every $x\in L$. Since $g\circ h_1 (x)$ lies in $(fL)'_{2\delta'}(h_1\circ f(x))$ by (\ref{eq2h1}) and $2\delta'+2\delta_3<\delta$ then $$g\circ h (x)\in (fL)'_{\delta}(h\circ f(x))$$ for every $x\in L$.  

If we define $\rho:L\to L$ as $$\rho(x)=h^{-1}\circ g \circ h\circ f^{-1}(x)$$ it follows that $\rho$ is a $C^1$ diffeomorphism that is $\delta$-close to the identity and satisfies $$h\circ \rho \circ f(x) = g \circ h(x)$$ for every $x\in L$.

It remains to `descend' $h$ and $\rho$ to $M$. By a little abuse of notation let us denote by $h$ and $\rho$ the maps in $M$ given by $\pi\circ h \circ \pi^{-1}$ and $\pi\circ \rho \circ \pi^{-1}$, respectively. 

All the properties claimed for $h$ and $\rho$ are immediately satisfied except maybe for the ones contained in the following two claims which may require further justification. 

\begin{af} The map $h:M\to M$ is continuous, surjective and $\delta$-close to the identity.
\end{af}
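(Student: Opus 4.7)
The plan is to verify the three claimed properties—$\delta$-closeness to the identity, continuity, and surjectivity—in turn, using the leafwise construction $h=\pi\circ h|_L\circ\pi^{-1}$ together with the uniform estimates for the graph-transform limits $\xi^\infty_{cu}$ and $\xi^\infty_{cs}$ established in Section \ref{subsectionhandrho}.

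For the $\delta$-closeness to the identity, the bound $d(\xi^0_{cu},\xi^\infty_{cu})<\delta_3/2$ (and its $cs$-analogue) combined with the definition $h_1(x)=\xi^\infty_{cs}\circ\pi^u\circ\xi^\infty_{cu}(x)$ exhibits $h_1(x)$ as reached from $x$ by a $g$-stable arc of length less than $\delta_3/2$ followed by a $g$-unstable arc of length less than $\delta_3/2$. The $\tfrac{1}{16}$-near-orthogonality of $(\C^{cs},\C^u)$ granted by (P4) translates this into an ambient bound $d(x,h_1(x))\lesssim\delta_3$. The arc-length averaging used to obtain $h$ from $h_1$ adds at most $2\delta_3$ measured inside $L'$, and transferring that intrinsic bound back to the ambient metric (again via (P4)) yields $d(x,h(x))\leq C\delta_3$ for a universal $C$. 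Since $\delta_3=\delta/(16\kappa^2)$, this is well under $\delta$.

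For continuity, fix $x_0\in M$ and work inside a small foliation chart for $\W^c$ around $x_0$, in which nearby leaves $L$, the cone fields, and the invariant bundles of $g$ all depend continuously on the base point. The contraction estimates of Claim \ref{claimgraphtransf} are uniform in the base leaf, so the iterates $g^n\xi^0_{cu}$ converge uniformly on the chart to $\xi^\infty_{cu}$; each finite iterate depends continuously on the base point, hence so does the uniform limit, and the same applies to $\xi^\infty_{cs}$. Therefore $h_1$ is continuous, and the arc-length averaging that produces $h|_L$ from $h_1|_L$ preserves continuity, giving continuity of $h$ on $M$.

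Surjectivity follows from $h$ being $C^0$-close to the identity on the closed manifold $M$. Taking $\delta(f_0)$ smaller than the injectivity radius of $M$—which is harmless in Lemma \ref{rmkUdelta}—the straight-line geodesic homotopy $h_t(x)=\exp_x\bigl(t\,\exp_x^{-1}(h(x))\bigr)$ is well defined and continuous, exhibiting $h$ as homotopic to $\mathrm{id}_M$. Hence $h$ has $\mathbb{Z}/2$-degree $1$, and so is surjective, since a non-surjective self-map of a closed connected manifold factors through $M\setminus\{\mathrm{pt}\}$ and therefore has degree $0$.

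The main obstacle I expect is the continuity step, because the thickenings $U(L)$ are built via leaf-dependent identifications, and both the domains $\bigcup_n\tilde{\W}^u_{\delta_3}(f^nL)$ and the target spaces $V^{su}_{\delta_2}(L)$ of the graph transforms vary with $L$. Making the informal statement ``$\xi^\infty_{cu}$ depends continuously on the base leaf'' precise requires setting up a single common chart in which these varying objects are simultaneously represented, and checking that the uniform geometric convergence transfers to that chart. Once this bookkeeping is carried out, the $\delta$-closeness and surjectivity arguments are essentially immediate from the estimates already derived.
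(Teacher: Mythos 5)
Your treatment of $\delta$-closeness and surjectivity matches the paper's and is essentially complete (the paper uses a smoothing-plus-degree-mod-2 argument where you use a geodesic homotopy, but these are interchangeable). The continuity step is where the claim's real content lies, and while you correctly diagnose the obstacle — the unfolded neighborhoods $U(L)$ and the domains of the graph transforms vary with $L$ — you leave it at the level of a plan, and that plan as stated is not yet adequate. "Work inside a small foliation chart around $x_0$" does not suffice, because the $n$-th iterate $\xi^n_{cu}$ is built from data over $f^{-n}L(x)$, which for large $n$ lies far from $x_0$; no fixed chart near $x_0$ simultaneously contains the relevant parts of these leaves for all $x$ near $x_0$. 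What makes continuity go through is a quantitative cutoff that you do not supply: one first fixes $K_0$ with $2\delta'(\lambda^{K_0}+\lambda^{K_0+1}+\cdots)<\epsilon/6$, so the tail of the graph-transform sequence is controlled uniformly in the base leaf; one then shows that the first $K_0$ backward preimages of a point under the transform stay in \emph{truncated} $cu$-strips $\tilde{\W}^u_{\delta_3}$ over pieces of $f^{-k}L(x)$ of length bounded by a constant $R(K_0)$ (controlled via the expansion constant $\kappa$); by foliation regularity those bounded truncated strips over nearby $x_n$ can be lifted into a single neighborhood $U^{su}_{2\delta_3}(L(x))$; and only then can one compare $\xi^{K_0}_{cu,x_n}(z_n)$ with $\xi^{K_0}_{cu,x}(w_n)$ inside a common space, via a shadowing-along-$g$-stable-plaques argument, and combine with the tail estimate. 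Without the uniform tail bound together with the bounded-length truncation there is literally nothing defined in a common chart to compare, so this is a genuine missing step, not routine bookkeeping — it is the bulk of the paper's proof of the claim.
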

\begin{proof}

It is easy to check that $h$ is $\delta$-close to the identity: Since $\WW^{s,g}_{\delta_3}(x)$ and $\WW^{u,g}_{\delta_3}(h_1(x))$ intersect for every $x$ it follows that $h_1$ is $2\delta_3$ close to the identity. Since $h(x)$ lies in $L_{2\delta_3}(h_1(x))$ we conclude that $h$ is $4\delta_3<\delta$ close to the identity in $M$.

The remaining of the proof is devoted to show that $h$ is continuous. The surjectivity of $h$ is a direct consequence of $h$ continuous and $\delta$-close to the identity 
\footnote{Recall that in a closed Riemannian manifold $X$ every continuous map $F:X\to X$ that is $\delta$-close to the identity for $\delta>0$ small enough needs to be surjective. To show this one can aproximate $F$ by a smooth map $G:X\to X$  that is $C^0$ $\epsilon$-close to it. For $\delta>0$ and $\epsilon>0$ small enough $G$ is homotopic to the identity. Since the identity has degree mod 2 equal to 1, then the same happens to $G$. As as consequence $G$ is surjective. Since $F$ is $\epsilon$-close to a surjective map for every $\epsilon>0$ small enough then $F$ itself needs to be surjective. See for example \cite{Mil}.}.

Note first that $h_1$ also descends naturally to $h_1:M\to M$ and that if $h_1$ is continuous in $M$ then $h$ will also be continuous as the regulating process has to preserve continuity. So we will show the continuity of $h_1$.

The map $h_1$ has been defined by means of maps $\xi^\infty_{cs}$, $\pi^u$ and $\xi^\infty_{cu}$ depending on the `unfolded' tubular neighborhoods $U(L)$ for each leaf $L\in \W^c$. These neighborhoods are a priori disjoint for different leaves of $\W^c$. We need to somehow merge them in $M$ to be able to compare them.

For every $x\in M$ let $L(x)$ denote the leaf of $\W^c$ through $x$. 
Let us define the map $\xi^\infty_{cu,x}:\W^u_{\delta_3}(L_{\delta_3}(x))\subset M\to M$ as the map such that $\xi^\infty_{cu}|_{\WW^u_{\delta_3}(L_{\delta_3}(x))}$ in $U^{su}_{\delta_3}(L(x))$ is a lift of it. Analogously we define the map $\xi^\infty_{cs,x}:\W^s_{\delta_3}(L_{\delta_3}(x))\to M$ for every $x\in M$.

Let $\pi^u_{x}: B_{\delta_3/2}(x)\to \W^s_{\delta_3}(L_{\delta_3}(x))$ be such that $\pi^u_{x}(z)$ is the intersection of $\W^{u,g}_{\delta_3}(z)$ with $\W^s_{\delta_3}(L_{\delta_3}(x))$ for every $z\in B_{\delta_3/2}(x)$. Again, $\pi^u|_{B_{\delta_3/2}(x)}$ in $U^{su}_{\delta_3}(L(x))$ is a lift of $\pi^u_x$. 

For every $x\in M$ we have $$h_1(x)=\xi^\infty_{cs,x} \circ \pi^u_x \circ \xi^\infty_{cu,x}(x).$$ Let us see that if $x_n\xrightarrow{n} x$ then $$\xi^\infty_{cs,x_n} \circ \pi^u_{x_n} \circ \xi^\infty_{cu,x_n}(x_n)\xrightarrow{n} \xi^\infty_{cs,x} \circ \pi^u_x \circ \xi^\infty_{cu,x}(x).$$

It will be enough to show that:
\begin{enumerate}
\item\label{hcont1} If $x_n\xrightarrow{n} x$ and $y_n\xrightarrow{n} y$ then $\pi^u_{x_n}(y_n)\xrightarrow{n} \pi^u_x(y)$.
\item\label{hcont2} If $x_n\xrightarrow{n}  x$ and $z_n\xrightarrow{n}  z$ with $z_n\in \W^u_{\delta_3}(\W^c_{\delta_3}(x_n))$ and $z\in \W^u_{\delta_3}(\W^c_{\delta_3}(x))$, then $\xi^\infty_{cu,x_n}(z_n)\xrightarrow{n}  \xi^\infty_{cu,x}(z)$. Analogous property hold for $cs$ maps.
\end{enumerate}

Let us see first why (\ref{hcont1}) and (\ref{hcont2}) are enough for proving $h_1$ continuous. Indeed, if $x_n \xrightarrow{n} x$ in $M$ then by (\ref{hcont2}) for $x_n=z_n$ and $x=z$ it follows that $\xi^\infty_{cu,x_n}(x_n)\xrightarrow{n} \xi^\infty_{cu,x}(x)$. Then $\pi^u_{x_n}\circ \xi^\infty_{cu,x_n}(x_n)$ converges with $n$ to $\pi^u_x\circ \xi^\infty_{cu,x}(x)$ by (\ref{hcont1}). And again by (\ref{hcont2}) for $cs$ maps with $z_n=\pi^u_{x_n}\circ \xi^\infty_{cu,x_n}(x_n)$ and $z=\pi^u_x\circ \xi^\infty_{cu,x}(x)$ we conclude that $\xi^\infty_{cs,x_n}\circ \pi^u_{x_n}\circ \xi^\infty_{cu,x_n}(x_n)$ converges with $n$ to $\xi^\infty_{cs,x} \circ \pi^u_x\circ \xi^\infty_{cu,x}(x)$.

The proof of (\ref{hcont1}) is immediate by the regularity of the foliations $\W^{u,g}$ and $\W^{cs}$.

The remaining is devoted to showing (\ref{hcont2}). Informally, the key property we will use is that, by the regularity of $\W^c$, for every $R>0$ and $\mu>0$ the sets $L_R(x)$ and $L_R(x_n)$ are at Hausdorff distance less than $\mu$ for every large enough $n$. This will enable us to `lift' to $U(L(x))$ long pieces of the leaf $L(x_n)$ and to `see' in $U(L(x))$ the first iterates of the $cu$ graph transform for $L(x_n)$.

Suppose from now on $x_n\xrightarrow{n} x$ and $z_n\xrightarrow{n} z$ with $z_n\in \W^u_{\delta_3}(\W^c_{\delta_3}(x_n))$ and $z\in \W^u_{\delta_3}(\W^c_{\delta_3}(x))$. Given $\epsilon>0$ let us see that $d(\xi^\infty_{cu,x_n}(z_n), \xi^\infty_{cu,x}(z))<\epsilon$ for every $n$ large enough. The proof for $cs$ maps is analogous. 

Let us assume without loss of generality that $\epsilon<\delta_3/2$. Recall the inclusions $U_{\delta_3/2}(L(y))\subset U^{su}_{\delta_3}(L(y))\subset U^{su}_{2\delta_3}(L(y))\subset U_{\delta_2}(L(y))$ for every $y\in M$.

For every $n$ large enough the point $x_n$ lies in $B_{\delta_3/2}(x)\subset M$ so we can lift it to $B_{\delta_3/2}(x)\subset U_{\delta_3/2}(L(x))$. For simplicity, let us call these lifts of $x_n$ with the same name, $x_n$.

For every $y\in M$ recall that $\tilde{\W}^u_{\delta_3}(L(y))$ denotes the $cu$-strip in $U^{su}_{\delta_3}(L(y))$. For every $R>0$ let $\tilde{\W}^u_{\delta_3}(L_R(y))$ denote the `truncated' $cu$-strip that is the set $\bigcup_{z\in L_R(y)}\tilde{\W}^u_{\delta_3}(z)$. By the regularity of the foliations $\W^c$ and $\W^u$ the following is immediate to check:

\begin{remark}\label{rmkR>0truncated} Suppose $R>0$ and $\mu>0$. For every $x_n$ close enough to $x$  the projection to $M$ of the truncated $cu$-strip $\tilde{\W}^u_{\delta_3}(L_R(x_n))$ can be lifted to be a subset of $U^{su}_{2\delta_3}(L(x))$ that is at Hausdorff distance less than $\mu$ from $\tilde{\W}^u_{\delta_3}(L_R(x))$ and such that $x_n$ lifts close to $x$ in $U^{su}_{2\delta_3}(L(x))$.
\end{remark}

Recall that for every $y\in M$ the map $\xi^\infty_{cu}$ in $\Pi^{cu}(L(y))$ is defined as a limit of the maps $g^k\xi^0_{cu}$ for $\xi^0_{cu}$ the zero-section in $\Pi^{cu}(L(y))$. Let us denote the zero section as $\xi^0_{cu,y}$ and the maps $g^k\xi^0$ as $\xi^k_{cu,y}$ to highlight the dependence on the point $y$. 

Recall that by Claim \ref{claimgraphtransf} it follows that  $$d\big(\xi^k_{cu,y},\xi^\infty_{cu,y}\big)<2\delta'(\lambda^k+\lambda^{k+1}+\ldots)$$ for every $k>0$. Hence for every $\mu>0$ there exists $K>0$ independent of $y$ such that
$$d\big(\xi^k_{cu,y},\xi^\infty_{cu,y}\big)<\mu$$ for every $k \geq K$. 

Let $K_0>0$ be such that \begin{equation}\label{hcont(a)} 2\delta'(\lambda^{K_0}+\lambda^{K_0+1}+\ldots)<\epsilon/6.\end{equation} 

For every $w\in M$ and $z\in \WW^u_{\delta_3}(L(w))$ let $z^{-1}$ denote the point in the $cu$-strip of $f^{-1}L(z)$ such that $g(z^{-1})=\xi^1_{cu,w}(z)$. Analogously, let $z^{-k}$ be the point in the $cu$-strip of $f^{-k}L(w)$ such that $g^k(z^{-k})=\xi^k_{cu,w}(z)$. From the construction of $\xi^k_w$ it follows that $z^{-k-1}$ is contained in $\WW^{cu}_{2\delta'}(f^{-1}(w^{-k}))$ for $\delta'=d_0(f,g)$. That is, $(z^{-k})_{k\geq 0}$ is a backwards $2\delta'$-pseudo orbit for $f$ with jumps in local $cu$-plaques.

Recall that the maximal expansion possible for $df^{-1}$ is given by the constant $\kappa>1$. It follows that, independently of $w$, for every $K>0$ there exists a constant $R(K)>0$ (in terms of $\kappa$ and $\delta'$) such that $z^{-k}$ lies in the truncated $cu$-strip $\tilde{\W}^u_{\delta_3}(L_{R(K)}(f^{-k}(w)))$ for every $k\in \{0,\ldots,K\}$. This is satisfied for every $w\in M$ and $z\in \WW^u_{\delta_3}(L(w))$.

Let $N>0$ be such that, by Remark \ref{rmkR>0truncated} for $R_0=R(K_0)$ and $\mu_0=\delta_3/100$, the truncated $cu$-strip $\tilde{\W}^u_{\delta_3}(L_{R_0}(f^{-k}(x_n)))$ can be projected to $M$ and then lifted to $U^{su}_{2\delta_3}(L(f^{-k}(x)))$ so that it gets at Hausdorff distance less than $\mu_0$ from $\tilde{\W}^u_{\delta_3}(L_{R_0}(f^{-k}(x)))$ for every $k\in \{0,\ldots,K_0\}$ and every $n\geq N$. For simplicity, let us call these projection-lifts to $U^{su}_{2\delta_3}(L(f^{-k}(x)))$ of the truncated $cu$-strips $\tilde{\W}^u_{\delta_3}(L_{R_0}(f^{-k}(x_n)))$ with the same name $\tilde{\W}^u_{\delta_3}(L_{R_0}(f^{-k}(x_n)))$.

It follows that the maps $\xi^\infty_{cu,x_n}$ can also be lifted to $U^{su}_{2\delta_3}(L(x))$ for every $n\geq N$. The domains of such maps being contained in the truncated $cu$-strips $\tilde{\W}^u_{\delta_3}(L_{R_0}(x_n))$. Analogously, the maps $\xi^\infty_{cu,z_n}$ can be lifted to $U^{su}_{2\delta_3}(L(x))$ with domain contained in $\tilde{\W}^u_{\delta_3}(L_{R_0}(z_n))$. Again, for simplicity let us call these lifted maps with the same names $\xi^\infty_{cu,x_n}$ and $\xi^\infty_{cu,z_n}$.

We have to show that $d(\xi^\infty_{cu,x_n}(z_n), \xi^\infty_{cu,x}(z))<\epsilon$ is satisfied  in $U^{su}_{2\delta_3}(L(x))$ for every $n\geq N$.

Note that, modulo taking $N$ larger, for every $n\geq N$ the set $\WW^{s,g}_{\delta_3}(z_n)$ intersects $\WW^u_{\delta_3}(L(x))$ and that this intersection point is unique. Let us call it $w_n$. 

As $z_n\xrightarrow{n} z$ then $w_n\xrightarrow{n} z$. Since $\xi^\infty_{cu,x}$ is continuous in $\WW^u_{\delta_3}(L(x))$ it follows that $d\big(\xi^\infty_{cu,x}(z),\xi^\infty_{cu,x}(w_n)\big)<\epsilon/2$ for every $n\geq N$, by taking $N$ larger if needed. It remains to show that $d\big(\xi^\infty_{cu,x_n}(z_n),\xi^\infty_{cu,x}(w_n)\big)<\epsilon/2$ for every $n\geq N$.

The points $z_n^{-k}$ are well defined points in $U^{su}_{2\delta_3}(f^{-k}L(x))$ satisfying that $g^k(z_n^{-k})=\xi^k_{cu,x_n}(z_n)$ for every $k\in \{0,\ldots, K_0\}$. The points $w_n^{-k}$ are well defined points in $U^{su}_{2\delta_3}(f^{-k}L(x))$ satisfying that $g^k(w_n^{-k})=\xi^k_{cu,x}(w_n)$ for every $k\in \{0,\ldots, K_0\}$. The key point to note is that the above implies \begin{equation}\label{hcont(b)}w_n^{-k}\in\WW^{s,g}_{4\delta_3}(z_n^{-k})\end{equation} for every $k\in \{0,\ldots,K_0\}$ and $n\geq N$. This is because, as $w_n$ lies in $\WW^{s,g}_{\delta_3}(z_n)$, then $\xi^k_{cu,x_n}(z_n)$ and $\xi^k_{cu,x}(w_n)$ lie in the same $\WW^{s,g}$-plaque of $U^{su}_{2\delta_3}(L(x))$. Then $z_n^{-1}$ and $w_n^{-1}$, which are two points in $U^{su}_{2\delta_3}(f^{-1}L(x))$ satisfying that $g(z_n^{-1})$ and $g(w_n^{-1})$ are in the same $\W^{s,g}$ plaque of $U^{su}_{2\delta_3}(L(x))$, need to lie in the same $\W^{s,g}$ plaque too. Inductively, $z_n^{-k}$ and $w_n^{-k}$ need to lie in the same $\WW^{s,g}$-plaque of $U^{su}_{2\delta_3}(f^{-k}L(x))$. As all of these $\WW^{s,g}$-plaques have diameter less than $4\delta_3$ then (\ref{hcont(b)}) follows.

For $k=K_0$ in (\ref{hcont(b)}) it follows that $w_n^{-K_0}$ lies in $\WW^{s,g}_{4\delta_3}(z_n^{-K_0})$. Then $g^{K_0}(z_n^{-{K_0}})$ lies in $\WW^{s,g}_{4\delta_3\lambda^{K_0}}(g^{K_0}(z_n^{-K_0}))$. Recall that $g^{K_0}(z_n^{-{K_0}})=\xi^{K_0}_{cu,x_n}(z_n)$ and $g^{K_0}(w_n^{-K_0})=\xi^{K_0}_{cu,x}(w_n)$. Using (\ref{hcont(a)}) and the fact that $2\delta'<4\delta_3$ by property (P5) it follows that  $$d(\xi^{K_0}_{cu,x_n}(z_n),\xi^{K_0}_{cu,x}(w_n))<\epsilon/6$$ for every $n\geq N$.

Again by (\ref{hcont(a)}) it follows that
$$d(\xi^{K_0}_{cu,x}(z),\xi^\infty_{cu,x}(w_n))<\epsilon/6 \hspace{0.2cm}\mbox{ and }\hspace{0.2cm} d(\xi^{K_0}_{cu,x_n}(z_n),\xi^\infty_{cu,x_n}(z_n))<\epsilon/6$$ for every $n\geq N$ . 

By triangular inequality (two times) we conclude that $$d\big(\xi^\infty_{cu,x}(z_n),\xi^\infty_{cu,x}(w_n)\big)<\epsilon/2$$ for every $n\geq N$.

This shows that $h$ is continuous and ends the proof of the claim.
\end{proof}

\begin{af}
The map $\rho:M\to M$ is a homeomorphism.
\end{af}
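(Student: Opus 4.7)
The plan is to verify three things in sequence: that $\rho$ is a bijection of $M$, that $\rho$ is continuous, and then to conclude from compactness of $M$ that $\rho$ is a homeomorphism.

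For bijectivity I will use the leafwise construction given above. For each leaf $L\in\W^c$ the map $\rho|_L: L\to L$ is defined as the composition $h|_L^{-1}\circ g\circ h|_{f^{-1}L}\circ f^{-1}|_L$. Here $f^{-1}|_L : L\to f^{-1}L$ and $h|_{f^{-1}L}: f^{-1}L\to (f^{-1}L)'$ are diffeomorphisms, $g$ restricts to a diffeomorphism from $(f^{-1}L)'$ onto $L'$ (using $gL'=(fL)'$), and $h|_L^{-1}: L'\to L$ is a diffeomorphism. So each $\rho|_L$ is a $C^1$ diffeomorphism of $L$. Since the leaves of $\W^c$ partition $M$ and $\rho$ preserves each of them, $\rho$ is a bijection of $M$.

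For continuity the main tool will be the identity $h\circ\rho=g\circ h\circ f^{-1}$, which shows that $h\circ\rho$ is continuous since $h$ is continuous by the previous claim. To upgrade this to continuity of $\rho$ itself, I suppose $x_n\to x$ in $M$; by compactness of $M$ it suffices to check that every subsequential limit of $\rho(x_n)$ equals $\rho(x)$. Passing to a subsequence $\rho(x_{n_k})\to y^*$, one has $h(y^*)=\lim h(\rho(x_{n_k}))=h(\rho(x))$. Moreover, $\rho(x_{n_k})\in \W^c_\delta(x_{n_k})$ because $\rho|_{L(x_{n_k})}$ is $\delta$-close to the identity, and the continuity of the foliation $\W^c$ recalled in Section~\ref{sectionprelims} (namely $d_H(\W^c_R(x_n),\W^c_R(x))\to 0$ as $x_n\to x$, for any $R>0$) forces $y^*$ to lie in the compact intrinsic $\delta$-ball of $L(x)$ around $x$, and in particular in $L(x)$. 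Since $\rho(x)$ also lies in $L(x)$ and $h|_{L(x)}:L(x)\to L(x)'$ is a $C^1$ diffeomorphism (and hence injective), the equality $h(y^*)=h(\rho(x))$ forces $y^*=\rho(x)$.

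The final step is the observation that a continuous bijection of a compact Hausdorff space is automatically a closed map, so $\rho^{-1}$ is continuous and $\rho$ is a homeomorphism. I expect the continuity step to be the main obstacle: one really needs that the continuation $h$ restricts to a \emph{diffeomorphism} on each center leaf (not merely a surjection), since the leafwise injectivity of $h$ is what lets one pass from $h(y^*)=h(\rho(x))$ to $y^*=\rho(x)$; and one needs the continuity of $\W^c$ at scale $\delta$ to prevent the competing limit $y^*$ from escaping into a different leaf of $\W^c$.
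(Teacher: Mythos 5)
Your argument follows the same route as the paper: continuity is reduced, via the identity $h\circ\rho\circ f=g\circ h$, to the continuity of $h$ and $g$, plus the observation that $\rho(x_n)$ stays in the $\delta$-ball of $\W^c(x_n)$ and that $h$ separates points at that scale. The one minor difference in packaging is how you handle bijectivity: you observe directly that $\rho|_L$ is a composition of bijections $h|_L^{-1}\circ g\circ h|_{f^{-1}L}\circ f^{-1}|_L$, while the paper instead appeals to the general fact that a continuous injective map $\delta$-close to the identity is automatically a homeomorphism. Both are fine.

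There is, however, one point where your stated justification is stronger than what is actually available (and stronger than what you need). You invoke that $h|_{L(x)}:L(x)\to L(x)'$ is a $C^1$ diffeomorphism ``and hence injective'', and you explicitly flag leafwise injectivity as the crucial ingredient. But Theorem~\ref{thmgraphtransf} only produces $h|_L$ as a $C^1$ diffeomorphism onto the \emph{abstract} continuation $L'\subset U(L)$; item~(\ref{(1)thmplaqueexp}) of the theorem says $h(L)\subset M$ is a complete $C^1$ \emph{immersed} submanifold, not an embedded one. Absent plaque expansivity (which Theorem~\ref{thmgraphtransf} does not assume), the projection $\pi|_{L'}$ could a priori fail to be injective, in which case $h|_L:L\to M$ would not be injective either. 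What \emph{is} available, and what you actually use once you have located both $y^*$ and $\rho(x)$ in the closed $\delta$-ball of $L(x)$ around $x$, is injectivity of $h$ on $\W^c$-plaques of size comparable to $\delta$: this follows from $\tfrac12<\|Dh|_{E^c}\|<2$ and property (P4), which forces a $C^1$ arc tangent to $E^c_g$ of length $<4\delta$ to be embedded, so the image of a $\delta$-plaque under $h$ cannot close up. This local injectivity is exactly what the paper cites (``$h$ is injective restricted to $\W^c_\delta(\rho\circ f(x))$''), and it is all your argument needs. If you replace ``leafwise injectivity'' by ``injectivity of $h$ at scale $\delta$ on center plaques'', the proof is complete and matches the paper.
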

\begin{proof}
Recall that we have already seen that $\rho(L)=L$ and $\rho|_L:L\to L$ is injective and $\delta$-close to the identity for every leaf $L\in \W^c$. And that $h\circ \rho\circ f=g\circ h$ is satisfied.

It remains to show that $\rho$ is continuous as $\rho$ continuous, injective and $\delta$ close to the identity implies $\rho$ homeomorphism. 

Suppose $x_n\xrightarrow{n} x$ in $M$. Let us see that $\rho \circ f(x_n)  \xrightarrow{n} \rho \circ f(x)$. As $f$ is continuous this implies $\rho$ continuous.

As $x_n\xrightarrow{n} x$ then $g\circ h(x_n) \xrightarrow{n} g\circ h(x)$ by the continuity of $h$ and $g$. Since $h\circ \rho\circ f=g\circ h$ it follows that $h\circ \rho\circ f(x_n)$ has limit $h\circ \rho\circ f(x)$.

As $x_n\xrightarrow{n} x$ and $\rho\circ f(x_n)$ lies in $\W^c_\delta(f(x_n))$ for every $n$ it follows that every accumulation point of the sequence $\rho\circ f(x_n)$ must lie in $\W^c_\delta(\rho\circ f(x))$. Because of $\frac{1}{2}<\|Dh|_{E^c}\|<2$ the map $h$ is injective restricted to $\W^c_\delta(\rho\circ f(x))$. Hence the only way  that $h\circ \rho\circ f(x_n)$ has limit $h\circ \rho\circ f(x)$ is that $\rho \circ f(x_n)$ converges to $\rho \circ f(x)$. This shows the continuity of $\rho$ and ends the proof of the claim and of Theorem \ref{thmgraphtransf}.
\end{proof}

\subsection{Proof of Theorem \ref{thmunifcontC1completecenterimm}: Continuation of complete $C^1$ center immersions} 

Suppose $\eta:\R\to M$ is a complete $C^1$ immersion tangent to $E^c_f$ as in the hypothesis of Theorem \ref{thmunifcontC1completecenterimm}. As in the proof of Theorem \ref{thmgraphtransf} one can construct an abstract manifold $U(\eta)$, informally an `unfolded neighborhood' of $\eta$, given as the disjoint union of the sets $\{B_\delta(\eta(t))\}_{t\in \R}$ with the points $y\in B_\delta(\eta(t))$ and $y'\in B_\delta(\eta(s))$ identified if and only if $y=y'$ and the piece of $\eta$-orbit from $\eta(t)$ to $\eta(s)$ has length less than $4\delta$. Then in $U(\eta)$ there exists a natural projection $\pi:U(\eta)\to M$ which is a local diffeomorphism at any point and $U(\eta)$ can be given the structure of abstract Riemannian manifold by taking pull-back of the structure in $M$ by the restrictions of $\pi:U(\eta)\to M$ to the sets $B_\delta(\eta(t))$ for $t\in \R$.

Analogously as for Theorem \ref{thmgraphtransf} one can construct a manifold $V(\eta)$ whose connected components are $U(f^n\circ \eta)$ for every $n\in \mathbb{Z}$, so that $g$ can be `lifted' to $V(\eta)$ (sending points from one connected component $U(f^n\circ \eta)$ to the next one $U(f^{n+1}\circ \eta)$ for every $n\in\mathbb{Z}$) so that the graph transform method (Lemma \ref{claimgraphtransf}) can be performed in $V(\eta)$. 

One obtains the existence of a sequence $\gamma_n:N\to M$ of complete $C^1$ immersions tangent to $E^c_{g}$ satisfying (\ref{eq1thmC1imm}) and (\ref{eq2thmC1imm}) of Theorem \ref{thmunifcontC1completecenterimm} by exactly the same arguments already seen for the continuation of $\W^c$-leaves in the proof of Theorem \ref{thmgraphtransf}.

For the uniqueness part of Theorem \ref{thmgraphtransf}, modulo reparametrizations, note that by the same arguments showing that the continuation $h(L)$ of a center leaf $L$ is characterized as the only points in $U(L)$ for which its $g$ orbit is well defined for every backwards and forwards iterate (see Remark \ref{rmkcharactcontinuation}) it follows that the image of every lift of $\gamma_0$ to $U(\eta)$ coincides with the points in $U(\eta)$ whose $g$-orbit is well defined in $V(\eta)$ for every backwards and forwards iterate (in particular, there exits a unique lift).

Then if $\gamma_n':\R\to M$ is another sequence of complete $C^1$ immersions tangent to $E^c_{g}$ satisfying (\ref{eq1thmC1imm}) and (\ref{eq2thmC1imm}) one deduces that the lift of $\gamma_0'$ to $U(\eta)$ has to have the same image as the lift of $\gamma_0$. It follows that $\gamma_0'$ is a reparametrization of $\gamma_0$ and, by (\ref{eq2thmC1imm}), that $\gamma_n'$ is a reparametrization of $\gamma_n$ for every $n\in\mathbb{Z}$.

\section{Global stability: Proof of Theorem \ref{thmA}}\label{sectionproofthmA}

As discussed below, Theorem \ref{thmA} is a consequence of the following.

\begin{prop}\label{propopenclosed}
For every $f_0\in \PH_{c=1}(M)$ there exists a $C^1$-neighborhood $\U$ of $f_0$ such that, if $f\in \mathcal{U}$ is a discretized Anosov flow, then every $g\in \mathcal{U}$ is also a discretized Anosov flow.

Moreover, if $\W^c_f$ and $\W^c_{g}$ denote the flow center foliations of $f$ and $g$, respectively, then $(f,\W^c_f)$ is plaque expansive and $(f,\W^c_f)$ and $(g,\W^c_{g})$ are leaf-conjugate.
\end{prop}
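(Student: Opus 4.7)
The approach combines a uniform $\delta'$-plaque expansivity for discretized Anosov flows in a $C^1$ neighborhood of $f_0$ with the uniform continuation of normally hyperbolic foliations provided by Theorem \ref{thmgraphtransf}. First I will fix a metric on $M$ and a constant $\delta(f_0) > 0$ as in Lemma \ref{rmkUdelta}, and set $\U := \U_\delta(f_0)$ for some small $\delta \leq \delta(f_0)$ to be determined at the end. On $\U$ the cone fields, the local product structure of the invariant foliations, and the exponential rates on $E^s, E^u$ are all controlled uniformly by properties (P1)--(P5).

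The main technical step is to establish that there is $\delta' > 0$, depending only on $f_0$ and $\delta$, such that every $f \in \U$ which is a discretized Anosov flow is $\delta'$-plaque expansive with respect to its flow center foliation $\W^c_f$. For this, Proposition \ref{propdyncoh} ensures dynamical coherence with $\W^c_f = \W^{cs}_f \cap \W^{cu}_f$. Given $\delta'$-pseudo-orbits $(x_n), (y_n)$ as in Definition \ref{defdeltape}, the uniform local product structure lets me define
\[
u_n := \W^u_{c\delta'}(x_n) \cap \W^{cs}_{c\delta'}(y_n)
\]
for each $n$. Using the $f$-invariance of $\W^{cs}_f$ together with uniform exponential expansion along $\W^u$, I will show inductively that $d_u(u_{n+1}, x_{n+1}) \geq c_1 \lambda^{-1} d_u(u_n, x_n)$ for a uniform $c_1 > 0$; since $d_u(u_n, x_n) \leq c\delta'$ is forced by the local product structure, replacing $f$ by a sufficiently high iterate so that $c_1 \lambda^{-N} > 1$ then yields $u_0 = x_0$, i.e.\ $y_0 \in \W^{cs}_f(x_0)$. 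A symmetric backward argument gives $y_0 \in \W^{cu}_f(x_0)$, hence $y_0 \in \W^c_f(x_0)$; all constants will be uniform in $\U$.

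Once uniform $\delta'$-plaque expansivity is in hand, I shrink $\delta$ so that $\delta \leq \delta'$ (the cone field constants are unaffected by this shrinking). For any $g \in \U$, Theorem \ref{thmgraphtransf} applied to the pair $(f, g)$ produces a continuous surjective map $h$ and a homeomorphism $\rho$ satisfying $h \circ \rho \circ f = g \circ h$, with $\rho$ preserving each leaf of $\W^c_f$; Lemma \ref{lemmadeltaplaqueexp=>leafconj} then ensures that $h$ is a homeomorphism and that $(f, \W^c_f)$ and $(g, h(\W^c_f))$ are leaf-conjugate. To conclude that $g$ itself is a discretized Anosov flow, I apply Proposition \ref{propcenterfixingL} with the candidate center foliation $h(\W^c_f)$: the identity $g \circ h(L) = h(\rho(f(L))) = h(L)$ shows that $g$ fixes each leaf, and the displacement is uniformly bounded since $f(x) \in \W^c_{L_f}(x)$ for some $L_f$, $\rho$ is $\delta$-close to the identity along leaves, and $\|D(h|_L)\| < 2$ by Theorem \ref{thmgraphtransf}(1). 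Proposition \ref{propcenterfixingL} then gives that $g$ is a DAF with flow center foliation $\W^c_g = h(\W^c_f)$.

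The hard part will be establishing the uniform plaque expansivity in the second step. Plaque expansivity of each individual DAF follows from the topological Anosov flow structure (Proposition \ref{propDAFtopAnosov}), but the uniformity of $\delta'$ over all DAFs in $\U$ does not follow directly from Remark \ref{rmkuniformdelta'plaqexp}, which is anchored at a fixed plaque expansive system. It must instead be extracted by a careful tracking of the local product structure and of the pseudo-orbit estimates through the uniform cone-field bounds of Lemma \ref{rmkUdelta}. Once this uniformity is secured, the remaining pieces --- the leaf-conjugacy via Theorem \ref{thmgraphtransf} and the DAF conclusion via Proposition \ref{propcenterfixingL} --- assemble cleanly.
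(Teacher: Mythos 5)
Your overall architecture matches the paper's: establish that every discretized Anosov flow in a fixed $C^1$-neighborhood $\U$ of $f_0$ is $\delta'$-plaque expansive for a \emph{uniform} $\delta'$, then feed that into Theorem \ref{thmgraphtransf} and Lemma \ref{lemmadeltaplaqueexp=>leafconj} to get a leaf-conjugacy $h$, and finally use Proposition \ref{propcenterfixingL} to conclude that $g$ is a discretized Anosov flow. The last two stages are carried out essentially as in the paper. The gap is in the first stage, which is exactly the part you yourself flag as the hard one.

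The issue with your inductive argument is that the estimate $d_u(u_{n+1},x_{n+1}) \geq c_1\lambda^{-1}\,d_u(u_n,x_n)$ with $c_1\lambda^{-1}>1$ is not available without first controlling the flow time $\tau_f$ of the DAF $f$ from below. When you pass from $f(u_n)$ to $u_{n+1}$ you must relate unstable distances on $\W^u(f(x_n))$ and $\W^u(x_{n+1})$, and these are linked by $\W^u$-holonomy inside $\W^{cu}$ along the center jump $[f(x_n),x_{n+1}]_c$ of length up to $\delta'$. For a DAF with very small flow time $\tau_f$, this center holonomy can contract unstable lengths by roughly the same factor that $Df$ expands them over a time $\tau_f$ step, so if $\tau_f \lesssim \delta'$ the net factor $c_1\lambda^{-1}$ can fall below $1$ and your induction produces no contradiction. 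Nothing in the $C^1$-smallness of $\U$ alone rules out $\tau_f$ being small: the condition $\|Df^{-1}|_{E^u}\|<\lambda$ constrains the product of flow time and expansion rate, not either one separately. Your proposed remedy --- ``replacing $f$ by a high iterate so that $c_1\lambda^{-N}>1$'' --- does not repair this: if $c_1\lambda^{-1}\leq 1$ then iterating the estimate yields $(c_1\lambda^{-1})^N\leq 1$, and switching to $f^N$-pseudo-orbits enlarges the jump size (roughly proportionally to $N$), which degrades the holonomy constant in tandem, so there is no free gain.

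This is precisely why the paper inserts Claim \ref{claimplaqueexp1} (the bound $\tau>10\delta$) as the first step of Proposition \ref{propplaqueexp}, and why it works with the strengthened neighborhood $\U_\delta(f_0)$ of Lemma \ref{rmkUdeltaDAF} (condition (P4') with nearly euclidean control at scale $20\kappa\delta$, not just $20\delta$) rather than the plain Lemma \ref{rmkUdelta} you invoke. The bound $\tau>10\delta$ is what guarantees that $\delta$-pseudo-orbits genuinely ``advance forward'' along the flow, and once that is in place the paper does not even argue as you do: instead it reduces via Claim \ref{claimplaqueexp2} to pseudo-orbits inside $\W^{cu}$ and $\W^{cs}$ leaves, uses fundamental domains $[f^k(x_0),f^{k+1}(x_0))_c$ in a non-compact center leaf, and derives the contradiction via the topological dichotomy (plane or cylinder) for $cu$-leaves from Proposition \ref{propsTopAF} together with Claim \ref{claimlastplaqueexp}. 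To make your approach rigorous you would need to prove an analogue of $\tau > 10\delta$ first; as written, the core quantitative claim is unsupported and the proof does not go through.
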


Let us mention how Theorem \ref{thmA} follows from  Proposition \ref{propopenclosed}.

Suppose Proposition \ref{propopenclosed} is true. If $f_0$ is a discretized Anosov flow then there exists a neighborhood $\U$ of $f_0$ such that every element of $\U$ is a discretized Anosov flow. This proves the open property. 

Moreover, if $f_n$ is a sequence of discretized Anosov flows converging to $f_0\in \PH_{c=1}(M)$ and $\U$ is as in Proposition \ref{propopenclosed} then for some large $N$ the map $f_N$ lies in $\U$ and as consequence $f_0$ is also a discretized Anosov flow. This proves the closed property.

It is immediate from Proposition \ref{propopenclosed} that leaf-conjugacy persists in the whole $C^1$ partially hyperbolic connected component:
\begin{cor}\label{corleafconjDAFs}
Suppose $f$ and $g$ belong to the same connected component of $\PH_{c=1}(M)$. If $f$ is a discretized Anosov flow then $g$ is also a discretized Anosov flow and, if $\W^c_f$ and $\W^c_g$ denote their flow center foliations, then $(f,\W^c_f)$ and $(g,\W^c_g)$ are leaf-conjugate.
\end{cor}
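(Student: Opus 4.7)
The plan is to apply Theorem \ref{thmgraphtransf} to transport $\W^c_f$ to a $g$-invariant center foliation and then to invoke Proposition \ref{propcenterfixingL} to identify the result as a discretized Anosov flow. I will work with the adapted metric and the constant $\delta(f_0)>0$ provided by Lemma \ref{rmkUdelta}, fix $\delta>0$ sufficiently small with $\delta\le\delta(f_0)$, and set $\U:=\U_\delta(f_0)$. The proof splits into three tasks: establishing $\delta$-plaque expansivity of $(f,\W^c_f)$ at the scale fixed by $\U$, running the continuation machinery of Section \ref{appendixA}, and then recognising $(g,h(\W^c_f))$ as a discretized Anosov flow.

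For the plaque expansivity, let $f\in\U$ be a discretized Anosov flow with $f(x)=\varphi_{\tau(x)}(x)$ and corresponding $f$-invariant foliations $\W^c_f$, $\W^{cs}_f$, $\W^{cu}_f$ supplied by Proposition \ref{prop1DAFs} and Proposition \ref{propdyncoh}. Given pseudo-orbits $(x_n),(y_n)$ as in Definition \ref{defdeltape}, I will show $y_0\in(\W^c_f)_{3\delta}(x_0)$. Since $f$ individually fixes every leaf of $\W^c_f$, the sequences $(x_n)$ and $(y_n)$ stay inside $\W^c_f(x_0)$ and $\W^c_f(y_0)$ respectively, and in particular the center jumps lie both in $\W^{cs}_f$ and in $\W^{cu}_f$. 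The near-Euclidean local product structure from property (P4) of Lemma \ref{rmkUdelta} defines for each $n$ a $\W^u$-displacement of $y_n$ away from $\W^{cs}_f(x_n)$. Since $f$ expands $\W^u$-distances by a factor at least $\lambda^{-1}$ and the center jumps preserve the $\W^u$-component of this displacement, boundedness in $n$ of $d(x_n,y_n)<2\delta$ forces it to vanish, so $y_n\in\W^{cs}_f(x_n)$ for all $n$; the symmetric argument run in backward time yields $y_n\in\W^{cu}_f(x_n)$. Hence $y_0\in\W^{cs}_f(x_0)\cap\W^{cu}_f(x_0)=\W^c_f(x_0)$, and the near-Euclidean bound at scale $\delta$ upgrades $d(x_0,y_0)<2\delta$ to $y_0\in(\W^c_f)_{3\delta}(x_0)$.

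For any $g\in\U$, Theorem \ref{thmgraphtransf} then produces a continuous surjection $h:M\to M$, $\delta$-close to the identity, with $h(L)$ a complete $C^1$ immersion tangent to $E^c_g$ for every $L\in\W^c_f$, together with $\rho:M\to M$ satisfying $h\circ\rho\circ f=g\circ h$. The $\delta$-plaque expansivity just established, together with Lemma \ref{lemmadeltaplaqueexp=>leafconj}, guarantees that $h$ is a homeomorphism, so $\W^c_g:=h(\W^c_f)$ is a $g$-invariant center foliation and $(f,\W^c_f)$ and $(g,\W^c_g)$ are leaf-conjugate. To promote $g$ itself to a discretized Anosov flow, fix $L>0$ with $f(z)\in(\W^c_f)_L(z)$ for all $z\in M$ via Proposition \ref{propcenterfixingL}. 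Writing $y=h(x)$ (every $y\in M$ does so uniquely, by bijectivity of $h$), the semiconjugacy gives $g(y)=h(\rho(f(x)))$. Since $\rho|_{\W^c_f(x)}$ is a $C^1$ diffeomorphism $\delta$-close to the identity, $\rho(f(x))\in(\W^c_f)_{L+\delta}(x)$; the derivative bound $\tfrac12<\|Dh|_{E^c_f}\|<2$ from Theorem \ref{thmgraphtransf} then gives $g(y)\in(\W^c_g)_{2(L+\delta)}(y)$, and the reverse direction of Proposition \ref{propcenterfixingL} identifies $g$ as a discretized Anosov flow.

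The delicate point will be the plaque-expansivity step. The constant $L$ associated to $f$ may a priori be arbitrarily large, whereas $\delta$ is fixed by $\U=\U_\delta(f_0)$ and cannot be shrunk after the fact; the argument therefore cannot invoke any $f$-dependent scale to control the center jumps, and relies exactly on the leaf-preservation of $\W^c_f$ by $f$ together with the uniform hyperbolic constants from Lemma \ref{rmkUdelta}. The geometric content of the remaining steps is standard once this uniform expansivity is in place.
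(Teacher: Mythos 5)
Your overall plan --- establish $\delta$-plaque expansivity of $(f,\W^c_f)$ at the scale fixed by Lemma \ref{rmkUdelta}, apply Theorem \ref{thmgraphtransf} with Lemma \ref{lemmadeltaplaqueexp=>leafconj} to obtain the leaf conjugacy $h$, and read off from $h\circ\rho\circ f=g\circ h$ that $g$ is a discretized Anosov flow via Proposition \ref{propcenterfixingL} --- matches what the paper does in Proposition \ref{propopenclosed}; the corollary itself is then obtained by a short open-closed-connected argument that your write-up does not carry out. The substance is in the plaque-expansivity step, and there your sketch has genuine gaps. The assertion that the center jumps ``preserve the $\W^u$-component of the displacement'' is not available: the jump from $f(y_n)$ to $y_{n+1}$ moves along $\W^c(y_n)\subset\W^{cs}_f(y_0)$, which is only approximately parallel to $\W^{cs}_f(x_0)$, so the near-Euclidean control (P4) yields only an additive error of order $\delta$ in the unstable distance per jump. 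When that displacement is small, the additive error can outweigh the multiplicative gain from one application of $f$, and the growth argument does not close. The paper circumvents this by first proving Claim \ref{claimplaqueexp1} (the bound $\tau>10\delta$, extracted from the strengthened property (P4') of Lemma \ref{rmkUdeltaDAF}), which forces the pseudo-orbit to advance monotonically along the center flow despite the backward jumps, and then reducing to pseudo-orbits inside a single $\W^{cu}$ leaf via stable-holonomy projection (Claim \ref{claimplaqueexp2}). There is no substitute for the $\tau>10\delta$ mechanism in your proposal.

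Two further problems: the identity $\W^{cs}_f(x_0)\cap\W^{cu}_f(x_0)=\W^c_f(x_0)$ is false as a global statement --- a single $cs$-leaf and a single $cu$-leaf may meet in several center leaves --- so what you need is a local version, and getting it is part of the work; and the final deduction from ``$y_0\in\W^c_f(x_0)$ with $d(x_0,y_0)<2\delta$'' to ``$y_0\in(\W^c_f)_{3\delta}(x_0)$'' does not follow from near-Euclidean geometry alone, since $\W^c_f(x_0)$ may re-enter $B_{2\delta}(x_0)$ at a segment that is intrinsically far from $x_0$. Excluding this possibility is precisely Claim \ref{claimlastplaqueexp}, which again relies on $\tau>10\delta$, on the resulting fundamental-domain decomposition of $\W^c(x_0)$, and on the plane/cylinder dichotomy of $\W^{cu}$ leaves from Proposition \ref{propsTopAF}. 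Your outer structure is correct, but the interior of the plaque-expansivity argument requires substantially more than the unstable-growth heuristic you offer.
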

\begin{proof}
Let $\mathcal{V}=\{g\in \PH_{c=1}(M)\mid g$ admits a $g$-invariant center foliation $\W^c_g$ such that $(g,\W^c_g)$ is leaf-conjugate to $(f,\W^c_f)\}$ and let $\mathcal{V}(f)$ be the connected component of $\PH_{c=1}(M)$ containing $f$. By Proposition \ref{propopenclosed} the set $\mathcal{V}$ is open and closed in $\PH_{c=1}(M)$. As $\mathcal{V}(f)$ is connected then either $\mathcal{V}(f)\cap \mathcal{V}$ or $\mathcal{V}(f)\cap (\PH_{c=1}(M)\setminus \mathcal{V})$ are empty. Since $f$ belongs to $\mathcal{V}(f)\cap \mathcal{V}$ one obtains that $\mathcal{V}(f)\subset \mathcal{V}$.

%Let $\{f_t\}_{t\in [0,1]}$ be a path in $\PH_{c=1}(M)$ joining $f_0=f$ with $f_1=g$. For every $f_t$ let $\U(f_t)$ be a $C^1$ neighborhood as in Proposition \ref{propopenclosed} such that if $f_t$ is a discretized Anosov flow then every $g\in \U(f_t)$ is a discretized Anosov flow that is leaf-conjugate to $f_t$. By taking a finite cover of $\{f_t\}_{t\in [0,1]}$ by open sets $\U(f_t)$ one can argue easily that $g$ needs to be a discretized Anosov flow such that $(f,\W^c_{f})$ and $(g,\W^c_{g})$ are leaf-conjugate.
\end{proof}

The rest of the section is devoted to prove Proposition \ref{propopenclosed}. We will crucially use Theorem \ref{thmgraphtransf} and Lemma \ref{lemmadeltaplaqueexp=>leafconj} from Section \ref{appendixA}. And we will assume familiarity with the terminology used in Subsection \ref{subsectionplaqueexpappA}.

\subsection{Plaque expansivity for discretized Anosov flows}\label{subsectionplaqueexp}

The notion of $\delta$-plaque expansivity was introduced in Definition \ref{defdeltape}. For every neighborhood $\U_\delta(f_0)$ as in Lemma \ref{rmkUdelta} we will consider a subset of $\U_\delta(f_0)$ satisfying a stronger version of property (P4). This will allow us to show in Proposition \ref{propplaqueexp} that any discretized Anosov on this new neighborhood has to be $\delta$-plaque expansive.

As discussed in Remark \ref{rmkafterUdelta}, property (P4') in the next lemma can be derived from the proof of Lemma \ref{rmkUdelta}. By taking $\epsilon'>0$ small enough it is immediate to check the rest of the lemma.

\begin{lemma}\label{rmkUdeltaDAF}
Suppose $f_0\in\PH_{c=D}(M)$ for some $D>1$. There exist a metric in $M$, constants $0<\lambda<1$, $\kappa>1$ and $\delta_0(f)>0$, for every $0<\delta\leq \delta_0(f)$ a $C^1$ neighborhood $\U_\delta(f_0)$ of $f_0$, and a constant $\epsilon'>0$ so that properties (P1), \ldots, (P5) of Lemma \ref{rmkUdelta} are satisfied for every $f,g\in \U_\delta(f_0)$, and the following reinforcement of property (P4) is satisfied
\begin{itemize}
\item[(P4')]\label{P4DAF} The metric and the cone fields $(\C^s,\C^{cu})$ and $(\C^{cs},\C^u)$ are $\epsilon'$-nearly euclidean at scale $\kappa 20 \delta$
\end{itemize}
ensuring that for every
\begin{itemize}
\item[$\cdot$] map $f\in\U_{\delta}(f_0)$,
\item[$\cdot$] pair of points $x\in M$ and $x'\in\W^s_f(x)$ with $d_s(x,x')=10\delta$,
\item[$\cdot$] pair of $C^1$ curves $\eta$ and $\eta'$ of length less than $20\delta$, tangent to $\C^{cs}\cap \C^{cu}$ with $x\in \eta$ and $x'\in \eta'$,
\end{itemize}
one has that $$d(\eta,\eta')>\lambda(10\delta),$$ where $d(\eta,\eta')$ denotes the infimum distance between points in $\eta$ and $\eta'$.
\end{lemma}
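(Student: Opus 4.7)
The plan is to derive this lemma from Remark \ref{rmkafterUdelta}, which asserts that the construction proving Lemma \ref{rmkUdelta} can be carried out with property (P4) strengthened to an arbitrary scale $C\delta$ and arbitrary small $\epsilon_n$, while keeping (P1), (P2), (P3) and (P5) intact. Applying this with $C = 20\kappa$, it suffices to show that there is some $\epsilon' > 0$, depending only on $\lambda$, such that if $(\C^s,\C^{cu})$ and $(\C^{cs},\C^u)$ are $\epsilon'$-nearly euclidean at scale $20\kappa\delta$, then the quantitative separation estimate in the conclusion holds. Thus the lemma reduces to a purely local claim about narrow, almost-orthogonal cone fields in a nearly euclidean chart.

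First I would note that, since $\C^{cs}$ contains $E^{cs}_f$ and has the same dimension, and since $(\C^{cs},\C^u)$ is $\epsilon'$-nearly euclidean, $\C^{cs}$ lies within a cone of half-angle $O(\epsilon')$ around $E^{cs}_f$; the analogous statement holds for $\C^{cu}$ around $E^{cu}_f$. Consequently $\C^{cs} \cap \C^{cu}$ is a one-dimensional cone field whose elements form an $O(\epsilon')$-narrow cone around $E^c_f = E^{cs}_f \cap E^{cu}_f$. In parallel, $\W^s_f$ is tangent to the narrow cone $\C^s$, which is $O(\epsilon')$-close to $E^s_f$, and $E^s_f$ is $O(\epsilon')$-close to orthogonal to $E^c_f$ at scale $20\kappa\delta$.

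Next I would work in the exponential chart at $x$: by (P4') we may identify $\exp_x : B_{20\kappa\delta}(0) \to B_{20\kappa\delta}(x)$ with an $\epsilon'$-bi-Lipschitz Euclidean ball, choosing an orthogonal splitting $\R^n = \R^{s} \oplus \R^{c} \oplus \R^{u}$ aligned with $E^s_f(x) \oplus E^c_f(x) \oplus E^u_f(x)$. Since $\eta$ is a $C^1$ arc of length less than $20\delta$ through $x$ tangent to a cone $O(\epsilon')$-close to $E^c_f$ and since the bundle $E^c_f$ itself varies by at most $O(\epsilon')$ across $B_{20\kappa\delta}(x)$, the image of $\eta$ in the chart lies in the $(20\delta) \cdot O(\epsilon')$-tubular neighborhood of the affine line through $x$ in the $\R^c$ direction. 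The same argument applies to $\eta'$ around $x'$. Moreover, since $x'$ is reached from $x$ by a stable arc of length $10\delta$ tangent to $\C^s$, its chart coordinates are $O(\epsilon') \cdot 10\delta$ close to $(\bar y, 0, 0)$ with $|\bar y|$ within $O(\epsilon') \cdot 10\delta$ of $10\delta$.

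Finally, projecting onto the $\R^s$ coordinate, any point of $\eta$ has first coordinate within $O(\epsilon') \cdot 20\delta$ of $0$, while any point of $\eta'$ has first coordinate within $O(\epsilon') \cdot 20\delta + O(\epsilon') \cdot 10\delta$ of $|\bar y| \geq 10\delta(1 - O(\epsilon'))$. Therefore the Euclidean distance between any $p \in \eta$ and $q \in \eta'$ is at least $10\delta - K\epsilon' \delta$ for a universal constant $K$, and the ambient Riemannian distance differs from this by a factor $1 + O(\epsilon')$. Choosing $\epsilon' > 0$ small enough that $(1 - K\epsilon')(1 - O(\epsilon'))^{-1} > \lambda$ (possible because $\lambda < 1$) yields $d(\eta,\eta') > \lambda(10\delta)$, as required. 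The only real work is bookkeeping the $O(\epsilon')$ error terms, which is routine because $\epsilon'$ can be prescribed arbitrarily small from the outset.
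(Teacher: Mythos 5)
Your proposal is correct and follows exactly the approach the paper intends: the paper's own ``proof'' is a two-line remark stating that (P4$'$) follows from Remark~\ref{rmkafterUdelta} and that ``by taking $\epsilon'>0$ small enough it is immediate to check the rest,'' and your argument is a careful unpacking of that claimed-immediate Euclidean-chart bookkeeping. One small point worth double-checking when you formalize this: since $\eta'$ can wander up to $20\delta$ from $x'$ and $x'$ lies roughly $10\delta$ from $x$, the single chart $\exp_x$ at scale $20\kappa\delta$ need not contain all of $\eta'$ when $\kappa<3/2$, so either invoke Remark~\ref{rmkafterUdelta} with a slightly larger scale (which it freely permits) or patch two overlapping charts at $x$ and $x'$.
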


The following is the goal of this subsection.

\begin{prop}[Uniform plaque expansivity for discretized Anosov flows]\label{propplaqueexp} Suppose $f_0\in \PH_{c=1}(M)$. Consider a metric in $M$ and a $C^1$ neighborhood $\mathcal{U}_\delta(f_0)\subset\PH_{c=1}(M)$ of $f_0$ as in Lemma \ref{rmkUdeltaDAF}. If $f$ is a discretized Anosov flow in $\U_\delta(f_0)$ and $\W^c$ is the flow center foliation of $f$ then $(f,\W^c)$ is $\delta$-plaque expansive.
\end{prop}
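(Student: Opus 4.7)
The plan is, in two main steps, to (i) force both pseudo-orbits onto a common center leaf, and (ii) convert the ambient bound $d(x_0,y_0)<2\delta$ into the intrinsic bound $d_c(x_0,y_0)<3\delta$.

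For (i), by Proposition~\ref{prop1DAFs} the map $f$ individually fixes every leaf of $\W^c$, and by Proposition~\ref{propdyncoh} the foliations $\W^{cs}$ and $\W^{cu}$ are $\W^c$-saturated, so $f$ also fixes every leaf of $\W^{cs}$ and $\W^{cu}$. Since the pseudo-orbit jumps lie in $\W^c_\delta \subset \W^{cs}\cap\W^{cu}$, a direct induction gives $x_n \in \W^c(x_0) \subset \W^{cs}(x_0)\cap\W^{cu}(x_0)$, and similarly for the $y_n$, for all $n\in\mathbb{Z}$. Suppose for contradiction $y_0 \notin \W^{cs}(x_0)$. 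The local product structure from (P4) assigns to each $n$ a well-defined local unstable separation $r_n := d_u(y_n, \W^{cs}(x_0)) > 0$, and $d(x_n,y_n)<2\delta$ yields a uniform bound $r_n \le C\delta$. By (P2) we get $d_u(f(y_n), \W^{cs}(x_0)) \ge r_n/\lambda$; the subsequent center jump $y_{n+1} \in \W^c_\delta(f(y_n)) \subset \W^{cs}(y_0)$ leaves the $\W^{cs}$-leaf of $y$ unchanged, so it affects $r$ only through the nearly-Euclidean distortion of (P4), which I will absorb into an expansion rate still strictly larger than~$1$. Iterating forces $r_n \to \infty$ and contradicts boundedness, so $y_0 \in \W^{cs}(x_0)$. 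The symmetric argument with backward iterates yields $y_0 \in \W^{cu}(x_0)$, hence $y_0 \in \W^c(x_0)$.

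For (ii), I argue by contradiction: suppose $d_c(x_0,y_0) \ge 3\delta$ and consider the length-parametrized minimal $\W^c$-arc $\gamma:[0,d_c(x_0,y_0)] \to \W^c(x_0)$ from $x_0$ to $y_0$. Its initial subarc $\gamma|_{[0,3\delta]}$ lies in $B_{3\delta}(x_0)$, hence in the nearly-Euclidean chart of radius $20\kappa\delta$ furnished by (P4'); because $E^c$ is $\epsilon'$-nearly constant there, the tangent vectors of $\gamma|_{[0,3\delta]}$ stay within angle $\epsilon'\pi$ of $E^c(x_0)$, and integrating yields $d(x_0,\gamma(3\delta)) \ge 3\delta\cos(\epsilon'\pi)$. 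Choosing $\epsilon'$ in Lemma~\ref{rmkUdeltaDAF} so that $\cos(\epsilon'\pi) > 2/3$, this is $>2\delta$. Moreover, when the whole arc fits in a single chart, namely $d_c(x_0,y_0) \le 20\kappa\delta$, the same computation applied to $\gamma$ itself gives $d(x_0,y_0) \ge d_c(x_0,y_0)\cos(\epsilon'\pi) > 2\delta$, contradicting the hypothesis.

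The main obstacle is the remaining regime $d_c(x_0,y_0) > 20\kappa\delta$, where the minimal $\W^c$-arc necessarily exits and re-enters the nearly-Euclidean chart, so that purely local control is not enough to prevent $y_0$ from returning close to $x_0$ in the ambient metric along a long leaf-excursion. To rule this out I intend to exploit the full pseudo-orbit condition $d(x_n,y_n)<2\delta$ for every $n\in\mathbb{Z}$, not only at $n=0$: writing $y_0 = \varphi_s(x_0)$ in the arc-length parametrization of the center flow, this forces the $\varphi_t$-orbit closure of $x_0$ to exhibit an approximate $\varphi_s$-recurrence both in the future and in the past. Combined with the topological Anosov structure of $\varphi_t$ (Proposition~\ref{propDAFtopAnosov} and Proposition~\ref{propsTopAF}) together with a uniform lower bound on the minimal period of periodic orbits of center flows of discretized Anosov flows in $\U_\delta(f_0)$, this should be incompatible with $|s| \ge 3\delta$ in the regime fixed by Lemma~\ref{rmkUdeltaDAF}. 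Obtaining such a bound uniformly across the neighborhood $\U_\delta(f_0)$, rather than depending on the particular $f$, is the delicate quantitative point of the argument.
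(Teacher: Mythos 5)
Your proof has a genuine and openly acknowledged gap in step~(ii): the regime $d_c(x_0,y_0) > 20\kappa\delta$ is not handled, and the sketch you offer to close it is not a viable route. You suggest exploiting an approximate $\varphi_s$-recurrence of the orbit closure of $x_0$ together with a uniform lower bound on minimal periods of periodic orbits of the center flows across $\U_\delta(f_0)$. But the obstruction is not periodic orbits at all: a non-periodic center leaf $\W^c(x_0)$ can pass arbitrarily close to itself in the ambient metric infinitely often, and no minimal-period bound speaks to that. What must actually be ruled out is a \emph{persistent} approximate self-recurrence of $\W^c(x_0)$ along the entire bi-infinite pseudo-orbit, and that requires using the transverse hyperbolicity \emph{within the center-unstable leaf}, not just ambient proximity. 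This is exactly what the paper's Claim~\ref{claimlastplaqueexp} supplies, via the classification of $\W^{cu}$-leaves into planes and cylinders (Proposition~\ref{propsTopAF}): a plane leaf has global product structure, and a cylinder leaf has a unique compact center circle controlling the alpha-limit sets, so $\W^u_\epsilon([x_0,f(x_0)]_c)\setminus[x_0,f(x_0)]_c$ cannot meet $\W^c(y_0)$ for small $\epsilon$.

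There is a second, less advertised soft spot. In step~(i) you claim that the center jump $y_{n+1}\in\W^c_\delta(f(y_n))$ ``affects $r$ only through the nearly-Euclidean distortion'' and that one can absorb this into an expansion rate $>1$. This would require knowing that the $\delta$-long center jumps cannot overpower a single step of unstable expansion when $r_n$ is much smaller than $\delta$, which is not automatic at scale $\delta$; the paper instead gets quantitative forward drift from Claim~\ref{claimplaqueexp1} ($\tau>10\delta$, proved using (P4') and the rate $\lambda$), which it does not need for the transverse-separation step but which is indispensable for the longitudinal step (showing the pseudo-orbit marches monotonically through fundamental domains $[f^k(x_0),f^{k+1}(x_0))_c$). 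Without something of that kind you cannot rule out that a forward pseudo-orbit oscillates, and your recurrence sketch would not get off the ground. A salvageable version of your argument would need (a) the estimate $\tau>10\delta$, (b) a reduction to pseudo-orbits inside a single $cu$-plaque (the paper's Claim~\ref{claimplaqueexp2}, which also removes the ambiguity in your ``unstable separation to $\W^{cs}(x_0)$'' when that leaf is dense), and (c) the topological input of Proposition~\ref{propsTopAF}. At that point the proof essentially becomes the paper's.
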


\begin{proof}[Proof of Proposition \ref{propplaqueexp}] By Definition \ref{defDAFintro}, Proposition \ref{prop1DAFs} and Remark \ref{rmkDAFreparam} the map $f$ can be written down as $f(x)=\varphi_{\tau(x)}(x)$ for $\tau:M\to \R_{>0}$ continuous and $\varphi_t:M\to M$ a unit speed flow whose flow lines are the flow center foliation $\W^c$ of $f$.

The following is a key claim showing that, even taking into account possible `backwards jumps', every $\delta$-pseudo orbit `advances forward' in the direction of the flow. Roughly speaking this allows us to bring into play the expansivity of the topological Anosov flow $\varphi_t$ to obtain expansivity for pairs of $\delta$-pseudo orbits belonging to different center leaves.

\begin{af}\label{claimplaqueexp1} 
The function $\tau$ is always larger than $10\delta$.
\end{af}
\begin{proof}\let\qed\relax

Suppose by contradiction that $\tau(x)< 10\delta$ for some $x\in M$. 

For every $y\in \W^s_{11\delta}(x)$ let $\gamma_y:[0,1]\to \W^c(y)$ be the constant speed reparametrization of the piece of $\varphi_t$-orbit from $y$ to $f(y)$. Note that by the continuity of $\tau$ the length of $\gamma_y$ varies continuously with $y$.

By property (P4') the image of $\gamma_y$ needs to be a segment from $\W^s_{11\delta}(x)$ to $\W^s_{12\delta}(f(x))$, contained in $B_{20\delta}(x)$ and whose length does not surpass $12\delta$. In particular, the image of $\gamma_y$ is contained in $\W^c_{20\delta}(y)$ for every $y\in \W^s_{11\delta}(x)$.

Let us fix $y_0$ in  $\W^s_{11\delta}(x)$ such that $d_s(x,y_0)=10\delta$. It follows that $\gamma_{y_0}$ is a curve joining $y_0$ to $f(y_0)$, where $d(f(x),f(y_0))\leq d_s(f(x),f(y_0))<\lambda (10\delta)$. 

By Lemma \ref{rmkUdeltaDAF} the sets $\W^c_{20\delta}(x)$ and $\W^c_{20\delta}(y_0)$ must be at distance greater than $\lambda 10\delta$. However, we have just shown that $f(x)\in \W^c_{20\delta}(x)$ and $f(y_0)\in \W^c_{20\delta}(y_0)$ are at distance less than $\lambda10\delta$. This gives us a contradiction and proves Claim \ref{claimplaqueexp1}. See Figure \ref{fig5} for a schematic idea of the argument used.

\begin{figure}[htb]
\def\svgwidth{0.8\textwidth}
\begin{center} 
{\scriptsize
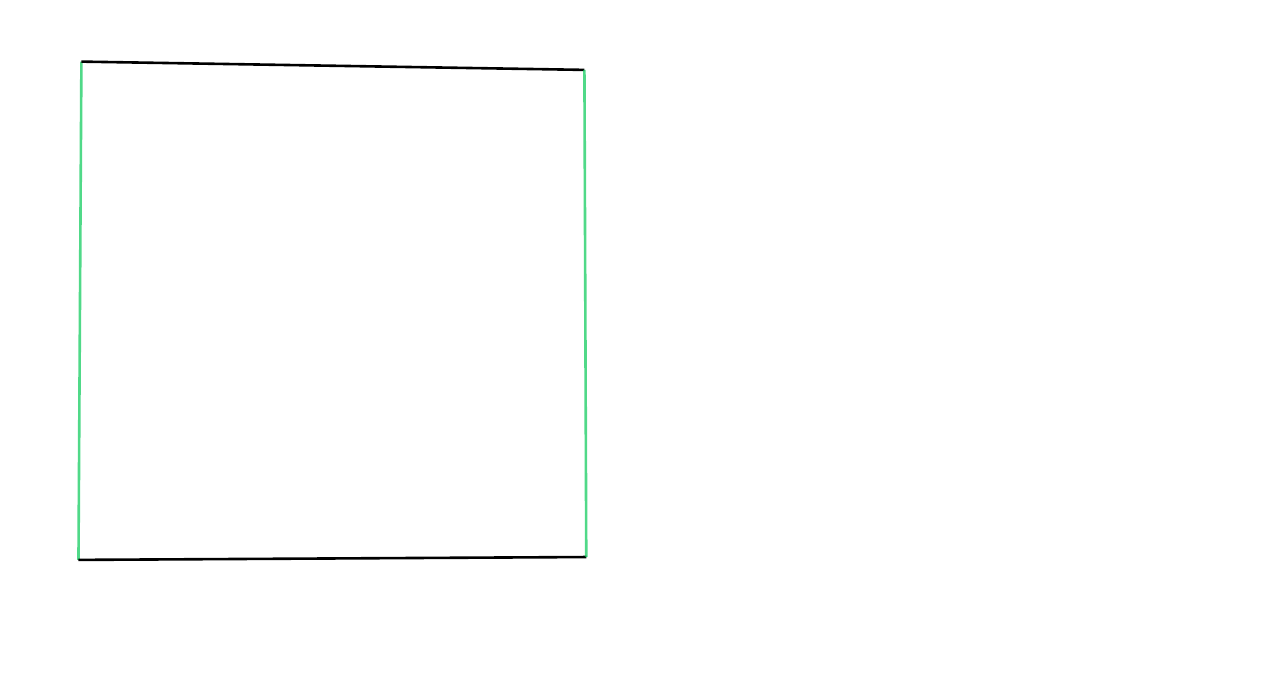
}
\caption{At small scale the bundles $E^\sigma$, $\sigma\in\{s,c,u\}$, are nearly parallel and pairwise disjoint for every $f$ near $f_0$. Hence $\tau$ must greater than $10\delta$ to be able to `see' the contraction of rate $\lambda\in (0,1)$.}\label{fig5} 
\end{center}
\end{figure}

\end{proof}

Recall that by Proposition \ref{propdyncoh} the discretized Anosov flow $f$ is dynamically coherent with center-stable foliation $\W^{cs}$ and center-unstable foliation $\W^{cu}$  such that $\W^c=\W^{cs}\cap \W^{cu}$. As stated in the next claim, dynamical coherence let us obtain $\delta$-plaque expansivity by checking $2\delta$-plaque expansivity inside $\W^{cs}$ and $\W^{cu}$ leaves. 

\begin{af}\label{claimplaqueexp2}
Suppose the following statement is true: For every $(x_n)_{n\geq 0}$ and $(y_n)_{n\geq 0}$ forward $2\delta$-pseudo orbits such that $x_{n+1}\in \W^c_{2\delta}(f(x_n))$, $y_{n+1} \in \W^c_{2\delta}(f(y_n))$ and $y_n\in \W^{cu}_{4\delta}(x_n)$ for every $n\geq 0$, then $y_0\in \W^c_{8\delta}(x_0)$. 

Suppose that the analogous statement for backwards $2\delta$-pseudo orbits inside $\W^{cs}$ leaves is also true.  Then $(f,\W^c)$ is $\delta$-plaque expansive.
\end{af}
\begin{proof}\let\qed\relax
Let $(x_n)_n$ and $(y_n)_n$ be a pair of $\delta$-pseudo orbits satisfying $x_{n+1}\in \W^c_\delta(f(x_n))$, $y_{n+1} \in \W^c_{\delta}(f(y_n))$ and $d(x_n,y_n)<2\delta$ for every $n \in \mathbb{Z}$. Let us see that $y_0\in \W^c_{3\delta}(x_0)$.

All along the proof of the claim we will implicitly use that, by property (P4'), at scale $\kappa 20\delta$ the invariant bundles are nearly pairwise orthogonal. It will be clear on each case that what is stated follows directly from property (P4'). And we will implicitly use that by dynamical coherence $cs$ (resp $cu$) discs are subfoliated by $c$ and $s$ (resp $u$) discs and that $cs$ and $cu$ discs intersect in $c$ discs.

As $d(x_n,y_n)<2\delta$ one can consider $y_n'$ the intersection of $\W^s_{3\delta}(y_n)$ with $\W^{cu}_{3\delta}(x_n)$. 

Moreover, $y_n'\in \W^{cu}_{3\delta}(x_n)$ implies $f(y_n')\in \W^{cu}_{\kappa 3\delta}(f(x_n))$ and $y_n'\in \W^s_{3\delta}(y_n)$ implies $f(y_n')\in \W^{s}_{\kappa 3\delta}(f(y_n))$. It follows that $f(y_n')$ is the intersection point of $\W^s_{\kappa3\delta}(f(y_n))$ with $\W^{cu}_{\kappa3\delta}(f(x_n))$.

The point $x_{n+1}$ lies in $\W^c_\delta(f(x_n))\subset \W^{cu}_{\kappa3\delta}(f(x_n))$ and the point $y_{n+1}'$ is given by the intersection of $\W^s_{3\delta}(y_{n+1})$ with $\W^{cu}_{3\delta}(x_{n+1})\subset \W^{cu}_{\kappa3\delta}(f(x_n))$. We obtain that $f(y_n')$ and $y_{n+1}'$ are both contained in $\W^{cu}_{\kappa3\delta}(f(x_n))$. And both contained in $\W^s_{\kappa3\delta}(\W^c_\delta(f(y_n))$ since $y_{n+1}\in \W^c_\delta(f(y_n))$. It follows that $f(y_n')$ and $y_{n+1}'$, which lie in the intersection of $\W^s_{\kappa3\delta}(\W^c_\delta(f(y_n))\subset \W^{cs}(f(y_0))$ and $\W^{cu}_{\kappa3\delta}(f(x_n))$, are in the same local center manifold. Since $y_{n+1}\in \W^c_\delta(f(y_n))$ it follows that $y_{n+1}'\in\W^c_{2\delta}(f(y_n'))$.

Then $(y_n')_{n\geq 0}$ is a forward $2\delta$-pseudo orbit with jumps in center plaques, as well as $(x_n)_{n\geq 0}$, and they satisfy $y_n'\in \W^{cu}_{3\delta}(x_n)$ for every $n\geq 0$. By the assumption of the statement it follows that $y_0'$ lies $\W^c_{8\delta}(x_0)$. Which in turns imply $y_0\in \W^{cs}_{11\delta}(x_0)$ as $y_0'\in \W^c_{3\delta}(y_0)$.

By defining analogously $(y_n'')_{n \leq 0}$ a backward $2\delta$-pseudo orbit as the intersection of $\W^u_{3\delta}(y_n)$ with $\W^{cs}_{3\delta}(x_n)$ for every $n\leq 0$ we conclude that $y_0$ lies $\W^{cu}_{11\delta}(x_0)$. It follows that $y_0=y_0'=y_0''$, and then that $y_0$ lies in $\W^c_{8\delta}(x_0)$. As we are at scale $\kappa 20 \delta$ then $d(x_0,y_0)<2\delta$ and $y_0\in \W^c_{8\delta}(x_0)$ imply that $y_0$ lies in  $\W^c_{3\delta}(x_0)$. This proves Claim \ref{claimplaqueexp2}.
\end{proof}

Suppose $(x_n)_{n\geq 0}$ and $(y_n)_{n\geq 0}$ are two forward $2\delta$-pseudo orbits such that $x_{n+1}\in \W^c_{2\delta}(f(x_n))$, $y_{n+1} \in \W^c_{2\delta}(f(y_n))$ and $y_n\in \W^{cu}_{4\delta}(x_n)$ for every $n\geq 0$. Let us see that $y_0$ needs to lie in $\W^c_{8\delta}(x_0)$. It will be clear that in a similar fashion one can show the analogous statement for backwards $2\delta$-pseudo orbits if $y_0$ is a point in $\W^{cs}_{4\delta}(x_0)$. By Claim \ref{claimplaqueexp2} this will end the proof of Proposition \ref{propplaqueexp}.

Suppose by contradiction that $y_0$ does not belong to $\W^c_{8\delta}(x_0)$. As $y_n\in \W^{cu}_{4\delta}(x_n)$ we can consider $x'_n\in \W^c_{5\delta}(x_n)$ such that $y_n\in \W^u_{5\delta}(x'_n)$ for every $n\geq 0$. It follows that $y_n\neq x_n'$ for every $n\geq 0$.

Note that $y_0\in \W^u_{5\delta}(x_0')$ implies that $d_u(f^{-n}(y_0),f^{-n}(x_0'))$ tends to $0$ with $n$. Since $\{f^{-n}(y_0)\}_{n\geq 0}$ are points in $\W^c(y_0)$ and $\{f^{-n}(x_0')\}_{n\geq 0}$ are points in $\W^c(x_0)$ it follows that $\W^c(x_0)$ and $\W^c(y_0)$ can not be both compact leaves. As the conditions for $(x_n)_{n\geq 0}$ and $(y_n)_{n\geq 0}$ are symmetric let us assume without loss of generality that $\W^c(x_0)$ is not compact.

For every pair of different points $z,z'\in \W^c(x_0)$ let $[z,z']_c$ and $[z,z')_c$ denote the closed and half-open center segments from $z$ to $z'$ inside $\W^c(x_0)$, respectively. Let us say that a sequence $(z_n)_{n\geq 0}$ in $\W^c(x_0)$ tends to $+\infty_c$ (resp. $-\infty_c$) if $z_n=\varphi_{t_n}(x_0)$ for $t_n \xrightarrow{n} +\infty$ (resp. $-\infty$). 

Since $\tau$ is continuous and positive it is bounded away from zero. It follows that the sequence $(f^k(x_0))_{k\geq 0}$ (resp. $(f^{-k}(x_0))_{k\geq 0}$) tends to $+\infty_c$ (resp. $-\infty_c$). In particular, $\W^c(x_0)$ can be written down as the disjoint union $\bigcup_{k\in \mathbb{Z}}[f^k(x_0),f^{k+1}(x_0))_c$, where each segment $[f^k(x_0),f^{k+1}(x_0))_c$ is a \emph{fundamental domain} for the dynamics of $f$ restricted to $\W^c(x_0)$.

Since $x_{n+1}$ lies in $\W^c_{2\delta}(f(x_n))$ for every $n\geq 0$ and by Claim \ref{claimplaqueexp1} the function $\tau$ is greater than $10\delta$ it follows that $(x_n)_{n\geq 0}$ tends to $+\infty_c$ as well. And, since $x_n'$ lies in $\W^c_{5\delta}(x_n)$ for every $n\geq 0$, the same happens to the sequence $(x_n')_{n\geq 0}$. Let $(K_n)_{n\geq 0}$ be the sequence such that $x_n'$ lies in $\big[f^{K_n}(x_0),f^{K_n+1}(x_0)\big)_c$ for every $n \geq 0$. One concludes that $K_n \xrightarrow{n} +\infty$.

For every $n\geq 0$ the point  $f^{-K_n}(x_n')$ is a point in the closed segment $[x_0,f(x_0)]_c$. Moreover, since $y_n$ lies in $\W^u_{5\delta}(x'_n)$ it follows that $f^{-K_n}(y_n)$ lies in $\W^u_{\lambda^{K_n}5\delta}(f^{-K_n}(x_n'))$. And as $y_n\neq x_n'$, then $f^{-K_n}(y_n)$ is not contained in $[x_0,f(x_0)]_c$. It follows that  \begin{equation}
\label{eqPropplaqexp}f^{-K_n}(y_n)\in\W^u_{\lambda^{K_n}5\delta}([x_0,f(x_0)]_c)\setminus [x_0,f(x_0)]_c\end{equation} for every $n$.

As $\lambda$ is a constant in $(0,1)$ the sequence $\lambda^{-K_n}5\delta$ tends to $0$ with $n$. And since $\W^c(y_0)$ contains $y_n$ for every $n$ and is an invariant leaf by $f$ it follows that $f^{-K_n}(y_n)$ is a sequence contained in $\W^c(y_0)$. Hence if the following claim is true one gets a contradiction with (\ref{eqPropplaqexp}), ending the proof of Proposition \ref{propplaqueexp}. 

\begin{af}\label{claimlastplaqueexp}
There exists $\epsilon>0$ such that $\W^u_{\epsilon}([x_0,f(x_0)]_c)\setminus [x_0,f(x_0)]_c$ is disjoint from $\W^c(y_0)$. 
\end{af}
\begin{proof}\let\qed\relax
Note that a priori one can not rule out that $\W^c(x_0)$ and $\W^c(y_0)$ may be the same leaf. That is why we will show that $\W^c(y_0)$ is disjoint from $\W^u_{\epsilon}([x_0,f(x_0)]_c)\setminus [x_0,f(x_0)]_c$ and not simply disjoint from $\W^u_{\epsilon}([x_0,f(x_0)]_c)$.

Recall from Proposition \ref{propsTopAF} the topological description of the center-unstable leaves of a discretized Anosov flow in terms of planes leaves, cylinder leaves, etc.

If $\W^{cu}(x_0)$ is a plane leaf the claim follows straightforwardly from Proposition \ref{propsTopAF} since in that case the foliations $\W^u$ and $\W^c$ need to have a global product structure inside $\W^{cu}(x_0)$. 

If $\W^{cu}(x_0)$ is a cylinder leaf then Proposition \ref{propsTopAF} shows that the alpha-limit of $y_0$ by the center flow $\varphi_t$ coincides with the unique compact leaf $L$ of $\W^c$ contained in $\W^{cu}(x_0)$. Moreover, as $\W^c(x_0)$ is not compact, then $L\neq \W^c(x_0)$.

In case $\W^c(y_0)$ coincides with $L$ it is enough to consider $\epsilon>0$ smaller than the Hausdorff distance between the compact and disjoints sets $\W^c(y_0)$ and $[x_0,f(x_0)]_c$. 

In the case that $\W^c(y_0)$ does not coincide with $L$, the leaf $\W^c(y_0)$ is not compact and it is immediate to check that the omega-limit of $y_0$ in the intrinsic metric of $\W^{cu}(x_0)$ needs to be empty. This follows from the fact that for every $R>0$ the point $f^n(y_0)$ can not be contained in $\W^u_R(L)$ for arbitrarily large $n>0$. Indeed, if $f^n(y_0)$ lies in $\W^u_R(L)$ for arbitrarily large $n>0$ then $y_0=f^{-n}\circ f^n(y_0)$, which is not contained in $L$, would be at arbitrarily small distance from the compact leaf $L$ getting to a contradiction.

It follows that for some $T>0$ the set $\W^c(y_0)\setminus \W^c_T(x_0)$ is at positive distance from the compact set $[x_0,f(x_0)]_c$ in the intrinsic metric of $\W^{cu}(x_0)$. Say $d>0$. 

If $\W^c_T(y_0)$ is disjoint from $[x_0,f(x_0)]_c$ it is enough to consider $d
>\epsilon>0$ so that $\epsilon$ is smaller than the Hausdorff distance between $\W^c_T(y_0)$ and $[x_0,f(x_0)]_c$.

If $\W^c_T(y_0)$ is not disjoint from $[x_0,f(x_0)]_c$ then for some $x_1,x_2\in \W^c(x_0)$ satisfying $[x_0,f(x_0)]_c\subset (x_1,x_2)_c$ the center segment $[x_1,x_2]_c$ needs to be contained in $\W^c_T(y_0)$ since the endpoints of $\W^c_T(y_0)$ are far from $[x_0,f(x_0)]_c$. It is enough to consider in this case $d>\epsilon>0$ so that $\epsilon $ is smaller than the Hausdorff distance between $\W^c_T(x_0)\setminus (x_1,x_2)_c$ and $[x_0,f(x_0)]_c$.

This proves Claim \ref{claimlastplaqueexp} and ends the proof of Proposition \ref{propplaqueexp}.
\end{proof}
\vspace{-0.2cm}
\end{proof}

\begin{remark}
As was pointed out by a referee, it is worth noting at this point and bringing into relevance the link with \cite[Proposition C.2.1.]{Ber} where, by a priori different methods and in a somehow more general context, has already identified and shown plaque-expansivity in the context of normally expanded (or contracted) quasi-isometrically actions.
\end{remark}

\subsection{Proof of Proposition \ref{propopenclosed}} 

Suppose $f_0\in \PH_{c=1}(M)$. Consider a metric in $M$, a constant $\delta(f_0)>0$ and, for some $0<\delta\leq\delta(f_0)$, a $C^1$ neighborhood $\U_\delta(f_0)\subset\PH_{c=1}(M)$ of $f_0$ as in Lemma \ref{rmkUdeltaDAF}.

Suppose $f$ and $g$ are maps in $\U_\delta(f_0)$ such that $f$ is a discretized Anosov flow. Let us see that $g$ needs to be also a discretized Anosov flow.

Suppose $f$ is of the form $f(x)=\varphi_{\tau(x)}(x)$ and let $\W^c$ denote the flow center foliation whose leaves are the flow lines of $\varphi_t$. 

Let $h:M\to M$ and $\rho:M\to M$ be as in Theorem \ref{thmgraphtransf}. By Proposition \ref{propplaqueexp} the system $(f,\W^c)$ is $\delta$-plaque expansive (in particular it is plaque expansive, see Remark \ref{rmkdeltaplaqueexpisplaqueexp}). By Remark \ref{rmkleafconj} and Lemma \ref{lemmadeltaplaqueexp=>leafconj} it follows that $h$ is a homeomorphism and that $h(\W^c)=\W^c_g$ is a $g$-center foliation such that $(f,\W^c)$ and $(g,\W^c_g)$ are leaf conjugate. In particular, $g(W')=W'$ for every leaf $W'\in \W^c_g$.

By Proposition \ref{propcenterfixingL} there exists $L>0$ such that $f(x)\in \W^c_L(x)$ for every $x\in M$. By Theorem \ref{thmgraphtransf} the maps $h$ and $\rho$ satisfy $\frac{1}{2}<\|Dh|_{E^c_f}\|<2$ and $h\circ \rho \circ f =g \circ h$. Moreover, $\rho(W)=W$ and $\rho$ is a $\delta$-close to the identity map inside $W$ for every leaf $W\in\W^c$. Then $g(x)=\W^c_{g,2(L+\delta)}(x)$ for every $x\in M$.

By denoting $L'= 2(L+\delta)$ we obtain that $g$ individually fixes each leaf of the center foliation $\W^c_{g}$ satisfying $$g(x)\in \W^{c}_{g,L'}(x)$$ for every $x\in M$. By Proposition \ref{propcenterfixingL} we conclude that $g$ is a discretized Anosov flow. Moreover, it is immediate to check from the proof of Proposition \ref{propcenterfixingL} that $\W^c_{g}$ needs to be the flow center foliation of $g$. This ends the proof of Proposition \ref{propopenclosed}, and therefore also that of Theorem \ref{thmA}.

\begin{remark}
By this point, it is worth spending a few words to highlight how Proposition \ref{propopenclosed} compares with \cite[Propositions 4.5 and 4.6]{BFP}. While both are based on the same framework, there are some added difficulties when it comes to showing Proposition \ref{propopenclosed}. One of them is that \cite{BFP} relies many times on the ambient dimension 3, while Proposition \ref{propopenclosed} needs to deal with arbitrary dimension. Another, is that at some point \cite{BFP} may not have to care for the continuation of certain leaves to be disjoint, as the leaves in collapsed Anosov flows are allowed to merge. To show Proposition \ref{propopenclosed} one needs to justify by means of certain `uniform' estimates as in Proposition \ref{propplaqueexp} that the map $h$ from Theorem \ref{thmgraphtransf} is a homeomorphism.
\end{remark}

\section{Global stability for uniformly compact center foliation: Proof of Theorem \ref{thmAprima}}\label{sectionAprima}

As in the case of discretized Anosov flows (see the discussion at the beginning of Section \ref{sectionproofthmA}), Theorem \ref{thmAprima} follows from the following proposition.

\begin{prop}\label{propopenclosedunifcompact}
Suppose $f_0\in \PH_{c=1}(M)$. There exists a $C^1$-neighborhood $\U\subset \PH_{c=1}(M)$ of $f_0$ such that, if $f\in \mathcal{U}$ admits a uniformly compact center foliation, then every $g\in \mathcal{U}$ also admits a uniformly compact center foliation.

Moreover, if $\W^c_f$ and $\W^c_{g}$ denote the uniformly compact center foliations of $f$ and $g$, respectively, then $(f,\W^c_f)$ and $(g,\W^c_{g})$ are leaf-conjugate.
\end{prop}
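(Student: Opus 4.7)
The plan is to mirror the proof of Proposition \ref{propopenclosed}. Fix $f_0 \in \PH_{c=1}(M)$, a metric on $M$, a constant $\delta(f_0) > 0$, and, for $0 < \delta \leq \delta(f_0)$, a $C^1$-neighborhood $\U_\delta(f_0) \subset \PH_{c=1}(M)$ as provided by Lemma \ref{rmkUdeltaDAF}. Suppose $f \in \U_\delta(f_0)$ admits a uniformly compact $f$-invariant center foliation $\W^c_f$. By Remark \ref{rmkunifcompactQI}, $f$ acts quasi-isometrically on $\W^c_f$, so by Theorem \ref{thmB'} item \eqref{thm2'i} the map $f$ is dynamically coherent with $\W^c_f = \W^{cs} \cap \W^{cu}$. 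For any $g \in \U_\delta(f_0)$, Theorem \ref{thmgraphtransf} produces continuous maps $h, \rho: M \to M$, both $\delta$-close to the identity, satisfying $h \circ \rho \circ f = g \circ h$ and such that $h(L)$ is a complete $C^1$ immersed submanifold tangent to $E^c_g$ for each $L \in \W^c_f$. The target is to show that $h$ is a homeomorphism and that $\W^c_g := h(\W^c_f)$ is a uniformly compact $g$-invariant center foliation leaf-conjugate to $\W^c_f$.

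The central step is to establish that $(f, \W^c_f)$ is $\delta$-plaque expansive, since Lemma \ref{lemmadeltaplaqueexp=>leafconj} then upgrades $h$ to a homeomorphism and yields leaf-conjugacy. Adapting the strategy of Proposition \ref{propplaqueexp}, one first notes that the reduction of Claim \ref{claimplaqueexp2} uses only dynamical coherence and so carries over verbatim: it suffices to show that if $(x_n)_{n \geq 0}$, $(y_n)_{n \geq 0}$ are forward $2\delta$-pseudo orbits with jumps in center plaques such that $y_n \in \W^{cu}_{4\delta}(x_n)$ for every $n \geq 0$, then $y_0 \in \W^c_{8\delta}(x_0)$ (and the backward analogue inside $\W^{cs}$). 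Write $y_n \in \W^u_{5\delta}(x_n')$ with $x_n' \in \W^c_{5\delta}(x_n)$, and assume towards a contradiction that $y_0 \neq x_0'$. Then $d_u(f^{-n}(y_0), f^{-n}(x_0')) \to 0$; quasi-isometric action of $f$ on $\W^c_f$ keeps $f^{-n}(x_0')$ inside the compact leaf $\W^c(x_0)$, so a subsequence converges to some $z \in \W^c(x_0)$, while the corresponding $f^{-n}(y_0) \in \W^c(y_0)$ accumulate on $\W^u_{\mathrm{loc}}(z) \cap \W^c(y_0)$. Since $\W^c(y_0)$ is also compact and since $\W^{cu}(x_0)$ is locally subfoliated by compact center leaves transverse to $\W^u$, the local product structure forces $y_0 = x_0'$, the required contradiction.

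Once $\delta$-plaque expansivity is in place, Lemma \ref{lemmadeltaplaqueexp=>leafconj} gives that $h$ is a homeomorphism and $(f, \W^c_f)$, $(g, \W^c_g)$ are leaf-conjugate. It remains to verify that $\W^c_g$ is uniformly compact. Each leaf $h(L)$ is the continuous image of the compact manifold $L$, hence compact. Moreover, by item (\ref{(1)thmplaqueexp}) of Theorem \ref{thmgraphtransf}, the restriction $h|_L : L \to h(L)$ is a $C^1$ diffeomorphism with $\|D(h|_L)|_{E^c_f}\| < 2$; since $\dim E^c = 1$ this yields $\vol(h(L)) \leq 2\,\vol(L)$. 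As the volumes of the leaves of $\W^c_f$ are uniformly bounded, so are those of $\W^c_g$, completing the proof.

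The main obstacle will be the $\delta$-plaque expansivity step. In the discretized Anosov case the argument rested on two features of the setting that are absent here: the flow-like structure along each (typically non-compact) center leaf was used to force $\tau > 10\delta$ (Claim \ref{claimplaqueexp1}), and the topological description of $\W^{cu}$-leaves as planes or cylinders with a distinguished asymptotic compact leaf (Proposition \ref{propsTopAF}) was essential for Claim \ref{claimlastplaqueexp}. In the uniformly compact setting one must replace both by a local-product-structure argument inside $\W^{cu}$ using only compactness of the center leaves and the quasi-isometric action, and this is where the technical care concentrates.
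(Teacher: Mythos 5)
Your overall strategy matches the paper's exactly: dynamical coherence, Theorem \ref{thmgraphtransf} plus Lemma \ref{lemmadeltaplaqueexp=>leafconj}, reduce to $\delta$-plaque expansivity via Claim \ref{claimplaqueexp2}, and then a backwards-iteration argument that trades on compactness. Your final observation on uniform boundedness of $\vol(h(L))$ via $\|D(h|_L)|_{E^c_f}\|<2$ is correct and is the same remark the paper makes after Proposition \ref{propglobalsatbilitynonunifcompact}.

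However, the core step has a genuine error. You iterate $f^{-n}(x_0')$ and $f^{-n}(y_0)$ and assert that ``quasi-isometric action of $f$ on $\W^c_f$ keeps $f^{-n}(x_0')$ inside the compact leaf $\W^c(x_0)$.'' This is false in general: $f^{-1}$ permutes center leaves, so $f^{-n}(x_0')$ lies in $f^{-n}(\W^c(x_0))=\W^c(f^{-n}(x_0))$, a sequence of (typically pairwise distinct) compact leaves; quasi-isometry only bounds displacement inside a leaf, it does not confine orbits to a single leaf. As a consequence, an accumulation point $z$ of $f^{-n}(x_0')$ has no reason to lie in $\W^c(x_0)$, and the sequence $f^{-n}(y_0)$ does not accumulate on a \emph{fixed} compact leaf $\W^c(y_0)$, so the ``infinitely many disjoint plaques of one leaf'' contradiction is unavailable. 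The paper avoids this by iterating the \emph{pseudo-orbit points}: setting $w_n:=f^{-n}(x_n')$ and $z_n:=f^{-n}(y_n)$, the forward pseudo-orbit relation $x_{n+1}\in\W^c_{2\delta}(f(x_n))$ forces $\W^c(x_n)=f^n(\W^c(x_0))$, so $w_n\in\W^c(x_0)$ and $z_n\in\W^c(y_0)$ for all $n$, while $y_n\in\W^u_{5\delta}(x_n')$ still gives $d_u(w_n,z_n)\to 0$. Then, after passing to a subsequence $w_{n_k}\to w_\infty$, the points $z_{n_k}\to w_\infty$ lie on the single compact leaf $\W^c(y_0)$ and must occupy infinitely many distinct center plaques of a fixed foliation box around $w_\infty$ (since $z_{n_k}\neq w_{n_k}$ and they are $u$-close), which is impossible. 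Your closing appeal to ``the local product structure forces $y_0=x_0'$'' is also vaguer than this plaque-counting contradiction; it does not quite explain where the contradiction comes from. The repair is straightforward but necessary: replace $f^{-n}(x_0'),f^{-n}(y_0)$ with $f^{-n}(x_n'),f^{-n}(y_n)$ and conclude via compactness of $\W^c(y_0)$ as above.
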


\begin{proof}[Proof of Proposition \ref{propopenclosedunifcompact}]
Suppose $f_0\in \PH_{c=1}(M)$. Consider a metric in $M$, a constant $\delta(f_0)>0$ and a $C^1$ neighborhood $\U_{\delta_0}(f_0)\subset \PH_{c=1}(M)$ of $f_0$ as in Lemma \ref{rmkUdeltaDAF}.

Suppose there exists $f$ in $\U_{\delta_0}(f_0)$ admitting a uniformly compact center foliation $\W^c_f$. Let us see that every $g\in \U_{\delta_0}(f_0)$ admits a uniformly compact center foliation $\W^c_{g}$ such that $(f,\W^c_f)$ and $(g,\W^c_{g})$ are leaf-conjugate. By Lemma \ref{lemmadeltaplaqueexp=>leafconj} it is enough to show that $(f,\W^c)$ is $\delta$ plaque-expansive as in Definition \ref{defdeltape}. 

By Proposition \ref{propdyncoh} (see also \cite[Theorem 1]{BoBo}) the map $f$ is dynamically coherent admitting $f$-invariant foliations $\W^{cs}$ and $\W^{cu}$ such that $\W^c=\W^{cs}\cap \W^{cu}$. 

Note that, as it was shown in Claim \ref{claimplaqueexp2} during the proof of Proposition \ref{propplaqueexp}, in order to show that $(f,\W^c)$ is $\delta$-plaque expansive it is enough to show that the following property is satisfied (together with its analogous version for backwards orbits and $cs$-leaves): if $(x_n)_{n\geq 0}$ and $(y_n)_{n\geq 0}$ are two forward $2\delta$-pseudo orbits such that $x_{n+1}\in \W^c_{2\delta}(f(x_n))$, $y_{n+1} \in \W^c_{2\delta}(f(y_n))$ and $y_n\in \W^{cu}_{4\delta}(x_n)$ for every $n\geq 0$, then $y_0\in \W^c_{8\delta}(x_0)$.

Suppose by contradiction that in the context above the point $y_0$ does not belong to  $\W^c_{8\delta}(x_0)$. Again, as in the proof of Proposition \ref{propplaqueexp} the fact that $y_n$ lies in $\W^{cu}_{4\delta}(x_n)$ allows us to consider $x'_n\in \W^c_{5\delta}(x_n)$ such that $y_n\in \W^u_{5\delta}(x'_n)$ for every $n\geq 0$. As $y_0\notin \W^c_{8\delta}(x_0)$ it follows that $y_n\neq x_n'$ for every $n\geq 0$.

By defining $w_n=f^{-n}(x_n')$ and $z_n=f^{-n}(y_n)$ we obtain that $w_n$ and $z_n$ are points contained in $\W^c(x_0)$ and $\W^c(y_0)$, respectively, satisfying that $\lim_{n\to +\infty} d(w_n,z_n)= 0$. By considering $w_{\infty}$ an accumulation point of $(w_n)_{n\geq 0}$ and $U(w_{\infty})$ a small $\W^c$-foliation box neighborhood of $w_{\infty}$ we obtain that there exists a subsequence $(z_{n_k})_{k\geq 0}$  
tending to $w_{\infty}$ such that each $z_{n_k}$ corresponds to a different center plaque in $ U(w_{\infty})$. As $(z_{n_k})\subset \W^c(y_0)$ and $\W^c(y_0)$ is compact we get to a contradiction since $\W^c(y_0)$ cannot contain infinitely many disjoint plaques of $U(w_{\infty})$.

\end{proof}

The following corollary is derived from Proposition \ref{propopenclosedunifcompact} by means of the same arguments as Corollary \ref{corleafconjDAFs} was derived from Proposition \ref{propopenclosed}. We can therefore omit its proof.

\begin{cor}\label{corleafconjunifcompact}
Suppose $f$ and $g$ belong to the same connected component of $\PH_{c=1}(M)$. If $f$ admits a uniformly compact center foliation $\W^c_f$ then $g$ admits a uniformly compact center foliation $\W^c_{g}$ such that $(f,\W^c_f)$ and $(g,\W^c_{g})$ are leaf-conjugate.
\end{cor}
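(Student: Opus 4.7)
The plan is to use a clopen argument in the $C^1$ topology of $\PH_{c=1}(M)$, mimicking the proof of Corollary \ref{corleafconjDAFs}. Define
\[
\mathcal{V} := \{\, g \in \PH_{c=1}(M) \mid g \text{ admits a uniformly compact center foliation } \W^c_g \text{ leaf-conjugate to } (f,\W^c_f) \,\}.
\]
My goal is to show that $\mathcal{V}$ contains the entire connected component of $\PH_{c=1}(M)$ that contains $f$.

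To see that $\mathcal{V}$ is open and closed in $\PH_{c=1}(M)$, I would apply Proposition \ref{propopenclosedunifcompact} in the following way. Given any $g_0 \in \mathcal{V}$, let $\W^c_{g_0}$ be the uniformly compact center foliation leaf-conjugate to $(f,\W^c_f)$. Applying Proposition \ref{propopenclosedunifcompact} to $g_0$ yields a $C^1$-neighborhood $\mathcal{U}$ of $g_0$ such that every $h \in \mathcal{U}$ admits a uniformly compact center foliation $\W^c_h$ with $(g_0, \W^c_{g_0})$ leaf-conjugate to $(h, \W^c_h)$. Since leaf-conjugacy is an equivalence relation, every $h \in \mathcal{U}$ is leaf-conjugate to $(f,\W^c_f)$, hence $\mathcal{U} \subset \mathcal{V}$; this proves openness. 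For closedness, let $g_0$ be a $C^1$-limit of elements $g_n \in \mathcal{V}$. Applying Proposition \ref{propopenclosedunifcompact} to $g_0$ gives a $C^1$-neighborhood $\mathcal{U}$ of $g_0$ such that, if any element of $\mathcal{U}$ admits a uniformly compact center foliation, then every element of $\mathcal{U}$ does, and all are pairwise leaf-conjugate. For $n$ large, $g_n \in \mathcal{U}$, and $g_n$ admits such a foliation by hypothesis; therefore $g_0$ admits a uniformly compact center foliation leaf-conjugate to that of $g_n$, which in turn is leaf-conjugate to $(f,\W^c_f)$. Hence $g_0 \in \mathcal{V}$.

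Let $\mathcal{V}(f)$ denote the connected component of $\PH_{c=1}(M)$ containing $f$. Since $\mathcal{V}$ is a clopen subset of $\PH_{c=1}(M)$, its intersection with $\mathcal{V}(f)$ is clopen in $\mathcal{V}(f)$. Because $\mathcal{V}(f)$ is connected and $f \in \mathcal{V}(f) \cap \mathcal{V}$, we conclude $\mathcal{V}(f) \subset \mathcal{V}$. In particular, any $g$ in the same connected component as $f$ belongs to $\mathcal{V}$, which gives the statement.

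There is essentially no obstacle here: the whole content is packaged inside Proposition \ref{propopenclosedunifcompact}, and the corollary is a purely formal topological consequence of open-and-closedness together with the transitivity of leaf-conjugacy. The only thing to be mildly careful about is that leaf-conjugacy genuinely is an equivalence relation (symmetry and transitivity), which is standard since a leaf-conjugacy is defined via a homeomorphism of $M$ sending leaves to leaves and intertwining the leaf-dynamics.
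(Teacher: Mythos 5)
Your argument is correct and is precisely the clopen argument the paper has in mind: the paper explicitly states that Corollary \ref{corleafconjunifcompact} is derived from Proposition \ref{propopenclosedunifcompact} by the same reasoning used to derive Corollary \ref{corleafconjDAFs} from Proposition \ref{propopenclosed}, and your proof spells out exactly that reasoning. You are a bit more explicit than the paper's prototype (separating the open and closed halves and noting transitivity of leaf-conjugacy), but the underlying argument is identical.
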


Whether there exists $f$ in $\PH(M)$ admitting a compact center foliation with non uniformly bounded volume of leaves is still unknown. Partial non-existence results have been given in \cite{Car}, \cite{Gog} and \cite{DMM}, not exclusively for the one-dimensional center scenario.

Assuming one-dimensional center it is worth noting that the second part of the proof of Proposition \ref{propopenclosedunifcompact} only uses that $\W^c$ is compact and $f$ is dynamically coherent. Moreover, by Theorem \ref{thmgraphtransf} (\ref{(1)thmplaqueexp}), whenever $h$ is a homeomorphism the volume of a compact center leaf $L$ and its continuation $h(L)$ differ at most by a constant factor depending only on the $C^1$ neighborhood $\U_{\delta(f_0)}(f_0)$. %$2^{\dim(E^c)}$. 
Thus the following statement follows from the proof of Proposition \ref{propopenclosedunifcompact}.

\begin{prop}\label{propglobalsatbilitynonunifcompact}
Suppose $f\in \PH_{c=1}(M)$ is a dynamically coherent system admitting $f$-invariant foliations $\W^{cs}$ and $\W^{cu}$ such that $\W^c=\W^{cs}\cap\W^{cu}$ is a non-uniformly compact center foliation. 

There exists a $C^1$ neighborhood $\U\subset \PH_{c=1}(M)$ of $f$ satisfying that every $g\in \U$ admits a non-uniformly compact center foliation $\W^c_{g}$ such that $(f,\W^c)$ and $(g,\W^c_{g})$ are leaf-conjugate.
\end{prop}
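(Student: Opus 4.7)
The plan is to mimic the proof of Proposition \ref{propopenclosedunifcompact} almost verbatim, noting that uniformity of the volumes of center leaves is never actually used in the $\delta$-plaque expansivity argument — only individual compactness of each center leaf is. First I would fix a metric on $M$, a constant $\delta(f)>0$, and for some $0<\delta\leq\delta(f)$ a $C^1$ neighborhood $\U_\delta(f)\subset\PH_{c=1}(M)$ as in Lemma \ref{rmkUdeltaDAF}. The goal is then to show that $(f,\W^c)$ is $\delta$-plaque expansive, so that for every $g\in\U_\delta(f)$ the maps $h$ and $\rho$ furnished by Theorem \ref{thmgraphtransf} provide a leaf-conjugacy between $(f,\W^c)$ and $(g,h(\W^c))$ via Lemma \ref{lemmadeltaplaqueexp=>leafconj}.

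For the $\delta$-plaque expansivity, I would invoke the reduction in Claim \ref{claimplaqueexp2} from the proof of Proposition \ref{propplaqueexp}, which only uses dynamical coherence and the nearly-euclidean scale property (P4'): it suffices to rule out the existence of forward $2\delta$-pseudo orbits $(x_n)_{n\geq 0}$ and $(y_n)_{n\geq 0}$ with jumps in $\W^c_{2\delta}$-plaques such that $y_n\in\W^{cu}_{4\delta}(x_n)$ for all $n\geq 0$ and $y_0\notin\W^c_{8\delta}(x_0)$, together with the symmetric backward statement in $\W^{cs}$-leaves. Assuming such sequences exist, I can pick $x'_n\in\W^c_{5\delta}(x_n)$ with $y_n\in\W^u_{5\delta}(x'_n)$ and $y_n\neq x'_n$. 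Setting $w_n=f^{-n}(x'_n)\in\W^c(x_0)$ and $z_n=f^{-n}(y_n)\in\W^c(y_0)$, the uniform contraction of $f^{-1}$ along $\W^u$ forces $d(w_n,z_n)\to 0$. Taking an accumulation point $w_\infty$ of $(w_n)$ and a small $\W^c$-foliation box $U(w_\infty)$, I obtain a subsequence $(z_{n_k})$ converging to $w_\infty$ with each $z_{n_k}$ lying in a distinct center plaque of $U(w_\infty)$ (the distinctness being guaranteed because $y_n\neq x'_n$ and by transverse hyperbolicity of $\W^u$). Since $\W^c(y_0)$ is compact, it cannot intersect the foliation box $U(w_\infty)$ in infinitely many distinct plaques, a contradiction.

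Finally, I would verify that $\W^c_g := h(\W^c)$ is itself a non-uniformly compact center foliation for $g$. By Theorem \ref{thmgraphtransf} (\ref{(1)thmplaqueexp}), for every leaf $L\in\W^c$ the image $h(L)$ is a $C^1$ immersed submanifold tangent to $E^c_g$, and $h|_L:L\to h(L)$ is a $C^1$ diffeomorphism with $\tfrac{1}{2}<\|Dh|_{E^c_f}\|<2$. Hence each $h(L)$ is compact, so $\W^c_g$ is a compact center foliation. Moreover, the volume of $h(L)$ differs from that of $L$ by at most a factor of $2$ (in either direction), so the leaf volumes of $\W^c_g$ fail to be uniformly bounded whenever the leaf volumes of $\W^c$ do. This shows $\W^c_g$ is non-uniformly compact and completes the proof.

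The only essential point where this differs from Proposition \ref{propopenclosedunifcompact} is that we cannot a priori bound the volumes of the leaves of $\W^c_g$ by a uniform constant — but this is precisely what we wanted: the non-uniformity passes to the continuation because $h$ is bi-Lipschitz along the center with bounded constants. I expect no real obstacle beyond the careful bookkeeping in the compactness argument above, which is already present in the cited proof.
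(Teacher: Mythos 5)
Your proposal is correct and follows exactly the route the paper takes. The paper in fact presents Proposition~\ref{propglobalsatbilitynonunifcompact} with no separate written proof, stating only that (i) the $\delta$-plaque-expansivity argument in Proposition~\ref{propopenclosedunifcompact} uses nothing beyond compactness of individual center leaves plus dynamical coherence, and (ii) Theorem~\ref{thmgraphtransf}(\ref{(1)thmplaqueexp}) bounds the distortion of leaf volume by a uniform constant, so non-uniform compactness passes to the continuation; your write-up simply expands both observations, and the expansion is accurate.
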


Proposition \ref{propglobalsatbilitynonunifcompact} could potentially be useful for bringing into play perturbative techniques to the existence problem of non-uniformly compact center foliations with one-dimensional center.

\section{Unique integrability of the center bundle}

Suppose $f\in\PH_{c=1}(M)$. Since $\dim(E^c)=1$ it follows from Peano's existence theorem that through every point of $M$ there exists at least one local $C^1$ curve tangent to $E^c$. We say that $E^c$ is \emph{uniquely integrable} if through every point of $M$ there exists a unique $C^1$ local curve tangent to $E^c$ modulo reparametrizations. That is, if for every $\eta:(-\delta,\delta)\to M$ and $\gamma:(-\epsilon,\epsilon)\to M$ a pair of $C^1$ curves tangent to $E^c$ with $\eta(0)=\gamma(0)$ there exists $\delta'>0$ such that $\eta(-\delta',\delta')$ is a subset of  $\gamma(-\epsilon,\epsilon)$.

It turns out that unique integrability of the center bundle persists along whole connected components of discretized Anosov flows and of systems admitting a uniformly compact center foliation (at least for one-dimensional center). This is the content of Proposition \ref{propC} and Proposition \ref{propC'} stated in the introduction.

\begin{proof}[Proof of Proposition \ref{propC} and Proposition \ref{propC'}]
Suppose $f$ is a discretized Anosov flow such that $E^c_{f}$ is uniquely integrable. Let $\mathcal{V}(f)$ denote the connected component of $\PH_{c=1}(M)$ containing $f$. By Theorem \ref{thmA} every $g$ in $\mathcal{V}(f)$ is a discretized Anosov flow. 

Let $\mathcal{V}=\{g\in \PH_{c=1}(M)\mid E^c_g$ is uniquely integrable$\}$. By Proposition \ref{propstableuniqint} the set $\mathcal{V}$ is open and closed in $\PH_{c=1}(M)$. Since $\mathcal{V}(f)$ is connected it follows that either $\mathcal{V}(f)\cap \mathcal{V}$ or $\mathcal{V}(f)\cap (\PH_{c=1}(M)\setminus\mathcal{V})$ is empty. Since $f$ lies in $\mathcal{V}(f)\cap \mathcal{V}$ one obtains that the latter is empty, thus $\mathcal{V}(f)$ is a subset of $\mathcal{V}$.

In case $f$ was a system admitting a uniformly compact center foliation the argument is analogous using Theorem \ref{thmAprima} and Proposition \ref{propopenclosedunifcompact} in the place of Theorem \ref{thmA} and Proposition \ref{propopenclosed}.
\end{proof}

\begin{cor}\label{cortime1uniqint}
Every discretized Anosov flow in the same $C^1$ connected component of $\PH_{c=1}(M)$ as the time 1 map of an Anosov flow has a uniquely integrable center bundle.
\end{cor}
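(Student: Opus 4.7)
The plan is to deduce the corollary directly from Proposition \ref{propC} applied to $f=\varphi_1$, where $\varphi_t$ is the given Anosov flow. For this I need to check two things: that $\varphi_1$ is itself a discretized Anosov flow, and that $E^c_{\varphi_1}$ is uniquely integrable. The first is immediate from Definition \ref{defDAFintro}: setting $\tau\equiv 1$ gives $\varphi_1(x)=\varphi_{\tau(x)}(x)$, and the Anosov-flow hypothesis furnishes the continuous non-singular vector field $\frac{\partial \varphi_t}{\partial t}|_{t=0}$. So the entire weight of the argument sits on the second point.

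For the unique integrability, I would argue as follows. Let $X=\frac{\partial \varphi_t}{\partial t}|_{t=0}$. The orbits of $\varphi_t$ are $C^1$ curves tangent to $E^c=\langle X\rangle$, providing \emph{at least one} integral curve through every point; I need to see they are the only ones. Let $\gamma:(-\epsilon,\epsilon)\to M$ be an arbitrary $C^1$ arc with $\gamma'(s)\in E^c(\gamma(s))$. The key observation is that the flow preserves $X$, i.e.\ $D\varphi_t\cdot X=X\circ \varphi_t$, so in particular $D\varphi_1^{\,n}$ sends $X(x)$ to $X(\varphi_n(x))$ for every $n\in\mathbb{Z}$; since $\|X\|$ is bounded above and below on the compact manifold $M$, the norm $\|D\varphi_1^{\,n}|_{E^c(x)}\|$ is uniformly bounded independently of $n\in\mathbb{Z}$ and $x\in M$. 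Therefore $\length(\varphi_1^{\,n}\circ\gamma)$ stays bounded as $n\to\pm\infty$.

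Now I would apply Lemma \ref{lemmaetac} in both time directions. By Theorem \ref{thmB}(\ref{thm2ii}), $\varphi_1$ admits invariant foliations $\W^{cs}$ and $\W^{cu}$ with $\W^c=\W^{cs}\cap\W^{cu}$ (the orbit foliation). The lemma states that any $C^1$ curve not lying in a leaf of $\W^{cs}$ has forward $\varphi_1$-iterates of unbounded length; applied to $\gamma$ this forces $\gamma\subset\W^{cs}(\gamma(0))$. Running the same argument for $\varphi_1^{-1}$ and its center-stable foliation, which is $\W^{cu}$ of $\varphi_1$, gives $\gamma\subset\W^{cu}(\gamma(0))$. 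Hence $\gamma\subset\W^{cs}(\gamma(0))\cap\W^{cu}(\gamma(0))=\W^c(\gamma(0))$, so up to reparametrization $\gamma$ is a piece of the $\varphi_t$-orbit of $\gamma(0)$. This establishes unique integrability of $E^c_{\varphi_1}$; Proposition \ref{propC} then propagates it to the whole $C^1$ connected component.

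I do not expect a serious obstacle: the only subtle point is ensuring that $\|D\varphi_1^{\,n}|_{E^c}\|$ remains uniformly bounded for all $n\in\mathbb{Z}$ (and not merely grows sub-exponentially), which is why the identity $D\varphi_t\cdot X=X\circ\varphi_t$ is essential rather than using a generic estimate on $E^c$.
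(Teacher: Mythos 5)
Your proof is correct and reaches the conclusion by a genuinely different route from the paper's. The paper does not attempt to verify directly that $E^c_{\varphi_1}$ is uniquely integrable: because its Anosov flows are only assumed $C^1$, the generating vector field $\frac{\partial\varphi_t}{\partial t}|_{t=0}$ is merely continuous, so classical ODE uniqueness is unavailable; the paper therefore replaces it by a $C^\infty$ approximating vector field $X$ whose time-1 map $g=X_1$ is $C^1$-close to $\varphi_1$, observes that $E^c_g=\langle X\rangle$ is uniquely integrable because $X$ is smooth, and lets Proposition~\ref{propC} propagate unique integrability along the connected component. You instead prove unique integrability of $E^c_{\varphi_1}$ itself, using the flow-invariance $D\varphi_t\cdot X=X\circ\varphi_t$ to bound $\|D\varphi_1^{\,n}|_{E^c}\|$ by $\max\|X\|/\min\|X\|$ uniformly over $n\in\mathbb{Z}$, and then Lemma~\ref{lemmaetac} (applied once to $\varphi_1$ and once to $\varphi_1^{-1}$) to trap any $C^1$ integral curve of $E^c$ inside a leaf of $\W^{cs}$ and a leaf of $\W^{cu}$; the connected component of that intersection through the base point is the $\varphi_t$-orbit by Theorem~\ref{thmB}~(\ref{thm2ii}). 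Your version is more self-contained --- it never leaves the given flow and never invokes the smoothing step, whose $C^1$-closeness property the paper treats as routine --- and it makes the dynamical mechanism (bounded center distortion forcing integral curves into $\W^{cs}\cap\W^{cu}$) explicit. The paper's version is shorter at the cost of offloading the delicate regularity point onto the approximation. One tiny imprecision in your write-up: $\W^{cs}(\gamma(0))\cap\W^{cu}(\gamma(0))$ is not a single center leaf in general, only its connected component through $\gamma(0)$ is; but since $\gamma$ is connected and passes through $\gamma(0)$, the conclusion $\gamma\subset\W^c(\gamma(0))$ stands.
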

\begin{proof} Let $\varphi_t:M\to M$ be an Anosov flow. We can approximate $\frac{\partial \varphi_t}{\partial t}|_{t=0}$ by a $C^\infty$ vector field $X$ so that, if $X_t$ denotes the flow generated by $X$, then $f:=\varphi_1$ and $g:=X_1$ are $C^1$-close (in particular, such that they are in the same $C^1$ partially hyperbolic connected component). Since $g$ is a discretized Anosov flow with uniquely integrable center bundle (because $X$ is $C^\infty$) it follows that $f$ and every systems in the $C^1$ connected component of $\PH_{c=1}(M)$ containing $f$ has a uniquely integrable center bundle.
\end{proof}

In \cite{HHU} two types of partially hyperbolic diffeomorphisms in $\mathbb{T}^3$ are built. Ones which are non-dynamically coherent and ones which are dynamically coherent but such that $E^c$ is not uniquely integrable. The following sketches how a discretized Anosov flow with non-uniquely integrable center bundle can be obtained as a simple modification of the second type of examples.

\begin{example}\label{exnonuniqint} Let us start by giving a brief description of a dynamically coherent example from \cite{HHU}. For more details see \cite{HHU} itself.

The partially hyperbolic diffeomorphism $f:\mathbb{T}^3\to \mathbb{T}^3$ can be considered homotopic to $A\times \Id$, where $\mathbb{T}^3$ is identified with $\mathbb{T}^2\times S^1$ and $A:\mathbb{T}^2\to \mathbb{T}^2$ is a linear hyperbolic automorphism with eigenvalues $0<\lambda<1$ and $1/\lambda$.

Denote by $E^s_A$ the contracting eigenspace of $A$ and by $e_s$ a unit vector in $E^s_A$. And identify $S^1$ with $\R/2\mathbb{Z}$. Then the map $f$ can be taken to be of the form $$f(x,\theta)=(A x+v(\theta)e_s,\Psi(\theta))$$ for suitable $v:S^1\to \mathbb{R}$ and $\Psi:S^1\to S^1$ such that $v$ is positive in $(-1,0)\subset S^1$ and negative in $(0,1)\subset S^1$, and $\Psi$ is a Morse-Smale map with $-1$ and $0$ as only fixed points that in addition satisfy $\Psi'(0)<\lambda<1<\Psi'(-1)<1/\lambda$. 

The sets $\mathbb{T}^2\times \{-1\}$ and $\mathbb{T}^2\times \{0\}$ are two invariant tori that are fixed by $f$, with $f$ acting as $A$ on each of them. The torus $\mathbb{T}^2\times \{0\}$ is a $cu$-torus (it is saturated by $\W^c$ and $\W^u$-leaves) and the torus $\mathbb{T}^2\times \{-1\}$ is a repelling $su$-torus (it is saturated by $\W^s$ and $\W^u$-leaves).

The construction given by \cite{HHU} shows the following. The map $f$ admits an $f$-invariant foliation by circles $\W^c$. Each of these circles is homotopic to a circle of the form $\{x\}\times S^1$ and intersects in a unique point each torus $\mathbb{T}^2 \times \theta$ for every $\theta \in S^1$. Thus $f$ is a partially hyperbolic skew-product where $\W^c$ is a foliation by circles that gives to $M$ a structure of fiber bundle.

Moreover, it can be seen that the bundle $E^c$ is uniquely integrable outside of the $cu$-torus $\mathbb{T}^2\times \{0\}$. However, remarkably, through each point of $\mathbb{T}^2\times \{0\}$ there exists more than one local $C^1$ curve tangent to $E^c$. Namely, through each point $y$ of $\mathbb{T}^2\times \{0\}$ one can consider the center arc corresponding to the leaf $\W^c(y)$, but also all the center arcs that are a concatenation of a piece of arc of $\W^c$, a center arc through $y$ contained in the $cu$ torus $\mathbb{T}^2\times \{0\}$ and a third piece of $\W^c$ arc. See Figure \ref{fig4}.

\begin{figure}[htb]
\def\svgwidth{0.7\textwidth}
\begin{center} 
{\scriptsize
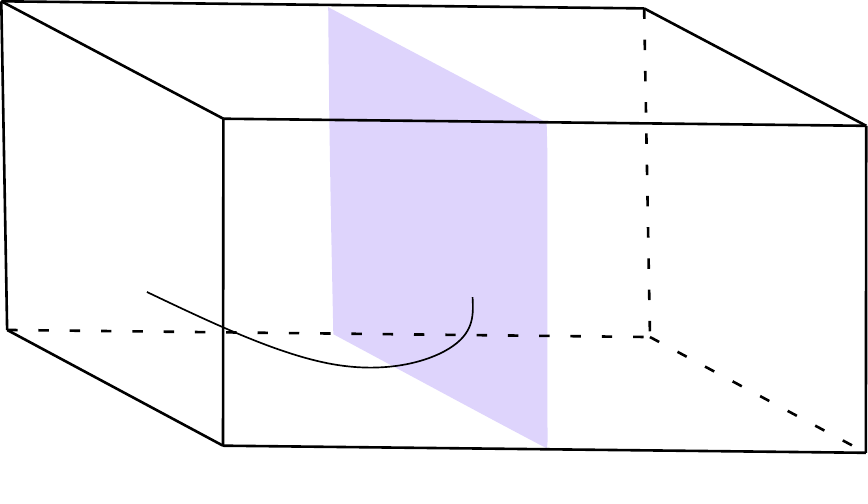
}
\caption{A cu-torus of points where $E^c$ is not uniquely integrable.}\label{fig4} 
\end{center}
\end{figure}

The simple modification of the example proceeds as follows. Let $F:\mathbb{T}^2\times \R\to \mathbb{T}^2\times \R$ be the lift of $f$ such that $F(x,-1)=(Ax,-1)$ for every $x\in \mathbb{T}^2$. It is immediate to check that $F$ commutes with the elements of the group $\Gamma=\{(x,\tilde{\theta})\mapsto (A^nx,\tilde{\theta}+2n)\}_{n\in \mathbb{Z}}$. 
As a consequence, $F$ descends to a partially hyperbolic diffeomorphism $g:N\to N$ in $N=(\mathbb{T}^2\times \R)/\Gamma$.

Let $\tilde{\W}^c$ denote the lift of $\W^c$ to $\mathbb{T}^2\times \R$ and $\W^c_g$ the descended one to $N$. It follows that $\W^c_g$ is a $g$-invariant center foliation for $g$. Since $F(\WW^c(x,-1))=\WW^c(Ax,-1)$ for every $x\in \mathbb{T}^2$ the leaves of $\W^c_g$ are individually fixed by $g$ (that is, $g(W)=W$ for every $W\in\W^c_g$). 

Moreover, for every $z\in N$ the point $g(z)$ lies in $\W^c_{g,L}(z)$ for $L>0$ any constant larger than the maximum length of a leaf in $\W^c$. By Proposition \ref{propcenterfixingL} it follows that $g$ is a discretized Anosov flow.

Finally, the property of non-unique integrability of the center bundle is preserved along the $cu-$torus that is the projection of $\mathbb{T}^2\times \{0\}$ to $N$ since this is a local property that is preserved by lifts and quotients. Hence $E^c_g$ is not uniquely integrable.
\end{example}

It turns out that unique integrability versus non-unique integrability of the center bundle provides a way for distinguishing between different connected components of discretized Anosov flows and partially hyperbolic systems in general. The following questions arise naturally.

\begin{quest}
Is it possible to connect (via a $C^1$-path of discretized Anosov flows) every discretized Anosov flow with uniquely integrable center bundle to the time 1 map of an Anosov flow? 
\end{quest}

\begin{quest}
Are there examples of discretized Anosov flows with a non uniquely integrable center bundle which are transitive or such that the center flow is not orbit equivalent to a suspension flow?
\end{quest}

More generally, we may ask:

\begin{quest} Are there examples of $C^1$-connected component of partially hyperbolic diffeomorphisms containing both systems with uniquely integrable and non-uniquely integrable center bundle?
\end{quest}

\section{Fixed and compact center foliations in dimension 3: Proof of Theorem \ref{thmD}}\label{sectionfixedcenterindim3}

The goal of this section is to prove Theorem \ref{thmD} stated in the introduction. In particular, item (\ref{item1propcenterfixingdim3}) of Theorem \ref{thmD}  will expand further on the discussion initiated in Subsection \ref{sectionfixedcenterfoliation} as it shows that in dimension 3 the transitive partially hyperbolic diffeomorphisms that are discretized Anosov flows coincide with those that leave invariant each leaf of a center foliation.

\begin{remark}\label{rmkcenterfixing3ifdyncohunifcmpct} When $f$ is dynamically coherent, item (\ref{item2propcenterfixingdim3}) of Theorem \ref{thmD} follows from previous results on compact center foliations. Indeed, by \cite{DMM} (also \cite{Gog} in case $E^c$ is uniquely integrable) the center foliation $\W^c$ needs to be uniformly compact. Then by \cite{Boh} one concludes that, modulo double cover, $(f,\W^c)$ is a partially hyperbolic skew product.

Alternatively, one could try to use \cite[Theorem 1]{BW}. We just chose the path described above so that the approach for both results (items (\ref{item1propcenterfixingdim3}) and (\ref{item2propcenterfixingdim3}))  matches.
\end{remark}

The strategy for showing Theorem \ref{thmD} is to first show that the hypothesis imply dynamical coherence (Proposition \ref{propdim3dyncoh} below). Then item (\ref{item2propcenterfixingdim3}) follows as explained in Remark \ref{rmkcenterfixing3ifdyncohunifcmpct} above and item (\ref{item1propcenterfixingdim3}) will follow as a consequence of \cite[Theorem 2]{BW}.

\subsection{Dynamical coherence}

Recall that $f$ is called \emph{transitive} if it has a dense orbit. Recall that the the \emph{non-wandering set} of $f$, denoted by $\Omega(f)$, is the set of all $x$ in $M$ such that for every  neighborhood $U$ of $x$ there exists $k>0$ satisfying $f^k(U)\cap U \neq \emptyset$. It is immediate to check that if $f$ is transitive then $\Omega(f)=M$.

In the context of Theorem \ref{thmD} we obtain that dynamical coherence can be derived from the more general hypothesis `$\Omega(f)=M$' in the place of `$f$ transitive':

\begin{prop}\label{propdim3dyncoh}
Suppose $f\in \PH_{c=1}(M^3)$ with $\Omega(f)=M^3$ admits an invariant center foliation $\W^c$ satisfying one of the following conditions: \begin{enumerate} \item\label{itemipropdim3dyncoh} $f(W)=W$ for every $W\in \W^c$.
\item\label{itemiipropdim3dyncoh} $W$ is compact for every $W\in \W^c$.\end{enumerate} Then $f$ is dynamically coherent with invariant foliations $\W^{cs}$ and $\W^{cu}$ such  that $\W^c=\W^{cs}\cap \W^{cu}$.
\end{prop}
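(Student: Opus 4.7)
The strategy, in both cases, is to reduce the statement to Proposition \ref{propdyncoh} by showing that $f$ acts quasi-isometrically on $\W^c$; dynamical coherence together with the description $\W^c=\W^{cs}\cap\W^{cu}$ then follows directly. By Proposition \ref{propcenterfixingL}, the quasi-isometric condition in case (\ref{itemipropdim3dyncoh}) is equivalent to producing a uniform constant $L>0$ with $f(x)\in\W^c_L(x)$ for every $x\in M$, while by Remark \ref{rmkunifcompactQI}, in case (\ref{itemiipropdim3dyncoh}) it is equivalent to a uniform bound on the lengths of the leaves of $\W^c$.

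Case (\ref{itemiipropdim3dyncoh}) is the easier one. Since $\dim(E^c)=1$ and every leaf of $\W^c$ is compact, each leaf is a $C^1$ circle (passing to the oriented double cover if $E^c$ fails to be orientable). A classical result on compact foliations by circles in closed $3$-manifolds (Epstein, Vogt) guarantees that leaf lengths are automatically uniformly bounded in this situation. Combining this with Remark \ref{rmkunifcompactQI} and Proposition \ref{propdyncoh} delivers dynamical coherence without any further use of the non-wandering hypothesis.

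Case (\ref{itemipropdim3dyncoh}) is the substantive one. The plan is to show that the displacement $\tau(x):=d_c(x,f(x))$, well defined since $f(\W^c(x))=\W^c(x)$, is locally and hence globally bounded. Assume for contradiction that $\tau$ fails to be locally bounded at some $x_0\in M$: a sequence $x_n\to x_0$ has $\tau(x_n)\to\infty$ while $f(x_n)\to f(x_0)$. In ambient dimension three, the only way a long center arc can have both endpoints close together is by wrapping through a small transverse disc to $\W^c$ around $x_0$ many times, so that long plaques of $\W^c$ accumulate transversely in any local foliation chart. Using Lemma \ref{lemmaWcWsC1} to locally integrate $E^{s}\oplus E^c$ and $E^c\oplus E^u$ along these long plaques, together with the leaf-fixing hypothesis and the recurrence provided by $\Omega(f)=M^3$, the goal is to produce an $f$-recurrent point at which the arguments used in the proof of $(ii)\Rightarrow(i)$ in Proposition \ref{propcenterfixingL} can be run: a hyperbolic return whose stable manifold would have to remain trapped in the same center leaf, contradicting the uniform contraction of $\W^s$ under forward iteration of $f$.

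The main obstacle will be carrying out this last argument in case (\ref{itemipropdim3dyncoh}) without having a uniform $L$ available a priori. In Proposition \ref{propcenterfixingL} the constant $L$ was used crucially to keep center segments inside a single foliation box and to rule out fixed points, whereas here one must extract the contradiction from the stretching itself. The hypothesis $\Omega(f)=M^3$ is essential — as recalled by Question \ref{questWcfixed} the conclusion is not known in general without some recurrence assumption — and its use must be carefully interlaced with leaf-fixing and transverse hyperbolicity in order to turn the transverse accumulation of long center plaques into an actual dynamical contradiction.
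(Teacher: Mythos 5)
Your treatment of case (\ref{itemiipropdim3dyncoh}) is a genuinely different route from the paper's: invoking Epstein's theorem on compact codimension-two foliations to get uniform boundedness of leaf lengths, then $\W^c$ uniformly compact $\Rightarrow$ quasi-isometric action (Remark \ref{rmkunifcompactQI}) $\Rightarrow$ dynamical coherence (Proposition \ref{propdyncoh}), with no need of $\Omega(f)=M$. This is slick if it applies, but you should check the regularity carefully: $\W^c$ here is only a $C^0$ foliation with $C^1$ leaves tangent to a continuous distribution, and Epstein's results are usually stated under stronger differentiability assumptions. The fact that the paper's own Remark \ref{rmkcenterfixing3ifdyncohunifcmpct} reaches uniform compactness only through \cite{DMM}/\cite{Gog}, and only after dynamical coherence is in hand, is a warning sign that the direct application of Epstein was not considered automatic. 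The paper instead treats both cases with one mechanism: density of compact $f$-periodic center leaves (Lemma \ref{lemmaperiodiccenters}), fed into a local integration criterion (Lemma \ref{lemmadyncohdriterion}).

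Your treatment of case (\ref{itemipropdim3dyncoh}) is not a proof, and the gap is structural. You aim to bound the center displacement $\tau(x)=d_c(x,f(x))$ locally (hence globally) and conclude via Proposition \ref{propcenterfixingL} that $f$ is a discretized Anosov flow; but producing that bound directly, under the hypotheses at hand, would be a direct answer to Question \ref{questWcfixed} (on $\Omega(f)=M^3$) and you yourself say you do not see how to close the argument ``without having a uniform $L$ available a priori.'' That bound is precisely what the arguments in Proposition \ref{propcenterfixingL} rely on to keep center segments inside one flow box; without it, ``a stable manifold trapped in the same center leaf'' has nothing to be trapped against. The sketch — long center plaques accumulating transversely near $x_0$, a hyperbolic return, a contradiction — is an outline of an intention, not a chain of implications: no step is shown to fail in a model of the situation, nor is it shown how partial hyperbolicity and leaf-fixing preclude the wrapping picture.

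The paper sidesteps bounding $\tau$ at this stage. Lemma \ref{lemmaperiodiccenters} uses $\Omega(g)=M$ for $g=f^2$ to get, near any non-fixed $x_0$, a return time $k$, and then a topological index argument on the projection along center plaques of a small $su$-rectangle: the projected return map sends the rectangle across itself and therefore has a fixed point, i.e.\ a point whose center plaque returns into itself, hence whose center leaf is compact and $g$-periodic. Lemma \ref{lemmadyncohdriterion} then produces $\W^{cs}$ and $\W^{cu}$ from density of such leaves, using the observation that two distinct compact periodic center leaves cannot meet a common stable leaf (iterating a suitable power of $f$ contracts stable distances to zero while the two compact leaves stay a fixed positive distance apart). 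The uniform bound on $\tau$ appears only afterwards, in the proof of Theorem \ref{thmD}, and is obtained indirectly via \cite[Theorem 2]{BW}, not by a direct estimate. If you want to prove case (\ref{itemipropdim3dyncoh}) by bounding $\tau$ directly, you need an idea that is absent from your outline.
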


The proof of Proposition \ref{propdim3dyncoh} consists of the following two lemmas.

\begin{lemma}\label{lemmadyncohdriterion} Suppose $f\in \PH_{c=1}(M^3)$ admits an $f$-invariant center foliation $\W^c$. Suppose that the set $\{W\in \W^c \mid W\mbox{ compact and } f^n(W)=W \mbox{ for some }n\neq 0\}$ is dense in $M$. Then $f$ is dynamically coherent and admits $f$-invariant foliations $\W^{cs}$ and $\W^{cu}$ such that $\W^c=\W^{cs}\cap\W^{cu}$.
\end{lemma}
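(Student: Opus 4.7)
The plan is to build the center-stable and center-unstable foliations out of the Hirsch--Pugh--Shub stable and unstable sets of the dense family $\mathcal{P}$ of compact periodic center leaves, and then extend to all of $M$ by density. For each $W\in\mathcal{P}$ with $f^{n_W}(W)=W$, the leaf $W$ is a compact normally hyperbolic $f^{n_W}$-invariant submanifold, so the theorem of Hirsch--Pugh--Shub (see \cite{HPS}) produces a $C^1$ immersed 2-dimensional submanifold $\W^{cs}(W)$ tangent to $E^{cs}$, coinciding with $\bigcup_{y\in W}\W^s(y)$, and with a uniform local size $\epsilon_0>0$ (depending only on $f$) in the sense that $\W^s_{\epsilon_0}(W)\subset\W^{cs}(W)$ for every $W\in\mathcal{P}$. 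Two such stable sets $\W^{cs}(W_1)$ and $\W^{cs}(W_2)$ coming from leaves in different $f$-orbits are automatically disjoint, since any common point would be simultaneously asymptotic under $f^{\mathrm{lcm}(n_{W_1},n_{W_2})}$ to two distinct compact center leaves. An analogous construction applied to $f^{-1}$ yields $\W^{cu}(W)$ tangent to $E^{cu}$ for every $W\in\mathcal{P}$.

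To define a candidate $cs$-plaque through an arbitrary $x\in M$, I would pick $z_n\in W_n\in\mathcal{P}$ with $z_n\to x$ and extract a $C^1$-limit of the local $cs$-discs $D_n:=\W^s_{\epsilon_0}(\W^c_{\epsilon_0}(z_n)\cap W_n)\subset \W^{cs}(W_n)$. These are $C^1$ 2-discs through $z_n$, tangent to $E^{cs}$, of uniform size, whose graphing functions over the fixed tangent plane $E^{cs}(x)$ are uniformly bounded in $C^1$; Arzel\`a--Ascoli then provides a subsequence converging in the $C^1$-topology to a $C^1$ disc $D_x\ni x$ tangent to $E^{cs}$. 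Running the analogous construction for $E^{cu}$ yields a candidate $cu$-plaque through every $x\in M$.

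The main obstacle, and the technical heart of the proof, is showing that $D_x$ does not depend on the chosen sequence and that the resulting family of plaques assembles into a genuine $f$-invariant foliation. The key point is that any limit disc $D_x$ inherits from the $D_n$'s a saturation property by local $\W^s$-plaques, and that two distinct limit discs through $x$ would disagree on some open subset; any such disagreement could be tested against a nearby $W\in\mathcal{P}$, whose uniformly sized $\W^{cs}(W)$ would have to be $C^1$-close to two distinct limits at once, contradicting the pairwise disjointness of the family $\{\W^{cs}(W)\}_{W\in\mathcal{P}}$. This step is delicate because continuous $E^{cs}$ need not be uniquely integrable a priori, and the argument must leverage both the uniform local size of the $\W^{cs}(W)$'s and the density of $\mathcal{P}$ in a compatible way.

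Once well-definedness is established, the $f$-invariance of the resulting foliation $\W^{cs}$ is automatic: $f$ permutes the elements of $\mathcal{P}$, and the Hirsch--Pugh--Shub construction commutes with $f$, so the limit family is $f$-invariant as well. The same argument produces $\W^{cu}$. Finally, $\W^c=\W^{cs}\cap\W^{cu}$ follows from the fact that each $\W^c$-leaf is a $C^1$ curve tangent to $E^c\subset E^{cs}\cap E^{cu}$ contained in both $\W^{cs}(x)$ and $\W^{cu}(x)$, together with the observation that the transverse intersection of a 2-dimensional $\W^{cs}$-leaf and a 2-dimensional $\W^{cu}$-leaf is a 1-dimensional $C^1$ submanifold tangent to $E^c$, which coincides locally with the $\W^c$-leaf by the uniqueness of integral curves of a continuous 1-dimensional bundle inside a $C^1$ surface.
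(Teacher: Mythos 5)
Your approach is genuinely different from the paper's. The paper proves the local product-structure estimate directly: it shows there is $\delta>0$ such that $y\in\W^s_\delta(x)$ implies $\W^c_\delta(y)\subset\W^s_{2\delta}(\W^c_{2\delta}(x))$ (and the $u$-analogue), via a contradiction argument based on projecting center arcs by stable holonomy onto a local $cu$-disc $D(x_0)$ and observing that an arc crossing from one side $D^+(x_0)$ to the other $D^-(x_0)$ would force two distinct compact periodic center leaves to meet the same stable leaf. Given that estimate, it defines $\W^{cs}(x)$ as the set of points reachable from $x$ by finite concatenations of $\W^s$- and $\W^c$-arcs, which is automatically a partition. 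You instead try to construct $\W^{cs}$ directly as a $C^1$-limit of the uniformly-sized local $cs$-discs $\W^s_{\epsilon_0}(\W^c_{\epsilon_0}(z_n))$ inside $\W^{cs}(W_n)$ for $W_n\in\mathcal{P}$ accumulating on $x$.

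There is a genuine gap in the well-definedness step, which you correctly identify as the heart of the matter but do not close. You argue that if two $C^1$-limit discs $D_x$ and $D'_x$ through $x$ disagreed on some open set, a nearby $W\in\mathcal{P}$ would have $\W^{cs}(W)$ ``$C^1$-close to two distinct limits at once,'' contradicting pairwise disjointness of $\{\W^{cs}(W)\}_{W\in\mathcal{P}}$. This does not follow: a single $\W^{cs}(W)$ through a point near the disagreement locus only needs to approximate one of the two limit discs, not both. Pairwise disjointness of the discs $D_n$ (accumulating on $D_x$) and $D'_m$ (accumulating on $D'_x$) is perfectly compatible with $D_x\neq D'_x$, since the $u$-transverse positions of the basepoints $z_n,z'_m\to x$ can interlace in a way that allows the discs to bypass each other near $x$ while diverging away from it. Precisely the failure of the $E^{cs}$-bundle to be integrable in a foliated way is what would manifest as such branching, so disjointness and density alone do not rule it out. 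To close the gap you would need either a trapping argument (pinning each $D_n$ between two $\W^{cs}(W)$-discs whose $cs$-distance you control) or, more efficiently, the projection argument that the paper uses. A further, unaddressed issue is showing that the well-defined family of plaques actually forms a foliation — that plaques through distinct nearby points are disjoint or coincide and fit together into foliation charts — which is built into the paper's concatenation construction but is not automatic from an Arzel\`a--Ascoli limit.

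Two smaller points. The uniform local size $\epsilon_0$ of the $cs$-discs does not need the Hirsch--Pugh--Shub theorem; it already follows from Lemma \ref{lemmaWcWsC1} of the paper, which gives that $\W^s_\delta(\eta)$ is a uniform-size $C^1$ submanifold tangent to $E^{s}\oplus E^c$ for any short center arc $\eta$, periodic or not. And the identification $\W^{cs}(W)=\bigcup_{y\in W}\W^s(y)$ as a complete injectively immersed submanifold requires the kind of completeness argument used in Proposition \ref{propdyncoh}; it is not directly the content of the HPS stable manifold theorem.
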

\begin{proof}

We first claim that it is enough to show that there exists $\delta>0$ so that for every $x$ and $y$ in $M$, if $y\in \W^s_\delta(x)$ then $\W^c_\delta(y)\subset \W^s_{2\delta}(\W^c_{2\delta}(x))$. And if $y\in \W^u_\delta(x)$ then $\W^c_\delta(y)\subset \W^u_{2\delta}(\W^c_{2\delta}(x))$.

Indeed, suppose such a $\delta$ exists. For every $x\in M$ one can define $\W^{cs}(x)$ as the set of all points in $M$ that can be joined to $x$ by a finite concatenation of $\W^s$ and $\W^c$ arcs. In this way $\{\W^{cs}(x):x\in M\}$ defines a partition of $M$. On each element of this partition one can consider the distance $d(y,z):=\inf_\gamma \length(\gamma)$ where $\gamma$ varies among all finite concatenations of $\W^s$ and $\W^c$ arcs joining $y$ to $z$. 

By shrinking $\delta$, if necessary, one can ensure by Lemma \ref{lemmaWcWsC1} that the set $\W^s_{2\delta}(\W^c_{2\delta}(x))$ is a $C^1$ submanifold tangent to $E^s\oplus E^c$ for every $x\in M$. As $\W^c_\delta(y)$ is contained in  $\W^s_{2\delta}(\W^c_{2\delta}(x))$ for every $y\in \W^s_\delta(x)$ it follows that for some $\epsilon, \epsilon'>0$ independent of $x$ the ball $B_\epsilon(x)\subset \W^{cs}(x)$ with respect to $d$ is an open subset of the $C^1$ submanifold $\W^s_{2\delta}(\W^c_{2\delta}(x))$ that contains the ball $B_{\epsilon'}(x)\subset \W^s_{2\delta}(\W^c_{2\delta}(x))$ with respect to the inner metric in $\W^s_{2\delta}(\W^c_{2\delta}(x))$ induced by the Riemannian metric of $M$.

In this way one obtains that each element of $\W^{cs}$ is a $C^1$ submanifold tangent to $E^s\oplus E^c$, saturated by $\W^s$ and $\W^c$ leaves and whose inner metric is complete. Hence $\W^{cs}$ is an $f$-invariant center-stable foliation. Analogously one constructs $\W^{cu}$ an $f$-invariant center-unstable foliation. The property $\W^c=\W^{cs}\cap \W^{cu}$ follows immediately. This proves the claim.
 
It remains to show that there exists $\delta>0$ such that for every $x,y\in M$ with $y \in \W^s_\delta(x)$, one has $\W^c_\delta(y)\subset \W^s_{2\delta}(\W^c_{2\delta}(\W^c_{2\delta}(x))$. For $cu$ discs the arguments are analogous.

The key point to note is that two distinct leaves of $\W^c$ that are compact and periodic can not intersect the same leaf of $\W^s$. Indeed, suppose by contradiction that two such leaves $W,W'\in \W^c$ contain points $x\in W$ and $y\in W'$ that belong to the same leaf of $\W^s$. One can consider $N>0$, a multiple of the periods of $W$ and $W'$, so that $f^N(W)=W$ and $f^{N}(W')=W'$. On the one hand, $d\big(f^{kN}(x),f^{kN}(y)\big)$ tends to $0$ as $k\to +\infty$ because $x$ and $y$ belong to the same stable leaf. On the other hand, $f^{kN}(x)$ lies in $W$ and $f^{kN}(y)$ in $W'$ so for every $k>0$ the distance between both points can not be smaller than the positive distance between the disjoint compact sets $W$ and $W'$. This gives us a contradiction.

Consider from now on a metric in $M$ and $\delta>0$ small enough so that the bundles $E^s$, $E^c$ and $E^u$ are almost constant and pairwise orthogonal at scale $\delta$. For a precise construction of such a metric and constant see for example Lemma \ref{rmkUdelta}. And consider $\delta>0$ small enough so that by Lemma \ref{lemmaWcWsC1} the set $\W^u_{4\delta}(\W^c_{4\delta}(x))$ is a $C^1$ submanifold tangent to $E^c\oplus E^u$ for every $x\in M$. In particular, let us consider the above so that for every $x,y\in M$ such that $d(x,y)<3\delta$ the set $\W^s_{4\delta}(y)$ intersects $\W^u_{4\delta}(\W^c_{4\delta}(x))$ and this intersection point is unique.

For every $y$ such that $d(x,y)<3\delta$ let $\pi^s_x(y)$ denote the intersection of $\W^s_{4\delta}(y)$ with $\W^u_{4\delta}(\W^c_{4\delta}(x))$. It is immediate to check that $\pi^s_x(y)$ varies continuously with $y$. For every $x\in M$ let $D(x)$ denote $\W^u_{2\delta}(\W^c_{2\delta}(x))$. By Lemma \ref{lemmaWcWsC1} it is a $C^1$ disc tangent to $E^c\oplus E^u$ for every $x\in M$. The set $D(x)\setminus \W^c_{2\delta}(x)$ has two connected components. Let us denote them by $D^+(x)$ and $D^-(x)$.

Suppose by contradiction that there exists 	$x_0,y_0\in M$  such that $y_0$ lies in $\W^s_\delta(x_0)$ and $\W^c_\delta(y_0)$ is not contained in $\W^s_{2\delta}(\W^c_{2\delta}(x_0))$. Then there exists $z_0\in \W^c_\delta(y_0)$ such that $\pi^s_{x_0}(z_0)$ is not in $\W^c_{2\delta}(x_0)$. By the hypothesis of almost constant and pairwise orthogonal invariant bundles at scale $\delta$ the point $\pi^s_{x_0}(z_0)$ lies in $D(x_0)$. Suppose without loss of generality that $\pi^s_{x_0}(z_0)$ lies in $D^+(x_0)$. See Figure \ref{fig2}.	

\begin{figure}[htb]
\def\svgwidth{0.7\textwidth}
\begin{center} 
{\scriptsize
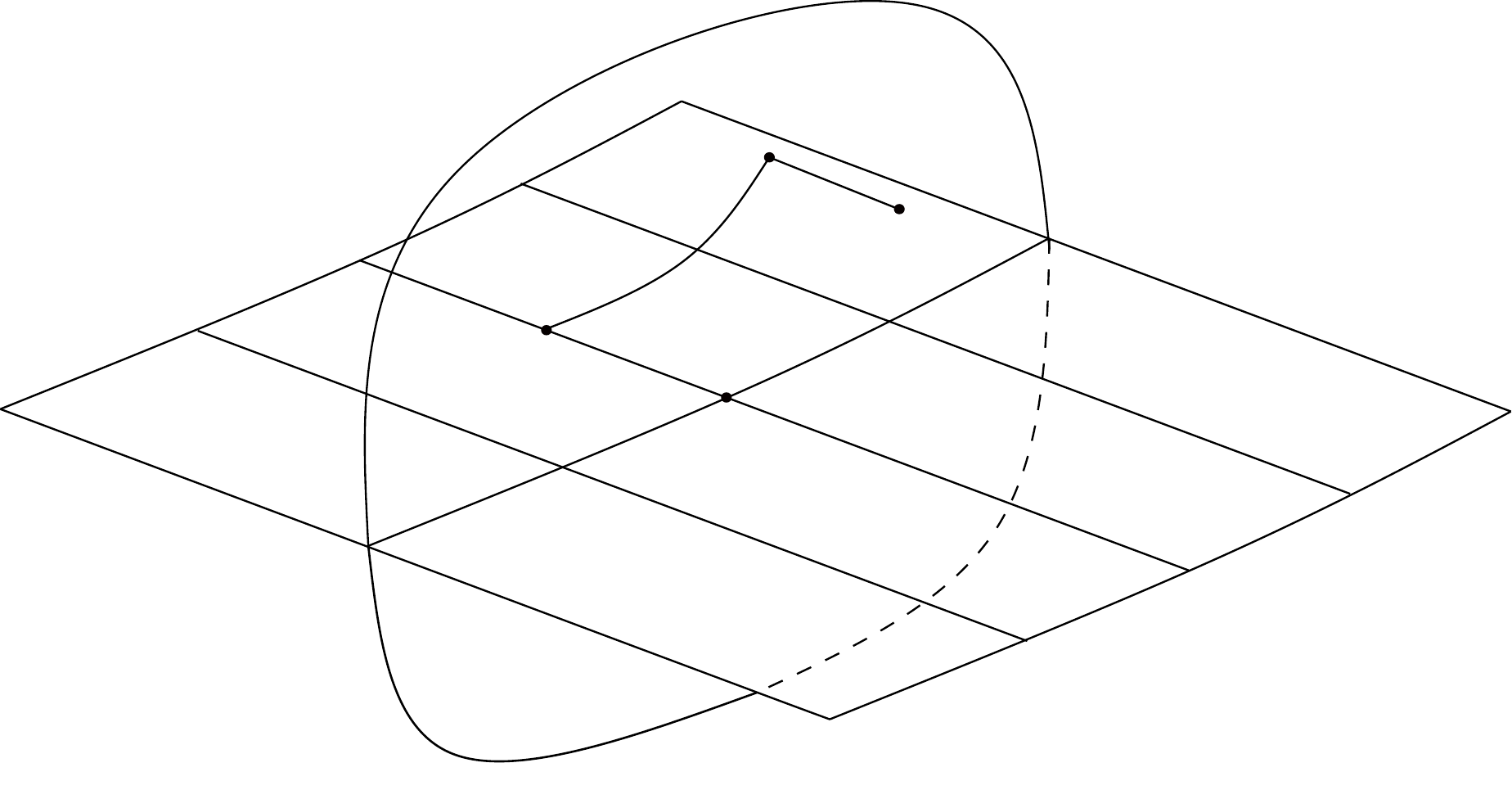
}
\caption{Projection by stable holonomy of the point $z_0$ lying in $\W^c_{loc}(y_0)$ but not in $\W^s_{loc}(\W^c_{loc}(x_0))$, for some $y_0$ in $\W^s_{loc}(\W^c_{loc}(x_0))$.}\label{fig2} 
\end{center}
\end{figure}

On the one hand, there exists $\epsilon>0$ small so that $\pi^s_{x_0}(B_\epsilon(z_0))$ is entirely contained in $D^+(x_0)$. On the other hand, since $\pi^s_{x_0}(y_0)=x_0$ one can consider $y_1$ as close as wanted to $y_0$ so that $\pi^s_{x_0}(y_1)$ lies in $D^-(x_0)$ and $\W^c_{\delta}(y_1)$ intersects $B_\epsilon(z_0)$. In particular, for such a $y_1$ there exists an arc $\gamma\subset \W^c_\delta(y_1)$ joining $y_1$ with a point $z_1\in B_\epsilon(z_0)$.

As $\{W\in \W^c \mid W\mbox{ compact and } f^n(W)=W \mbox{ for some }n\neq 0\}$ is dense in $M$ we can approximate $\gamma$ and $\W^c_{2\delta}(x_0)$ by center arcs contained in compact periodic leaves of $\W^c$. 

By construction $\pi^s_{x_0}(\gamma)$ is an arc in $D(x_0)$ joining a point in $D^+(x_0)$ with a point in $D^-(x_0)$. In particular, $\pi^s_{x_0}(\gamma)$ intersects $\W^c_{2\delta}(x_0)$. One can then approximate $\gamma$ by an arc $\gamma'$ contained in a periodic compact leaf of $\W^c$ so that the $\pi^s_{x_0}(\gamma')$ continues to satisfy the same property, namely that $\pi^s_{x_0}(\gamma')$ intersects $\W^c_{2\delta}(x_0)$ and has each of its endpoints in a different connected component of $D(x_0)\setminus \W^c_{2\delta}(x_0)$.

By approximating $\W^c_{2\delta}(x_0)$ close enough by a center arc $\eta$ contained in a periodic compact leaf of $\W^c$ one obtains that $\pi^s_{x_0}(\gamma')$ and $\pi^s_{x_0}(\eta)$ must intersect. This gives us a contraction with the aforementioned fact that one can not join two different compact periodic leaves of $\W^c$ by an arc contained in a leaf of $\W^s$.
\end{proof}

The criterion above combined with the following lemma ends the proof of Proposition \ref{propdim3dyncoh}. 

\begin{lemma}\label{lemmaperiodiccenters} In the setting of Proposition \ref{propdim3dyncoh} the set $\{W\in \W^c \mid W\mbox{ compact}$ $\mbox{and } f^n(W)=W \mbox{ for some }n\neq 0\}$ is dense in $M$.
\end{lemma}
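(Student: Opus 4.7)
The plan is to produce, for every $x\in M^3$ and every neighborhood $U$ of $x$, a compact $f$-periodic leaf of $\W^c$ meeting $U$. The strategy combines $\Omega(f)=M^3$ with a transverse Anosov closing adapted to $f\in\PH_{c=1}(M^3)$ preserving $\W^c$. Since $x\in\Omega(f)$, fix $y\in U$ and $n>0$ with $f^n(y)\in U$. Using the one-dimensional $\W^s$ and $\W^u$ foliations, together with the local center-stable $C^1$ plaques $\W^s_\delta(\eta)$ tangent to $E^s\oplus E^c$ (and their center-unstable analogues) provided by Lemma~\ref{lemmaWcWsC1}, a hyperbolic fixed point argument in the $2$-dimensional direction transverse to $\W^c$ yields a point $z$ close to $y$ with $f^n(z)\in\W^c_\epsilon(z)$. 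Since $f$ permutes leaves of $\W^c$, this gives $f^n(\W^c(z))=\W^c(z)$.

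In case (\ref{itemiipropdim3dyncoh}), every leaf is compact by hypothesis, so $\W^c(z)$ is already the required compact $f^n$-periodic leaf meeting $U$, and this finishes the lemma.

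In case (\ref{itemipropdim3dyncoh}), every leaf is $f$-invariant, so $f^n$-invariance is automatic and only the compactness of $\W^c(z)$ remains to be proved. To this end I upgrade $z$, by a further fixed point argument along the one-dimensional leaf $\W^c(z)$ (using that $Df^n|_{E^c(z)}$ is not the identity when the transverse hyperbolic closing is used generically), to a genuine hyperbolic periodic point $p$ of $f$ of some period $N$ near $x$. I then assert that $\W^c(p)$ is compact: if instead $\W^c(p)\cong\R$, the map $f^N|_{\W^c(p)}$ is a diffeomorphism of $\R$ fixing $p$ with $\|Df^N|_{E^c(p)}\|\neq 1$, say contracting at $p$, so there is a basin $B\subset\W^c(p)$ attracted to $p$. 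Picking $w\in B$ off the periodic orbit $\{f^j(p)\}_{j=0}^{N-1}$, the local product structure of $\W^s$, $\W^u$ and local center arcs around the hyperbolic orbit of $p$, combined with the transverse hyperbolic contractions of $f^n$, forces every $f$-orbit starting sufficiently close to $w$ to accumulate only on the finite periodic orbit $\{f^j(p)\}$ and never to return close to $w$, contradicting $w\in\Omega(f)=M^3$. Hence $\W^c(p)\cong S^1$ gives the desired compact $f$-periodic leaf meeting $U$.

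The main obstacle is the compactness step in case (\ref{itemipropdim3dyncoh}): bridging from the non-wandering hypothesis to the topological type of $\W^c(p)$. The delicate point is that, under the assumption $\W^c(p)\cong\R$, one must rule out pseudo-returns to a neighborhood of $w$ coming from orbits on leaves near $\W^c(p)$ that might accumulate on it. This is handled using the product structure around the hyperbolic periodic orbit of $p$, which asymptotically aligns every nearby $f$-orbit with the orbit of $p$ and forces it to inherit the attracting drift of $f^N$ along $\W^c(p)$.
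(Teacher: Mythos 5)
Your overall strategy—use $\Omega(f)=M$ together with a fixed-point argument in the two-dimensional direction transverse to $\W^c$ to find a point $z$ whose $f^n$-image lies in its own center plaque—is the same as the paper's, and your handling of case (\ref{itemiipropdim3dyncoh}) is correct, since compactness is then automatic. The gap is in case (\ref{itemipropdim3dyncoh}), and it appears in two places.

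First, your ``upgrade'' to a hyperbolic periodic point $p$ on $\W^c(z)$ presupposes that $f^N|_{\W^c(z)}$ has a fixed point at all. If $\W^c(z)\cong\R$, then $f^n(z)\in\W^c_\epsilon(z)$ gives you an $f$-invariant line with a recurrence, but an orientation-preserving homeomorphism of $\R$ may well be fixed-point free (a translation is the model). Asking for a periodic point on $\W^c(z)$ is essentially asking for what you want to prove, so the argument is circular at this step; the ``generic'' hedge does not resolve it, since the issue is existence, not hyperbolicity.

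Second, even granting a hyperbolic periodic point $p$ with $\|Df^N|_{E^c(p)}\|<1$ and $\W^c(p)\cong\R$, the claimed contradiction with $\Omega(f)=M$ at a basin point $w$ is not valid. The point $p$ is a saddle: $E^u$ is nontrivial, so orbits starting arbitrarily near $w$ with a nonzero unstable component escape from any local neighborhood of the orbit of $p$. In a non-wandering (indeed transitive) system those escaping orbits can and do return close to $w$; the local product structure near the hyperbolic orbit cannot preclude that, it only describes the local picture, not the global recurrence. So $w$ being wandering does not follow.

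What makes the paper's version of this step work is an extra disjointness that your closing lemma does not record: the returning point $x$ is constructed inside a ball $B_{2\epsilon}(x_0)$ with $x_0\notin\Fix(g)$ (working with $g=f^2$ to kill orientation issues on the leaf) and $\epsilon$ small enough that $g(B_{2\epsilon}(x_0))\cap B_{2\epsilon}(x_0)=\emptyset$, hence $g(\W^c_\epsilon(x))\cap\W^c_\epsilon(x)=\emptyset$. If $\W^c(x)\cong\R$, then $g|_{\W^c(x)}$ is an orientation-preserving homeomorphism of the line that pushes the plaque $\W^c_\epsilon(x)$ entirely off itself, and a simple order argument shows that $g^k(x)$ can never re-enter $\W^c_\epsilon(x)$, contradicting the return. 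No periodic point, no hyperbolicity of the center derivative, and no global wandering argument are needed. I'd suggest reworking your closing step so that it produces a returning point $x$ together with this local disjointness of $\W^c_\epsilon(x)$ from its image, and then concluding compactness by the one-dimensional order argument rather than by hunting for a hyperbolic periodic point.
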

\begin{proof} 

To avoid possible non-orientation preserving issues let us work with $g:=f^2$ and show that $\{W\in \W^c \mid W\mbox{ compact}$ $\mbox{and } g^n(W)=W \mbox{ for some }n\neq 0\}$ is dense in $M$. 

Recall that, since $\Omega(f)=M$, it follows by standard arguments that $\Omega(g)=M$. Indeed, for every open set $U\subset M$ there exists $k_1,k_2>0$ such that $V:=f^{k_1}(U)\cap U$  and $f^{k_2}(V)\cap V\neq \emptyset$ are non empty. It follows that at least one element $k$ in $\{k_1,k_2,k_1+k_2\}$ is even, and then $g^{
k/2}(U)\cap U$ is non empty. One concludes that $\Omega(g)=M$.

%To avoid possible non-orientation preserving issues let us work with $g:=f^2$ and show that $\{W\in \W^c \mid W\mbox{ compact}$ $\mbox{and } g^n(W)=W \mbox{ for some }n\neq 0\}$ is dense in $M$. Recall that, since $\Omega(f)=M$, it follows by standard arguments that $\Omega(g)=M$ (see Lemma \ref{lemmaultimum} below). 

Note that the set of fixed points of $g$, denoted by $\Fix(g)\subset M$, has empty interior in $M$. This follows immediately from the fact that, if $x$ is a fixed point of $g$, then every $y$ in $\W^s_{loc}(x)\setminus\{x\}$ can not be a fixed point of $g$ because its forward $g$-orbit must tend to $x$.

As a consequence of $\Fix(g)$ having empty interior in $M$ it is enough to show that $\{W\in \W^c \mid W\mbox{ compact}$ $\mbox{and } g^n(W)=W \mbox{ for some }n\neq 0\}$ is dense in $M\setminus \Fix(g)$ to obtain that is dense in $M$.

Suppose from now on that $x_0$ is a point in $M\setminus \Fix(g)$. Let us see that for every $\epsilon>0$ small enough there exists $x\in B_\epsilon(x_0)$ and $k>0$ such that $g(\W^c_\epsilon(x))$ is disjoint from  $\W^c_\epsilon(x)$ and $g^k(x)\in \W^c_\epsilon(x)$. This immediately implies that $\W^c(x)$ needs to be compact and periodic (see next paragraph) and shows that $x_0$ can be approximated by periodic compact leaves of $\W^c$.

Indeed, in case that every leaf of $\W^c$ is compact then $\W^c(x)$ is automatically compact and periodic for $g$ and there is no more to say. In case $f(W)=W$ for every leaf $W\in\W^c$ let us suppose by contradiction that $\W^c(x)$ is not compact. Then $f:\W^c(x)\to \W^c(x)$ is a homeomorphism of the line. As a consequence $g:\W^c(x)\to \W^c(x)$ is an orientation preserving homeomorphism of the line. Then $g(\W^c_\epsilon(x))$ disjoint from $\W^c_\epsilon(x)$ impedes $g^k(x)$ from lying in $\W^c_\epsilon(x)$ for some $k>0$ and gives us a contradiction.

Let $\epsilon>0$ be small enough so that $g(B_{2\epsilon}(x_0))$ is disjoint from $B_{2\epsilon}(x_0)$. And small enough so that at scale $\epsilon$ the bundles are almost constant and the distances inside the invariant manifolds $\W^\sigma$, $\sigma\in\{s,c,u\}$ are nearly the same as in the manifold. For a precise construction see for example the scale and metric considered in property (P4) of Lemma \ref{rmkUdelta}. 

Inside $B_\epsilon(x_0)$ let $U$ be a $\W^c$-foliation box neighborhood containing $x_0$ that is obtained as $U:=\W^c_\delta(D)$ for $\delta>0$ some small constant and $D$ some $C^1$ disc transverse to $\W^c$ and nearly tangent to $E^s\oplus E^u$. In particular, let $\delta>0$ be such that $\delta/2$ is smaller than the constant given by Lemma \ref{lemmaWcWsC1}. 

Let us consider $0<\delta'<\delta$ and $0<\epsilon'<\epsilon$ such that $10\epsilon'<\delta'$ and such that for every $y\in B_{\epsilon'}(x_0)$ the set $\W^s_{\delta'}(\W^u_{\delta'}(y))$ is contained in $U$. 

We claim that for every $y\in B_{\epsilon'}(x_0)$ the set $\W^s_{\delta'}(\W^u_{\delta'}(y))$ intersects every center plaque of $U$ in at most one point. This is a consequence of Lemma \ref{lemmaWcWsC1}. Indeed, suppose that $w,w'\in \W^s_{\delta'}(\W^u_{\delta'}(y))$ are points in the same center plaque of $U$. Then $w'\in\W^c_\delta(w)$. Let $z,z'\in \W^u_{\delta'}(y)$ be such that $w\in\W^s_{\delta'}(z)$ and $w'\in\W^s_{\delta'}(z')$. As $w'\in \W^c_{\delta'}(w)$ then both $z$ and $z'$ lie in $\W^s_{\delta'}(\W^c_{\delta'}(w))$. As $\W^s_{\delta'}(\W^c_{\delta'}(w))$ is $C^1$ and tangent to $E^s\oplus E^c$ it follows that $\W^s_{\delta'}(\W^c_{\delta'}(w))$ intersects $\W^u_{\delta'}(y)$ in at most one point. That is, $z=z'$. Then $w=w'$. This proves the claim.

Let $\pi^c:U\to D$ denote the projection along center plaques. It is immediate to check that $\pi^c$ needs to be continuous. The previous paragraph then implies that $\pi^c$ from $\W^s_{\delta'}(\W^u_{\delta'}(y))$ to $D$ is a homeomorphism onto its image for every $y\in B_{\epsilon'}(x_0)$. 

Since $x_0\in \Omega(g)$ there exists $k>0$ such that $g^k(B_{\epsilon'}(x_0))\cap B_{\epsilon'}(x_0)\neq \emptyset$. Moreover, such a $k$ can be considered arbitrarly large. Let us fix such a $k$ large enough so that $\W^u_{2\delta'}(g^k(x))\subset g^k(\W^u_{\delta'}(x))$ and $g^k(\W^s_{\delta'}(x))\subset \W^s_{\delta'/2}(g^k(x))$ for every $x\in M$.

Let us fix from now on $y$ a point in $B_{\epsilon'}(x_0)$ so that $g^k(y)\in B_{\epsilon'}(x_0)$. It follows that there exists a sub arc $\gamma^u_y$ in $\W^u_{\delta'}(y)$ such that $g^k(\gamma^u_y)=\W^u_{2\delta'}(g^k(y))$.

Then $g^k(\W^s_{\delta'}(\gamma^u_y))$ is a subset of $\W^s_{\delta'/2}(\W^u_{2\delta'}(g^k(y))$. Consider $R\subset D$ the closure of the image by $\pi^c$ of $\W^s_{\delta'}(\gamma^u_y))$. It follows that $R$ is a topological disc in $D$. Its boundary can be viewed as a rectangle. Two of its opposite sides, $\Gamma_2$ and $\Gamma_4$, correspond to the projection by $\pi^c$ of the two $s$-arcs $\W^s_{\delta'/2}(y_1)$ and $\W^s_{\delta'/2}(y_2)$ for $y_1,y_2$ each one of the two endpoints of $\gamma^u_y$ in $\W^u(y)$. The other two sides, $\Gamma_1$ and $\Gamma_3$, correspond to the projection by $\pi^c$ of the two segments formed by the endpoints of $\W^s_{\delta'}(y')$ as $y'$ varies in $\gamma^u_y$.

It follows that $h=\pi^c\circ g^k\circ (\pi^c)^{-1}$ is a well defined continuous map from $R$ to $D$. We claim that it is enough to show that $h$ has a fixed point. 
Indeed, if $p$ denotes a fixed point for $h$ then $o:=(\pi^c)^{-1}(p)$ satisfies that $g^k(o)$ and $o$ are in the same center plaque of $U$. Since $g(\W^c_\epsilon(x))$ is disjoint from  $\W^c_\epsilon(x)$, because $g(B_{2\epsilon}(x_0))$ is disjoint from $B_{2\epsilon}(x_0)$, one obtains that $\W^c(o)$ needs to be compact and periodic as discussed before.

The existence of a fixed point for $h$ follows by a classic Lefschetz's index argument. Let $\Gamma$ denote the boundary of $R$. The closed curve $\Gamma$ is the union $\Gamma_1\cup\ldots \cup \Gamma_4$ of the sides of $R$ as explained above. Since $g^k(\W^s_{\delta'}(\gamma^u_y))$ is a subset of $\W^s_{\delta'/2}(\W^u_{2\delta'}(g^k(y))$ and $d(y,g^k(y))<\epsilon'$ for $10\epsilon'<\delta'$ it follows that $h$ sends the rectangle $R$ to a new rectangle $h(R)$
that `crosses' $R$ so that $\Gamma_1$ and $\Gamma_3$ do not intersect $h(R)$ and $h(\Gamma_2)$ and $h(\Gamma_4)$ lie in two different connected components of $R\setminus h(R)$ that are adjacent to $\Gamma_2$ and $\Gamma_4$, respectively (see Figure \ref{fig0} and Figure \ref{fig1}). This is enough for finding a fixed point for $h$. For the sake of completeness we will reproduce this classical argument for finding a fixed point under these hypothesis.

\begin{figure}[htb]
\def\svgwidth{0.8\textwidth}
\begin{center} 
{\scriptsize
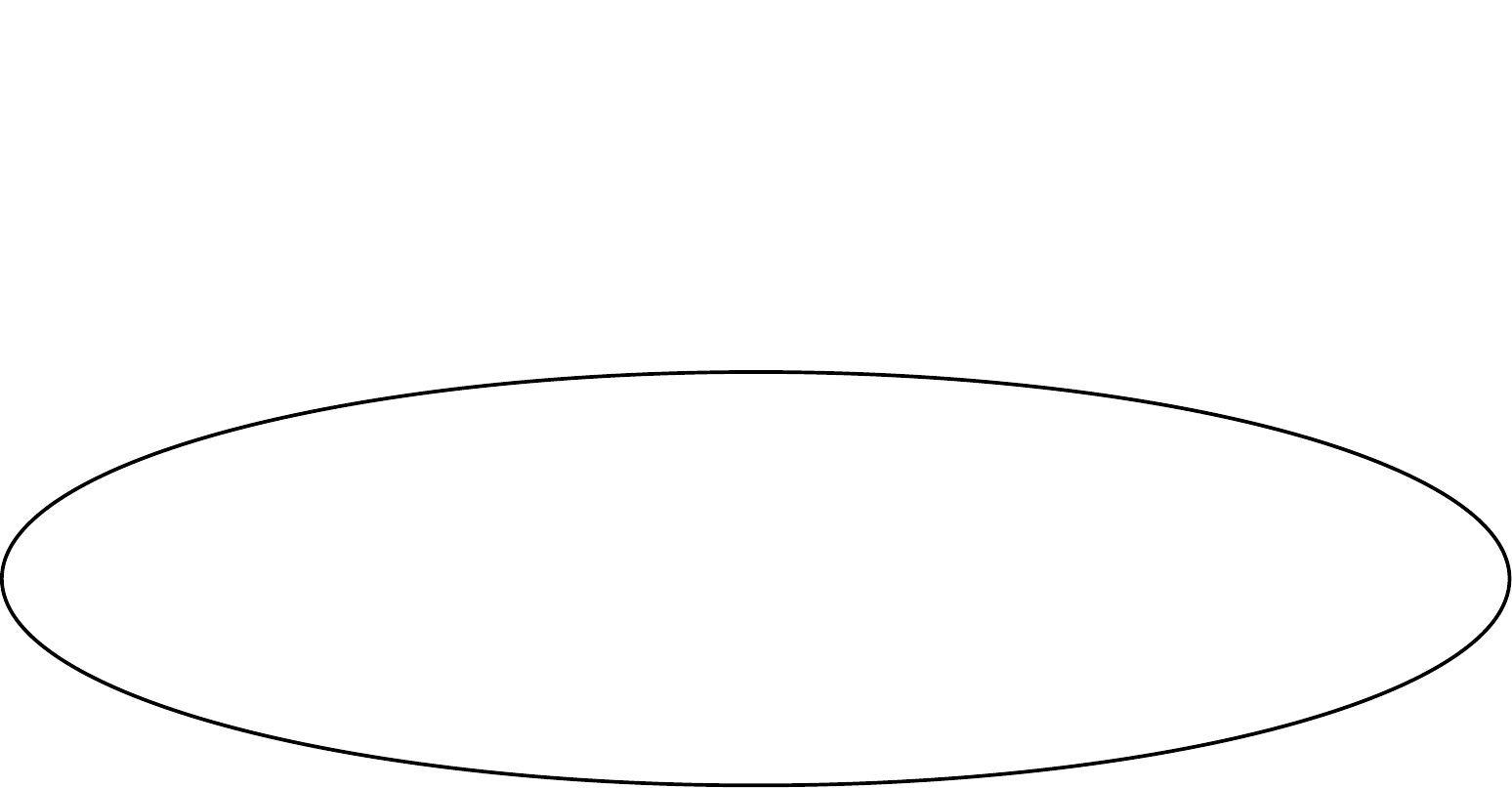
}
\caption{Modulo local center projection, a $su$ rectangle that comes back transverse to itself.}\label{fig0} 
\end{center}
\end{figure}
	
\begin{figure}[htb]
\def\svgwidth{0.6\textwidth}
\begin{center} 
{\scriptsize
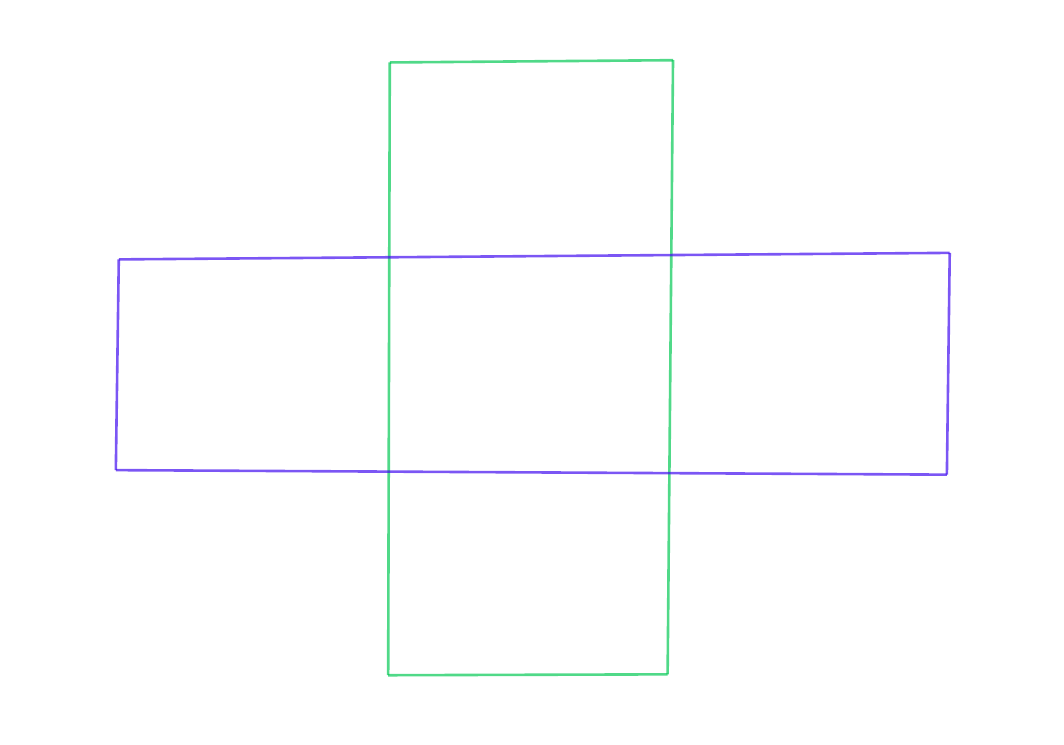
}
\caption{The transverse $su$ return ensures a fixed point modulo center projection, hence a point in $M$ that is sent to its same local center leaf.}\label{fig1} 
\end{center}
\end{figure}

Let $t\mapsto \Gamma(t)$ be an homeomorphism from the circle $S^1$ to $\Gamma$. We can consider a nullhomotopy $\{\Gamma^{(s)}\}_{s\in[0,1]}$ of $\Gamma$ inside $R$ as follows. Let us identify $R$ homeomorphically with $[0,1]
\times [0,1]$. Then let $\Gamma^{(0)}$ be equal to $\Gamma$ and let $\Gamma^{(s)}$, varying continuously with $s\in [0,1]$, be such that the image of $\Gamma^{(s)}$ is the boundary of the square $[0,1-s]\times [0,1-s]$. 

Suppose by contradiction that $h$ has no fixed points in $R$. Let us identify $D$ with the euclidean plane $\R^2$. The no fixed points assumption implies that the continuous family of maps $\rho_s:S^1\to S^1$ given by $$\rho_s(t):= \frac{h(\Gamma^{(s)}(t))-\Gamma^{(s)}(t)}{||h(\Gamma^{(s)}(t))-\Gamma^{(s)}(t)||}$$ is well defined for every $s\in [0,1]$. 

On the one hand, from the way the sides $\Gamma_1$, \ldots, $\Gamma_4$ are mapped by $h$ it is an immediate computation to check that $\rho_0:S^1\to S^1$ has index different from $0$. On the other hand, if $y_0$ denote the point that is the image of $\Gamma^{(1)}$, then $h(y_0)\neq y_0$ and one can consider a small ball $B$ containing $y_0$ so that $h(B)\cap B=\emptyset$. This immediately implies that for every $s$ close enough to $0$ (so that $\Gamma^{(s)}\subset B$) the map $\rho_s:S^1\to S^1$ must have index $0$. As the index of a continuous family of maps from $S^1$ to $S^1$ is an invariant of the family one gets to a contradiction. Hence $h$ must have a fixed point on $R$.
\end{proof}

\subsection{Proof of Theorem \ref{thmD}}

As was already mentioned in Remark \ref{rmkcenterfixing3ifdyncohunifcmpct}, item (\ref{item2propcenterfixingdim3}) of Theorem \ref{thmD} has been already shown once dynamical coherence was proven. It remains to show item (\ref{item1propcenterfixingdim3}).

Suppose $f\in \PH_{c=1}(M^3)$ satisfies the hypothesis of (\ref{item1propcenterfixingdim3}). Let $\W^c$ denote the center foliation such that $f(W)=W$ for every $W\in \W^c$. By Lemma \ref{lemmaperiodiccenters} there exists at least one compact leaf $\gamma$ of $\W^c$ (in fact, the union of such leaves is dense in $M$). Moreover, for every $x\in \W^s_{loc}(\gamma)$ the leaf $\W^c(x)$ is fixed by $f$ (in particular, periodic). It follows from \cite[Theorem 2]{BW} (see Remark \ref{rmkBW05}) that $f^n$ is a discretized Anosov flow for some $n>0$.

The next claim concludes then the proof of Theorem \ref{thmD}.

\begin{af}
In the setting of Theorem \ref{thmD} item (\ref{item1propcenterfixingdim3}), if $f^n$ is a discretized Anosov flow for some $n>1$ then $f$ is a discretized Anosov flow.
\end{af}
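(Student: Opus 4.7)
The plan is to verify the criterion of Proposition \ref{propcenterfixingL}: I will produce a uniform $L>0$ such that $f(x) \in \W^c_L(x)$ for every $x \in M$, where $\W^c$ is the $f$-invariant center foliation from the hypothesis. First, I will identify $\W^c$ with a familiar object. Since $f$ is transitive, $\Omega(f)=M^3$, so Proposition \ref{propdim3dyncoh} applies and yields that $f$ is dynamically coherent with $f$-invariant foliations $\W^{cs}, \W^{cu}$ and $\W^c = \W^{cs}\cap\W^{cu}$. These are also $f^n$-invariant, so Theorem \ref{thmB}(\ref{thm2iii}) applied to the discretized Anosov flow $f^n$ forces $\W^{cs} = \W^{cs}_{f^n}$ and $\W^{cu} = \W^{cu}_{f^n}$, identifying $\W^c$ with the flow center foliation of $f^n$. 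I may thus write $f^n(x) = \varphi_{\tau(x)}(x)$ for the unit-speed center flow $\varphi$ and some continuous $\tau: M \to (0,\tau_{\max}]$, and the task reduces to bounding $d_W(x,f(x))$ uniformly, where $d_W$ denotes intrinsic distance inside $W = \W^c(x)$.

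The core displacement estimate will be on non-compact leaves. On such $W \cong \R$, parametrized by flow time so that $\varphi_s$ becomes translation by $s$, I claim $f|_W$ has no fixed points: a fixed point $y$ would satisfy $\varphi_{\tau(y)}(y)=f^n(y)=y$, forcing $\tau(y)=0$ since the orbit of $\varphi$ through $y$ is non-periodic, contradicting $\tau>0$. Thus $f|_W$ is an orientation-preserving fixed-point-free homeomorphism of $\R$, and $a(t):=f|_W(t)-t$ has constant sign. The telescoping identity
\begin{equation*}
\sum_{j=0}^{n-1} a\bigl(f|_W^{\,j}(t)\bigr) \;=\; f|_W^{\,n}(t)-t \;\in\; (0,\tau_{\max}]
\end{equation*}
forces the sign to be positive and, since all summands are positive and their sum is at most $\tau_{\max}$, each summand is at most $\tau_{\max}$. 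In particular $a(t)\leq\tau_{\max}$, so $d_W(x,f(x))\leq\tau_{\max}$ on non-compact leaves.

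On a compact leaf $W$, I will promote this bound by continuity. By Proposition \ref{propsTopAF}(\ref{item2propTAFs}), $W$ cannot sit inside a plane $\W^{cs}$-leaf (global product structure of $\W^c$ with the non-compact leaves of $\W^s$ would force center leaves non-compact there), so $W$ must lie in a cylinder $\W^{cs}$-leaf. Proposition \ref{propsTopAF}(\ref{item3propTAFs}) then shows every point of $W$ is an accumulation of $\varphi$-iterates of points on non-compact center leaves within that cylinder. Picking $x_k \to x \in W$ on non-compact leaves and writing $f(x_k)=\varphi_{\sigma_k}(x_k)$ with $|\sigma_k|\leq \tau_{\max}$ by the previous paragraph, I extract a convergent subsequence $\sigma_k \to \sigma^*$ with $|\sigma^*|\leq\tau_{\max}$; joint continuity of the flow gives $f(x)=\varphi_{\sigma^*}(x)$, whence $d_W(x,f(x))\leq|\sigma^*|\leq \tau_{\max}$. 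With $L=\tau_{\max}$, Proposition \ref{propcenterfixingL} then concludes that $f$ is a discretized Anosov flow.

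The hard part will be the compact-leaf case: on long compact leaves the rotation number of $f|_W$ could a priori produce large intrinsic motion, so a direct estimate fails; the uniform bound will emerge only from combining the non-compact-leaf telescoping argument with the accumulation property guaranteed by Proposition \ref{propsTopAF}.
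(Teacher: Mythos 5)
Your proof is correct. The overall strategy — verify the criterion $f(x)\in\W^c_L(x)$ from Proposition~\ref{propcenterfixingL}, handling non-compact and compact leaves separately — is the same as the paper's, and your telescoping estimate on non-compact leaves makes explicit what the paper leaves implicit (no fixed points $\Rightarrow$ constant sign of the displacement $\Rightarrow$ each of the $n$ summands in $f^n|_W(t)-t=\tau$ is bounded by $\tau_{\max}$). Your preliminary step of identifying the given $\W^c$ with the flow center foliation of $f^n$ via Theorem~\ref{thmB}(\ref{thm2iii}) is a useful precision that the paper uses tacitly. Where you genuinely diverge is the compact-leaf case: the paper argues that compact center leaves are countable (transverse hyperbolicity prevents accumulation of compact leaves of bounded length), so non-compact leaves are dense and continuity of $\W^c$ at scale $L$ gives $f(x)\in\W^c_L(x)$ by passing to the limit $x_k\to x$ with $\W^c(x_k)$ non-compact; you instead invoke the topological description of $\W^{cs}$-leaves from Proposition~\ref{propsTopAF} (cylinder leaves, $\omega$-limit equal to the unique compact center leaf) to produce the approximating sequence $x_k$ and bound the flow-time displacement. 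Both work; the paper's countability argument is a bit more elementary and does not require the classification of $\W^{cs}$-leaf topology, while your route gives the slightly sharper conclusion $f(x)=\varphi_{\sigma^*}(x)$ with $|\sigma^*|\le\tau_{\max}$ rather than merely a Hausdorff-limit membership in $\W^c_L(x)$. One small economy: rather than re-deriving from Proposition~\ref{propsTopAF}(\ref{item2propTAFs}) that plane $\W^{cs}$-leaves contain no compact center leaves, you could cite Theorem~\ref{thmB}(\ref{thm2v}) directly, which states exactly this.
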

\begin{proof}
Note first that, by Proposition \ref{prop1DAFs} item \ref{item1prop1}, if $W$ is a leaf of $\W^c$ that is not compact, then $f^n$ has no fixed points in $W$. As a consequence, $f$ has no fixed points in $W$ either. In particular, $f$ has to preserve the orientation of $W$.

By Proposition \ref{propcenterfixingL} there exists $L>0$ such that $f^n(x)\in \W^c_L(x)$ for every $x\in M$. If $W\in \W^c$ is not compact it follows from the paragraph above that $f(x)\in W^c_L(x)$ for every $x$ in $W$.

Given $K>0$, by transverse hyperbolicity one can show that every leaf of $\W^c$ with length less than $K$ can not be accumulated by compact leaves of $\W^c$ with length less than $K$. As a consequence, there exist at most countably many compact leaves of $\W^c$.

Given $x$ in a compact leaf $W\in \W^c$ one can consider a sequence $x_n$ converging to $x$ so that $\W^c(x_n)$ is not compact for every $n$. As $f(x_n)$ belongs to $\W^c_L(x_n)$ for every $n$ and the sequence $f(x_n)$ tends to $f(x)$ one obtains that $f(x)$ must lie in $\W^c_L(x)$. 

We have shown that $f(x)\in \W^c_L(x)$ for every $x\in M$. By Proposition \ref{propcenterfixingL} we conclude that $f$ needs to be a discretized Anosov flow.
\end{proof}

{\small \emph{Acknowledgments:} I have deeply benefited from countless exchanges with my advisors S. Crovisier and R. Potrie. I would also like to thank A. Thazhibi, T. Barbot (thesis reviewers) and the anonymous referees of this paper.}

\end{document}